\newcommand{\wt}{\tilde{w}}
\newcommand{\com}{\color{green}  }
\tikzstyle directed=[postaction={decorate,decoration={markings,
    mark=at position #1 with {\arrow{>}}}}]
\newcommand{\hackcenter}[1]{
 \xy (0,0)*{#1}; \endxy}
\tikzset{->-/.style={decoration={
  markings,
  mark=at position #1 with {\arrow{>}}},postaction={decorate}}}
\tikzset{middlearrow/.style={
        decoration={markings,
            mark= at position 0.5 with {\arrow{#1}} ,
        },
        postaction={decorate}
    }
}
\newcommand{\bbullet}{
\begin{tikzpicture}
  \draw[fill=black] circle (0.55ex);
\end{tikzpicture}
}
\newcommand{\sE}{\cal{E}}
\newcommand{\sF}{\cal{F}}
\newcommand{\cE}{\cal{E}}
\newcommand{\cF}{\cal{F}}
\newcommand{\uep}{\und{\varepsilon}}
\newcommand{\onel}{\1_{\lambda}}
\newcommand{\onenn}{\1_{n}}
\newcommand{\oneml}{\1_{-\lambda}}
\newcommand{\tsigma}{\sigma}
\newcommand{\tomega}{\omega}
\newcommand{\tpsi}{\psi}
\newcommand{\Ucas}{\cal{C}}
\newcommand{\UupD}{\cal{E}}
\newcommand{\UdownD}{\cal{F}}
\def\P{\mathsf{P}}
\def\Q{\mathsf{Q}}
\newcommand{\ogam}{\xi^+}
\newcommand{\odelt}{\hat{\xi}^+}
\newcommand{\gam}{\xi^{-}}
\newcommand{\delt}{\hat{\xi}^{-}}
\newcommand{\gams}{\xi^{-}_{\tsigma}}
\newcommand{\delts}{\hat{\xi}^{-}_{\tsigma}}
\newcommand{\ogams}{\xi^+_{\tsigma}}
\newcommand{\odelts}{\hat{\xi}^+_{\tsigma}}
\newcommand{\gamp}{\xi'^{-}}
\newcommand{\SL}{\mathrm{SL}}
\def\W{W_{1+\infty}}
\def\tW{\widetilde{W}_{1+\infty}}
\def\id{\mathrm{id}}
\def\Id{\mathrm{Id}}
\def\adj{\mathrm{adj}}
\newcommand{\END}{{\rm END}}
\newcommand{\Gr}{\cat{Flag}_{N}}
\newcommand{\Grn}[1]{\cat{Flag}_{#1}}
\theoremstyle{plain}
\newtheorem{theorem}{Theorem}
\newtheorem{corollary}[theorem]{Corollary}
\newtheorem{proposition}[theorem]{Proposition}
\newtheorem{lemma}[theorem]{Lemma}
\theoremstyle{definition}
\newtheorem{example}[theorem]{Example}
\newtheorem{definition}[theorem]{Definition}
\theoremstyle{definition}
\newtheorem{remark}[theorem]{Remark}
\numberwithin{equation}{section}
\numberwithin{theorem}{section}
\newcommand{\sym}{{\rm Sym}}
\newcommand{\maps}{\colon}
\newcommand{\und}[1]{\underline{#1}}
\newcommand{\xsum}[2]{
  \xy
  (0,.4)*{\sum};
  (0,3.7)*{\scs #2};
  (0,-2.9)*{\scs #1};
  \endxy
}
\newcommand{\refequal}[1]{\xy {\ar@{=}^{#1}
(-1,0)*{};(1,0)*{}};
\endxy}
\newcommand{\cat}[1]{\ensuremath{\mbox{\bfseries {\upshape {#1}}}}}
\newcommand{\numroman}{\renewcommand{\labelenumi}{\roman{enumi})}}
\newcommand{\numarabic}{\renewcommand{\labelenumi}{\arabic{enumi})}}
\newcommand{\numAlph}{\renewcommand{\labelenumi}{\Alph{enumi}.}}
\newcommand{\To}{\Rightarrow}
\newcommand{\TO}{\Rrightarrow}
\newcommand{\Hom}{{\rm Hom}}
\newcommand{\HOM}{{\rm HOM}}
\renewcommand{\to}{\rightarrow}
\newcommand{\nh}{\mathrm{NH}}
\def\Res{{\mathrm{Res}}}
\def\Ind{{\mathrm{Ind}}}
\def\Lad{\mathrm{Lad}}
\def\lra{{\longrightarrow}}
\def\dmod{{\mathrm{-mod}}}   
\def\fmod{{\mathrm{-fmod}}}   
\def\pmod{{\mathrm{-pmod}}}  
\def\rk{{\mathrm{rk}}}
\def\Id{\mathrm{Id}}
\def\mc{\mathcal}
\def\mf{\mathfrak}
\def\Bim{{\mathrm{Bim}}}
\def\Br{{\mathrm{Br}}}
\numberwithin{equation}{section}
\def\YY#1{\textcolor[rgb]{1.00,0.00,0.50}{[YY: #1]}}%
\def\AL#1{\textcolor[rgb]{1.00,0.00,0.00}{[AL: #1]}}%
\def\PS#1{\textcolor[rgb]{0.00,0.49,0.25}{[MK: #1]}}%
\def\JS#1{\textcolor[rgb]{0.40,0.00,0.90}{[JS: #1]}}%
\def\b{$\blacktriangleright$}
\def\e{$\blacktriangleleft$}
\def\new#1{\b #1\e}%
\let\hat=\widehat
\let\tilde=\widetilde
\let\epsilon=\varepsilon
\def\C{{\mathbb{C}}}
\def\N{{\mathbbm N}}
\def\R{{\mathbbm R}}
\def\Z{{\mathbbm Z}}
\def\H{{\mathcal{H}}}
\def\cal#1{\mathcal{#1}}%
\def\1{\mathbbm{1}}%
\def\ev{\mathrm{ev}}%
\def\coev{\mathrm{coev}}%
\def\tr{\mathrm{tr}}%
\def\st{\mathrm{st}}%
\def\nn{\notag}
\newcommand{\ontop}[2]{\genfrac{}{}{0pt}{2}{\scriptstyle #1}{\scriptstyle #2}}
\def\la{\langle}
\def\ra{\rangle}
\renewcommand{\l}{\lambda}
\def\cal#1{\mathcal{#1}}
\newcommand\nc{\newcommand}
\nc\rnc{\renewcommand}
\nc\Kar{\operatorname{Kar}}
\nc\End{\operatorname{End}}
\newcommand{\scs}{\scriptstyle}
\newcommand{\Ucat}{\cal{U}}
\newcommand{\UcatD}{\dot{\cal{U}}}
\nc\Sym{\operatorname{Sym}}
\newcommand{\bigb}[1]{
\begin{tikzpicture}
\node[draw,  fill=white,rounded corners=4pt,inner sep=3pt] (X) at (0,.75) {$\scs #1$};
\end{tikzpicture}}
\newcommand{\Sq}{{\rm Sq}}
\newcommand{\eqr}[1]{{\overset{#1}{=}}}
\title[Braid group actions from deformed Webster algebras]{Braid group actions from categorical symmetric Howe duality on
deformed Webster algebras}
\begin{document}
\setcounter{tocdepth}{1}

\author{Mikhail Khovanov}
\email{khovanov@math.columbia.edu}
\address{Department of Mathematics \\ Columbia University \\ New York, NY}

\author{Aaron D. Lauda}
\email{lauda@usc.edu}
\address{Department of Mathematics\\ University of Southern California \\ Los Angeles, CA}

\author{Joshua Sussan}
\email{jsussan@mec.cuny.edu}
\address{Department of Mathematics \\ CUNY Medgar Evers \\ Brooklyn, NY}
\date{\today}

\author{Yasuyoshi Yonezawa}
\email{yasuyoshi.yonezawa@math.nagoya-u.ac.jp}
\address{Graduate school of Mathematics \\
Nagoya University \\ Nagoya, Japan}

\maketitle

\begin{abstract}
We construct a 2-representation  categorifying the symmetric Howe representation  of $\mathfrak{gl}_m$ using a deformation of an algebra introduced by Webster.  As a consequence, we obtain a categorical braid group action taking values in a homotopy category.
\end{abstract}
\tableofcontents

\section{Introduction}

\subsection{Howe pairs and link homology}
Beginning with the algebro-geometric constructions of link homology theories by Cautis and Kamnitzer~\cite{CaKa-slm} and further developed in \cite{CKL-skew}, categorical skew Howe duality has proven to be a powerful tool in higher representation theory and link homology theory.
The key idea originates in the decategorified context where Toledano Laredo~\cite{LT} used Howe duality to show that the
 braid group action on an $m$-fold tensor product of $\mf{sl}_n$-representations can be realized via the deformed Weyl group action associated with an auxiliary quantum group for $\mf{sl}_m$.  This perspective was fundamental in the resolution of several important conjectures in link homology theory that have resisted proof by other techniques.  These include:
\begin{itemize}
\item the first complete definition of $\mf{sl}_n$-foams with a combinatorial evaluation for closed foams not requiring the use of the Kapustin-Li formula~\cite{LQR,QR},
\item a completely integral formulation of $\mf{sl}_n$-link homology~\cite{QR},
\item the study of $\mf{sl}_n$ deformations analogous to Lee homology in the $\mf{sl}_2$-case~\cite{RoseW},
\item the discovery of direct connections between quantum groups and Chern-Simon's gauge theory~\cite{CGR},
\item the definition of $\mf{sl}_n$-web algebras generalizing Khovanov's arc algebra~\cite{MPT,MY},
\item a proof of the exponential growth of coloured  HOMFLYPT homology~\cite{Wed}, and
\item a unification of $\mathfrak{sl}_n$ link homologies constructed using wildly different techniques (e.g. category $\mathcal{O}$, matrix factorizations, Webster's categorification of tensor products, and coherent sheaves on orbits in the affine Grassmannian)~\cite{Cautis,MW}.
\end{itemize}

Starting from the work of Cautis, Kamnitzer, and Morrison~\cite{CKM}, these methods have proven to be incredibly powerful even in the decategorified setting.
They gave a complete diagrammatic description of the category generated by tensor products of fundamental quantum $\mf{sl}_n$-representations using $\mf{sl}_n$-webs.
The Karoubi envelope of this category is the full category of quantum $\mf{sl}_n$-representations, since every irreducible arises as a summand of a tensor product of fundamental representations.
In this framework, relations on $\mf{sl}_n$ webs arise from relations in quantum $\mf{gl}_m$ under a natural action on so-called ladder webs.   The proof that these relations suffice uses Howe duality and the commuting actions of quantum $\mf{gl}_m$ and $\mf{sl}_n$ on the space $\bigwedge^N(\C^n_q \otimes \C^m_q)$.
This idea has been generalized in several directions~\cite{CW, ES,Grant,QS,TVW}.
This approach was also instrumental in the second author's work with Garoufalidis and L\^{e} resolving the $q$-holonomicity conjecture for the HOMFLYPT polynomial~\cite{Lau-q}.

In the classical theory of Howe duality,
one considers the commuting actions of $\mf{gl}_m$ and $\mf{sl}_n$ on the space ${\rm Sym}^N(\C^n \otimes \C^m)$.  This duality was extended to the quantum setting in~\cite{RT}, extending the work~\cite{BZ}.  Quantum symmetric Howe duality for $(\mf{gl}_m, \mf{sl}_2)$ was used in \cite{RT} to provide a generators and relations description for the `symmetric web category', that is, a generators and relations description of the full space of intertwiners between tensor powers of symmetric powers of fundamental $\mf{sl}_2$-representations, not requiring a passage to the Karoubi envelope.   This was later extended to symmetric powers of quantum $\mf{sl}_n$-representations in \cite{TVW}.


In this paper we utilize the commuting actions of quantum $ \mf{gl}_m$ and $\mf{sl}_2$ on the symmetric representation ${\rm Sym}^N(\C^2_q \otimes \C^m_q)$ in the categorical setting.  Our framework introduces deformations of Webster's tensor product algebras associated to symmetric powers of $\mf{sl}_2$.   Closely related deformations were studied for fundamental representations by Mackaay and Webster \cite{MW}.  Our framework allows us to define a categorical coloured braid invariant where strands are coloured by arbitrary irreducibles of $\mf{sl}_2$ (viewed as symmetric powers of the defining representation).
Our approach also suggests a natural generalization to deformations of Webster's algebras for tensor products of fundamental $\mf{sl}_n$ representations.

\subsection{Motivation for symmetric Howe duality from link homology theory}
The first author's original categorification of the Jones polynomial~\cite{Kh1,Kh2} can be defined entirely within the homotopy category of complexes over an additive category.  In this approach the homologies associated with knots and links are finite dimensional.  An approach to coloured $\mf{sl}_2$ link invariants using cabling was proposed in ~\cite{Kh05}.  An alternative approach was proposed by Cooper and Krushkal~\cite{CoopK0} using categorified Jones-Wenzl projectors where the homologies are infinite dimensional  for coloured unknots.  Hogancamp gave a refinement of this invariant where the homology of the unknot is finite dimensional over a larger ground ring~\cite{Hog}.      An alternative approach to categorification of Jones-Wenzl projectors was given by Rozansky~\cite{Roz}.  A Lie theoretic approach was outlined in ~\cite{FSS}.

Utilizing the higher representation theory framework arising from the theory of categorified quantum groups~\cite{KL1,KL2,KL3,Rou2}, Webster  gave a general combinatorial categorification of Reshetikhin-Turaev invariants~\cite{Web} for links coloured by arbitrary irreducible representations of the quantum group associated to a semisimple Lie algebra $\mf{g}$.  The invariants are constructed using algebras that are defined in an elementary way. However, the associated tangle invariants are difficult to compute.  For example, in order to associate a functor to a braid, one must work in the derived category rather than the homotopy category as in~\cite{Kh2}. Furthermore, invariants for the unknot coloured by non-miniscule representations are infinite dimensional in this approach.

Our interest in the symmetric Howe 2-representation ${\rm Sym}^N(\C^2_q \otimes \C^m_q)$ stems from the goal of modifying Webster's approach to obtain coloured $\mf{sl}_2$-link invariants using finite complexes in the homotopy category, rather than the derived category.
This work can be viewed as providing a potential algebraic categorification of the symmetric web category  from \cite{RT}.    We expect that this work should be closely related to the geometric categorification of symmetric Howe duality given by Cautis and Kamnitzer using the derived category of coherent sheaves on the  Beilinson-Drinfeld Grassmannian~\cite{CK-symm}.
A foam setting for symmetric Howe duality was recently studied in \cite{RWag,QRS}.

Cautis defines a link homology theory (which could be rephrased in the language of Soergel bimodules) where each component of the link is coloured by an arbitrary partition ~\cite{Cautis-Rem}.  The resulting homology categorifies the coloured HOMFLYPT polynomial.  Cautis also defines differentials $d_N$ leading to a categorified $\mf{sl}_N$ coloured link homology theory.  When the components are coloured by symmetric powers of the defining representation, the homology of the link is finite dimensional.  Both of our constructions have ingredients coming from the theory of Soergel bimodules and it is enticing to understand how our braid group actions (and optimistically coloured link homologies built from them) are related.



\subsection{The redotted Webster algebra and categorical braid group action}

Webster defined a family of algebras categorifying an arbitrary tensor product $V_{\l_1} \otimes V_{\l_2} \otimes \dots \otimes V_{\l_k}$ of irreducible representations of the quantum group associated to a semisimple Lie algebra $\mf{g}$.  These algebras admit a diagrammatic interpretation extending the diagrammatic categorifications of quantum groups and their irreducibles from \cite{Lau1,KL1,KL2,KL3}.  In this presentation, generators of the algebra correspond to planar diagrams consisting of red strands labelled by the dominant weights $\l_i$ of irreducibles and black strands labelled by the simple roots of the Lie algebra $\mf{g}$ that are governed by the KLR-algebra of type $\mf{g}$. Here we will be primarily interested in the case when $\mf{g}=\mf{sl}_2$, so that the black strands are governed by the nilHecke algebra.
\[
\hackcenter{\begin{tikzpicture}[scale=0.8]
   \draw[thick,red, double, ] (-1.8,0) to (-1.8,2.0);
    \draw[thick,red, double, ] (-.6,0) .. controls ++(0,.5) and ++(0,-.5) .. (-1.2,2.0);
   \draw[thick,] (-1.2,0) .. controls ++(0,.5) and ++(0,-.5) .. (-.6,2.0)node[pos=.75, shape=coordinate](DOT){};
    \draw[thick,  ] (0,0) .. controls ++(0,1) and ++(0,-1) .. (1.2,2);
    \draw[thick,red, double, ] (.6,0) .. controls ++(0,.5) and ++(0,-.5) .. (0,1.0);
    \draw[thick,red, double, ] (0,1.0) .. controls ++(0,.5) and ++(0,-.5) .. (0.6,2) ;
    \draw[thick, ] (1.2,0) .. controls ++(0,1) and ++(0,-1) .. (0,2) node[pos=.2, shape=coordinate](DOT2){}
     node[pos=.4, shape=coordinate](DOT3){};
    \filldraw[black]  (DOT) circle (2.5pt);
    \filldraw[black]  (DOT2) circle (2.5pt);
    \filldraw[black]  (DOT3) circle (2.5pt);
    \node at (.6,-.2) {$\scs \lambda_3$};
    \node at (-.6,-.2) {$\scs \lambda_2$};
    \node at (-1.8,-.2) {$\scs \lambda_1$};
\end{tikzpicture}}
\]

Webster's tensor product algebras admit deformations, not only for $\mf{sl}_2$, but in vast generality (see for example ~\cite{WebgradedHecke} and \cite{MW}).
The first and third authors independently studied these deformations in the context of $\mf{sl}_2$ where all the red strands are labelled by the fundamental $\mf{sl}_2$ representation.  In this context, the deformation led to additional generators where red strands are also allowed to carry dots and additional diagrammatic relations are required.  The resulting algebras were called the {\em redotted Webster algebra} $W(1^m,n)$ in \cite{KS}, where there are $n$ black strands and $1^m$ indicates $m$ red strands labelled by the fundamental $\mf{sl}_2$-representation.  These algebras were studied with the aim of simplifying Webster's braid invariants.

The algebras $W(1^m,0)$, corresponding to no black strands, are directly related to Soergel bimodules.  In this language, early work by Rouquier~\cite{Rou-Soergel} defines a braid group action on the homotopy category of modules for $W(1^m,0)$.  This categorical braid group action was shown to be a strong action by  Elias and Krasner ~\cite{EKras}, meaning that it extends to braid cobordisms.  This categorical braid group action was extended to a link invariant in ~\cite{Kho-triple} based on earlier work with Rozansky ~\cite{KhR2}.

More recently, the case of a single black strand was studied, where a strong braid group action was constructed~\cite{KS} on the homotopy category of modules for $W(1^m,1)$ and it was conjectured that a categorical braid group action can be defined on the homotopy category of $W(1^m,n)$-modules for all $n$.  In this article we prove this conjecture in much greater generality.
We extend the algebra $W(1^m,n)$ to $W({\bf s},n)$ where ${\bf s}=(s_1, \ldots, s_m)$ is a tuple of natural numbers corresponding to red strands coloured by symmetric powers of the fundamental representation of $\mf{sl}_2$.
We then extend the braid group action to $W({\bf s},n)$.
In \cite{MW} the authors use formal arguments and connections with category $\cal{O}$ to place deformations of Webster algebras into the \emph{skew} Howe framework.  The authors work in the derived category.  Here we work in the context of \emph{symmetric} Howe duality and our explicit constructions allow us to use elementary techniques and to stay in the homotopy category.

The technical work involved is to show that a version of the $\mathfrak{gl}_m$ $2$-category  $\cal{U}=\cal{U}(\mf{gl}_m)$ ~\cite{KL3,MSV} acts on direct sums of categories of bimodules for $W({\bf s},n)$, so that the categorical quantum Weyl group action of $\mathfrak{gl}_m$ can be used to induce the braiding.
The computationally most difficult relations to verify are the $\mathcal{E} \mathcal{F}$ and
$\mathcal{F} \mathcal{E}$ decompositions.
A key observation provided to us by Cautis allowed us to avoid these calculations.
The braid group action for $W(1^m,1)$ constructed in \cite{KS} avoided the use of categorified $\mathfrak{gl}_m$ but fits into the framework of this paper.

The categories of modules over Webster's $\mf{sl}_2$ algebras were shown to admit the structure of a 2-representation of the categorified quantum group for $\mf{sl}_2$~\cite{Web}.  It would be interesting to extend that action to our redotted context, as our framework is well suited to understanding the full categorified Reshetikhin-Turaev invariant set up, including the commuting quantum group action.
A related action of categorified $\mf{sl}_2$ (and its generalizations) on cyclotomic nilHecke algebras (and its generalizations) was studied in \cite{KK}.

\subsection{Connections with Soergel bimodules}
A diagrammatic category $ \cal{S}\cal{C}_1(m)$ describing Soergel bimodules in type $A$ was introduced in ~\cite{EK}. In \cite{MSV}, the authors construct a functor from the diagrammatic Soergel category $\cal{S}\cal{C}_1(m)$ into endomorphisms of the $(1^m)$ weight space of a {\it Schur quotient} of $\cal{U}(\mf{gl}_m)$
\begin{equation*}
 \Sigma_{m,m} \maps
 \cal{S}\cal{C}_1(m) \to \cal{U}(1^m,1^m)/ \la \lambda \notin \Lambda(m,m)  \ra,
\end{equation*}
where
\begin{equation*}
\Lambda(m,m)=\{\lambda=(\lambda_1,\ldots,\lambda_m) | \lambda_i \in \Z_{\geq 0}, \lambda_1+\cdots + \lambda_m=m \}.
\end{equation*}

Under the 2-functor $\Sigma_{m,m}$, Rouquier's braiding on Soergel bimodules is sent to the braid group action on the homotopy category of the Schur quotient coming from two term Rickard complexes (see \eqref{complexes1} for more details).  Elias and Krasner's proof that Rouquier complexes give a strong braid group action~\cite{EKras} can then be used to show that two term Rickard complexes also give rise to a strong braid group action (functorial under braid cobordisms) in the context of the Schur quotient $\cal{U}(1^m,1^m)/\la \lambda \notin \Lambda(m,m) \ra$.  Thus the homotopy category of a category which is the image of
$ \cal{U}(1^m,1^m)/\la \lambda \notin \Lambda(m,m) \ra$ under a categorical action also has the structure of a \emph{strong} braid group action.

The categorical $\cal{U}(\mf{gl}_m)$ action we define on the reddotted Webster algebras factors through the Schur quotient $ \cal{U}(1^m,1^m)/\la \lambda \notin \Lambda(m,m) \ra$ when the sequence ${\bf s} = (1,1, \dots, 1)$.    This allows us to deduce that the braid group action we construct is a strong action in this case.  Note that the Rickard complexes for more general of weights ${\bf s}=(s_1, \ldots, s_m)$ in $\cal{U}(\mf{gl}_m)$ (with $s_i$ not all equal to 1) will not usually be two term complexes, so the results of Elias and Krasner do not apply.

\subsection{Connections with singular Soergel bimodules}
In ~\cite{KS} it was shown that a {\it cyclotomic quotient} of the algebra $W(1^m,1)$ is isomorphic to the endomorphism algebra of a direct sum of all the unique indecomposable objects of the category of singular Soergel bimodules ${}^{(1^m)}\mathcal{R}^{(1,m-1)} $ for the polynomial ring $\Bbbk[Y_1,\ldots, Y_m]$.  We conjecture that the cyclotomic quotient of the algebra
$W({\bf s},n)$ is Morita equivalent to the endomorphism algebra of a direct sum of all the indecomposable objects for the category of singular Soergel bimodules
${}^{(\mathbf{s})} \mathcal{R}^{(n,|{\mathbf{s}|-n)}} $ where $|\mathbf{s}|=s_1+\cdots+s_m$.

\subsection{Roots of unity}

Part of the motivation in reformulating Webster's link invariants in the context of the redotted Webster algebra is that we anticipate this framework will be useful in the program of categorification at a root of unity.  If one hopes to obtain a categorification of the Witten-Reshetikhin-Turaev 3-manifold invariant, it is expected that we must be able to define categorifications of coloured $\mf{sl}_2$ link invariants specialized at a root of unity.

Categorification at an $N$-th root of unity requires categorifying the base ring of Laurent polynomials in $q$ modulo the ideal generated by the $N$-th cyclotomic polynomial $\Phi_N(q)$. A framework for such a theory, known as hopfological algebra, was proposed in ~\cite{Kh4} when $N$ is a power of a prime $p$, via the stable category of $p$-complexes: generalized complexes
over a field of characteristic $p$ with the $p$-th power of the differential rather than
the second power being $0$. Recently, $p$-complexes were successfully
used as the ground category to categorify small and big quantum $\mf{sl}_2$ at a $p$-th root of
unity \cite{KQ,Q1,EQ,EQ2}.  A categorification of the Burau representation was given in ~\cite{QSus}.
There, a braid group action was constructed on the compact derived category of a special case of a $p$-DG Webster algebra.  It would be interesting to reformulate that work in the context of the redotted Webster algebra.
The paper \cite{KQ} contains an explicit proposal
about extending this categorification from $\mf{sl}_2$ to other simply-laced Lie algebras
as well as extending it to Webster's categorification ~\cite{Web} of tensor
products of integrable representations of quantum groups.  We anticipate that  redotted Webster algebras will provide a framework for defining $p$-DG analogs of coloured Khovanov homology.

\subsection{Towards link homology}
It is a natural question how to extend the braid invariant we construct here to a link invariant.  When $m=0$, it was shown by Khovanov that taking Hochschild cohomology yields a link invariant categorifying the HOMFLYPT polynomial.  It is not clear that this procedure will work for $m>0$.  Even when $m=0$, taking Hochschild cohomology seems conceptually wrong here since we expect to categorify the Jones polynomial.  The Reshetikhin-Turaev procedure for recovering the Jones polynomial requires one to work in the middle weight space of $V_1^{\otimes m}$ ($m$ should be even).  So again, it is not clear that the braid invariant existing for arbitrary $m$ and $n$ should give rise to a non-trivial link invariant.

\subsection{Acknowledgements}
The authors would like to thank Sabin Cautis for graciously providing us the key argument in Section ~\ref{sec:EF}.

M.K.~is partially supported by the NSF grants DMS-1406065 and DMS-1664240.
A.D.L.~ is partially supported by the NSF grants DMS-1255334 and DMS-1664240
J.S.~is partially supported by the NSF grant DMS-1407394, PSC-CUNY Award 67144-0045, and Simons Foundation Collaboration Grant 516673.

\section{The nilHecke algebra}
In subsequent sections, the nilHecke algebra will play two roles.  It is an ingredient in the definition of the categorified quantum group for $\mf{gl}_n$.  It also part of the definition of the algebra $W(\mathbf{s},n)$.
\subsection{Definition}
Let $\nh_n$ be the nilHecke algebra of rank $n$ over $\Bbbk$.   It is generated by commuting degree $2$ elements $x_1, \ldots, x_n$ and elements $\partial_1, \ldots, \partial_{n-1}$ of degree $-2$.  The generators satisfy the following relations.
\begin{enumerate}
\item $\partial_i^2=0$,
\item $\partial_i \partial_j \partial_i = \partial_j \partial_i \partial_j$ if $|i-j|=1$,
\item $\partial_i \partial_j = \partial_j \partial_i$ if $|i-j|>1$,
\item $x_i \partial_i - \partial_i x_{i+1}=1=\partial_i x_i - x_{i+1} \partial_i$.
\end{enumerate}

In general, for a $\Z$-graded algebra $A$ and a graded module $M$ over $A$, let $M_i$ be the subset of $M$ contained in degree $i$.  Let $M \langle r \rangle$ be the shift of $M$ up by $r$.  That is, $(M \langle r \rangle)_i = M_{i-r}$.
We will use the following notation for a direct sum of shifts of $M$:
\begin{equation*}
[r]M = M \langle r-1 \rangle \oplus M \langle r-3 \rangle
\oplus \cdots \oplus M \langle 3-r \rangle \oplus M \langle 1-r \rangle.
\end{equation*}

We now present this algebra in a diagrammatic fashion.
Consider collections of smooth arcs in the plane connecting $ n $ points on one horizontal line with $n$ points on another horizontal line.
Arcs are assumed to have no critical points (in other words no cups or caps).
Arcs are allowed to intersect, but no triple intersections are allowed.
Arcs can carry dots.
Two diagrams that are related by an isotopy that does not change the combinatorial types of the diagrams or the relative position of crossings are taken to be equal.
The elements of the vector space $ \nh_n$ are formal linear combinations of these diagrams modulo the local relations given below.
We give $ \nh_n $ the structure of an algebra by concatenating diagrams vertically.

Dots on strands correspond to generators $x_i$ given earlier.  Strands which cross correspond to generators $\psi_i$.
The generating elements of $\nh_n$ and their degrees are given below.
\[
\deg
\left( \;
\hackcenter{\begin{tikzpicture}[scale=0.6]
    \draw[thick, ] (0,0) -- (0,1.5)  node[pos=.35, shape=coordinate](DOT){};
    \filldraw  (DOT) circle (2.5pt);
    \node at (0,-.2) {$\scs $};
\end{tikzpicture}}\;
\right)
= 2,
\quad
\deg\left( \; \hackcenter{\begin{tikzpicture}[scale=0.6]
    \draw[thick, ] (0,0) .. controls (0,.75) and (.75,.75) .. (.75,1.5);
    \draw[thick, ] (.75,0) .. controls (.75,.75) and (0,.75) .. (0,1.5);
\end{tikzpicture}} \; \right) = -2.
\]

The diagrams satisfy the relations given below.
\[
\hackcenter{\begin{tikzpicture}[scale=0.8]
    \draw[thick,] (0,0) .. controls ++(0,1) and ++(0,-1) .. (1.2,2);
    \draw[thick, ] (.6,0) .. controls ++(0,.5) and ++(0,-.5) .. (0,1.0);
    \draw[thick, ] (0,1.0) .. controls ++(0,.5) and ++(0,-.5) .. (0.6,2);
    \draw[thick, ] (1.2,0) .. controls ++(0,1) and ++(0,-1) .. (0,2);
\end{tikzpicture}}
\;\; = \;\;
\hackcenter{\begin{tikzpicture}[scale=0.8]
    \draw[thick, ] (0,0) .. controls ++(0,1) and ++(0,-1) .. (1.2,2);
    \draw[thick, ] (.6,0) .. controls ++(0,.5) and ++(0,-.5) .. (1.2,1.0);
    \draw[thick, ] (1.2,1.0) .. controls ++(0,.5) and ++(0,-.5) .. (0.6,2.0);
    \draw[thick, ] (1.2,0) .. controls ++(0,1) and ++(0,-1) .. (0,2.0);
\end{tikzpicture}}
\qquad \qquad
\hackcenter{\begin{tikzpicture}[scale=0.8]
    \draw[thick] (0,0) .. controls ++(0,.5) and ++(0,-.5) .. (.75,1);
    \draw[thick] (.75,0) .. controls ++(0,.5) and ++(0,-.5) .. (0,1);
    \draw[thick,] (0,1 ) .. controls ++(0,.5) and ++(0,-.5) .. (.75,2);
    \draw[thick, ] (.75,1) .. controls ++(0,.5) and ++(0,-.5) .. (0,2);
    \node at (0,-.25) {$\;$};
    \node at (0,2.25) {$\;$};
\end{tikzpicture}}
 \;\; = \;\;
0
\]

\[
\hackcenter{\begin{tikzpicture}[scale=0.8]
    \draw[thick, ] (0,0) .. controls (0,.75) and (.75,.75) .. (.75,1.5)
        node[pos=.25, shape=coordinate](DOT){};
    \draw[thick,] (.75,0) .. controls (.75,.75) and (0,.75) .. (0,1.5);
    \filldraw  (DOT) circle (2.5pt);
\end{tikzpicture}}
\quad - \quad
\hackcenter{\begin{tikzpicture}[scale=0.8]
    \draw[thick, ] (0,0) .. controls (0,.75) and (.75,.75) .. (.75,1.5)
        node[pos=.75, shape=coordinate](DOT){};
    \draw[thick, ] (.75,0) .. controls (.75,.75) and (0,.75) .. (0,1.5);
    \filldraw  (DOT) circle (2.5pt);
\end{tikzpicture}}
\quad = \quad
\hackcenter{\begin{tikzpicture}[scale=0.8]
    \draw[thick, ] (0,0) -- (0,1.5);
        \draw[thick, ] (.75,0) -- (.75,1.5);
\end{tikzpicture}} \;
\qquad = \quad
\hackcenter{\begin{tikzpicture}[scale=0.8]
    \draw[thick,  ] (0,0) .. controls (0,.75) and (.75,.75) .. (.75,1.5);
    \draw[thick,  ] (.75,0) .. controls (.75,.75) and (0,.75) .. (0,1.5)
        node[pos=.75, shape=coordinate](DOT){};
    \filldraw  (DOT) circle (2.75pt);
\end{tikzpicture}}
\quad - \quad
\hackcenter{\begin{tikzpicture}[scale=0.8]
    \draw[thick,  ] (0,0) .. controls (0,.75) and (.75,.75) .. (.75,1.5);
    \draw[thick, ] (.75,0) .. controls (.75,.75) and (0,.75) .. (0,1.5)
        node[pos=.25, shape=coordinate](DOT){};
      \filldraw  (DOT) circle (2.75pt);
\end{tikzpicture}}
\]

It is not hard to show that these relations imply the following.

\begin{lemma}
\label{diff1}
The following equalities hold in the nilHecke algebra.
\[
\hackcenter{\begin{tikzpicture}[scale=0.8]
    \draw[thick] (0,0) .. controls ++(0,.5) and ++(0,-.5) .. (.75,1);
    \draw[thick] (.75,0) .. controls ++(0,.5) and ++(0,-.5) .. (0,1)  node[pos=1, shape=coordinate](DOT1){};
    \draw[thick,] (0,1 ) .. controls ++(0,.5) and ++(0,-.5) .. (.75,2);
    \draw[thick, ] (.75,1) .. controls ++(0,.5) and ++(0,-.5) .. (0,2);
    \node at (0,-.25) {$\;$};
    \node at (0,2.25) {$\;$};
    \filldraw  (DOT1) circle (3.5pt);
    \node at (-.25,1) {$\scs d$};
\end{tikzpicture}}
 \;\; = \;\;
\sum_{A+B=d-1}
\hackcenter{\begin{tikzpicture}[scale=0.8]
    \draw[thick,  ] (0,0) .. controls (0,.75) and (.75,.75) .. (.75,1.5) node[pos=.25, shape=coordinate](DOT1){};
    \draw[thick, ] (.75,0) .. controls (.75,.75) and (0,.75) .. (0,1.5) node[pos=.25, shape=coordinate](DOT2){};
      \filldraw  (DOT1) circle (2.75pt);
      \filldraw  (DOT2) circle (2.75pt);
    \node at (-.25,.5) {$\scs A$};
    \node at (1,.5) {$\scs B$};
\end{tikzpicture}}
\qquad
\hackcenter{\begin{tikzpicture}[scale=0.8]
    \draw[thick] (0,0) .. controls ++(0,.5) and ++(0,-.5) .. (.75,1)  node[pos=1, shape=coordinate](DOT1){};
    \draw[thick] (.75,0) .. controls ++(0,.5) and ++(0,-.5) .. (0,1);
    \draw[thick,] (0,1 ) .. controls ++(0,.5) and ++(0,-.5) .. (.75,2);
    \draw[thick, ] (.75,1) .. controls ++(0,.5) and ++(0,-.5) .. (0,2);
    \node at (0,-.25) {$\;$};
    \node at (0,2.25) {$\;$};
    \filldraw  (DOT1) circle (3.5pt);
    \node at (1.125,1) {$\scs d$};
\end{tikzpicture}}
 \;\; = \;\;
-
\sum_{A+B=d-1}
\hackcenter{\begin{tikzpicture}[scale=0.8]
    \draw[thick,  ] (0,0) .. controls (0,.75) and (.75,.75) .. (.75,1.5) node[pos=.25, shape=coordinate](DOT1){};
    \draw[thick, ] (.75,0) .. controls (.75,.75) and (0,.75) .. (0,1.5) node[pos=.25, shape=coordinate](DOT2){};
      \filldraw  (DOT1) circle (2.75pt);
      \filldraw  (DOT2) circle (2.75pt);
    \node at (-.25,.5) {$\scs A$};
    \node at (1,.5) {$\scs B$};
\end{tikzpicture}}
\]
\end{lemma}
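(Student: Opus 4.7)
The plan is to reduce both identities to iterated applications of the dot--slide relation (4) and the vanishing $\partial_i^2 = 0$ of relation (1). Since only two adjacent strands are involved, I will work algebraically with $\partial_i, x_i, x_{i+1}$ and translate the resulting expression back to pictures at the end.

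First I will establish, by induction on $d$, the auxiliary dot--pushing formula
\[ \partial_i x_i^d \;=\; x_{i+1}^d\,\partial_i \;+\; \sum_{A+B=d-1} x_i^A x_{i+1}^B. \]
The base case $d=1$ is precisely the nilHecke relation $\partial_i x_i = x_{i+1}\partial_i + 1$. For the inductive step I write $\partial_i x_i^{d+1} = (\partial_i x_i^d)\,x_i$, substitute the induction hypothesis, and then push the trailing $x_i$ past the leading $\partial_i$ once more using $\partial_i x_i = x_{i+1}\partial_i + 1$. Collecting like terms produces the $(d+1)$ version of the formula.

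Now I right--multiply this auxiliary formula by $\partial_i$. The leading term $x_{i+1}^d\partial_i^2$ vanishes by $\partial_i^2 = 0$, leaving
\[ \partial_i x_i^d \partial_i \;=\; \sum_{A+B=d-1} x_i^A x_{i+1}^B\,\partial_i, \]
which, read off diagrammatically as ``first dots of degrees $A$ and $B$, then a crossing,'' is exactly the right--hand side of the first equation in the lemma.

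The second identity is proved in exactly the same way, starting instead from the mirror formula
\[ \partial_i x_{i+1}^d \;=\; x_i^d\,\partial_i \;-\; \sum_{A+B=d-1} x_i^A x_{i+1}^B, \]
whose induction now uses the other form of relation (4), namely $\partial_i x_{i+1} = x_i \partial_i - 1$, so a minus sign is introduced at the base case and propagates through the recursion. Right--multiplying by $\partial_i$ and killing the leading $x_i^d \partial_i^2$ term with $\partial_i^2 = 0$ yields the desired $-\sum_{A+B=d-1} x_i^A x_{i+1}^B \partial_i$. No genuine obstacle arises; the only care needed is the bookkeeping of which strand carries the middle dot (strand $i$ in the first identity, strand $i+1$ in the second) and the symmetric role of $A$ and $B$ in the output sum, both of which can be fixed by a consistent reading convention for the pictures at the start of the argument.
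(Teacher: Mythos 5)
Your proof is correct, and it is essentially the standard argument for this fact; the paper itself gives no proof but cites Lauda's Proposition 5.2 plus the nilHecke relation, which is precisely the inductive dot-pushing identity you establish. One small point worth flagging: after right-multiplying your auxiliary formula by $\partial_i$ you arrive at $\sum_{A+B=d-1} x_i^A x_{i+1}^B\,\partial_i$ (dots sitting \emph{above} the crossing), whereas the picture in the statement places the dots \emph{below} the crossing, i.e. $\partial_i\, x_i^A x_{i+1}^B$. These agree because the polynomial $\sum_{A+B=d-1} x_i^A x_{i+1}^B$ is symmetric in $x_i, x_{i+1}$ and therefore commutes with $\partial_i$; you should say this explicitly rather than appealing to a ``reading convention.'' With that one sentence added, the argument is complete.
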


\subsection{Thick calculus}

It is convenient to introduce an enhanced diagrammatic calculus for the nilHecke algebra -- the so-called `thick calculus'.  This notation is helpful for providing a graphical calculus for the category of finitely generated projective $\nh_n$-modules.
We are primarily interested in the category of graded projective modules over the nilHecke algebra.  We can access this category diagrammatically by viewing $\nh_n$ as a one object $\Bbbk$-linear category where the morphisms are the elements of $\nh_n$.  We then pass to the Karoubi envelope allowing us to split idempotents in the diagrammatic language.
We review some of the relevant details here, but the reader is referred to \cite{KLMS} for more details.

\subsubsection{Box diagrams}
For any composition $\mu=(\mu_1,\dots, \mu_n)$ write $\und{x}^{\mu}:= x_1^{\mu_1} x_2^{\mu_2} \dots x_n^{\mu_n}$.  We depict these diagrammatically as
\begin{equation}
  \und{x}^{\mu}  \;\; = \;\;
  \hackcenter{\begin{tikzpicture}
    \draw[thick, ] (-1.2,0) -- (-1.2,1.5);
    \draw[thick, ] (-.6,0) -- (-.6,1.5);
    \draw[thick,  ] (.6,0) -- (.6,1.5);
        \draw[thick,  ] (1.2,0) -- (1.2,1.5);
    \node at (-1.2,.75) {$\bullet$};
    \node at (-1.43,.9) {$\scs \mu_1$};
    \node at (-.6,.75) {$\bullet$};
    \node at (-.83,.9) {$\scs \mu_2$};
    \node at (.6,.75) {$\bullet$};
    \node at (.27,.9) {$\scs \mu_{n-1}$};
    \node at (1.2,.75) {$\bullet$};
    \node at (1.5,.9) {$\scs \mu_n$};
    \node at (0, .35) {$\cdots$};
\end{tikzpicture}}
\;\; = \;\;
  \hackcenter{\begin{tikzpicture}
    \draw[thick, ] (-1.2,0) -- (-1.2,1.5);
    \draw[thick, ] (-.6,0) -- (-.6,1.5);
    \draw[thick,  ] (.6,0) -- (.6,1.5);
        \draw[thick, ] (1.2,0) -- (1.2,1.5);
    \node[draw, fill=white!20 ,rounded corners ] at (0,.75) {$ \qquad \quad \und{x}^{\mu} \quad \qquad$};
    \node at (0, .35) {$\cdots$};
\end{tikzpicture}}
\end{equation}

Since the center of the nilHecke algebra $Z(\nh_n)$ is isomorphic to the ring of symmetric polynomials $\Z[x_1,\dots, x_n]^{S_n}$ it is convenient to set notation for certain bases of symmetric polynomials.
The box labelled ${\sf h}_d$ and ${\sf e}_d$ represents the $d$-th complete and elementary symmetric function in $n$ variables, respectively:
\[
\hackcenter{
\begin{tikzpicture}[scale=.9]
\draw (0,0) -- (0,1.5)[thick, ];
\draw (1,0) -- (1,1.5)[thick, ];
    \filldraw[white] (-.25,.5) -- (1.25,.5) -- (1.25,1) -- (-.25,1) -- (-.25,.5);
    \draw (-.25,.5) -- (1.25,.5) -- (1.25,1) -- (-.25,1) -- (-.25,.5);
        \node at (.5,.75) {$\scs {\sf h}_{d}$};
        \node at (.5,.25) {$\cdots$};
        \node at (.5,-.35) {$\underbrace{\hspace{1cm}}_{n}$};
        \node at (.5,2) {};
\end{tikzpicture}}
=
\sum_{d_1+\cdots+ d_{n}=d}\hackcenter{
\begin{tikzpicture}[scale=.9]
\draw (0,0) -- (0,1.25)[thick, ] node[pos=.5, shape=coordinate](DOT1){};
\draw (0.75,0) -- (0.75,1.25)[thick, ] node[pos=.5, shape=coordinate](DOT3){};
\draw (1.75,0) -- (1.75,1.25)[thick, ] node[pos=.5, shape=coordinate](DOT2){};
    \filldraw  (DOT3) circle (2.5pt);
    \filldraw  (DOT1) circle (2.5pt);
    \filldraw  (DOT2) circle (2.5pt);
        \node at (-.25,.675) {$\scs d_1$};
        \node at (0.5,.675) {$\scs d_2$};
        \node at (2.1,.675) {$\scs d_{n}$};
        \node at (1.25,.675) {$\scs \cdots$};
\end{tikzpicture}}
\qquad \qquad
\hackcenter{
\begin{tikzpicture}[scale=.9]
\draw (0,0) -- (0,1.5)[thick, ];
\draw (1,0) -- (1,1.5)[thick, ];
    \filldraw[white] (-.25,.5) -- (1.25,.5) -- (1.25,1) -- (-.25,1) -- (-.25,.5);
    \draw (-.25,.5) -- (1.25,.5) -- (1.25,1) -- (-.25,1) -- (-.25,.5);
        \node at (.5,.75) {$\scs {\sf e}_{d}$};
        \node at (.5,.25) {$\cdots$};
        \node at (.5,-.35) {$\underbrace{\hspace{1cm}}_{n}$};
        \node at (.5,2) {};
\end{tikzpicture}}
=
\sum_{\overset{d_1+\cdots +d_{n}=d}{ \scs 0 \leq d_i\leq 1}}\hackcenter{
\begin{tikzpicture}[scale=.9]
\draw (0,0) -- (0,1.25)[thick, ] node[pos=.5, shape=coordinate](DOT1){};
\draw (0.75,0) -- (0.75,1.25)[thick, ] node[pos=.5, shape=coordinate](DOT3){};
\draw (1.75,0) -- (1.75,1.25)[thick, ] node[pos=.5, shape=coordinate](DOT2){};
    \filldraw  (DOT3) circle (2.5pt);
    \filldraw  (DOT1) circle (2.5pt);
    \filldraw  (DOT2) circle (2.5pt);
        \node at (-.25,.675) {$\scs d_1$};
        \node at (0.5,.675) {$\scs d_2$};
        \node at (2.1,.675) {$\scs d_{n}$};
        \node at (1.25,.675) {$\scs \cdots$};
\end{tikzpicture}}
\]
More generally, we denote the Schur polynomial associated to a partition $\mu = (\mu_1, \dots, \mu_n)$ by a box labelled $s_{\mu}$.

We will make use of the following crossing diagrams.
\begin{equation}
\label{diagramsCk1}
\hackcenter{
\begin{tikzpicture}[scale=.75]
\draw[thick,] (0,0) to (0,2);
\draw[fill=white!20,] (-.4,1.5) rectangle (.4,.5);
\node at (0,1) {$C_1$};
\end{tikzpicture}}
=
\hackcenter{
\begin{tikzpicture}[scale=.75]
\draw[thick,] (0,0) to (0,2);
\draw[fill=white!20,] (-.4,1.5) rectangle (.4,.5);
\node at (0,1) {$\rotatebox{180}{C}_1$};
\end{tikzpicture}}
=
\hackcenter{
\begin{tikzpicture}[scale=.75]
\draw[thick,] (0,0) to (0,2);
\end{tikzpicture}}
~,
\qquad
\hackcenter{
\begin{tikzpicture}[scale=.75]
\draw[thick,] (-.9,2) -- (-.9,4);
\node at (-.3,2.25) {$\cdots$};
\node at (-.3,3.85) {$\cdots$};
\draw[thick,] (.3,2) -- (.3, 4);
\draw[thick,] (.9,2) -- (.9, 4);
\draw[fill=white!20,] (-1.1,2.5) rectangle (1.1,3.5);
\node at (0,3) {$C_k$};
\end{tikzpicture}}
\; =\;
\hackcenter{
\begin{tikzpicture}[scale=.75]
\draw[thick,] (1.2,0) .. controls ++(0,1) and ++(0,-1) ..  (-1.2,2);
\draw[thick,] (-1.2,0) .. controls ++(0,1) and ++(0,-1) .. (-.6,2);
\draw[thick,] (-.6,0) .. controls ++(0,1) and ++(0,-1) ..  (0,2);
\draw[thick,] (.6,0) .. controls ++(0,1) and ++(0,-1) ..  (1.2,2);
\node at (.6,1.5) {$\cdots$};
\node at (.15,.5) {$\cdots$};
\end{tikzpicture}}
~,
\quad
\hackcenter{
\begin{tikzpicture}[scale=.75]
\draw[thick,] (-.9,2) -- (-.9,4);
\node at (-.3,2.25) {$\cdots$};
\node at (-.3,3.85) {$\cdots$};
\draw[thick,] (.3,2) -- (.3, 4);
\draw[thick,] (.9,2) -- (.9, 4);
\draw[fill=white!20,] (-1.1,2.5) rectangle (1.1,3.5);
\node at (0,3) {$\rotatebox{180}{C}_k$};
\end{tikzpicture}}
=
\hackcenter{
\begin{tikzpicture}[scale=.75]
\draw[thick,] (-1.2,0) .. controls ++(0,1) and ++(0,-1) ..  (1.2,2);
\draw[thick,] (1.2,0) .. controls ++(0,1) and ++(0,-1) .. (.6,2);
\draw[thick,] (.6,0) .. controls ++(0,1) and ++(0,-1) ..  (0,2);
\draw[thick,] (-.6,0) .. controls ++(0,1) and ++(0,-1) ..  (-1.2,2);
\node at (-.6,1.5) {$\cdots$};
\node at (-.15,.5) {$\cdots$};
\end{tikzpicture}}
\quad
\hbox{for $k>1$,}
\end{equation}

\begin{equation}
\label{diagramsCk2}
\hackcenter{
\begin{tikzpicture}[scale=.75]
\draw[thick,] (0,0) to (0,2);
\draw[fill=white!20,] (-.4,1.5) rectangle (.4,.5);
\node at (0,1) {$D_1$};
\end{tikzpicture}}
=
\hackcenter{
\begin{tikzpicture}[scale=.75]
\draw[thick,] (0,0) to (0,2);
\end{tikzpicture}}
~,
\qquad
\hackcenter{
\begin{tikzpicture}[scale=.75]
\draw[thick,] (-.9,2) -- (-.9,4);
\node at (-.3,2.25) {$\cdots$};
\node at (-.3,3.85) {$\cdots$};
\draw[thick,] (.3,2) -- (.3, 4);
\draw[thick,] (.9,2) -- (.9, 4);
\draw[fill=white!20,] (-1.1,2.5) rectangle (1.1,3.5);
\node at (0,3) {$D_k$};
\end{tikzpicture}}
\;\; = \;\;
\hackcenter{
\begin{tikzpicture}[scale=.75]
\draw[thick,] (-.9,2) -- (-.9,4);
\node at (-.3,2.25) {$\cdots$};
\node at (-.3,4) {$\cdots$};
\draw[thick,] (.3,2) -- (.3, 4);
\draw[thick,] (.9,2) -- (.9, 4);
\draw[fill=white!20,] (-1.1,3.05) rectangle (.5,3.75);
\node at (-.3,3.4) {$D_{k-1}$};
\draw[fill=white!20,] (-1.1,2.95) rectangle (1.1,2.25);
\node at (0,2.6) {$\rotatebox{180}{C}_k$};
\end{tikzpicture}}
\quad
\hbox{for $k>1$,}
\end{equation}
which are used to define a minimal idempotent $e_k$ in $\nh_k$ via
\begin{equation}
\label{diagramsCk3}
\hackcenter{
\begin{tikzpicture}[scale=.75]
\draw[thick,] (-.9,2) -- (-.9,4);
\node at (-.3,2.25) {$\cdots$};
\node at (-.3,3.85) {$\cdots$};
\draw[thick,] (.3,2) -- (.3, 4);
\draw[thick,] (.9,2) -- (.9, 4);
\draw[fill=white!20,] (-1.1,2.5) rectangle (1.1,3.5);
\node at (0,3) {$e_k$};
\end{tikzpicture}}
=
\hackcenter{
\begin{tikzpicture}[scale=.75]
\draw[thick,] (-.9,2) -- (-.9,4);
\node at (-.3,2.15) {$\cdots$};
\node at (-.3,3.85) {$\cdots$};
\draw[thick,] (.3,2) -- (.3, 4);
\draw[thick,] (.9,2) -- (.9, 4);
\draw[fill=white!20,] (-1.1,2.4) rectangle (1.1,3.4);
\node at (0,3) {$D_k$};
\node at (-1.4,3.65) {\tiny $k-1$};
    \filldraw[black]  (-0.9,3.65) circle (2pt);
    \filldraw[black]  (0.3,3.65) circle (2pt);
\end{tikzpicture}}
\end{equation}
Note that for any reduced expression $w=s_{i_1}s_{i_2} \dots s_{i_{\ell}} \in S_k$ we write
\[
\partial_w := \partial_{i_1} \partial_{i_2} \dots \partial_{i_{\ell}}.
\]
The relations in the nilHecke algebra imply that $\partial_{w}$ is independent of the reduced expression for $w$.  In the language the element $D_k$ corresponds to $\partial_{w_0}$ where $w_0$ denotes the longest word in $S_k$.
Hence, $D_k e_k = \partial_{w_0}(x_1^{k-1}x_2^{k-2} \dots x_k^0) D_k = D_k$, which implies the idempotence of $e_k$.

\subsubsection{Thick calculus definitions}

We introduce a thick line carrying a label corresponding to the idempotent $e_k$. Then a for a map between thick labelled strands to be well defined it must be invariant under pre and post composing with the relevant idempotents.  In particular, the endomorphisms of a thick strand are given by multiplication by symmetric functions $x\in \sym_k$ since these commute with the idempotent $e_k$:
\[
\hackcenter{
\begin{tikzpicture} [scale=.75]
\draw[thick,  double, ](0,1.15) to(0,-1.15);
 \node at (0,1.35) {$\scs k$};
\node at (0,-1.35) {$\scs k $};
\end{tikzpicture}}
\;\; := \;\;
\hackcenter{\begin{tikzpicture} [scale=0.75]
\draw[thick,  ] (-.6,-.5) to (-.6,1.85);
\draw[thick,  ] (.6,-.5) to (.6,1.85);
\draw[thick,  ] (-1.2,-.5) to (-1.2,1.85);
\draw[thick,  ] (1.2,-.5) to (1.2,1.85);
\draw[fill=white!20,] (-1.3,.1) rectangle (1.3,1.25);
\node at (0,-.2) {$\dots$};
\node at (0,1.65) {$\dots$};
 \node at (0,.75) {$e_k$};
\end{tikzpicture}}~,
\qquad \qquad \quad
\hackcenter{
\begin{tikzpicture} [scale=.75]
\draw[thick,  double, ](0,1.15) to(0,-1.15);
\filldraw  (0,0) circle (3.0pt);
\node at (.35,0) {$x$};
 \node at (0,1.35) {$\scs k$};
\node at (0,-1.35) {$\scs k $};
\end{tikzpicture}} \; \; \text{for $x \in \sym_k$, }
\qquad \qquad \quad
\hackcenter{
\begin{tikzpicture} [scale=.75]
\draw[thick,  double, ](0,1.15) to(0,-1.15);
\filldraw  (0,.5) circle (3.0pt);
\node at (.35,.55) {$x$};
\filldraw  (0,-.6) circle (3.0pt);
\node at (.35,-.5) {$y$};
 \node at (0,1.35) {$\scs k$};
\node at (0,-1.35) {$\scs k $};
\end{tikzpicture}}
\;\; = \;\;
\hackcenter{
\begin{tikzpicture} [scale=.75]
\draw[thick,  double, ](0,1.15) to(0,-1.15);
\filldraw  (0,0) circle (3.0pt);
\node at (.45,.05) {$xy$};
 \node at (0,1.35) {$\scs k$};
\node at (0,-1.35) {$\scs k $};
\end{tikzpicture}}
\]
where the product $xy$ is well defined since $xe_kye_k = xy e_k$.  We have splitter maps
\[
\hackcenter{ \begin{tikzpicture} [scale=.75]
\draw[thick,  double, ](0,2).. controls ++(0,-.75) and ++(0,.3) ..(.6,1);
\draw[thick,  double,  ](1.2,2).. controls ++(0,-.75) and ++(0,.3) ..(.6,1) to (.6,0);;
 \node at (0,2.2) {$\scs a$};
 \node at (1.2,2.2) {$\scs b$};
\node at (.6,-.2) {$\scs a+b$};
\end{tikzpicture}}
\;\; := \quad
\hackcenter{\begin{tikzpicture} [scale=0.75]
\draw[thick,  ] (.4,-1.2) .. controls ++(0,.5) and ++(0,-.5) ..(-1.4,.1) to (-1.4,1.85);
\draw[thick,  ] (1.4,-1.2) .. controls ++(0,.6) and ++(0,-.5) .. (-.4,.1) to (-.4,1.85);
\draw[thick,  ] (-.4,-1.2) .. controls ++(0,.5) and ++(0,-.5) ..(1.4,.1) to (1.4,1.85);
\draw[thick,  ] (-1.4,-1.2) .. controls ++(0,.6) and ++(0,-.5) ..(.4,.1) to (.4,1.85);
\draw[fill=white!20,] (-1.6,.3) rectangle (-.2,1.25);
\draw[fill=white!20,] (1.6,.3) rectangle (.2,1.25);
\node at (-.9,1.5) {$\dots$};
\node at (.9,1.5) {$\dots$};
\node at (-.8,-1.1) {$\dots$};
\node at (.9,-1.1) {$\dots$};
 \node at (-.9,.75) {$e_a$};
 \node at (.9,.75) {$e_b$};
\end{tikzpicture}}
\qquad \qquad
\hackcenter{ \begin{tikzpicture} [scale=.75]
\draw[thick,  double  ](0,-2).. controls ++(0,.75) and ++(0,-.3) ..(.6,-1);
\draw[thick,  double, ](1.2,-2).. controls ++(0,.75) and ++(0,-.3) ..(.6,-1) to (.6,0);;
 \node at (0,-2.2) {$\scs b$};
 \node at (1.2,-2.2) {$\scs a$};
\node at (.6,.2) {$\scs a+b$};
\end{tikzpicture}}
\;\; := \quad
\hackcenter{\begin{tikzpicture} [scale=0.75]
\draw[thick,  ] (-.6,-.5) to (-.6,1.85);
\draw[thick,  ] (.6,-.5) to (.6,1.85);
\draw[thick,  ] (-1.2,-.5) to (-1.2,1.85);
\draw[thick,  ] (1.2,-.5) to (1.2,1.85);
\draw[fill=white!20,] (-1.3,.1) rectangle (1.3,1.25);
\node at (0,-.2) {$\dots$};
\node at (0,1.65) {$\dots$};
 \node at (0,.75) {$e_{a+b}$};
\end{tikzpicture}}
\]
These maps satisfy (co)associativity relations making it possible to define
\[
\hackcenter{
\begin{tikzpicture} [scale=0.75]
\draw[thick,  double] (0,1.15) to (0,1.85);
\node at (0,3.2) {$\dots$};
\draw[thick,  ] (0,1.85) .. controls ++(.25,.1) and ++(0,-.5) .. (.6,3.5);
\draw[thick,  ] (0,1.85) .. controls ++(-.25,.1) and ++(0,-.5) .. (-.6,3.5);
\draw[thick,  ] (0,1.85) .. controls ++(.25,.1) and ++(0,-.7) .. (1.2,3.5);
\draw[thick,  ] (0,1.85) .. controls ++(-.25,.1) and ++(0,-.7) .. (-1.2,3.5);
%
  \node at (0,.95) {$\scs k$};
\end{tikzpicture}}
\;\; := \quad
\hackcenter{\begin{tikzpicture} [scale=0.75]
\draw[thick,  ] (-.6,-.5) to (-.6,1.85);
\draw[thick,  ] (.6,-.5) to (.6,1.85);
\draw[thick,  ] (-1.2,-.5) to (-1.2,1.85);
\draw[thick,  ] (1.2,-.5) to (1.2,1.85);
\draw[fill=white!20,] (-1.3,.1) rectangle (1.3,1.25);
\node at (0,-.2) {$\dots$};
\node at (0,1.65) {$\dots$};
 \node at (0,.75) {$D_k$};
\end{tikzpicture}}
\qquad \qquad \quad
\hackcenter{
\begin{tikzpicture} [scale=0.75]
\draw[thick,  ] (-.6,-.5) .. controls ++(0,.5) and ++(-.4,-.1) .. (0,1.15);
\draw[thick,  ] (.6,-.5) .. controls ++(0,.5) and ++(.4,-.1) .. (0,1.15);
\draw[thick,  ] (-1.2,-.5) .. controls ++(0,.7) and ++(-.4,-.1) .. (0,1.15);
\draw[thick,  ] (1.2,-.5) .. controls ++(0,.7) and ++(.4,-.1) .. (0,1.15);
\draw[thick,  double, ] (0,1.15) to (0,1.85);
\node at (0,0) {$\dots$};
  \node at (0,2.05) {$\scs k$};
\end{tikzpicture}} \;\; := \quad
\hackcenter{\begin{tikzpicture} [scale=0.75]
\draw[thick,  ] (-.6,-.5) to (-.6,1.85);
\draw[thick,  ] (.6,-.5) to (.6,1.85);
\draw[thick,  ] (-1.2,-.5) to (-1.2,1.85);
\draw[thick,  ] (1.2,-.5) to (1.2,1.85);
\draw[fill=white!20,] (-1.3,.1) rectangle (1.3,1.25);
\node at (0,-.2) {$\dots$};
\node at (0,1.65) {$\dots$};
 \node at (0,.75) {$e_k$};
\end{tikzpicture}}
\]
unambiguously.  For any Schur polynomial $s_{\mu}$ corresponding to the partition $\mu = (\mu_1, \dots , \mu_k)$  one can show that
\[
\hackcenter{
\begin{tikzpicture} [scale=.75]
\draw[thick,  double, ](0,1.15) to(0,-1.15);
\filldraw  (0,0) circle (3.0pt);
\node at (.5,.05) {$s_{\mu}$};
 \node at (0,1.35) {$\scs k$};
\node at (0,-1.35) {$\scs k $};
\end{tikzpicture}}
\; \; =\;\;
\hackcenter{
\begin{tikzpicture} [scale=0.65]
\draw[thick,  ] (0,-1.15).. controls++(-.4,.1)and ++(0,-.5).. (-.6,0) .. controls ++(0,.5) and ++(-.4,-.1) .. (0,1.15);
\draw[thick,  ] (0,-1.15).. controls ++(.4,.1) and ++(0,-.5).. (.6,0) .. controls ++(0,.5) and ++(.4,-.1) .. (0,1.15);
\draw[thick,  ]  (0,-1.15).. controls ++(-.4,.1)and ++(0,-.7) .. (-1.2,0) .. controls ++(0,.7) and ++(-.4,-.1) .. (0,1.15);
\draw[thick,  ] (0,-1.15) .. controls ++(.4,.1) and ++(0,-.7) .. (1.2,0) .. controls ++(0,.7) and ++(.4,-.1) .. (0,1.15);
\node[draw, fill=white!20 ,rounded corners ] at (0,.3 ) {$ \qquad \und{x}^{\mu+\delta}\qquad $};
\draw[thick,  double, ] (0,1.15) to (0,1.85);
\draw[thick,  double ] (0,-1.15) to (0,-1.65);
\node at (0,-.4) {$\dots$};
  \node at (0,2.05) {$\scs k$};
\end{tikzpicture}}
\]
where $\mu+\delta$ is the partition $(\mu_1+k-1, \mu_2+k-2, \dots, \mu_k+ 0)$.  Symmetric functions can be slid through splitters via the relations
\begin{equation} \label{eq:splitters}
\hackcenter{ \begin{tikzpicture} [scale=.75]
\draw[thick,  double, ](0,2).. controls ++(0,-.75) and ++(0,.3) ..(.6,1);
\draw[thick,  double,  ](1.2,2).. controls ++(0,-.75) and ++(0,.3) ..(.6,1) to (.6,0);;
 \node at (0,2.2) {$\scs b$};
 \node at (1.2,2.2) {$\scs a$};
\node at (.6,-.2) {$\scs a+b$};
\filldraw  (.6,.55) circle (3.0pt);
    \node at (.15,.55) {${\sf e}_{j}$};
\end{tikzpicture}}
\;\; = \;\; \sum_{x+y=j}
\hackcenter{ \begin{tikzpicture} [scale=.75]
\draw[thick,  double, ](0,2).. controls ++(0,-.75) and ++(0,.3) ..(.6,.55);
\draw[thick,  double,  ](1.2,2).. controls ++(0,-.75) and ++(0,.3) ..(.6,.55) to (.6,0);;
 \node at (0,2.2) {$\scs b$};
 \node at (1.2,2.2) {$\scs a$};
\node at (.6,-.2) {$\scs a+b$};
\filldraw  (.15,1.35 ) circle (3.0pt);
    \node at (-.3,1.35 ) {${\sf e}_{x}$};
\filldraw  (1.05,1.35 ) circle (3.0pt);
    \node at (1.5,1.35) {${\sf e}_{y}$};
\end{tikzpicture}}
\qquad \qquad
\hackcenter{ \begin{tikzpicture} [scale=.75]
\draw[thick,  double  ](0,-2).. controls ++(0,.75) and ++(0,-.3) ..(.6,-1);
\draw[thick,  double, ](1.2,-2).. controls ++(0,.75) and ++(0,-.3) ..(.6,-1) to (.6,0);;
 \node at (0,-2.2) {$\scs b$};
 \node at (1.2,-2.2) {$\scs a$};
\node at (.6,.2) {$\scs a+b$};
\filldraw  ( .6,-.65 ) circle (3.0pt);
    \node at ( .15,-.65 ) {${\sf e}_{j}$};
\end{tikzpicture}}
\;\; = \;\; \sum_{x+y=j}
\hackcenter{ \begin{tikzpicture} [scale=.75]
\draw[thick,  double,  ](0,-2).. controls ++(0,.75) and ++(0,-.3) ..(.6,-.55);
\draw[thick,  double,   ](1.2,-2).. controls ++(0,.75) and ++(0,-.3) ..(.6,-.55) to (.6,0);;
 \node at (0,-2.2) {$\scs b$};
 \node at (1.2,-2.2) {$\scs a$};
\node at (.6,.2) {$\scs a+b$};
\filldraw  (.1,-1.35 ) circle (3.0pt);
    \node at (-.3,-1.35 ) {${\sf e}_{x}$};
\filldraw   (1.1,-1.35 ) circle (3.0pt);
    \node at  (1.5,-1.35 ) {${\sf e}_{y}$};
\end{tikzpicture}}
\end{equation}
and more generally for any $x\in \sym_k$
\begin{equation}\label{eq:elem_slide}
\hackcenter{
\begin{tikzpicture} [scale=0.75]
\draw[thick,  double] (0,1.15) to (0,1.85);
\node at (0,3.2) {$\dots$};
\draw[thick,  ] (0,1.85) .. controls ++(.25,.1) and ++(0,-.5) .. (.6,3.5);
\draw[thick,  ] (0,1.85) .. controls ++(-.25,.1) and ++(0,-.5) .. (-.6,3.5);
\draw[thick,  ] (0,1.85) .. controls ++(.25,.1) and ++(0,-.7) .. (1.2,3.5);
\draw[thick,  ] (0,1.85) .. controls ++(-.25,.1) and ++(0,-.7) .. (-1.2,3.5);
\node[draw, fill=white!20 ,rounded corners ] at (0,2.7) {$ \qquad x\qquad $};
  \node at (0,.95) {$\scs k$};
\end{tikzpicture}}
\;\; = \;\;
\hackcenter{
\begin{tikzpicture} [scale=0.75]
\draw[thick,  double] (0,1.15) to (0,2.3);
\node at (0,3.2) {$\dots$};
\draw[thick,  ] (0,2.3) .. controls ++(.25,.1) and ++(0,-.5) .. (.6,3.5);
\draw[thick,  ] (0,2.3) .. controls ++(-.25,.1) and ++(0,-.5) .. (-.6,3.5);
\draw[thick,  ] (0,2.3) .. controls ++(.25,.1) and ++(0,-.7) .. (1.2,3.5);
\draw[thick,  ] (0,2.3) .. controls ++(-.25,.1) and ++(0,-.7) .. (-1.2,3.5);
\filldraw   (0,1.75) circle (3.0pt);
    \node at  (-.4,1.75) {${\sf e}_{x}$};
  \node at (0,.95) {$\scs k$};
\end{tikzpicture}}
\qquad \qquad
\hackcenter{
\begin{tikzpicture} [scale=0.75]
\draw[thick,  ] (-.6,-.5) .. controls ++(0,.5) and ++(-.4,-.1) .. (0,1.15);
\draw[thick,  ] (.6,-.5) .. controls ++(0,.5) and ++(.4,-.1) .. (0,1.15);
\draw[thick,  ] (-1.2,-.5) .. controls ++(0,.7) and ++(-.4,-.1) .. (0,1.15);
\draw[thick,  ] (1.2,-.5) .. controls ++(0,.7) and ++(.4,-.1) .. (0,1.15);
\node[draw, fill=white!20 ,rounded corners ] at (0,.3 ) {$ \qquad x \qquad $};
\draw[thick,  double, ] (0,1.15) to (0,1.85);
\node at (0,-.3) {$\dots$};
  \node at (0,2.05) {$\scs k$};
\end{tikzpicture}}
\;\; = \;\;
\hackcenter{
\begin{tikzpicture} [scale=0.75]
\draw[thick,  ] (-.6,-.5) .. controls ++(0,.5) and ++(-.4,-.1) .. (0,.7);
\draw[thick,  ] (.6,-.5) .. controls ++(0,.5) and ++(.4,-.1) .. (0,.7);
\draw[thick,  ] (-1.2,-.5) .. controls ++(0,.7) and ++(-.4,-.1) .. (0,.7);
\draw[thick,  ] (1.2,-.5) .. controls ++(0,.7) and ++(.4,-.1) .. (0,.7);
\draw[thick,  double, ] (0,.7) to (0,1.85);
\filldraw   (0,1.15) circle (3.0pt);
    \node at  (-.4,1.15) {${\sf e}_{x}$};
\node at (0,-.3) {$\dots$};
  \node at (0,2.05) {$\scs k$};
\end{tikzpicture}}
\end{equation}

\subsubsection{Primitive idempotents}

The set of sequences
\begin{equation}
  \Sq(n) := \{
  \und{\ell} = (\ell_1, \dots, \ell_{n-1}) \mid 0 \leq \ell_{\nu} \leq \nu, \;\; \nu =  1,2, \dots n-1
  \}
\end{equation}
has size $|\Sq(n)|=n!$.  Let $|\und{\ell}|=\sum_{\nu} \ell_{\nu}$, and set $\hat{\ell_j}=j-\ell_j$.
Define a composition with $n$-parts by
\begin{equation}
  \hat{\und{\ell}}=(0, \hat{\ell}_1,\dots,\hat{\ell}_{n-1})=  \left(0, 1-\ell_1, 2-\ell_2, \cdots,   n-1-\ell_{n-1} \right).
\end{equation}

Let ${\sf e}_r^{(a)}$ denote the $r$-th elementary symmetric polynomial in $a$ variables.   The {\em standard elementary monomials} are given  by
\begin{equation}
  {\sf e}_{\und{\ell}} := {\sf e}_{\ell_1}^{(1)}{\sf e}_{\ell_2}^{(2)} \dots {\sf e}_{\ell_{a-1}}^{(a-1)}.
\end{equation}
We depict these diagrammatically as
\begin{equation}
  {\sf e}_{\und{\ell}} \;\; = \;\;
  \hackcenter{\begin{tikzpicture}
    \draw[thick, ] (-1.2,-.5) -- (-1.2,2);
    \draw[thick, ] (-.6,-.5) -- (-.6,2);
    \draw[thick,  ] (.3,-.5) -- (.3,2);
        \draw[thick, ] (.9,-.5) -- (.9,2);
    \node at (-1.2,1.75) {$\bullet$};
    \node at (-1.4,1.8) {$\scs \ell_1$};
    \node[draw, fill=white!20 ,rounded corners ] at (-.9,1.3) {$ \; {\sf e}_{\ell_2} \; $};
    \node[draw, fill=white!20 ,rounded corners ] at (-.45,0) {$ \quad \;{\sf e}_{\ell_{n-1}} \quad\; $};
    \node at (-.4, .8) {$\vdots$};
    \node at (-.1, 1.8) {$\cdots$};
\end{tikzpicture}}
\;\; =: \;\;
  \hackcenter{\begin{tikzpicture}
    \draw[thick, ] (-1.2,-.5) -- (-1.2,2);
    \draw[thick, ] (-.6,-.5) -- (-.6,2);
    \draw[thick,  ] (.3,-.5) -- (.3,2);
        \draw[thick,  ] (.9,-.5) -- (.9,2);
    \node[draw, fill=white!20 ,rounded corners ] at (-.15,.75) {$ \qquad \;\;\;{\sf e}_{\und{\ell}} \qquad\;\;\; $};
    \node at (-.1, 1.8) {$\cdots$};
\end{tikzpicture}}
\end{equation}
These polynomials can be used to provide a complete set of primitive orthogonal idempotents decomposing the identity of $\nh_n$.
\begin{lemma}[\cite{KLMS} Proposition 2.5.3] \label{thm_Ea}
\begin{align}
  \hackcenter{\begin{tikzpicture}
    \draw[thick, ] (-1.2,0) -- (-1.2,1.5);
    \draw[thick, ] (-.6,0) -- (-.6,1.5);
    \draw[thick,  ] (.6,0) -- (.6,1.5);
        \draw[thick, ] (1.2,0) -- (1.2,1.5);
    \node at (0, .35) {$\cdots$};
\end{tikzpicture}}
\;\;
&=
\;\;
\sum_{\und{\ell} \in \Sq(n)}(-1)^{|\hat{\und{\ell}}|}\;\;
\hackcenter{
\begin{tikzpicture} [scale=0.75]
\draw[thick,  ] (-.6,-.5) .. controls ++(0,.5) and ++(-.4,-.1) .. (0,1.15);
\draw[thick,  ] (.6,-.5) .. controls ++(0,.5) and ++(.4,-.1) .. (0,1.15);
\draw[thick,  ] (-1.2,-.5) .. controls ++(0,.7) and ++(-.4,-.1) .. (0,1.15);
\draw[thick,  ] (1.2,-.5) .. controls ++(0,.7) and ++(.4,-.1) .. (0,1.15);
\draw[thick,  double] (0,1.15) to (0,1.85);
\draw[thick,  ] (0,1.85) .. controls ++(.25,.1) and ++(0,-.5) .. (.6,3.5);
\draw[thick,  ] (0,1.85) .. controls ++(-.25,.1) and ++(0,-.5) .. (-.6,3.5);
\draw[thick,  ] (0,1.85) .. controls ++(.25,.1) and ++(0,-.7) .. (1.2,3.5);
\draw[thick,  ] (0,1.85) .. controls ++(-.25,.1) and ++(0,-.7) .. (-1.2,3.5);
\node[draw, fill=white!20 ,rounded corners ] at (0,2.7) {$ \qquad {\sf e}_{\und{\ell}} \qquad $};
\node[draw, fill=white!20 ,rounded corners ] at (0,.35) {$ \qquad   \und{x}^{\hat{\und{\ell}}} \qquad $};
%
\end{tikzpicture}}
\\
  \hackcenter{
\begin{tikzpicture} [scale=0.75]
\draw[thick,  ] (0,-1.15).. controls++(-.4,.1)and ++(0,-.5).. (-.6,0) .. controls ++(0,.5) and ++(-.4,-.1) .. (0,1.15);
\draw[thick,  ] (0,-1.15).. controls ++(.4,.1) and ++(0,-.5).. (.6,0) .. controls ++(0,.5) and ++(.4,-.1) .. (0,1.15);
\draw[thick,  ]  (0,-1.15).. controls ++(-.4,.1)and ++(0,-.7) .. (-1.2,0) .. controls ++(0,.7) and ++(-.4,-.1) .. (0,1.15);
\draw[thick,  ] (0,-1.15) .. controls ++(.4,.1) and ++(0,-.7) .. (1.2,0) .. controls ++(0,.7) and ++(.4,-.1) .. (0,1.15);
\node[draw, fill=white!20 ,rounded corners ] at (0,.55 ) {$ \qquad \scs  {\sf e}_{\und{\ell'}}\qquad $};
\node[draw, fill=white!20 ,rounded corners ] at (0,-.5 ) {$ \qquad\; \scs  \und{x}^{\hat{\und{\ell}}}\qquad $};
\draw[thick,  double, ] (0,1.15) to (0,1.85);
\draw[thick,  double ] (0,-1.15) to (0,-1.65);
\node at (0,0) {$\dots$};
  \node at (0,2.05) {$\scs n$};\;
\end{tikzpicture}}
\;\; &= \;\; \delta_{\und{\ell},\und{\ell'}}
\hackcenter{
\begin{tikzpicture} [scale=0.75]
\draw[thick,  double, ] (0,-1.65) to (0,1.85);
  \node at (0,2.05) {$\scs n$};\;
\end{tikzpicture}}
\end{align}
\end{lemma}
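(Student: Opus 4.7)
The plan is to reduce both assertions to a classical duality between two bases of the polynomial ring $\mathrm{Pol}_n := \Bbbk[x_1, \ldots, x_n]$, viewed as a free module of rank $n!$ over the symmetric polynomials $\sym_n$. The relevant pairing is $\langle f, g \rangle := \partial_{w_0}(fg)$, where $\partial_{w_0}$ is the divided difference operator associated to the longest element $w_0 \in S_n$. The key classical input is that $\{\varepsilon_{\und{\ell}}\}_{\und{\ell} \in \Sq(n)}$ and $\{\und{x}^{\hat{\und{\ell}}}\}_{\und{\ell} \in \Sq(n)}$ are dual bases up to the sign $(-1)^{|\hat{\und{\ell}}|}$, since the standard elementary monomials are essentially Schubert polynomials for $S_n$, for which this duality is well known.

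First I would prove the orthogonality identity. Using the definition of the thick strand as the idempotent $e_n$ from \eqref{diagramsCk3} together with the splitter and sliding relations \eqref{eq:splitters} and \eqref{eq:elem_slide}, the left-hand diagram is equivalent, inside $\nh_n$, to $e_n \cdot \varepsilon_{\und{\ell'}} \cdot \und{x}^{\hat{\und{\ell}}} \cdot e_n$. A standard identity for the nilHecke algebra (reflecting the structure of $\nh_n$ as an $\sym_n$-bimodule) tells us that for any polynomial $f \in \mathrm{Pol}_n$, $e_n \cdot f \cdot e_n = \partial_{w_0}(f) \cdot e_n$. Applied to $f = \varepsilon_{\und{\ell'}} \cdot \und{x}^{\hat{\und{\ell}}}$ and combined with the classical duality above, this produces $\delta_{\und{\ell}, \und{\ell'}}$ times the thick identity strand.

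Second, for the decomposition of the unit, I would combine the orthogonality just established with a completeness check. The summands on the right-hand side are pairwise orthogonal idempotents by the second identity, and there are $|\Sq(n)| = n!$ of them, matching the number of summands required to decompose $\1$ into primitive orthogonal idempotents, since $\nh_n$ is Morita equivalent to $\sym_n$ via $\nh_n \cong \mathrm{Mat}_{n!}(\sym_n)$. To upgrade orthogonality to a full decomposition I would verify the identity on the faithful polynomial representation of $\nh_n$: for any monomial $\und{x}^{\mu}$, writing $\und{x}^{\mu} = \sum_{\und{\ell}} c_{\und{\ell}}(\mu) \, \varepsilon_{\und{\ell}}$ with $c_{\und{\ell}}(\mu) \in \sym_n$, the duality identifies $c_{\und{\ell}}(\mu) = (-1)^{|\hat{\und{\ell}}|} \partial_{w_0}\bigl(\und{x}^{\hat{\und{\ell}}} \cdot \und{x}^{\mu}\bigr)$, which is exactly the coefficient produced by acting on $\und{x}^{\mu}$ by the sum on the right-hand side.

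The main obstacle I anticipate is sign bookkeeping: the factor $(-1)^{|\hat{\und{\ell}}|}$ must be tracked through the splitter sliding relations and through the conversion between the diagrammatic elements $e_n$, $D_n$, $C_k$ of \eqref{diagramsCk1}--\eqref{diagramsCk3} and algebraic identities in $\nh_n$. The underlying symmetric function duality is elementary once one recognizes that the standard elementary monomials coincide with Schubert polynomials for $S_n$, so the substance of the argument lies in the thick-calculus translation rather than in the symmetric function theory itself.
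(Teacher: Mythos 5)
Since the paper imports this lemma verbatim from \cite[Proposition~2.5.3]{KLMS} and does not supply a proof of its own, there is no in-paper argument to compare against; you are effectively reproving the cited result. Your high-level strategy --- reduce both identities to the classical duality of $\{\varepsilon_{\und{\ell}}\}$ and $\{\und{x}^{\hat{\und{\ell}}}\}$ as $\Lambda_n$-bases of the polynomial ring under $(f,g)\mapsto\partial_{w_0}(fg)$, then translate back through the faithful polynomial representation --- is the same route that KLMS use and that the paper's Remark~\ref{rem:dual-bases} points to.

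There is a convention slip that needs fixing. In the paper's thick calculus the thin-to-thick merger is $e_n$ and the thick-to-thin splitter is $D_n$, so the orthogonality diagram translates to the element $e_n\,\varepsilon_{\und{\ell'}}\,\und{x}^{\hat{\und{\ell}}}\,D_n \in \nh_n$, \emph{not} $e_n\,\varepsilon_{\und{\ell'}}\,\und{x}^{\hat{\und{\ell}}}\,e_n$. The correct algebraic lemma to invoke is $D_n\, f\, D_n = \partial_{w_0}(f)\,D_n$ for any $f$ (immediate from the polynomial representation, since $\partial_{w_0}(f\,\partial_{w_0}(g))=\partial_{w_0}(f)\,\partial_{w_0}(g)$), from which $e_n\, f\, D_n = \partial_{w_0}(f)\,e_n$ follows. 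Your quoted identity $e_n\, f\, e_n = \partial_{w_0}(f)\,e_n$ is not literally true: with $e_n=\und{x}^{\delta}D_n$, $\delta=(n-1,\dots,1,0)$, one computes $e_n\, f\, e_n = \partial_{w_0}(\und{x}^{\delta}f)\,e_n$ --- the $\und{x}^{\delta}$ does not disappear (for $n=2$, $e_2\,x_1\,e_2=(x_1+x_2)e_2$, not $e_2$). Your two slips, misreading the splitter and dropping $\und{x}^\delta$, happen to cancel, so the final formula you reach is right, but both intermediate steps need to be repaired.

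Your comment that the duality only holds up to the sign $(-1)^{|\hat{\und{\ell}}|}$ is in fact a genuine subtlety that Remark~\ref{rem:dual-bases} glosses over: for $n=2$, $\partial_1(\varepsilon_{(0)}\und{x}^{\hat{(0)}})=\partial_1(x_2)=-1$, so the pairing is $(-1)^{|\hat{\und{\ell}}|}\delta_{\und{\ell},\und{\ell'}}$ rather than $\delta_{\und{\ell},\und{\ell'}}$. Reconciling this with the fact that the lemma's first display carries the sign while its second does not is exactly the bookkeeping you identify as the main obstacle; the write-up should make this reconciliation explicit rather than gesture at it, since as literally written your orthogonality step would leave an unwanted $(-1)^{|\hat{\und{\ell}}|}$ hanging.

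The completeness step --- orthogonality, the count of primitive idempotents via $\nh_n\cong\operatorname{Mat}_{n!}(\sym_n)$, and a check on the faithful polynomial representation --- is a sound way to finish once the translation issues above are addressed.
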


\begin{remark} \label{rem:dual-bases}
The polynomial ring $\Z[x_1,\dots,x_n]$ is a free module of rank $n!$ over the ring $\Lambda_n=\Z[x_1,\dots,x_n]^{S_n}$ of symmetric functions \cite[Proposition 2.5.5]{Man}.  There is a $\Lambda_n$ bilinear form on $\Z[x_1, \dots, x_n]$ defined by defined by $(x,y)=\partial_{w_0}(xy)$, for $w_0$ the longest element in the symmetric group.
The significance of the sets $\{{\sf e}_{\und{\ell}} \mid \und{\ell} \in \Sq(n) \}$ and $\{ \und{x}^{\hat{\und{\ell}}} \mid \und{\ell} \in \Sq(n)\}$ are that they are dual bases for the polynomial ring $\Z[x_1,\dots,x_n]$ as a $\Lambda_n$-module with respect to this form.  Another dual set of bases are given by Schubert polynomials and dual Schubert polynomials  \cite[Proposition 2.5.7]{Man}.
\end{remark}

\subsubsection{Helpful thick calculus lemmas}
We collect here several lemmas that will be useful in establishing our main results in later sections.

\begin{lemma}[\cite{KLMS} Lemma 2.2.3]\label{diff}
\[
\hackcenter{
\begin{tikzpicture} [scale=.75]
\draw[thick, double ] (0,.25) to (0,.65);
\draw[thick, double] (0,.65) .. controls ++(-.7,0) and ++(-.7,0) .. (0,1.45);
\draw[thick] (0,.65) .. controls ++(.7,0) and ++(.7,0) .. (0,1.45) node[pos=.3, shape=coordinate](DOT1){};
\draw[thick, double  ] (0,1.45) to (0,1.75);
    \filldraw  (DOT1) circle (3.5pt);
\draw[thick,  ] (0,1.75) .. controls ++(.25,.1) and ++(0,-.5) .. (.3,2.5);
\draw[thick,  ] (0,1.75) .. controls ++(-.25,.1) and ++(0,-.5) .. (-.3,2.5);
\draw[thick,  ] (0,1.75) .. controls ++(.25,.1) and ++(0,-.7) .. (.9,2.5);
\draw[thick,  ] (0,1.75) .. controls ++(-.25,.1) and ++(0,-.7) .. (-.9,2.5);
\draw[thick,  ] (-.3,-.5) .. controls ++(0,.5) and ++(-.4,-.1) .. (0,.25);
\draw[thick,  ] (.3,-.5) .. controls ++(0,.5) and ++(.4,-.1) .. (0,.25);
\draw[thick,  ] (-.9,-.5) .. controls ++(0,.7) and ++(-.4,-.1) .. (0,.25);
\draw[thick,  ] (.9,-.5) .. controls ++(0,.7) and ++(.4,-.1) .. (0,.25);
    \node at (.5,1.45) {$\scs 1$};
    \node at (-.8,1.45) {$\scs k-1$};
    \node at (.7,.65) {$\scs d$};
\end{tikzpicture}}
\;\; = \;\;
\hackcenter{
\begin{tikzpicture} [scale=.75]
\draw[thick,] (-.9,-.5) to (-.9,1.5) .. controls ++(0,.5) and ++(0,-.5) .. (-.3,2.5);
\draw[thick,] (-.3,-.5) to (-.3,1.5) .. controls ++(0,.5) and ++(0,-.5) .. (.3,2.5);
\draw[thick,] (.3,-.5) to (.3,1.5) .. controls ++(0,.5) and ++(0,-.5) .. (.9,2.5);
\draw[thick,] (.9,-.5) to (.9,1.5) .. controls ++(0,.5) and ++(0,-.5) .. (-.9,2.5)  node[pos=0, shape=coordinate](DOT1){};
\draw[fill=white!20,] (-1.1,0) rectangle (1.1,1);
    \filldraw  (DOT1) circle (3.5pt);
    \node at (1.25,1.5) {$\scs d$};
 \node at (0,.5) {${D}_k$};
\end{tikzpicture}}
\;\; = \;\;
\begin{cases}
(-1)^{k-1}\hackcenter{
\begin{tikzpicture}[scale=.9]
\draw (0,0) -- (0,2.5)[thick, ];
\draw (1,0) -- (1,2.5)[thick, ];
    \filldraw[white] (-.25,.5) -- (1.25,.5) -- (1.25,1) -- (-.25,1) -- (-.25,.5);
    \draw (-.25,.5) -- (1.25,.5) -- (1.25,1) -- (-.25,1) -- (-.25,.5);
    \filldraw[white] (-.25,1.5) -- (1.25,1.5) -- (1.25,2) -- (-.25,2) -- (-.25,1.5);
    \draw (-.25,1.5) -- (1.25,1.5) -- (1.25,2) -- (-.25,2) -- (-.25,1.5);
        \node at (.5,1.675) {$\scs {\sf h}_{d-k+1}$};
        \node at (.5,.675) {$\scs D_{k}$};
\end{tikzpicture}}
&\hbox{if $d\geq k-1$,}
\\
0 &\hbox{otherwise.}
\end{cases}
\]
\end{lemma}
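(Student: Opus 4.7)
The plan is to establish the two equalities in sequence. For the first equality, unfold the thick-calculus definitions from~\eqref{eq:splitters} and~\eqref{diagramsCk1}--\eqref{diagramsCk3} and exploit the idempotency $e_k^2 = e_k$. Both the bottom splitter (from thickness $k$ to the composition $(k-1,1)$) and the top splitter (back to $k$) are implemented by the idempotent $e_k$, so the left-hand side equals $e_k \cdot (1 \otimes x_k^d) \cdot e_k$ inside $\nh_k$. Writing $e_k = D_k \cdot \und{x}^{\delta}$ via~\eqref{diagramsCk3}, with $\delta = (k-1, k-2, \dots, 0)$, and absorbing one of the two $D_k$ factors into the complementary crossing sequence $C_k$ from~\eqref{diagramsCk1}, one extracts the middle diagram: $D_k$ at the bottom, followed by the cycle permutation with the $d$ dots carried by the wrapping strand.

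For the second equality, I would induct on $k$. The base $k = 1$ is immediate since $D_1$ is the identity strand, no crossings are present, and $h_d(x_1) = x_1^d$. For the inductive step, view the middle diagram as the element $(\partial_1 \partial_2 \cdots \partial_{k-1}) \cdot x_k^d \cdot D_k$ in $\nh_k$, and push $x_k^d$ leftward through the cycle using the nilHecke relation $\partial_i x_{i+1}^a = x_i^a \partial_i - \sum_{A+B = a-1} x_i^A x_{i+1}^B$. The \emph{straight-through} terms propagate the dots along the cycle but leave a trailing crossing that is annihilated by $D_k$ (since $D_k \cdot \partial_{k-1} = 0$ follows from $\partial_{k-1}^2 = 0$ after expanding $D_k$ via the recursive formula~\eqref{diagramsCk2}), while the \emph{annihilated-crossing} terms at each stage contribute a negative sign and decrease the dot count by one. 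After iterating $k-1$ times, the overall factor $(-1)^{k-1}$ emerges and what remains is a sum of dot monomials indexed by weak compositions of $d-k+1$ into $k$ parts sitting atop $D_k$, which by definition equals $h_{d-k+1}(x_1, \dots, x_k) \cdot D_k$. For $d < k-1$ the indexing set is empty and the expression vanishes.

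The main obstacle will be the precise combinatorial bookkeeping: confirming the cancellation pattern of straight-through terms against the idempotent structure of $D_k$, and verifying that the surviving annihilated-crossing terms assemble exactly into $h_{d-k+1}(x_1,\dots,x_k)$. A conceptually cleaner route, suggested by Remark~\ref{rem:dual-bases}, is to identify the dot-pushing procedure with the divided-difference operator $\partial_{w_0}$ (for $w_0$ the longest element of $S_k$) applied to $x_k^d$, and then invoke the classical identity $\partial_{w_0}(x_k^d) = h_{d-k+1}(x_1, \dots, x_k)$ (with the convention that this vanishes for $d < k-1$) to bypass the explicit combinatorial calculation altogether, up to a sign determined by a chosen reduced word for $w_0$.
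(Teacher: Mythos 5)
The paper does not prove this lemma---it cites \cite{KLMS}, Lemma~2.2.3 without argument---so you are supplying something the paper outsources. Your main route for the second equality is sound: reading the middle picture as the element $(\partial_1\partial_2\cdots\partial_{k-1})\,x_k^d\,D_k$ of $\nh_k$ and pushing $x_k^d$ leftward through the $k-1$ crossings via Lemma~\ref{diff1}, killing each straight-through term against $D_k$, does produce $(-1)^{k-1}h_{d-k+1}(x_1,\dots,x_k)\,D_k$, with the empty sum giving zero when $d<k-1$. One slip in the write-up: the trailing crossing that survives the straight-through term lands to the \emph{left} of $D_k$, so the vanishing you need is $\partial_iD_k=0$ (which is exactly~\eqref{cross-D}), not $D_k\,\partial_i=0$.

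Two of your other claims are, however, incorrect. For the first equality, the splitter $(k)\to(k-1,1)$ is \emph{not} implemented by $e_k$, and the left-hand side is \emph{not} $e_k\cdot(1\otimes x_k^d)\cdot e_k$: by the definitions around \eqref{eq:splitters} and \eqref{diagramsCk3}, the splitter unfolds to $e_{k-1}\otimes1$ composed with a genuine $(k-1)$-crossing sweep, and the explode at the top is $D_k$, not $e_k$. Discarding the crossings breaks the identity: already for $k=2$, $d=1$ one has $e_2\,x_2\,e_2 = x_1\partial_1\,x_2\,x_1\partial_1 = x_1x_2\,\partial_1^2 = 0$, whereas the middle picture equals $\partial_1x_2\partial_1=-\partial_1\neq0$. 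The first equality instead comes from absorbing $e_{k-1}\otimes1$ into the neighbouring $e_k$, using $D_k e_k=D_k$, and carrying the dotted crossing sweep past the explode. Your proposed ``cleaner route'' is also wrong: $\partial_{w_0}(x_k^d)=0$ for every $k\geq3$, not $h_{d-k+1}$, because $x_k^d$ is invariant under $s_1$ and some reduced word for $w_0$ ends in $s_1$. The divided-difference operator that actually underlies the middle diagram is the length-$(k-1)$ composite $\partial_1\cdots\partial_{k-1}$, for which $(\partial_1\cdots\partial_{k-1})(x_k^d)=(-1)^{k-1}h_{d-k+1}(x_1,\dots,x_k)$; this coincides with $\partial_{w_0}$ only when $k=2$.
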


\begin{lemma}\label{C}
We have the following equalities involving the diagrams defined in \eqref{diagramsCk1}, \eqref{diagramsCk2}, and \eqref{diagramsCk3}.

\begin{equation}\label{dot-C}
\hackcenter{
\begin{tikzpicture}[scale=.75]
\draw[thick,] (-.9,2) -- (-.9,4);
\draw[thick,] (-.3,2) -- (-.3,4);
\draw[thick,] (.3,2) -- (.3,4);
\draw[thick,] (.9,2) -- (.9,4);
\draw[fill=white!20,] (-1.1,2.5) rectangle (1.1,3.5);
    \filldraw (-.9,3.75) circle (3pt);
\node at (0,3) {$C_k$};
\end{tikzpicture}}
\;\; - \;\;
\hackcenter{
\begin{tikzpicture}[scale=.75]
\draw[thick,] (-.9,2) -- (-.9,4);
\draw[thick,] (-.3,2) -- (-.3,4);
\draw[thick,] (.3,2) -- (.3,4);
\draw[thick,] (.9,2) -- (.9,4);
    \filldraw (.9,2.25) circle (3pt);
\draw[fill=white!20,] (-1.1,2.5) rectangle (1.1,3.5);
\node at (0,3) {$C_k$};
\end{tikzpicture}}
\;\; = \;\;
\sum_{\ell=1}^{k-1}\;
\hackcenter{
\begin{tikzpicture}[scale=.75]
\draw[thick,] (-1.5,2) -- (-1.5,4);
\draw[thick,] (-.3,2) -- (-.3,4);
\draw[thick,] (.3,2) -- (.3,4);
\draw[thick,] (1.5,2) -- (1.5,4);
\draw[fill=white!20,] (-1.7,2.5) rectangle (-.1,3.5);
\draw[fill=white!20,] (.1,2.5) rectangle (1.7,3.5);
\node at (-.9,3) {$C_{\ell}$};
\node at (.9,3) {$C_{k-\ell}$};
\end{tikzpicture}}
\end{equation}

\begin{equation}\label{cross-D}
\hackcenter{
\begin{tikzpicture}[scale=.75]
\draw[thick,] (-1.2,2) -- (-1.2,4.5);
\draw[thick,] (-.3,2) -- (-.3,3.5);
\draw[thick,] (.3,2) -- (.3,3.5);
\draw[thick,] (1.2,2) -- (1.2,4.5);
\node at (.75,4) {$\cdots$};
\node at (-.75,4) {$\cdots$};
\draw[thick,   ](-.3,3.5) .. controls ++(0,.3) and ++(0,-.3) .. (.3,4.5);
\draw[thick,   ](.3,3.5) .. controls ++(0,.3) and ++(0,-.3) .. (-.3,4.5);
\draw[fill=white!20,] (-1.4,2.5) rectangle (1.4,3.5);
\node at (0,3) {$D_k$};
\end{tikzpicture}}
\;\; =\;\;0,
\qquad \quad
\hackcenter{
\begin{tikzpicture}[scale=.75]
\draw[thick,] (-1.2,.5) -- (-1.2,4);
\draw[thick,] (-.3,.5) -- (-.3,4);
\draw[thick,] (.3,.5) -- (.3,4);
\draw[thick,] (1.2,.5) -- (1.2,4);
\node at (.75,3) {$\cdots$};
\draw[fill=white!20,] (-1.4,2.5) rectangle (-.1,3.5);
\node at (-.75,3) {$C_{\ell}$};
\draw[fill=white!20,] (-1.4,1) rectangle (1.4,2);
\node at (0,1.5) {$D_k$};
\end{tikzpicture}}
\;\; = \;\;0
\quad\hbox{for $2\leq \ell\leq k$.}
\end{equation}
\end{lemma}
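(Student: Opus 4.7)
My plan is to treat the two equations of Lemma~\ref{C} separately; both amount to routine bookkeeping in the nilHecke algebra, with \eqref{dot-C} requiring a dot-slide through the defining reduced word of $C_k$, and \eqref{cross-D} following from the observation that $D_k$ is a reduced expression for the longest element $w_0\in S_k$.

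For \eqref{dot-C} I would first observe that, by its definition in \eqref{diagramsCk1}, $C_k$ is the reduced word $\psi_1\psi_2\cdots\psi_{k-1}$ (read with $\psi_1$ on top). The strategy is then to slide the dot on the rightmost strand upward through this product, applying only the local nilHecke relation $\psi_i x_{i+1} = x_i\psi_i - 1$ at each step. Since $x_j$ commutes with every $\psi_i$ for $j\neq i,i+1$, no braid relation is ever needed, and each of the $k-1$ slides contributes a single ``resolution'' term in which one crossing has been removed. The resolution produced when sliding past the factor $\psi_\ell$ is the word $(\psi_1\cdots\psi_{\ell-1})(\psi_{\ell+1}\cdots\psi_{k-1})$, whose two blocks act on the disjoint sets of strands $\{1,\dots,\ell\}$ and $\{\ell+1,\dots,k\}$ and are, by the very definition of the $C$-diagrams, the reduced expressions for $C_\ell$ and $C_{k-\ell}$. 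Summing the $k-1$ resolutions yields exactly $\sum_{\ell=1}^{k-1} C_\ell\otimes C_{k-\ell}$, which rearranges to the claimed identity.

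For \eqref{cross-D} I would unfold the recursion \eqref{diagramsCk2} into $D_k = \overline{C}_2\overline{C}_3\cdots\overline{C}_k$, where $\overline{C}_j$ abbreviates the rotated crossing of \eqref{diagramsCk1}. Since each $\overline{C}_j = \psi_{j-1}\psi_{j-2}\cdots\psi_1$ has length $j-1$, the diagram $D_k$ is a product of $\binom{k}{2}=\ell(w_0)$ simple crossings, and a direct trace of the strands shows it realizes the permutation $i\mapsto k+1-i$, i.e.\ the longest element $w_0\in S_k$. Hence $D_k = \psi_{w_0}$ in $\nh_k$. For every simple reflection $s_i$ one has $s_iw_0 < w_0$, so the standard nilHecke identity $\psi_i\psi_w = 0$ whenever $s_iw<w$ gives the first vanishing $\psi_i\cdot D_k = 0$ at once. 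The second vanishing follows by expanding $C_\ell = \psi_1\cdots\psi_{\ell-1}$: for $\ell\geq 2$ the bottommost factor $\psi_{\ell-1}$ sits directly above $D_k$, so $\psi_{\ell-1}\cdot D_k = 0$ forces $C_\ell\cdot D_k = 0$.

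I do not anticipate any real obstacle; the only mild subtlety is to confirm that each resolution block in the dot-slide argument is literally $C_\ell\otimes C_{k-\ell}$ as a diagram (not merely equal to it after further rewriting). This is clear because the nilHecke dot–crossing relation is purely local, so the crossings flanking the annihilated $\psi_\ell$ are never touched, and the resulting word is already in the exact reduced form used to define $C_\ell$ and $C_{k-\ell}$.
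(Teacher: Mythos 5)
Your proof is correct and follows the same route the paper intends when it says the identities ``follow easily from the defining relations of the nilHecke algebra''; you have simply supplied the routine bookkeeping that the paper leaves implicit. Both the dot-slide through $C_k = \psi_1\cdots\psi_{k-1}$ (producing the $k-1$ resolution terms $C_\ell\otimes C_{k-\ell}$) and the identification $D_k = \psi_{w_0}$ with the consequent vanishing $\psi_i D_k = 0$ are exactly the intended computations, and your check that each resolved word is already in the reduced form defining $C_\ell$ and $C_{k-\ell}$ closes the only potential gap.
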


\begin{proof}
The equations in \eqref{dot-C} and \eqref{cross-D} follow easily from the defining relations of the nilHecke algebra.
\end{proof}

\begin{lemma}\label{id-dots-C}
\[
\hackcenter{
\begin{tikzpicture} [scale=.75]
\draw[thick,  ] (1.2,0) -- (1.2,3);
\draw[thick,  ] (.6,0) -- (.6,3);
\draw[thick,  ] (-.6,0) -- (-.6,3);
\draw[thick,  ] (-1.2,0) -- (-1.2,3);
\node at (0,2.5) {$\cdots$};
\node[draw, fill=white!20] at (.3,1.675) {$ \quad\;   D_{k-1} \;\quad $};
\end{tikzpicture}}
\;\; = \;\;
\sum_{d=0}^{k-1}(-1)^d
\hackcenter{
\begin{tikzpicture} [scale=.75]
\draw[thick,  ] (1.2,0) -- (1.2,3);
\draw[thick,  ] (.6,0) -- (.6,3);
\draw[thick,  ] (-.6,0) -- (-.6,3);
\draw[thick,  ] (-1.2,0) -- (-1.2,3);
\node[draw, fill=white!20 ,rounded corners ] at (.3,.75) {$ \quad\;\; {\sf e}_{d} \quad\;\; $};
\node[draw, fill=white!20] at (0,1.675) {$ \qquad\;   D_k \;\qquad $};
\filldraw (-1.2,2.5) circle (3pt);
\node at (-2.,2.5) {$\scs k-1-d$};
\node at (0,2.5) {$\cdots$};
\end{tikzpicture}}
\]
\end{lemma}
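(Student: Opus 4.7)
The plan is to reduce the identity to a nilHecke computation using that $D_k = \partial_{w_0^{(S_k)}}$ for the longest element $w_0 \in S_k$. The reduced expression $w_0^{(S_k)} = (s_{k-1} s_{k-2} \cdots s_1)\cdot w_0^{(S_{k-1}\text{ on }2,\ldots,k)}$ gives the alternative factorization $D_k = (\partial_{k-1}\partial_{k-2}\cdots\partial_1)\cdot D_{k-1}^{(2..k)}$, where $D_{k-1}^{(2..k)}$ denotes $D_{k-1}$ acting on the rightmost $k-1$ strands (equivalently, $\partial_{w_0}$ for the longest element of the subgroup $S_{k-1}\subset S_k$ fixing $1$). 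Since $\varepsilon_d(x_2,\ldots,x_k)$ is symmetric and so commutes with $D_{k-1}^{(2..k)}$, the right-hand side factors as $X\cdot D_{k-1}^{(2..k)}$ with
\[
X \;:=\; \sum_{d=0}^{k-1}(-1)^d x_1^{k-1-d}\,(\partial_{k-1}\cdots\partial_1)\,\varepsilon_d(x_2,\ldots,x_k).
\]
It thus suffices to show $X\cdot D_{k-1}^{(2..k)} = D_{k-1}^{(2..k)}$.

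Next I would simplify $X$. Since $x_1$ commutes with $\partial_2,\ldots,\partial_{k-1}$, only the commutator with $\partial_1$ matters; the nilHecke relation $x_1^a \partial_1 = \partial_1 x_2^a + h_{a-1}(x_1, x_2)$ yields
\[
x_1^{a}(\partial_{k-1}\cdots\partial_1) \;=\; (\partial_{k-1}\cdots\partial_1)\,x_2^{a} \;+\; (\partial_{k-1}\cdots\partial_2)\,h_{a-1}(x_1,x_2).
\]
The first family of summands assembles into $(\partial_{k-1}\cdots\partial_1)\cdot\sum_{d}(-1)^d x_2^{k-1-d}\varepsilon_d(x_2,\ldots,x_k) = (\partial_{k-1}\cdots\partial_1)\cdot\prod_{i=2}^k(x_2-x_i) = 0$, due to the factor $(x_2-x_2)$. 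A generating function computation (the coefficient of $t^{k-2}$ in $\prod_{i=3}^k(1-tx_i)/(1-tx_1)$) shows
\[
\sum_{d=0}^{k-2}(-1)^d h_{k-2-d}(x_1,x_2)\,\varepsilon_d(x_2,\ldots,x_k) \;=\; \prod_{i=3}^k(x_1 - x_i),
\]
so $X = (\partial_{k-1}\cdots \partial_2)\cdot \prod_{i=3}^k(x_1-x_i)$.

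I would then expand by iterating $\partial_j(x_1 - x_{j+1}) = (x_1 - x_j)\partial_j + 1$ (from $\partial_j x_{j+1} = x_j\partial_j - 1$), pairing each $\partial_j$ with the factor $(x_1 - x_{j+1})$ and using that $(x_1 - x_\ell)$ for $\ell > j+1$ commutes with $\partial_j$. An induction on $k$, based on the recursion $F_k = (x_1 - x_{k-1})\partial_{k-1}F_{k-1} + F_{k-1}$ for $F_k := (\partial_{k-1}\cdots\partial_2)\cdot \prod_{i=3}^k(x_1 - x_i)$, yields the clean expansion
\[
X \;=\; \sum_{S\subseteq\{2,3,\ldots,k-1\}}\Bigl(\prod_{j\in S}(x_1 - x_j)\Bigr)\cdot \partial_{w_S},
\]
where $w_S \in S_{k-1}^{(2..k)}$ is represented by $s_{j_m} s_{j_{m-1}} \cdots s_{j_1}$ taken in decreasing order over $S=\{j_1<\cdots<j_m\}$ and $\partial_{w_S}$ is the corresponding nilHecke product (reduced because $w_{S'}$ for $S'\subseteq\{2,\ldots,k-2\}$ fixes position $k$, so prepending $s_{k-1}$ strictly increases length).

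Finally, multiply on the right by $D_{k-1}^{(2..k)} = \partial_{w_0^{(S_{k-1}\text{ on }2..k)}}$. The $S=\emptyset$ term contributes $1\cdot D_{k-1}^{(2..k)} = D_{k-1}^{(2..k)}$, matching the LHS. For any nonempty $S$, the element $w_S$ is a nontrivial element of the subgroup $S_{k-1}^{(2..k)}$, so $\ell(w_S\cdot w_0^{(S_{k-1})}) = \ell(w_0^{(S_{k-1})}) - \ell(w_S) < \ell(w_S) + \ell(w_0^{(S_{k-1})})$, whence the standard nilHecke cancellation rule gives $\partial_{w_S}\cdot D_{k-1}^{(2..k)} = 0$. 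This leaves exactly $D_{k-1}^{(2..k)}$, establishing the lemma. The main technical obstacle is the inductive expansion of the third paragraph; it hinges on the careful bookkeeping that $(x_1-x_j)$ for $j \leq k-2$ commutes with $\partial_{k-1}$ and that each enlargement $S'\mapsto S'\cup\{k-1\}$ produces a reduced word.
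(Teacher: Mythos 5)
Your proof is correct, but it takes a genuinely different route from the paper's. The paper deduces the identity from a variant of \cite[Lemma~2.4.5]{KLMS}, which already packages the sum $\sum_{d}(-1)^{d}\,x_{1}^{k-1-d}\cdots\varepsilon_{d}$ as a split/merge of a thin strand against a thick strand of thickness $k-1$; it then composes with $D_{k-1}$ on top and uses thick-calculus absorption ($D_{k-1}C_{k}e_{k}=D_{k}$, etc.) to finish. You instead work purely inside the nilHecke algebra: after the reduced factorization $D_{k}=(\partial_{k-1}\cdots\partial_{1})\,D_{k-1}^{(2..k)}$ and commuting $\varepsilon_{d}(x_{2},\ldots,x_{k})$ past $D_{k-1}^{(2..k)}$, you evaluate the two Vandermonde-type sums to get $X=(\partial_{k-1}\cdots\partial_{2})\prod_{i=3}^{k}(x_{1}-x_{i})$, expand $X$ over subsets $S\subseteq\{2,\ldots,k-1\}$ using the relation $\partial_{j}(x_{1}-x_{j+1})=(x_{1}-x_{j})\partial_{j}+1$, and kill every $S\ne\emptyset$ term by the standard nilCoxeter cancellation $\partial_{u}\partial_{w_{0}'}=0$ for $u\ne e$ in the parabolic. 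I checked the key steps (the factorization of $w_{0}$ through the parabolic, the identity $x_{1}^{a}\partial_{1}=\partial_{1}x_{2}^{a}+h_{a-1}(x_{1},x_{2})$, the generating-function evaluation giving $\prod_{i=3}^{k}(x_{1}-x_{i})$, the reducedness of $s_{k-1}w_{S'}$ via $w_{S'}(k)=k$, and the final length-dropping cancellation) and they all hold; the $k=2,3$ cases confirm the formula. The trade-off is that the paper's argument is shorter because it reuses a thick-calculus lemma already in hand, whereas yours is more elementary and self-contained — it only needs the defining nilHecke relations and makes the cancellation mechanism fully explicit — at the cost of the combinatorial bookkeeping in the subset expansion.
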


\begin{proof}
By a variant of \cite[Lemma 2.4.5]{KLMS}  we have that
\[
\hackcenter{
\begin{tikzpicture} [scale=.75]
\draw[thick, double, ] (.6,0) -- (.6,3);
\draw[thick,  ] (-.6,0) -- (-.6,3);
\node at (.6,-.2) {$\scs k-1$};
\node at (-.6,-.2) {$\scs 1$};
\end{tikzpicture}}
\;\; = \;\;
\sum_{d=0}^{k-1}(-1)^d\;
\hackcenter{ \begin{tikzpicture} [scale=.75]
\draw[thick,   ](0,2).. controls ++(0,-.75) and ++(0,.3) ..(.6,.8)  node[pos=0.35, shape=coordinate](DOT1){};;
\draw[thick,  double,  ](1.2,2).. controls ++(0,-.75) and ++(0,.3) ..(.6,.8) to (.6,.3);
\draw[thick, ](0,-1).. controls ++(0,.75) and ++(0,-.3) ..(.6,.3);
\draw[thick,  double, ](1.2,-1).. controls ++(0,.75) and ++(0,-.3) ..(.6,.3);
 \node at (0,-1.2) {$\scs 1$};
 \node at (1.2,-1.2) {$\scs k-1$};
\node[draw, fill=white!20 ,rounded corners ] at ( 1.1,-.5 ) {$\scs  {\sf e}_{d}$};
    \filldraw  (DOT1) circle (2.5pt);
\node at (-.65,1.6) {$\scs k-1-d$};
\end{tikzpicture}}
\;\; =\;\;
\sum_{d=0}^{k-1}(-1)^d
\hackcenter{
\begin{tikzpicture} [scale=.75]
\draw[thick,  ] (1.2,-.1) -- (1.2,3.25);
\draw[thick,  ] (.6,-.1) -- (.6,3.25);
\draw[thick,  ] (-.6,-.1) -- (-.6,3.25);
\draw[thick,  ] (-1.2,-.1) -- (-1.2,3.25);
\node[draw, fill=white!20 ,rounded corners ] at (.3,.4) {$ \quad\;\; {\sf e}_{d} \quad\;\; $};
\node[draw, fill=white!20] at (0,1.1) {$ \qquad\;   e_k \;\qquad $};
\node[draw, fill=white!20] at (0,1.9) {$ \qquad\;   C_k \;\qquad $};
\node[draw, fill=white!20] at (.3,2.7) {$ \quad   e_{k-1} \quad $};
\filldraw (-1.2,2.75) circle (3pt);
\node at (-2.,2.75) {$\scs k-1-d$};
\end{tikzpicture}}
\]

Applying the diagram $D_{k-1}$ to the top of the diagrams on both sides, we have
\begin{eqnarray*}
\hackcenter{
\begin{tikzpicture} [scale=.75]
\draw[thick,  ] (1.2,-.1) -- (1.2,3.25);
\draw[thick,  ] (.6,-.1) -- (.6,3.25);
\draw[thick,  ] (-.6,-.1) -- (-.6,3.25);
\draw[thick,  ] (-1.2,-.1) -- (-1.2,3.25);
\node[draw, fill=white!20] at (.3,1.5) {$ \quad   D_{k-1} \quad $};
\node at (0,2.5) {$\cdots$};
\end{tikzpicture}}
&=&
\sum_{d=0}^{k-1}(-1)^d
\hackcenter{
\begin{tikzpicture} [scale=.75]
\draw[thick,  ] (1.2,-.1) -- (1.2,4.25);
\draw[thick,  ] (.6,-.1) -- (.6,4.25);
\draw[thick,  ] (-.6,-.1) -- (-.6,4.25);
\draw[thick,  ] (-1.2,-.1) -- (-1.2,4.25);
\node[draw, fill=white!20 ,rounded corners ] at (.3,.4) {$ \quad\;\; {\sf e}_{d} \quad\;\; $};
\node[draw, fill=white!20] at (0,1.1) {$ \qquad\;   e_k \;\qquad $};
\node[draw, fill=white!20] at (0,1.9) {$ \qquad\;   C_k \;\qquad $};
\node[draw, fill=white!20] at (.3,2.7) {$ \quad   e_{k-1} \quad $};
\node[draw, fill=white!20] at (.3,3.5) {$ \quad   D_{k-1} \quad $};
\filldraw (-1.2,2.75) circle (3pt);
\node at (-2.,2.75) {$\scs k-1-d$};
\end{tikzpicture}}
=
\sum_{d=0}^{k-1}(-1)^d
\hackcenter{
\begin{tikzpicture} [scale=.75]
\draw[thick,  ] (1.2,-.1) -- (1.2,3.5);
\draw[thick,  ] (.6,-.1) -- (.6,3.5);
\draw[thick,  ] (-.6,-.1) -- (-.6,3.5);
\draw[thick,  ] (-1.2,-.1) -- (-1.2,3.5);
\node[draw, fill=white!20 ,rounded corners ] at (.3,.4) {$ \quad\;\; {\sf e}_{d} \quad\;\; $};
\node[draw, fill=white!20] at (0,1.1) {$ \qquad\;   e_k \;\qquad $};
\node[draw, fill=white!20] at (0,1.9) {$ \qquad\;   C_k \;\qquad $};
\node[draw, fill=white!20] at (.3,2.7) {$ \quad   D_{k-1} \quad $};
\filldraw (-1.2,2.75) circle (3pt);
\node at (-2.,2.75) {$\scs k-1-d$};
\end{tikzpicture}}
\\&=&
\sum_{d=0}^{k-1}(-1)^d
\hackcenter{
\begin{tikzpicture} [scale=.75]
\draw[thick,  ] (1.2,-.1) -- (1.2,3);
\draw[thick,  ] (.6,-.1) -- (.6,3);
\draw[thick,  ] (-.6,-.1) -- (-.6,3);
\draw[thick,  ] (-1.2,-.1) -- (-1.2,3);
\node[draw, fill=white!20 ,rounded corners ] at (.3,.4) {$ \quad\;\; {\sf e}_{d} \quad\;\; $};
\node[draw, fill=white!20] at (0,1.1) {$ \qquad\;   e_k \;\qquad $};
\node[draw, fill=white!20] at (0,1.9) {$ \qquad\;   D_k \;\qquad $};
\filldraw (-1.2,2.5) circle (3pt);
\node at (-2.,2.5) {$\scs k-1-d$};
\node at (0,2.5) {$\cdots$};
\end{tikzpicture}}
=
\sum_{d=0}^{k-1}(-1)^d
\hackcenter{
\begin{tikzpicture} [scale=.75]
\draw[thick,  ] (1.2,0) -- (1.2,3);
\draw[thick,  ] (.6,0) -- (.6,3);
\draw[thick,  ] (-.6,0) -- (-.6,3);
\draw[thick,  ] (-1.2,0) -- (-1.2,3);
\node[draw, fill=white!20 ,rounded corners ] at (.3,.75) {$ \quad\;\; {\sf e}_{d} \quad\;\; $};
\node[draw, fill=white!20] at (0,1.675) {$ \qquad\;   D_k \;\qquad $};
\filldraw (-1.2,2.5) circle (3pt);
\node at (-2.,2.5) {$\scs k-1-d$};
\node at (0,2.5) {$\cdots$};
\end{tikzpicture}}
\end{eqnarray*}
\end{proof}

The following lemma  holds in a context more general the than the nilHecke algebra $\nh_k$.  We use it to establish results for the redotted Webster algebras defined in section~\ref{sec:reddotted}. We denote by a box labelled $X$ a region with appropriate inputs and outputs that we make absolutely no assumption about whatsoever.  The identity holds completely externally to any assumption about the content of $X$.

\begin{lemma}\label{reduction-EX} For an arbitrary diagram $X$, the identity
\[
\sum_{\und{\ell} \in \Sq(k)}(-1)^{|\hat{\und{\ell}}|}\;\;
\hackcenter{
\begin{tikzpicture} [scale=.75]
\draw[thick,  ] (1.2,0) -- (1.2,3);
\draw[thick,  ] (.6,0) -- (.6,3);
\draw[thick,  ] (-.6,0) -- (-.6,3);
\draw[thick,  ] (-1.2,0) -- (-1.2,3);
\node[draw, fill=white!20 ,rounded corners ] at (0,2.45) {$ \qquad\; {\sf e}_{\und{\ell}} \qquad\; $};
\node[draw, fill=white!20 ,rounded corners ] at (0,.7) {$ \qquad\;   \und{x}^{\hat{\und{\ell}}} \;\qquad $};
\node[draw, fill=white!20] at (0,1.675) {$ \qquad\;   X \;\qquad $};
\end{tikzpicture}}
\;\; =\;\;
\sum_{\und{\ell} \in \Sq(k-1)}(-1)^{|\hat{\und{\ell}}|}\;\;
\sum_{d=0}^{k-1}(-1)^d
\hackcenter{
\begin{tikzpicture} [scale=.75]
\draw[thick,  ] (1.2,-.75) -- (1.2,3) ;
\draw[thick,  ] (.6,-.75) -- (.6,3);
\draw[thick,  ] (-.6,-.75) -- (-.6,3);
\draw[thick,  ] (-1.2,-.75) -- (-1.2,3);
\node[draw, fill=white!20 ,rounded corners ] at (.3,2.45) {$ \quad\;\; {\sf e}_{\und{\ell}} \quad\;\; $};
\node[draw, fill=white!20 ,rounded corners ] at (.3,.75) {$ \quad\;\; {\sf e}_{d} \quad\;\; $};
\node[draw, fill=white!20 ,rounded corners ] at (.3,-.15) {$ \quad\;\;   \und{x}^{\hat{\und{\ell}}} \;\;\quad $};
\node[draw, fill=white!20] at (0,1.6) {$ \qquad\;   X \;\qquad $};
\filldraw (-1.2,2.5) circle (3pt);
\node at (-2.,2.5) {$\scs k-1-d$};
\end{tikzpicture}}
\]
holds.
\end{lemma}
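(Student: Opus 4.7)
The plan is to reduce the identity to a purely combinatorial statement in commuting polynomials. Since $\varepsilon_{\und{\ell}}$ is drawn above $X$ and $\und{x}^{\hat{\und{\ell}}}$ below, these two factors act on disjoint portions of the surrounding strands, so the validity of the identity for an \emph{arbitrary} $X$ is equivalent to the identity in $\Z[y_1,\ldots,y_{k-1},x_2,\ldots,x_k]$
\[
P_k(y;x) \;:=\; \sum_{\und{\ell}\in \Sq(k)} (-1)^{|\hat{\und{\ell}}|}\,\varepsilon_{\und{\ell}}(y_1,\ldots,y_{k-1})\,\und{x}^{\hat{\und{\ell}}}(x_2,\ldots,x_k),
\]
where the $y_i$'s represent the $x_i$'s on the strands above $X$ and the $x_j$'s those below (both above and below have $k$ strands, and $\varepsilon_{\und{\ell}}$ does not touch strand $k$ while $\und{x}^{\hat{\und{\ell}}}$ does not touch strand $1$, consistent with the variable ranges shown). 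Our strategy is to compute $P_k(y;x)$ in closed form and then re-expand it in a factorization that exhibits strand $1$ and strands $2,\ldots,k$ separately.

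The first step is to establish, by induction on $k$, the closed-form identity
\[
P_k(y;x) \;=\; \prod_{1\le i<j\le k}(y_i-x_j).
\]
The base case $k=1$ is trivial. For the inductive step, use the bijection $\Sq(k) \cong \Sq(k-1)\times\{0,1,\ldots,k-1\}$ given by $\und{\ell} \leftrightarrow (\und{\ell}',d)$ with $d = \ell_{k-1}$. Under this decomposition
\[
\varepsilon_{\und{\ell}} = \varepsilon_{\und{\ell}'}\cdot\varepsilon_d^{(k-1)}(y_1,\ldots,y_{k-1}),\quad
\und{x}^{\hat{\und{\ell}}} = \und{x}^{\hat{\und{\ell}'}}\cdot x_k^{k-1-d},\quad
(-1)^{|\hat{\und{\ell}}|} = (-1)^{|\hat{\und{\ell}'}|}(-1)^{k-1-d},
\]
so that $P_k(y;x) = P_{k-1}(y;x)\cdot \sum_{d=0}^{k-1}(-1)^{k-1-d}\varepsilon_d(y_1,\ldots,y_{k-1})\,x_k^{k-1-d}$. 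The inner sum is the standard expansion $\prod_{i=1}^{k-1}(y_i-x_k)$, and the induction hypothesis $P_{k-1}(y;x) = \prod_{1\le i<j\le k-1}(y_i-x_j)$ completes the step.

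For the second step, factor the product in the complementary way:
\[
\prod_{1\le i<j\le k}(y_i-x_j) \;=\; \Bigl[\prod_{j=2}^{k}(y_1-x_j)\Bigr]\cdot\Bigl[\prod_{2\le i<j\le k}(y_i-x_j)\Bigr].
\]
The first bracket expands as $\sum_{d=0}^{k-1}(-1)^d\,y_1^{k-1-d}\,\varepsilon_d(x_2,\ldots,x_k)$. Applying the closed-form identity of Step~1 to the second bracket, but for $k-1$ strands shifted to positions $2,\ldots,k$ (so that the analogue of $\varepsilon_{\und{\ell}'}$ lives in $y_2,\ldots,y_{k-1}$ and of $\und{x}^{\hat{\und{\ell}'}}$ lives in $x_3,\ldots,x_k$), yields
\[
\prod_{2\le i<j\le k}(y_i-x_j) \;=\; \sum_{\und{\ell}'\in\Sq(k-1)}(-1)^{|\hat{\und{\ell}'}|}\,\varepsilon_{\und{\ell}'}(y_2,\ldots,y_{k-1})\,\und{x}^{\hat{\und{\ell}'}}(x_3,\ldots,x_k).
\]
Multiplying the two brackets gives a double sum over $\und{\ell}'\in\Sq(k-1)$ and $d\in\{0,\ldots,k-1\}$ whose $y$-factor is $y_1^{k-1-d}\varepsilon_{\und{\ell}'}(y_2,\ldots,y_{k-1})$ and whose $x$-factor is $\varepsilon_d(x_2,\ldots,x_k)\und{x}^{\hat{\und{\ell}'}}(x_3,\ldots,x_k)$, with sign $(-1)^{|\hat{\und{\ell}'}|+d}$. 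Interpreting the $y$'s as acting on the strands above $X$ and the $x$'s as acting below gives exactly the right-hand side of the lemma.

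The main obstacle is Step~1: although the induction is short, one has to be careful with the sign combinatorics of $(-1)^{|\hat{\und{\ell}}|}$ under the splitting $\und{\ell}=(\und{\ell}',d)$, and with the fact that in our polynomial interpretation the two indexings of $\Sq(k-1)$ used in Steps~1 and~2 refer to different placements of the $k-1$ strands (leftmost $k-1$ vs.\ rightmost $k-1$); tracking these placements in the diagrammatic translation is the only non-routine bookkeeping.
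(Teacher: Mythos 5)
Your proof is correct, and it takes a genuinely different route from the paper's. The paper proceeds by a direct combinatorial manipulation: it expands $\varepsilon_{\und{\ell}}(x_1,\dots,x_k)$ by repeatedly extracting the $x_1$-dependence via $\varepsilon_{\ell_i}^{(i)}(x_1,\dots,x_i) = \varepsilon_{\ell_i}(x_2,\dots,x_i) + x_1\varepsilon_{\ell_i-1}(x_2,\dots,x_i)$, thereby reindexing the sum over $\und{\ell}\in\Sq(k)$ by pairs $(\und{\ell}',\und{j})$ with $\und{\ell}'\in\Sq(k-1)$ and $\und{j}\in\{0,1\}^{k-2}$, and then checks the resulting terms via a case analysis on $(\ell_1,|\und{j}|)$. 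You instead establish the closed form $P_k(y;x)=\prod_{1\le i<j\le k}(y_i-x_j)$ by the much simpler recursion that peels off $\ell_{k-1}$ (using the natural bijection $\Sq(k)\cong\Sq(k-1)\times\{0,\dots,k-1\}$), and then obtain the lemma by refactoring the product to separate strand~$1$ from strands $2,\dots,k$ and applying the closed form to the $(k-1)$-strand subproblem. Your preliminary reduction to the commuting-polynomial identity is valid precisely because $X$ is a free placeholder, so both sides are formal linear combinations of $[\text{top monomial}]\circ X\circ[\text{bottom monomial}]$ and equality for arbitrary $X$ means equality in $\Z[y]\otimes\Z[x]$. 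Your route is shorter, avoids the case analysis, and the intermediate product formula for $P_k$ makes the combinatorics transparent; it also resonates with the dual-basis interpretation of $\{\varepsilon_{\und{\ell}}\}$ and $\{\und{x}^{\hat{\und{\ell}}}\}$ noted in Remark~\ref{rem:dual-bases}. Both approaches are sound.
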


\begin{proof}
For $\und{\ell}=(\ell_1,...,\ell_{k-1})\in \Sq(k)$, we have
\begin{eqnarray*}
{\sf e}_{\und{\ell}}(x_1,x_2,...,x_k)&=&\prod_{i=1}^{k-1}{\sf e}_{\ell_i}(x_1,x_2,...,x_i)
\\
&=&x_1^{\ell_1}\prod_{i=2}^{k-1}\left((1-\delta_{\ell_i i}){\sf e}_{\ell_i}(x_2,...,x_i)+(1-\delta_{\ell_i 0})\; x_1 {\sf e}_{\ell_i-1}(x_2,...,x_i)\right)
\\
&=&\sum_{\und{j}=(j_1,...,j_{k-2})\atop \in\{0,1\}^{k-2}}
x_1^{\ell_1+|\und{j}|}\sum_{\und{\ell}'\in \Sq(k-1)\atop \und{\ell}'+\und{j}=(\ell_2,...,\ell_{k-1})}
{\sf e}_{\und{\ell}'}(x_2,...,x_k).
\end{eqnarray*}
The second equality above follows from the standard relation of elementary symmetric functions
\[
{\sf e}_{\ell_i}(x_1,x_2,...,x_i) = {\sf e}_{\ell_i}(x_2,...,x_i) + x_1 {\sf e}_{\ell_i-1}(x_2,...,x_i).
\]

Letting $\und{j}=(j_1,...,j_{k-2}) \in\{0,1\}^{k-2}$ and $\und{\ell}'\in \Sq(k-1)$ such that $\und{\ell}'+\und{j}=(\ell_2,...,\ell_{k-1})$
we have
\begin{eqnarray*}
\und{x}^{\hat{\und{\ell}}}
\;=\; \prod _{i=1}^{k-1} x_{i+1}^{i-\ell_i}
\;=\; x_2^{1-\ell_1}\prod _{i=2}^{k-1} x_{i+1}^{i-\ell_{i-1}'-j_{i-1}}
\;=\; x_2^{1-\ell_1}x_3^{1-j_{1}}x_4^{1-j_{2}}\cdots x_k^{1-j_{k-2}} \prod _{i=2}^{k-1} x_{i+1}^{i-1-\ell_{i-1}'}.
\end{eqnarray*}

Therefore, we have
\begin{eqnarray*}
\sum_{\und{\ell} \in \Sq(k)}(-1)^{|\hat{\und{\ell}}|}\;\;
\hackcenter{
\begin{tikzpicture} [scale=.75]
\draw[thick,  ] (1.2,0) -- (1.2,3);
\draw[thick,  ] (.6,0) -- (.6,3);
\draw[thick,  ] (-.6,0) -- (-.6,3);
\draw[thick,  ] (-1.2,0) -- (-1.2,3);
\node[draw, fill=white!20 ,rounded corners ] at (0,2.45) {$ \qquad\; {\sf e}_{\und{\ell}} \qquad\; $};
\node[draw, fill=white!20 ,rounded corners ] at (0,.7) {$ \qquad\;   \und{x}^{\hat{\und{\ell}}} \;\qquad $};
\node[draw, fill=white!20] at (0,1.6) {$ \qquad\;   X \;\qquad $};
\end{tikzpicture}}
&=&
\sum_{\und{\ell} \in \Sq(k)}(-1)^{|\hat{\und{\ell}}|}
\sum_{\und{j}=(j_1,...,j_{k-2})\atop \in\{0,1\}^{k-2}}
\sum_{\und{\ell}'\in \Sq(k-1)\atop \und{\ell}'+\und{j}=(\ell_2,...,\ell_{k-1})}
\hackcenter{
\begin{tikzpicture} [scale=.75]
\draw[thick,  ] (1.5,-.5) -- (1.5,3);
\draw[thick,  ] (0,-.5) -- (0,3);
\draw[thick,  ] (-.4,-.5) -- (-.4,3);
\draw[thick,  ] (-1.5,-.5) -- (-1.5,3);
\filldraw (-1.5,2.5) circle (3pt);
\filldraw (1.5,1) circle (3pt);
\filldraw (-.4,1) circle (3pt);
\filldraw (0,1) circle (3pt);
\node[draw, fill=white!20 ,rounded corners ] at (.55,2.45) {$ \quad\;\; {\sf e}_{\und{\ell}'} \quad\;\; $};
\node[draw, fill=white!20 ,rounded corners ] at (.55,.2) {$ \quad\;\; \und{x}^{\hat{\und{\ell}'}} \quad\;\; $};
\node[draw, fill=white!20] at (0,1.675) {$ \qquad\;\;\;   X \qquad \;\;\;$};
\node at (-2.3,2.5) {$\scs \ell_1+|\und{j}|$};
\node at (-1,1) {$\scs 1-\ell_1$};
\node at (.6,1) {$\scs 1-j_1$};
\node at (2.3,1) {$\scs 1-j_{k-2}$};
\end{tikzpicture}}
\\&=&
\sum_{\und{\ell}'\in \Sq(k-1)}
\sum_{\und{j}=(j_1,...,j_{k-2})\atop \in\{0,1\}^{k-2}}
\sum_{\und{\ell} \in \Sq(k)\atop \und{\ell}'+\und{j}=(\ell_2,...,\ell_{k-1})}
(-1)^{|\hat{\und{\ell}}|}
\hackcenter{
\begin{tikzpicture} [scale=.75]
\draw[thick,  ] (1.5,-.5) -- (1.5,3);
\draw[thick,  ] (0,-.5) -- (0,3);
\draw[thick,  ] (-.4,-.5) -- (-.4,3);
\draw[thick,  ] (-1.5,-.5) -- (-1.5,3);
\filldraw (-1.5,2.5) circle (3pt);
\filldraw (1.5,1) circle (3pt);
\filldraw (-.4,1) circle (3pt);
\filldraw (0,1) circle (3pt);
\node[draw, fill=white!20 ,rounded corners ] at (.55,2.45) {$ \quad\;\; {\sf e}_{\und{\ell}'} \quad\;\; $};
\node[draw, fill=white!20 ,rounded corners ] at (.55,.2) {$ \quad\;\; \und{x}^{\hat{\und{\ell}'}} \quad\;\; $};
\node[draw, fill=white!20] at (0,1.675) {$ \qquad\;\;\; X \qquad\;\;\;$};
\node at (-2.3,2.5) {$\scs \ell_1+|\und{j}|$};
\node at (-1,1) {$\scs 1-\ell_1$};
\node at (.6,1) {$\scs 1-j_1$};
\node at (2.3,1) {$\scs 1-j_{k-2}$};
\end{tikzpicture}}
\end{eqnarray*}

We find that when $(\ell_1,|\und{j}|)=(0,0)$, this summation equals
\[
\sum_{\und{\ell}'\in \Sq(k-1)}
(-1)^{|\hat{\und{\ell}}'|-k+1}
\hackcenter{
\begin{tikzpicture} [scale=.75]
\draw[thick,  ] (1.5,-.75) -- (1.5,3);
\draw[thick,  ] (0,-.75) -- (0,3);
\draw[thick,  ] (-.4,-.75) -- (-.4,3);
\draw[thick,  ] (-1.5,-.75) -- (-1.5,3);
\node[draw, fill=white!20 ,rounded corners ] at (.55,2.45) {$ \quad\;\; {\sf e}_{\und{\ell}'} \quad\;\; $};
\node[draw, fill=white!20] at (0,1.675) {$ \qquad\;\;\; X \qquad \;\;\;$};
\node[draw, fill=white!20 ,rounded corners ] at (.55,.85) {$ \quad\; {\sf e}_{k-1} \quad\; $};
\node[draw, fill=white!20 ,rounded corners ] at (.55,-.15) {$ \quad\;\; \und{x}^{\hat{\und{\ell}'}} \quad\;\; $};
\end{tikzpicture}}
\]

We find that when $(\ell_1,|\und{j}|)=(0,d),(1,d-1)$ for $1\leq d\leq k-2$ the summation equals
\begin{eqnarray*}
&&
\sum_{\und{\ell}'\in \Sq(k-1)}
\sum_{\und{j}=(j_1,...,j_{k-2}) \in\{0,1\}^{k-2}\atop |\und{j}|=d-1}
\sum_{\und{\ell} \in \Sq(k)\atop \und{\ell}'+\und{j}=(\ell_2,...,\ell_{k-1})}
(-1)^{|\hat{\und{\ell}'}|-k+1+d}
\hackcenter{
\begin{tikzpicture} [scale=.75]
\draw[thick,  ] (1.5,-.5) -- (1.5,3);
\draw[thick,  ] (0,-.5) -- (0,3);
\draw[thick,  ] (-.4,-.5) -- (-.4,3);
\draw[thick,  ] (-1.5,-.5) -- (-1.5,3);
\filldraw (-1.5,2.5) circle (3pt);
\filldraw (1.5,1) circle (3pt);
\filldraw (-.4,1) circle (3pt);
\filldraw (0,1) circle (3pt);
\node[draw, fill=white!20 ,rounded corners ] at (.55,2.45) {$ \quad\;\; {\sf e}_{\und{\ell}'} \quad\;\; $};
\node[draw, fill=white!20 ,rounded corners ] at (.55,.2) {$ \quad\;\; \und{x}^{\hat{\und{\ell}'}} \quad\;\; $};
\node[draw, fill=white!20] at (0,1.625) {$ \qquad\;\;\; X \qquad\;\;\;$};
\node at (-1.8,2.5) {$\scs d$};
\node at (.6,1) {$\scs 1-j_1$};
\node at (2.3,1) {$\scs 1-j_{k-2}$};
\end{tikzpicture}}
\\
&+&\sum_{\und{\ell}'\in \Sq(k-1)}
\sum_{\und{j}=(j_1,...,j_{k-2}) \in\{0,1\}^{k-2}\atop |\und{j}|=d}
\sum_{\und{\ell} \in \Sq(k)\atop \und{\ell}'+\und{j}=(\ell_2,...,\ell_{k-1})}
(-1)^{|\hat{\und{\ell}'}|-k+1+d}
\hackcenter{
\begin{tikzpicture} [scale=.75]
\draw[thick,  ] (1.5,-.5) -- (1.5,3);
\draw[thick,  ] (0,-.5) -- (0,3);
\draw[thick,  ] (-.4,-.5) -- (-.4,3);
\draw[thick,  ] (-1.5,-.5) -- (-1.5,3);
\filldraw (-1.5,2.5) circle (3pt);
\filldraw (1.5,1) circle (3pt);
\filldraw (0,1) circle (3pt);
\node[draw, fill=white!20 ,rounded corners ] at (.55,2.45) {$ \quad\;\; {\sf e}_{\und{\ell}'} \quad\;\; $};
\node[draw, fill=white!20 ,rounded corners ] at (.55,.2) {$ \quad\;\; \und{x}^{\hat{\und{\ell}'}} \quad\;\; $};
\node[draw, fill=white!20] at (0,1.625) {$ \qquad\;\;\; X \qquad\;\;\;$};
\node at (-1.8,2.5) {$\scs d$};
\node at (.6,1) {$\scs 1-j_1$};
\node at (2.3,1) {$\scs 1-j_{k-2}$};
\end{tikzpicture}}
\\
&=&
\sum_{\und{\ell}'\in \Sq(k-1)}
(-1)^{|\hat{\und{\ell}}'|-k+1+d}
\hackcenter{
\begin{tikzpicture} [scale=.75]
\draw[thick,  ] (1.5,-.75) -- (1.5,3);
\draw[thick,  ] (0,-.75) -- (0,3);
\draw[thick,  ] (-.4,-.75) -- (-.4,3);
\draw[thick,  ] (-1.5,-.75) -- (-1.5,3);
\node[draw, fill=white!20 ,rounded corners ] at (.55,2.45) {$ \quad\;\; {\sf e}_{\und{\ell}'} \quad\;\; $};
\node[draw, fill=white!20] at (0,1.625) {$ \qquad\;\;\; X \qquad \;\;\;$};
\node[draw, fill=white!20 ,rounded corners ] at (.55,.85) {$ \quad\; {\sf e}_{k-1-d} \quad\; $};
\node[draw, fill=white!20 ,rounded corners ] at (.55,-.15) {$ \quad\;\; \und{x}^{\hat{\und{\ell}'}} \quad\;\; $};
\filldraw (-1.5,2.5) circle (3pt);
\node at (-2,2.5) {$\scs d$};
\end{tikzpicture}}
\end{eqnarray*}

We find that when $(\ell_1,|\und{j}|)=(1,k-2)$ this summation equals
\[
\sum_{\und{\ell}'\in \Sq(k-1)}
(-1)^{|\hat{\und{\ell}}'|}
\hackcenter{
\begin{tikzpicture} [scale=.75]
\draw[thick,  ] (1.5,0) -- (1.5,3);
\draw[thick,  ] (0,0) -- (0,3);
\draw[thick,  ] (-.4,0) -- (-.4,3);
\draw[thick,  ] (-1.5,0) -- (-1.5,3);
\node[draw, fill=white!20 ,rounded corners ] at (.55,2.45) {$ \quad\;\; {\sf e}_{\und{\ell}'} \quad\;\; $};
\node[draw, fill=white!20] at (0,1.625) {$ \qquad\;\;\; X \qquad \;\;\;$};
\node[draw, fill=white!20 ,rounded corners ] at (.55,.7) {$ \quad\;\; \und{x}^{\hat{\und{\ell}'}} \quad\;\; $};
\filldraw (-1.5,2.5) circle (3pt);
\node at (-2.3,2.5) {$\scs k-1$};
\end{tikzpicture}}
\]
\end{proof}

\section{Quantum $\mf{gl}_m$ $2$-category}
\label{2categorysection}

We recall here the $\mf{gl}_m$ version of the categorified quantum group from~\cite{MSV}.
 The 2-category $\cal{U}=\cal{U}(\mf{gl}_m)$ is obtained
from the corresponding $\mf{sl}_m$ category defined in \cite{KL3,CLau} by switching from $\mf{sl}_m$ weights to $\mf{gl}_m$ weights~\cite{MSV}, however, the coefficients in the defining relations for the 2-category take on subtle changes.  This is further studied in \cite{BHLW2,Lau-param}.  A minimal presentation of this category can be determined from \cite{Brundan2}.

Here we identify the weight lattice $X$ of $\mf{gl}_m$ with $\Z^m$ and write $\epsilon_i = (0, \dots, 0, 1, 0 \dots ,0) \in X$, with a single 1 on the $i$th coordinate.  Let $I = \{ 1,2 ,\dots, m-1\}$ and define $\alpha_i = \epsilon_i - \epsilon_{i+1} \in X$ for $i \in I$.  There is a bilinear form on $\Z[I]$ define by
\[
 i\cdot j =
 \left\{
   \begin{array}{ll}
     2, & \hbox{if $i=j$;} \\
     -1, & \hbox{if $j=i\pm1$;} \\
     0, & \hbox{otherwise.}
   \end{array}
 \right.
\]
For $\lambda \in \Z^m$ let $\lambda_i$ denote the $i$th component of $\lambda$. Define the associated $\mf{sl}_m$ weight $\bar{\lambda} \in \Z^{m-1}$ by setting the $i$-th component $\bar{\lambda}_i = \lambda_i -\lambda_{i+1}$.

\begin{definition} \label{defU_cat}
The 2-category $\cal{U}$ is the graded additive $\Bbbk$-linear 2-category consisting of:
\begin{itemize}
\item \textbf{Objects} $\lambda$ for $\lambda \in X$.
\item \textbf{1-morphisms} are formal direct sums of (shifts of) compositions of
$$\onel, \quad \1_{\lambda+\alpha_i} \sE_i= \1_{\lambda+\alpha_i} \sE_i\onel, \quad \text{ and }\quad
\1_{\lambda-\alpha_i} \sF_i= \1_{\lambda-\alpha_i} \sF_i\onel$$
for $i \in I$ and $\lambda \in X$.  We denote the grading shift by $\la 1 \ra$, so that for each 1-morphism $x$ in $\cal{U}$ and $t\in \Z$ we a 1-morphism $x\la t\ra$.

\item \textbf{2-morphisms} are $\Bbbk$-vector spaces spanned by compositions of decorated tangle-like diagrams  coloured by $i\in I$ illustrated below.
\begin{align}
\hackcenter{\begin{tikzpicture}[scale=0.8]
    \draw[thick, ->] (0,0) -- (0,1.5)
        node[pos=.5, shape=coordinate](DOT){};
    \filldraw  (DOT) circle (2.5pt);
    \node at (-.85,.85) {\tiny $\lambda +\alpha_i$};
    \node at (.5,.85) {\tiny $\lambda$};
    \node at (-.2,.1) {\tiny $i$};
\end{tikzpicture}} &\maps \cal{E}_i\onel \to \cal{E}_i\onel \la i\cdot i \ra  & \quad
 &
\hackcenter{\begin{tikzpicture}[scale=0.8]
    \draw[thick, <-] (0,0) -- (0,1.5)
        node[pos=.5, shape=coordinate](DOT){};
    \filldraw  (DOT) circle (2.5pt);
    \node at (-.85,.85) {\tiny $\lambda -\alpha_i$};
    \node at (.5,.85) {\tiny $\lambda$};
    \node at (-.2,1.4) {\tiny $i$};
\end{tikzpicture}}
\maps \cal{F}_i\onel \to \cal{F}_i\onel\la i\cdot i \ra  \nn \\
   & & & \nn \\
  \hackcenter{\begin{tikzpicture}[scale=0.8]
    \draw[thick, ->] (0,0) .. controls (0,.5) and (.75,.5) .. (.75,1.0);
    \draw[thick, ->] (.75,0) .. controls (.75,.5) and (0,.5) .. (0,1.0);
    \node at (1.1,.55) {\tiny $\lambda$};
    \node at (-.2,.1) {\tiny $i$};
    \node at (.95,.1) {\tiny $j$};
\end{tikzpicture}} \;\;&\maps \cal{E}_i\cal{E}_j\onel  \to \cal{E}_j\cal{E}_i\onel\la -i\cdot j \ra  &
  &
  \hackcenter{\begin{tikzpicture}[scale=0.8]
    \draw[thick, <-] (0,0) .. controls (0,.5) and (.75,.5) .. (.75,1.0);
    \draw[thick, <-] (.75,0) .. controls (.75,.5) and (0,.5) .. (0,1.0);
    \node at (1.1,.55) {\tiny $\lambda$};
    \node at (-.25,.1) {\tiny $i$};
    \node at (1,.1) {\tiny $j$};
\end{tikzpicture}}\;\; \maps \cal{F}_i\cal{F}_j\onel  \to \cal{F}_j\cal{F}_i\onel\la -i\cdot j \ra  \nn \\
  & & & \nn \\
\hackcenter{\begin{tikzpicture}[scale=0.8]
    \draw[thick, <-] (.75,2) .. controls ++(0,-.75) and ++(0,-.75) .. (0,2);
    \node at (.4,1.2) {\tiny $\lambda$};
    \node at (-.2,1.9) {\tiny $i$};
\end{tikzpicture}} \;\; &\maps \onel  \to \cal{F}_i\cal{E}_i\onel\la  1 + \bar{\lambda}_i  \ra   &
    &
\hackcenter{\begin{tikzpicture}[scale=0.8]
    \draw[thick, ->] (.75,2) .. controls ++(0,-.75) and ++(0,-.75) .. (0,2);
    \node at (.4,1.2) {\tiny $\lambda$};
    \node at (.95,1.9) {\tiny $i$};
\end{tikzpicture}} \;\; \maps \onel  \to\cal{E}_i\cal{F}_i\onel\la  1 - \bar{\lambda}_i  \ra   \nn \\
      & & & \nn \\
\hackcenter{\begin{tikzpicture}[scale=0.8]
    \draw[thick, ->] (.75,-2) .. controls ++(0,.75) and ++(0,.75) .. (0,-2);
    \node at (.4,-1.2) {\tiny $\lambda$};
    \node at (.95,-1.9) {\tiny $i$};
\end{tikzpicture}} \;\; & \maps \cal{F}_i\cal{E}_i\onel \to\onel\la  1 + \bar{\lambda}_i  \ra   &
    &
\hackcenter{\begin{tikzpicture}[scale=0.8]
    \draw[thick, <-] (.75,-2) .. controls ++(0,.75) and ++(0,.75) .. (0,-2);
    \node at (.4,-1.2) {\tiny $\lambda$};
    \node at (-.2,-1.9) {\tiny $i$};
\end{tikzpicture}} \;\;\maps\cal{E}_i\cal{F}_i\onel  \to\onel\la  1 - \bar{\lambda}_i  \ra  \nn
\end{align}
\end{itemize}
In this $2$-category (and those throughout the paper) we
read diagrams from right to left and bottom to top.  The identity 2-morphism of the 1-morphism
$\cal{E}_i \onel$ is
represented by an upward oriented line labelled by $i$ and the identity 2-morphism of $\cal{F}_i \onel$ is
represented by a downward such line.

The 2-morphisms satisfy the following relations:
\begin{enumerate}
\item \label{item_cycbiadjoint-cyc} The 1-morphisms $\cal{E}_i \onel$ and $\cal{F}_i \onel$ are biadjoint (up to a specified degree shift). These conditions are expressed diagrammatically as
\begin{equation} \label{eq_biadjoint1-cyc}
    \hackcenter{
 }
\end{equation}
\end{enumerate}
\end{definition}

\section{Redotted Webster algebra for $\mf{sl}_2$} \label{sec:reddotted}
In this section we generalize a deformation of Webster's algebra considered in ~\cite{KS} to allow for the possibility of certain objects to be labelled by all natural numbers rather than just $1$.
Related deformations were also introduced by Webster (see for example \cite{WebgradedHecke}).
An algebraic and graphical description of this algebra are provided.  We construct a faithful representation of this algebra allowing us to write a basis.  The section is concluded with certain symmetries of the algebra.

\subsection{The definition of $W(\mathbf{s},n)$}

Let $m\geq 0$ be an integer.
For a sequence of non-negative integers $\mathbf{s}=(s_{1},...,s_{m})$, let $\mathrm{Seq}(\mathbf{s},n)$ be the set of all sequences $\mathbf{i}=(i_1,...,i_{m+n})$ in which $n$ of the entries are $\mf{b}$ and ${s}_{i}$ appears exactly once and in the order in which it appears in $\mathbf{s}$.
Denote by $\mathbf{i}_j$ the $j$-th entry of $\mathbf{i}$ and denote by $I_\mathbf{s}$ the set $\{{s}_{i}|1\leq i\leq m\}$.

Let $S_{n+m}$ be the symmetric group on $n+m$ letters generated by simple transpositions $\sigma_1, \ldots, \sigma_{n+m-1}$.  Each transposition $\sigma_j$ naturally acts on a sequence $\mathbf{i}$.  Note that if
$\mathbf{i} \in \mathrm{Seq}(\mathbf{s},n)$ it is not always the case that $\sigma_j.\mathbf{i} \in \mathrm{Seq}(\mathbf{s},n)$.

Let $W(\mathbf{s},n)$ be the algebra over $\Bbbk$ generated by $e(\mathbf{i})$, where $\mathbf{i} \in \mathrm{Seq}(\mathbf{s},n)$, $x_j$, $E(d)_j$, where $1\leq j\leq m+n$, $d\geq 1$, and $\psi_j$, where $1\leq j \leq m+n-1$, satisfying the relations below. For convenience, we use the notation $E(0)_j=1$.

\begin{minipage}{0.4\textwidth}
\begin{align}
& e(\mathbf{i})e(\mathbf{j}) = \delta_{\mathbf{i},\mathbf{j}}e(\mathbf{i})
\\
& E(d)_j e(\mathbf{i})=e(\mathbf{i})E(d)_j
\\
& E(d)_j e(\mathbf{i})=0 \quad \text{if $\mathbf{i}_j=\mathfrak{b}$}
\\ \label{eq:Ered}
&E(d)_j e({\bf i})=0 \quad \text{if $d > {\bf i}_j \in I_{\mathbf{s}}$}
\\
& x_j e(\mathbf{i})=e(\mathbf{i})x_j%
\\
& x_j e(\mathbf{i} )=0 \quad \text{if $\mathbf{i}_j\in I_{\mathbf{s}}$}
\\
& \psi_j e(\mathbf{i}) = e(\sigma_j(\mathbf{i}))\psi_j
\\
& \psi_j e(\mathbf{i}) = 0 \quad \text{if $\mathbf{i}_j,\mathbf{i}_{j+1}\in I_\mathbf{s}$}
\end{align}
\end{minipage}
\begin{minipage}{0.6\textwidth}
\begin{align}
& x_j x_\ell=x_\ell x_j
\\
& E(d)_j x_\ell=x_\ell E(d)_j
\\
& E(d)_j E(d')_\ell=E(d')_\ell E(d)_j
\\
&\psi_j x_\ell = x_\ell\psi_j  \quad \text{if $\ell\not=j$ and $\ell\not=j+1$}
\\
&\psi_j E(d)_\ell = E(d)_\ell\psi_j  \quad \text{if $\ell\not=j$ and $\ell\not=j+1$}
\\
& \psi_j E(d)_j  = E(d)_{j+1} \psi_j
\\
& E(d)_j \psi_j = \psi_j E(d)_{j+1}
\\
& \psi_j\psi_\ell = \psi_\ell\psi_j \quad \text{if $|j - \ell| > 1$}
\end{align}
\end{minipage}
\begin{minipage}{\textwidth}
\begin{align}
&x_j \psi_j e(\mathbf{i}) - \psi_j x_{j+1} e(\mathbf{i})= \delta_{\mathbf{i}_j,\mathbf{i}_{j+1}}e(\mathbf{i}) \quad \text{unless $\mathbf{i}_j\in I_\mathbf{s}$ and $\mathbf{i}_{j+1}\in  I_\mathbf{s}$}
\\
&\psi_j x_j e(\mathbf{i}) - x_{j+1} \psi_j e(\mathbf{i})= \delta_{\mathbf{i}_j,\mathbf{i}_{j+1}}e(\mathbf{i}) \quad  \text{unless $\mathbf{i}_j\in I_\mathbf{s}$ and $\mathbf{i}_{j+1}\in  I_\mathbf{s}$}
\\
\label{rel:r2}
&\psi_j^2 e(\mathbf{i})=\left\{
	\begin{array}{ll}
		0
		&\text{if } \mathbf{i}_j=\mathbf{i}_{j+1}={\mathfrak{b}},\\
		\sum_{d=0}^{\mathbf{i}_j}(-1)^{d}E(d)_j x_{j+1}^{\mathbf{i}_j-d} e(\mathbf{i})
		&\text{if } \mathbf{i}_j\in I_{\mathbf{s}}, \mathbf{i}_{j+1}={\mathfrak{b}},\\
		\sum_{d=0}^{\mathbf{i}_{j+1}}(-1)^{d} x_j^{\mathbf{i}_{j+1}-d} E(d)_{j+1} e(\mathbf{i})
		&\text{if } \mathbf{i}_j={\mathfrak{b}}, \mathbf{i}_{j+1} \in I_{\mathbf{s}},
	\end{array}\right.
\\
\label{rel:r3}
& (\psi_j\psi_{j+1} \psi_j-\psi_{j+1}\psi_j \psi_{j+1}) e(\mathbf{i}) \\
& \qquad
=\left\{
	\begin{array}{ll}
	 \sum_{d_1+d_2+d_3=\mathbf{i}_{j+1}-1}(-1)^{d_3}x_j^{d_1}E(d_3)_{j+1}x_{j+2}^{d_2}e(\mathbf{i})
	&\text{if } \mathbf{i}_{j+1}\in I_{s}, \mathbf{i}_{j}=\mathbf{i}_{j+2}={\mathfrak{b}},\\
	0
	&\text{otherwise},
	\end{array}\right. \nn
\end{align}
\end{minipage}

\begin{remark} \label{rem:cyclotomic}
Denote by $\overline{W}(\mathbf{s},n) $ the quotient of $W(\mathbf{s},n)$ by the extra \it{cyclotomic relation} $e(\mathbf{i}) = 0$ if $\mathbf{i}_1=\mf{b}$.
This cyclotomic quotient (when ${\bf s}=(1^m)$ and $n=1$) was the main object of study in ~\cite{KS}.  As mentioned there, the braid group action works whether or not we work in the cyclotomic quotient.  It is more difficult to find a basis for the cyclotomic quotient (except in the special case in ~\cite{KS}) which is why we prefer to work with the larger algebra here.  We expect that this cyclotomic quotient  categorifies a weight space of a tensor product of finite dimensional irreducible representations of quantum $\mathfrak{sl}_2$.
\end{remark}

$W(\mathbf{s},n)$ is a graded algebra with the degrees of generators:
\begin{equation} \label{eq:grading}
\begin{split}
\deg(e(\mathbf{i}))=0,\quad
\deg(x_j)=2,\quad
\deg(E(d)_j)=2d.
\\
\deg(\psi_j e(\mathbf{i}))=\left\{
	\begin{array}{ll}
	-2
	&\text{if}\quad \mathbf{i}_j=\mathbf{i}_{j+1}=\mf{b},\\
	\mathbf{i}_j
	&\text{if}\quad \mathbf{i}_j\in I_{s}, \mathbf{i}_{j+1}=\mf{b},\\
	\mathbf{i}_{j+1}
	&\text{if}\quad \mathbf{i}_j=\mf{b}, \mathbf{i}_{j+1}\in I_{\mathbf{s}}.\\
	\end{array}\right.
\end{split}
\end{equation}

\begin{remark} \label{rem:Redsymm}
Relation~\ref{eq:Ered}, together with the grading convention above, allow us to identify $E(d)_je(\mathbf{i})$ (where $\mathbf{i}_j \in I_{\mathbf{s}}$) with the elementary symmetric function ${\sf e}_d$ in the ring of symmetric functions. 
This identification is further justified by the basis result below
in Proposition~\ref{prop:basis} and the relations on bimodules studied in Section~\ref{subsec:splitter}.
\end{remark}

We describe a diagrammatic presentation of $W(\mathbf{s},n)$ in next subsection.

\begin{remark}
$W((0^m),n)$ is just the nilHecke algebra $\nh_n$.
\end{remark}

\subsection{Diagrammatic presentation of $W(\mathbf{s},n)$}
\label{sectiondiagdescription}
There is a graphical presentation of $W(\mathbf{s},n)$ similar to the algebra introduced in ~\cite{KS}.
We consider collections of smooth arcs composed of $n$ black arcs corresponding to $\mf{b}$ and $m$ red arcs with $I_{\mathbf{s}}$-labelling.
Arcs are assumed to have no critical points (in other words no cups or caps).
Arcs are allowed to intersect (as long as they are both not red), but no triple intersections are allowed.
Arcs can carry dots.
Two diagrams that are related by an isotopy that does not change the combinatorial types of the diagrams or the relative position of crossings and or dots are taken to be equal.
The elements of the vector space $W(\mathbf{s},n)$ are formal linear combinations of these diagrams modulo the local relations given below.
We give $W(\mathbf{s},n)$ the structure of an algebra by concatenating diagrams vertically as long as the colors of the endpoints match.  If they do not, the product of two diagrams is taken to be zero.

The generators $e(\mathbf{i})$ for $\mathbf{i} \in \mathrm{Seq}(\mathbf{s},n)$ represent the diagram consisting entirely of vertical strands whose $j$-th strand (counting from left for each $j$) is black if $\mathbf{i}_j$ is $\mf{b}$ or thick red with $\mathbf{i}_j$-labelling if $\mathbf{i}_j$ is in $I_\mathbf{s}$.
The red strand with $1$-labelling will often be represented by  a thin red strand without $1$-labelling.
Dots on black strands correspond to generators $x_j$ given earlier.
Dots on red strands correspond to generators $ E(d)_j$ defined in the previous subsection.
A crossing of two strands corresponds to a generator $\psi_j$.

\begin{example}
In the case of $\mathbf{s}=(2,1)$ and $n=2$, the set $\mathrm{Seq}((2,1),2)$ is
\[
\{ (2,1,\mf{b},\mf{b}),(2,\mf{b},1,\mf{b}),(\mf{b},2,1,\mf{b}),(2,\mf{b},\mf{b},1),(\mf{b},2,\mf{b},1),(\mf{b},\mf{b},2,1)\}
\]
The diagrammatic presentation of $e(\mathbf{i})$ corresponding to sequences of $\mathrm{Seq}((2,1),2)$ are
\[
\hackcenter{\begin{tikzpicture}[scale=0.6]
    \draw[red,double,thick, ] (0,0) -- (0,1.5)  node[pos=.35, shape=coordinate](DOT){};
    \draw[red,double,thick, ] (.5,0) -- (.5,1.5)  node[pos=.35, shape=coordinate](DOT){};
    \draw[thick, ] (1,0) -- (1,1.5)  node[pos=.35, shape=coordinate](DOT){};
    \draw[thick, ] (1.5,0) -- (1.5,1.5)  node[pos=.35, shape=coordinate](DOT){};
    \node at (0,-.25) {$\scs 2$};
    \node at (.5,-.25) {$\scs 1$};
\end{tikzpicture}}
,\quad\hackcenter{\begin{tikzpicture}[scale=0.6]
    \draw[red,double,thick, ] (0,0) -- (0,1.5)  node[pos=.35, shape=coordinate](DOT){};
    \draw[thick, ] (.5,0) -- (.5,1.5)  node[pos=.35, shape=coordinate](DOT){};
    \draw[red,double,thick, ] (1,0) -- (1,1.5)  node[pos=.35, shape=coordinate](DOT){};
    \draw[thick, ] (1.5,0) -- (1.5,1.5)  node[pos=.35, shape=coordinate](DOT){};
    \node at (0,-.25) {$\scs 2$};
    \node at (1,-.25) {$\scs 1$};
\end{tikzpicture}}
,\quad\hackcenter{\begin{tikzpicture}[scale=0.6]
    \draw[thick, ] (0,0) -- (0,1.5)  node[pos=.35, shape=coordinate](DOT){};
    \draw[red,double,thick, ] (.5,0) -- (.5,1.5)  node[pos=.35, shape=coordinate](DOT){};
    \draw[red,double,thick, ] (1,0) -- (1,1.5)  node[pos=.35, shape=coordinate](DOT){};
    \draw[thick, ] (1.5,0) -- (1.5,1.5)  node[pos=.35, shape=coordinate](DOT){};
    \node at (1,-.25) {$\scs 1$};
    \node at (.5,-.25) {$\scs 2$};
\end{tikzpicture}}
,\quad\hackcenter{\begin{tikzpicture}[scale=0.6]
    \draw[red,double,thick, ] (0,0) -- (0,1.5)  node[pos=.35, shape=coordinate](DOT){};
    \draw[thick, ] (.5,0) -- (.5,1.5)  node[pos=.35, shape=coordinate](DOT){};
    \draw[thick, ] (1,0) -- (1,1.5)  node[pos=.35, shape=coordinate](DOT){};
    \draw[red,double,thick, ] (1.5,0) -- (1.5,1.5)  node[pos=.35, shape=coordinate](DOT){};
    \node at (0,-.25) {$\scs 2$};
    \node at (1.5,-.25) {$\scs 1$};
\end{tikzpicture}}
,\quad\hackcenter{\begin{tikzpicture}[scale=0.6]
    \draw[thick, ] (0,0) -- (0,1.5)  node[pos=.35, shape=coordinate](DOT){};
    \draw[red,double,thick, ] (.5,0) -- (.5,1.5)  node[pos=.35, shape=coordinate](DOT){};
    \draw[thick, ] (1,0) -- (1,1.5)  node[pos=.35, shape=coordinate](DOT){};
    \draw[red,double,thick, ] (1.5,0) -- (1.5,1.5)  node[pos=.35, shape=coordinate](DOT){};
    \node at (1.5,-.25) {$\scs 1$};
    \node at (.5,-.25) {$\scs 2$};
\end{tikzpicture}}
,\quad\hackcenter{\begin{tikzpicture}[scale=0.6]
    \draw[thick, ] (0,0) -- (0,1.5)  node[pos=.35, shape=coordinate](DOT){};
    \draw[thick, ] (.5,0) -- (.5,1.5)  node[pos=.35, shape=coordinate](DOT){};
    \draw[red,double,thick, ] (1,0) -- (1,1.5)  node[pos=.35, shape=coordinate](DOT){};
    \draw[red,double,thick, ] (1.5,0) -- (1.5,1.5)  node[pos=.35, shape=coordinate](DOT){};
    \node at (1,-.25) {$\scs 2$};
    \node at (1.5,-.25) {$\scs 1$};
\end{tikzpicture}}
\]
\end{example}

The generators $x_j e(\mathbf{i})$ and $E(d)_j e(\mathbf{i})$ represent the dot on the $j$-th strand counting from left of the $(m+n)$ strand diagram:
\begin{eqnarray*}
x_j e(\mathbf{i})&=&\hackcenter{\begin{tikzpicture}[scale=0.6]
    \draw[thick, ] (0,0) -- (0,1.5)  node[pos=.45, shape=coordinate](DOT){};
    \filldraw  (DOT) circle (2.5pt);
\end{tikzpicture}}\quad  \hbox{if $\mathbf{i}_j=\mf{b}$,} \\
E(d)_j e(\mathbf{i})&=&
\hackcenter{\begin{tikzpicture}[scale=0.6]
    \draw[thick,red, double, ] (0,0) to (0,1.5);
 \filldraw[red]  (0,.65) circle (2.75pt);
    \node at (-.5,.65) {$\scs {\sf e}_d$};
    \node at (0,-.25) {$\scs \mathbf{i}_j$};
\end{tikzpicture}}\quad  \hbox{if $\mathbf{i}_j\in I_{\mathbf{s}}$ and $1\leq d \leq \mathbf{i}_j$.}
\end{eqnarray*}
where we make use of the identification from Remark~\ref{rem:Redsymm} to identify red dots with elementary symmetric functions.  In particular, we have that $E_d={\sf e}_d$ on the red arc with $i$-labelling is zero if $d$ is bigger than $i$.  Note that on thin red strands, we only have dots labelled by ${\sf e}_1$.  For simplicity in this case, we will abuse notation and label a dot $d$ when we really mean ${\sf e}_1^d$.

The generator $\psi_j e(\mathbf{i})$ represents the crossing diagram as follows.
\[
\psi_j e(\mathbf{i})=
 \left\{
   \begin{array}{ll}
     \;
\hackcenter{\begin{tikzpicture}[scale=0.6]
    \draw[thick, ] (0,0) .. controls (0,.75) and (1.5,.75) .. (1.5,1.5);
    \draw[thick, ] (1.5,0) .. controls (1.5,.75) and (0,.75) .. (0,1.5);
\end{tikzpicture}}, & \hbox{if $\mathbf{i}_j=\mf{b}$, $\mathbf{i}_{j+1}=\mf{b}$;} \\
\hackcenter{\begin{tikzpicture}[scale=0.6]
    \draw[thick, ] (0,0) .. controls (0,.75) and  (1.5,.75) .. (1.5,1.5);
    \draw[thick,red, double, ] (1.5,0) .. controls (1.5,.75) and (0,.75) .. (0,1.5);
    \node at (1.5,-.25) {$\scs \mathbf{i}_{j+1}$};
\end{tikzpicture}}, & \hbox{if $\mathbf{i}_j=\mf{b}$, $\mathbf{i}_{j+1}\in I_{\mathbf{s}}$;} \\
\hackcenter{\begin{tikzpicture}[scale=0.6]
    \draw[thick,red, double, ] (0,0) .. controls (0,.75) and  (1.5,.75) .. (1.5,1.5);
    \draw[thick, ] (1.5,0) .. controls (1.5,.75) and (0,.75) .. (0,1.5);
    \node at (0,-.25) {$\scs \mathbf{i}_j$};
\end{tikzpicture}}, & \hbox{if $\mathbf{i}_j\in I_{s}$, $\mathbf{i}_{j+1}=\mf{b}$.}
   \end{array}
 \right.
\]
The degrees of the generating diagrams are
\[
\deg
\left( \;
\hackcenter{\begin{tikzpicture}[scale=0.6]
    \draw[thick, ] (0,0) -- (0,1.5)  node[pos=.35, shape=coordinate](DOT){};
    \filldraw  (DOT) circle (2.5pt);
    \node at (0,-.2) {$\scs $};
\end{tikzpicture}}\;
\right)
= 2,
\quad
\deg
\left( \;
\hackcenter{\begin{tikzpicture}[scale=0.6]
    \draw[thick,red, double, ] (0,0) to (0,1.5);
 \filldraw[red]  (0,.5) circle (2.75pt);
    \node at (-.5,.5) {$\scs {\sf e}_d$};
    \node at (0,-.2) {$\scs i$};
\end{tikzpicture}}\;
\right)
= 2d,
\quad
\deg\left( \; \hackcenter{\begin{tikzpicture}[scale=0.6]
    \draw[thick, ] (0,0) .. controls (0,.75) and (.75,.75) .. (.75,1.5);
    \draw[thick, ] (.75,0) .. controls (.75,.75) and (0,.75) .. (0,1.5);
\end{tikzpicture}} \; \right) = -2,
\quad
\deg\left( \; \hackcenter{\begin{tikzpicture}[scale=0.6]
    \draw[thick, ] (0,0) .. controls (0,.75) and (.75,.75) .. (.75,1.5);
    \draw[thick,red, double, ] (.75,0) .. controls (.75,.75) and (0,.75) .. (0,1.5);
    \node at (.8,-.2) {$\scs i$};
\end{tikzpicture}} \; \right) =
\deg\left( \; \hackcenter{\begin{tikzpicture}[scale=0.6]
    \draw[thick,red, double, ] (0,0) .. controls (0,.75) and (.75,.75) .. (.75,1.5);
    \draw[thick, ] (.75,0) .. controls (.75,.75) and (0,.75) .. (0,1.5);
    \node at (0,-.2) {$\scs i$};
\end{tikzpicture}} \; \right) = i.
\]

The elements of the vector space $W(\mathbf{s},n)$ are formal linear combinations of these diagrams modulo isotopy, nilHecke relations on the black strands, and the following local relations.
We give the algebra structure of $W(\mathbf{s},n)$ by concatenating diagrams vertically when the colors on the endpoints of two diagrams match.
\begin{equation}\label{RBr2-rel}
\hackcenter{\begin{tikzpicture}[scale=0.8]
    \draw[thick] (0,0) .. controls ++(0,.5) and ++(0,-.5) .. (.75,1);
    \draw[thick, red, double, ] (.75,0) .. controls ++(0,.5) and ++(0,-.5) .. (0,1);
    \draw[thick,red, double, ] (0,1 ) .. controls ++(0,.5) and ++(0,-.5) .. (.75,2);
    \draw[thick, ] (.75,1) .. controls ++(0,.5) and ++(0,-.5) .. (0,2);
    \node at (0,-.25) {$\;$};
    \node at (0,2.25) {$\;$};
    \node at (.8,-.2) {$\scs i$};
\end{tikzpicture}}
 \;\; = \;\;
\sum_{d_1+d_2=i}(-1)^{d_2}
\hackcenter{\begin{tikzpicture}[scale=0.8]
    \draw[thick, ] (0,0) -- (0,1.5)  node[pos=.55, shape=coordinate](DOT){};
    \filldraw  (DOT) circle (2.5pt);
    \draw[thick,red, double, ] (.75,0) -- (.75,1.5) node[pos=.55, shape=coordinate](DOT2){};
    \filldraw[red]  (DOT2) circle (2.75pt);
    \node at (.8,-.2) {$\scs i$};
    \node at (-.3,.75) {$\scs d_1$};
    \node at (1.25,.75) {$\scs {\sf e}_{d_2}$};
\end{tikzpicture}}
%
\qquad \quad
\hackcenter{\begin{tikzpicture}[scale=0.8]
    \draw[thick, red, double] (0,0) .. controls ++(0,.5) and ++(0,-.5) .. (.75,1);
    \draw[thick ] (.75,0) .. controls ++(0,.5) and ++(0,-.5) .. (0,1);
    \draw[ thick, ] (0,1 ) .. controls ++(0,.5) and ++(0,-.5) .. (.75,2);
    \draw[thick,red, double, ] (.75,1) .. controls ++(0,.5) and ++(0,-.5) .. (0,2);
    \node at (0,-.25) {$\;$};
    \node at (0,2.25) {$\;$};
    \node at (0,-.2) {$\scs i$};
\end{tikzpicture}}
 \;\; = \;\;
\sum_{d_1+d_2=i}(-1)^{d_1}
\hackcenter{\begin{tikzpicture}[scale=0.8]
    \draw[thick, ] (.75,0) -- (.75,1.5)  node[pos=.55, shape=coordinate](DOT){};
    \filldraw  (DOT) circle (2.5pt);
    \draw[thick,red, double, ] (0,0) -- (0,1.5) node[pos=.55, shape=coordinate](DOT2){};
    \filldraw[red]  (DOT2) circle (2.75pt);
    \node at (.05,-.2) {$\scs i$};
    \node at (1.15,.75) {$\scs d_2$};
    \node at (-.4,.75) {$\scs {\sf e}_{d_1}$};
\end{tikzpicture}}
\end{equation}

\begin{align}
\label{blackdot}
\hackcenter{\begin{tikzpicture}[scale=0.8]
    \draw[thick, ] (0,0) .. controls (0,.75) and (.75,.75) .. (.75,1.5)
        node[pos=.25, shape=coordinate](DOT){};
    \draw[thick,red, double, ] (.75,0) .. controls (.75,.75) and (0,.75) .. (0,1.5);
    \filldraw  (DOT) circle (2.5pt);
    \node at (.8,-.2) {$\scs i$};
\end{tikzpicture}}
\quad =\quad
\hackcenter{\begin{tikzpicture}[scale=0.8]
    \draw[thick, ] (0,0) .. controls (0,.75) and (.75,.75) .. (.75,1.5)
        node[pos=.75, shape=coordinate](DOT){};
    \draw[thick,red, double, ] (.75,0) .. controls (.75,.75) and (0,.75) .. (0,1.5);
    \filldraw  (DOT) circle (2.5pt);
    \node at (.8,-.2) {$\scs i$};
\end{tikzpicture}}
\qquad \quad
\hackcenter{\begin{tikzpicture}[scale=0.8]
    \draw[thick,red, double, ] (0,0) .. controls (0,.75) and (.75,.75) .. (.75,1.5);
    \draw[thick,  ] (.75,0) .. controls (.75,.75) and (0,.75) .. (0,1.5)
        node[pos=.75, shape=coordinate](DOT){};
    \filldraw  (DOT) circle (2.75pt);
    \node at (.05,-.2) {$\scs i$};
\end{tikzpicture}}
\quad=\quad
\hackcenter{\begin{tikzpicture}[scale=0.8]
    \draw[thick,red, double, ] (0,0) .. controls (0,.75) and (.75,.75) .. (.75,1.5);
    \draw[thick, ] (.75,0) .. controls (.75,.75) and (0,.75) .. (0,1.5)
        node[pos=.25, shape=coordinate](DOT){};
      \filldraw  (DOT) circle (2.75pt);
    \node at (.05,-.2) {$\scs i$};
\end{tikzpicture}}
\end{align}

\begin{equation}
\label{reddot}
\hackcenter{\begin{tikzpicture}[scale=0.8]
    \draw[thick,red, double, ] (0,0) .. controls (0,.75) and (.75,.75) .. (.75,1.5)
        node[pos=.25, shape=coordinate](DOT){};
    \draw[thick, ] (.75,0) .. controls (.75,.75) and (0,.75) .. (0,1.5);
    \filldraw[red]  (DOT) circle (2.5pt);
    \node at (.05,-.2) {$\scs i$};
    \node at (-.25,.5) {$\scs {\sf e}_d$};
\end{tikzpicture}}
\quad =\quad
\hackcenter{\begin{tikzpicture}[scale=0.8]
    \draw[thick,red, double, ] (0,0) .. controls (0,.75) and (.75,.75) .. (.75,1.5)
        node[pos=.75, shape=coordinate](DOT){};
    \draw[thick, ] (.75,0) .. controls (.75,.75) and (0,.75) .. (0,1.5);
    \filldraw[red]  (DOT) circle (2.5pt);
    \node at (.05,-.2) {$\scs i$};
    \node at (1,1) {$\scs {\sf e}_d$};
\end{tikzpicture}}
\qquad \quad
\hackcenter{\begin{tikzpicture}[scale=0.8]
    \draw[thick, ] (0,0) .. controls (0,.75) and (.75,.75) .. (.75,1.5);
    \draw[thick,red, double, ] (.75,0) .. controls (.75,.75) and (0,.75) .. (0,1.5)
        node[pos=.75, shape=coordinate](DOT){};
    \filldraw[red]  (DOT) circle (2.75pt);
    \node at (.8,-.2) {$\scs i$};
    \node at (-.25,1) {$\scs {\sf e}_d$};
\end{tikzpicture}}
\quad=\quad
\hackcenter{\begin{tikzpicture}[scale=0.8]
    \draw[thick, ] (0,0) .. controls (0,.75) and (.75,.75) .. (.75,1.5);
    \draw[thick,red, double, ] (.75,0) .. controls (.75,.75) and (0,.75) .. (0,1.5)
        node[pos=.25, shape=coordinate](DOT){};
      \filldraw[red]  (DOT) circle (2.75pt);
    \node at (.8,-.2) {$\scs i$};
    \node at (1.,.5) {$\scs {\sf e}_d$};
\end{tikzpicture}}
\end{equation}

\begin{align}
\hackcenter{\begin{tikzpicture}[scale=0.8]
    \draw[thick,red, double, ] (0,0) .. controls ++(0,1) and ++(0,-1) .. (1.2,2);
    \draw[thick, ] (.6,0) .. controls ++(0,.5) and ++(0,-.5) .. (0,1.0);
    \draw[thick, ] (0,1.0) .. controls ++(0,.5) and ++(0,-.5) .. (0.6,2);
    \draw[thick, ] (1.2,0) .. controls ++(0,1) and ++(0,-1) .. (0,2);
    \node at (0,-.2) {$\scs i$};
\end{tikzpicture}}
\;\; = \;\;
\hackcenter{\begin{tikzpicture}[scale=0.8]
    \draw[thick,red, double, ] (0,0) .. controls ++(0,1) and ++(0,-1) .. (1.2,2);
    \draw[thick, ] (.6,0) .. controls ++(0,.5) and ++(0,-.5) .. (1.2,1.0);
    \draw[thick, ] (1.2,1.0) .. controls ++(0,.5) and ++(0,-.5) .. (0.6,2.0);
    \draw[thick, ] (1.2,0) .. controls ++(0,1) and ++(0,-1) .. (0,2.0);
    \node at (0,-.2) {$\scs i$};
\end{tikzpicture}}
\qquad \qquad
\hackcenter{\begin{tikzpicture}[scale=0.8]
    \draw[thick,  ] (0,0) .. controls ++(0,1) and ++(0,-1) .. (1.2,2);
    \draw[thick, ] (.6,0) .. controls ++(0,.5) and ++(0,-.5) .. (0,1.0);
    \draw[thick, ] (0,1.0) .. controls ++(0,.5) and ++(0,-.5) .. (0.6,2);
    \draw[thick,red, double, ] (1.2,0) .. controls ++(0,1) and ++(0,-1) .. (0,2);
    \node at (1.2,-.2) {$\scs i$};
\end{tikzpicture}}
\;\; = \;\;
\hackcenter{\begin{tikzpicture}[scale=0.8]
    \draw[thick,  ] (0,0) .. controls ++(0,1) and ++(0,-1) .. (1.2,2);
    \draw[thick, ] (.6,0) .. controls ++(0,.5) and ++(0,-.5) .. (1.2,1.0);
    \draw[thick, ] (1.2,1.0) .. controls ++(0,.5) and ++(0,-.5) .. (0.6,2.0);
    \draw[thick,red, double, ] (1.2,0) .. controls ++(0,1) and ++(0,-1) .. (0,2.0);
    \node at (1.2,-.2) {$\scs i$};
\end{tikzpicture}}
\end{align}

\begin{equation}\label{r3-2}
\hackcenter{\begin{tikzpicture}[scale=0.8]
    \draw[thick,  ] (0,0) .. controls ++(0,1) and ++(0,-1) .. (1.2,2);
    \draw[thick,red, double, ] (.6,0) .. controls ++(0,.5) and ++(0,-.5) .. (0,1.0);
    \draw[thick,red, double, ] (0,1.0) .. controls ++(0,.5) and ++(0,-.5) .. (0.6,2);
    \draw[thick, ] (1.2,0) .. controls ++(0,1) and ++(0,-1) .. (0,2);
    \node at (.6,-.2) {$\scs i$};
\end{tikzpicture}}
\;\; - \;\;
\hackcenter{\begin{tikzpicture}[scale=0.8]
    \draw[thick,  ] (0,0) .. controls ++(0,1) and ++(0,-1) .. (1.2,2);
    \draw[thick, red, double,] (.6,0) .. controls ++(0,.5) and ++(0,-.5) .. (1.2,1.0);
    \draw[thick,red, double, ] (1.2,1.0) .. controls ++(0,.5) and ++(0,-.5) .. (0.6,2.0);
    \draw[thick,  ] (1.2,0) .. controls ++(0,1) and ++(0,-1) .. (0,2.0);
    \node at (.6,-.2) {$\scs i$};
\end{tikzpicture}}
\;\; = \;\; \sum_{a+b+c=i-1}(-1)^c
\hackcenter{\begin{tikzpicture}[scale=0.8]
    \draw[thick, ] (0,0) -- (0,2)  node[pos=.5, shape=coordinate](DOT){};
    \draw[thick,red, double, ] (.6,0) --  (.6,2) node[pos=.75, shape=coordinate](DOT2){};
    \draw[thick, ] (1.2,0) -- (1.2,2)  node[pos=.5, shape=coordinate](DOT1){};
    \filldraw  (DOT) circle (2.5pt);
    \filldraw  (DOT1) circle (2.5pt);
    \filldraw[red]  (DOT2) circle (2.5pt);
 \node at (-.4,1) {$\scs a$};
    \node at (1.6,1) {$\scs b$};
    \node at (.6,-.2) {$\scs i$};
        \node at (.9,1.5) {$\scs {\sf e}_c$};
\end{tikzpicture}}
\end{equation}

\begin{lemma}\label{RthickBr2-lemma}

\begin{equation}\label{RthickBr2-rel}
\hackcenter{\begin{tikzpicture}[scale=0.8]
    \draw[thick, double] (0,0) .. controls ++(0,.5) and ++(0,-.5) .. (.75,1);
    \draw[thick, red, ] (.75,0) .. controls ++(0,.5) and ++(0,-.5) .. (0,1);
    \draw[thick,red,  ] (0,1 ) .. controls ++(0,.5) and ++(0,-.5) .. (.75,2);
    \draw[thick, double, ] (.75,1) .. controls ++(0,.5) and ++(0,-.5) .. (0,2);
    \node at (0,-.25) {$\;$};
    \node at (0,2.25) {$\;$};
    \node at (0,-.2) {$\scs k$};
\end{tikzpicture}}
 \;\; = \;\;
\sum_{d_1+d_2=k}(-1)^{d_2}
\hackcenter{\begin{tikzpicture}[scale=0.8]
    \draw[thick, double, ] (0,0) -- (0,1.5)  node[pos=.55, shape=coordinate](DOT){};
    \filldraw  (DOT) circle (2.5pt);
    \draw[red,thick,  ] (.75,0) -- (.75,1.5) node[pos=.55, shape=coordinate](DOT2){};
    \filldraw[red]  (DOT2) circle (2.75pt);
    \node at (0,-.2) {$\scs k$};
    \node at (-.3,.75) {$\scs {\sf e}_{d_1}$};
    \node at (1.15,.75) {$\scs d_2$};
\end{tikzpicture}}
%
\qquad \quad
\hackcenter{\begin{tikzpicture}[scale=0.8]
    \draw[thick, red] (0,0) .. controls ++(0,.5) and ++(0,-.5) .. (.75,1);
    \draw[thick, double ] (.75,0) .. controls ++(0,.5) and ++(0,-.5) .. (0,1);
    \draw[ thick, double, ] (0,1 ) .. controls ++(0,.5) and ++(0,-.5) .. (.75,2);
    \draw[thick,red,  ] (.75,1) .. controls ++(0,.5) and ++(0,-.5) .. (0,2);
    \node at (0,-.25) {$\;$};
    \node at (0,2.25) {$\;$};
    \node at (.8,-.2) {$\scs k$};
\end{tikzpicture}}
 \;\; = \;\;
\sum_{d_1+d_2=k}(-1)^{d_1}
\hackcenter{\begin{tikzpicture}[scale=0.8]
    \draw[thick, double, ] (.75,0) -- (.75,1.5)  node[pos=.55, shape=coordinate](DOT){};
    \filldraw  (DOT) circle (2.5pt);
    \draw[red,  ] (0,0) -- (0,1.5) node[pos=.55, shape=coordinate](DOT2){};
    \filldraw[red]  (DOT2) circle (2.75pt);
    \node at (.8,-.2) {$\scs k$};
    \node at (1.15,.75) {$\scs {\sf e}_{d_2}$};
    \node at (-.4,.75) {$\scs d_1$};
\end{tikzpicture}}
\end{equation}
\end{lemma}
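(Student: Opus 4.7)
The plan is to prove \eqref{RthickBr2-rel} by induction on the thickness $k$, interpreting the thick black strand through its idempotent decomposition $e_k$ on $k$ thin strands, as reviewed in the thick calculus subsection. The base case $k=1$ is immediate: the idempotent $e_1$ is trivial, and the elementary symmetric polynomial $e_{d_1}$ in a single variable reduces to $x_1^{d_1}$, so the statement collapses to the thin-strand bigon relation \eqref{RBr2-rel}.

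For the inductive step, I would split the thick $k$-strand into a thick $(k-1)$-strand plus a single thin strand using the splitter maps from \eqref{eq:splitters}. Applying this decomposition to both crossings of the LHS bigon yields nested sub-bigons: one between the thick $(k-1)$-strand and the red strand, and another between the remaining thin strand and the red strand, which will carry intermediate $E$-decorations once the outer bigon is resolved. Apply the inductive hypothesis to the outer sub-bigon, then \eqref{RBr2-rel} to the inner thin-red sub-bigon, using the dot slide relations \eqref{blackdot} and \eqref{reddot} to move accumulating decorations past the black and red crossings as needed. Finally, push the accumulated black dot data back up through the splitter using the elementary symmetric sliding relation \eqref{eq:elem_slide}.

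The combined dot data on the $k$ black strands then reassembles into $e_{d_1}$ on the full thick $k$-strand via the elementary symmetric recursion $e_{d_1}(x_1,\dots,x_k)=e_{d_1}(x_1,\dots,x_{k-1})+x_k\,e_{d_1-1}(x_1,\dots,x_{k-1})$, while the intermediate $E$-decorations on the red strand multiply into a single $E_{d_2}$. The hard part will be the combinatorial bookkeeping needed to verify that the double sum from the two applications collapses precisely into the single sum $\sum_{d_1+d_2=k}(-1)^{d_2}$, together with careful sign tracking from the two applications of \eqref{RBr2-rel}; Lemma~\ref{reduction-EX} provides an alternative mechanism that could replace the splitter bookkeeping with a more mechanical reduction from thick $k$ to thick $k-1$. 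The second equation of \eqref{RthickBr2-rel} is a mirror image of the first and follows by reflecting the diagrams horizontally (which exchanges the roles of the indices $d_1$ and $d_2$ and flips the sign accordingly), so proving the first equation is the essential content of the lemma.
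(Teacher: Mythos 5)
Your proposal is correct and is essentially the expansion of what the paper means by ``follows easily from the defining relations along with the definition of the thick black strand'': the paper gives no details, and your inductive organisation --- split the thick $k$-strand as $(k-1)+1$, resolve the two nested bigons via the inductive hypothesis and \eqref{RBr2-rel}, slide the $E$-dots through the remaining black crossings via \eqref{reddot}, and reassemble using $e_{d_1}(x_1,\dots,x_k)=e_{d_1}(x_1,\dots,x_{k-1})+x_k\,e_{d_1-1}(x_1,\dots,x_{k-1})$ --- is the natural way to carry out that routine check. One small expository remark: topologically it is cleaner to resolve the \emph{inner} thin-red bigon first and then the outer thick-$(k-1)$ bigon, sliding the accumulated $E_1$-powers on the red strand out of the outer bigon between the two steps; your stated order also works but requires the same sliding step in a slightly more awkward place. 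Your observation that the whole computation amounts to showing the bigon equals $\prod_{j=1}^k(x_j-E_1)=\sum_{d_2=0}^k(-1)^{d_2}e_{k-d_2}E_1^{d_2}$, which is symmetric and hence compatible with the sandwiching idempotent $e_k$, is exactly the right way to see why the sum over $d_1+d_2=k$ appears, and the second equation does indeed follow from the first by the reflection symmetry $\tau$ (the two crossings of the bigon contribute a net sign $(-1)^2=1$, and the defining relations with red strands are $\tau$-invariant as noted in the paper).
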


\begin{proof}
This follows easily from the defining relations along with the definition of the thick black strand.
\end{proof}

\begin{lemma}
Recall the diagrams defined in \eqref{diagramsCk1}.  We have the following equality.
\begin{equation}\label{rb-cross-C}
\hackcenter{
\begin{tikzpicture}[scale=.75]
\draw[thick,] (-1.5,2) .. controls ++(0,.25) and ++(0,-.25) .. (-.9,2.5) -- (-.9,4) .. controls ++(0,.25) and ++(0,-.25) .. (-1.5,4.5);
\draw[thick,] (-.9,2) .. controls ++(0,.25) and ++(0,-.25) .. (-.3,2.5) -- (-.3,4.5);
\draw[thick,] (-.3,2) .. controls ++(0,.25) and ++(0,-.25) .. (.3,2.5) -- (.3,4.5);
\draw[thick,] (.9,2) -- (.9,4.5);
\draw[thick,red,double] (.3,2) .. controls ++(0,.25) and ++(0,-.25) .. (-1.5,2.5) -- (-1.5,4) .. controls ++(0,.25) and ++(0,-.25) .. (-.9,4.5);
\draw[fill=white!20,] (-1.1,2.75) rectangle (1.1,3.75);
\node at (0,3.25) {$C_k$};
    \node at (.3,1.8) {$\scs 1$};
\end{tikzpicture}}
=
\hackcenter{
\begin{tikzpicture}[scale=.75]
\draw[thick,] (-1.5,2) -- (-1.5,4.5);
\draw[thick,] (-.9,2) -- (-.9,4) .. controls ++(0,.25) and ++(0,-.25) ..  (-.3,4.5);
\draw[thick,] (-.3,2) -- (-.3,4) .. controls ++(0,.25) and ++(0,-.25) .. (.3,4.5);
\draw[thick,] (.9,2) .. controls ++(0,.25) and ++(0,-.25) .. (.3,2.5) -- (.3,4)  .. controls ++(0,.25) and ++(0,-.25) .. (.9,4.5);
\draw[thick,red,double] (.3,2) .. controls ++(0,.25) and ++(0,-.25) .. (.9,2.5) -- (.9,4) .. controls ++(0,.25) and ++(0,-.25) .. (-.9,4.5);
\draw[fill=white!20,] (-1.7,2.75) rectangle (.5,3.75);
\node at (-.6,3.25) {$C_k$};
    \node at (.3,1.8) {$\scs 1$};
\end{tikzpicture}}
+
\sum_{\ell=1}^{k-1}
\hackcenter{
\begin{tikzpicture}[scale=.75]
\draw[thick,] (-2.7,2) to (-2.7,4.5);
\draw[thick,] (-2.1,2) -- (-2.1,3) .. controls ++(0,.25) and ++(0,-.25) .. (-1.8,3.5) -- (-1.8,4.5);
\draw[thick,] (-1.2,2) -- (-1.2,3) .. controls ++(0,.25) and ++(0,-.25) .. (-.9,3.5) -- (-.9,4.5);
\draw[thick,] (-.6,2)--(-.6,3) .. controls ++(0,.25) and ++(0,-.25) .. (-.3,3.5) -- (-.3,4.5);
\draw[thick,] (.3,2)--(.3,3) .. controls ++(0,.25) and ++(0,-.25) .. (.6,3.5) -- (.6,4.5);
\draw[thick,] (1.2,2)--(1.2,3) .. controls ++(0,.25) and ++(0,-.25) .. (1.2,3.5) -- (1.2,4.5);
\draw[thick,red,double] (.75,2) -- (.75,3) .. controls ++(0,.25) and ++(0,-.25) .. (-2.45,3.5) -- (-2.45,4.5);
\draw[fill=white!20,] (-.5,3.5) rectangle (1.4,4.25);
\node at (.45,3.875) {$C_{k-\ell}$};
\draw[fill=white!20,] (-2.9,2.25) rectangle (-1,3);
\node at (-1.8,2.625) {$C_{\ell}$};
    \node at (.75,1.8) {$\scs 1$};
\node at (-.15,2.5) {$\cdots$};
\node at (-1.35,4) {$\cdots$};
\end{tikzpicture}}
\end{equation}

\end{lemma}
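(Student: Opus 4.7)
The plan is to argue by induction on $k$. For the base case $k=1$, the diagram $C_1$ is by \eqref{diagramsCk1} the identity on a single strand, and the sum $\sum_{\ell=1}^{k-1}$ is empty; so both sides of \eqref{rb-cross-C} reduce to the same planar diagram (the red strand simply crossing some strands unobstructed), and there is nothing to check.

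For the inductive step, the key observation is that $C_k$ is a cyclic permutation on $k$ strands, and so decomposes as a composition of $k-1$ elementary crossings $\psi$ of neighboring black strands (namely $C_k = \sigma_1 \sigma_2 \cdots \sigma_{k-1}$ read bottom-to-top, where $\sigma_j$ crosses the $j$th and $(j{+}1)$st strands). The strategy is then to slide the red strand (which is labelled by $1$) through this composition one elementary crossing at a time. The basic move is governed by relation \eqref{r3-2}: when a red strand of label $i$ passes through a black--black crossing, the difference of the two sides equals $\sum_{a+b+c=i-1}(-1)^c x^a E_c\, x^b$. For $i=1$ this sum collapses to the single term with $a=b=c=0$, which is just three straight vertical strands with $E_0 = 1$ on the red strand, i.e.\ the identity smoothing of the crossing.

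Consequently, each application of this sliding move replaces one elementary crossing of $C_k$ by the identity. When the $j$th crossing $\sigma_j$ in $C_k$ is smoothed in this way, the remaining diagram of black strands splits along that position into two disjoint cyclic permutations, acting on the first $j$ and last $k-j$ strands respectively. By the definition \eqref{diagramsCk1} applied in reverse, this smoothed diagram is exactly $C_j \otimes C_{k-j}$ on the two disjoint groups of strands, and thus corresponds to the $\ell=j$ summand on the right-hand side of \eqref{rb-cross-C}. The remaining term, where no smoothing occurred, is exactly the first term on the right-hand side of \eqref{rb-cross-C} (the red strand has slid to the other side of $C_k$ without picking up any correction).

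The main obstacle is the combinatorial bookkeeping: one must carefully verify that the positions at which the red strand successively crosses elementary crossings, together with the structure of the inductively reduced $C_{k-1}$ portions, combine to give exactly one correction term $C_\ell \otimes C_{k-\ell}$ for each $\ell = 1, \ldots, k-1$, with correct orientation and placement. This bookkeeping mirrors that of Lemma \ref{C} (equation \eqref{dot-C}) for dot-sliding, with the $i=1$ case of \eqref{r3-2} playing the role of the basic dot-commutator; an alternative streamlined route is to observe that a red strand labelled $1$ sliding past $C_k$ behaves formally like a single dot sliding past $C_k$, and to deduce \eqref{rb-cross-C} directly from \eqref{dot-C} after using the red--black crossing decomposition \eqref{RBr2-rel} to convert red strand moves into dot moves.
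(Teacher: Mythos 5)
Your proof is correct and takes the same route as the paper's, whose entire proof reads ``The lemma follows from repeated use of \eqref{r3-2}.'' Your inductive organization, the decomposition $C_k=\sigma_1\sigma_2\cdots\sigma_{k-1}$, and the observation that for a thin ($i=1$) red strand the right-hand side of \eqref{r3-2} collapses to the single dotless resolution term are exactly the details that make the paper's terse statement rigorous; the ``alternative streamlined route'' you mention at the end is really the same argument phrased by analogy with \eqref{dot-C} rather than a genuinely different proof.
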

\proof
The lemma follows from repeated use of \eqref{r3-2}.

\endproof

\subsection{A faithful representation and basis}
We will construct a faithful (left) action of $W(\mathbf{s},n)$ on a direct sum of polynomial algebras.

For $1 \leq j \leq m+n$ let
\[
\#_{B,\mathbf{i}}(j)=|\{r | 1 \leq r \leq j-1, \mathbf{i}_r=\mathfrak{b}          \} |
\quad \quad
\#_{R,\mathbf{i}}(j) = j-1 - \#_{B,\mathbf{i}}(j).
\]
In other words, $\#_{B,\mathbf{i}}(j)$ is the number of the first $j-1$ entries of $\mathbf{i}$ which are $\mathfrak{b}$ and $\#_{R,\mathbf{i}}(j)$ is the number of the first $j-1$ entries which belong to $I_{\bf s}$.
When the sequence $\mathbf{i}$ is fixed we simply write $\#_B(j)$ and $\#_R(j)$.

Let $Y_{s_j}$ be an alphabet in $s_j$ variables and define
\[
R=\Sym(Y_{s_1}) \otimes \cdots \otimes \Sym(Y_{s_m})
\]
where $\Sym(Y_{s_j})$ is the algebra over $\Bbbk$ of symmetric functions in the alphabet $Y_{s_j}$.  Note that $R$ is determined by the sequence $\mathbf{s}$.

For each $\mathbf{i}=(i_1,...,i_{m+n}) \in  \mathrm{Seq}(\mathbf{s},n)$ in which $n$ of the letters are $\mf{b}$,  we associate a vector space
\begin{equation*}
V_{\mathbf{i}} =  R[X_{1, \mathbf{i}}, \ldots, X_{n, \mathbf{i}}]
\hspace{.5in}
V = \bigoplus_{\mathbf{i} \in \mathrm{Seq}(\mathbf{s},n)} V_{\mathbf{i}} .
\end{equation*}
Sometimes we write $f \in V_{\mathbf{i}}$ as $f(X_{\bf i})$ to emphasize the alphabet for which the variables of $f$ are taken from.

For $1 \leq r \leq n-1$ let
\[
\partial_r \colon V_{\mathbf{i}} \rightarrow V_{\mathbf{i}} \quad \quad
f(X_{1, \mathbf{i}}, \ldots, X_{n, \mathbf{i}}) \mapsto
\frac{f(X_{1, \mathbf{i}}, \ldots, X_{n, \mathbf{i}})-f(X_{\sigma_r(1), \mathbf{i}}, \ldots, X_{\sigma_r(n), \mathbf{i}})}
{X_{r, \mathbf{i}}-X_{r+1, \mathbf{i}}}
\]
be the $r$-th divided difference operator where each $\sigma_r \in S_n$ is the simple transposition fixing all elements except $r$ and $r+1$.

Each generator of $W(\mathbf{s},n)$ is declared to act trivially (by zero) on $V$ except in the following cases.

\begin{itemize}
\item $e(\mathbf{i}) \colon f \in V_{\mathbf{i}} \mapsto f \in V_{\mathbf{i}}$
\item $x_j e(\mathbf{i}) \colon f \in V_{\mathbf{i}} \mapsto X_{\#_B(j)+1, \mathbf{i}} f \in V_{\mathbf{i}}$,
if  $\mathbf{i}_j = \mathfrak{b}$
\item $E(d)_j e(\mathbf{i}) \colon f \in V_{\mathbf{i}} \mapsto {\sf e}_d(Y_{s_{\#_R(j)+1}}) f \in V_{\mathbf{i}}$,
if  $\mathbf{i}_j \neq \mathfrak{b}$
\item $\psi_j e(\mathbf{i}) \colon f \in V_{\mathbf{i}} \mapsto
\partial_{\#_B(j)+1} f \in V_{\mathbf{i}}$,
if  $\mathbf{i}_j =\mathbf{i}_{j+1}= \mathfrak{b}$
\item $\psi_j e(\mathbf{i}) \colon f(X_{\bf i}) \in V_{\mathbf{i}} \mapsto
f(X_{\sigma_j({\bf i})}) \in V_{\sigma_j(\mathbf{i})} $,
if $\mathbf{i}_j \neq \mathfrak{b}$ and $\mathbf{i}_{j+1}=\mathfrak{b}$
\item $\psi_j e(\mathbf{i}) \colon f(X_{\bf i}) \in V_{\mathbf{i}} \mapsto
\sum_{\alpha+\beta=\mathbf{i}_{j+1}} (-1)^{\beta}
X^{\alpha}_{ \#_B(j)+1, \sigma_j({\bf i}) }
{\sf e}_{\beta}(Y_{s_{\#_R(j+1)+1}} )
f(X_{\sigma_j({\bf i})}) \in V_{\sigma_j(\mathbf{i})} $,
if $\mathbf{i}_j = \mathfrak{b}$ and $\mathbf{i}_{j+1} \neq \mathfrak{b}$
\end{itemize}

\begin{proposition}
\label{proprep}
The action of the generators of $W(\mathbf{s},n)$ above extends to a  representation of $W(\mathbf{s},n)$ on $V$.
\end{proposition}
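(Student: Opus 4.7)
The plan is to verify that each defining relation of $W(\mathbf{s},n)$ is respected by the proposed action. Most relations are immediate: the orthogonality $e(\mathbf{i})e(\mathbf{j}) = \delta_{\mathbf{i},\mathbf{j}}e(\mathbf{i})$ follows from the decomposition $V = \bigoplus V_{\mathbf{i}}$; the commutativity of $x_j$, $E(d)_j$, and $e(\mathbf{i})$ with one another follows because these act as multiplication operators on disjoint alphabets (the $X_{r,\mathbf{i}}$ versus the $Y_{s_k}$); the relations $x_j e(\mathbf{i}) = 0$ when $\mathbf{i}_j \in I_{\mathbf{s}}$ and $E(d)_j e(\mathbf{i}) = 0$ when $\mathbf{i}_j = \mathfrak{b}$ or $d > \mathbf{i}_j$ are built into the case-by-case definition; and $E(0)_j e(\mathbf{i})$ acts as $\varepsilon_0(Y_{s_k})=1$. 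The intertwining relations $\psi_j e(\mathbf{i}) = e(\sigma_j(\mathbf{i}))\psi_j$ hold because $\psi_j$ either maps $V_{\mathbf{i}}$ to $V_{\sigma_j(\mathbf{i})}$ (when the colors at positions $j,j{+}1$ differ) or to $V_{\mathbf{i}}$ via $\partial_r$ (when both are black). The relations $\psi_j x_\ell = x_\ell\psi_j$ and $\psi_j E(d)_\ell = E(d)_\ell\psi_j$ for $\ell\neq j,j{+}1$ are clear since $\psi_j$ only touches the two strands in its interval, and the sliding relations $\psi_j E(d)_j e(\mathbf{i}) = E(d)_{j+1}\psi_j e(\mathbf{i})$ follow from the fact that $\#_R(j)+1$ relative to $\mathbf{i}$ equals $\#_R(j{+}1)+1$ relative to $\sigma_j(\mathbf{i})$ when the red strand moves from position $j$ to position $j{+}1$. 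The purely-black nilHecke relations on $\psi$ and $x$ are classical properties of divided difference operators on $\Bbbk[X_{\bullet,\mathbf{i}}]$.

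The first nontrivial check is the quadratic relation \eqref{r2}. For $\mathbf{i}_j = \mathbf{i}_{j+1} = \mathfrak{b}$ this is $\partial_r^2 = 0$. For $\mathbf{i}_j \in I_{\mathbf{s}}$, $\mathbf{i}_{j+1} = \mathfrak{b}$, starting from $f(X_{\mathbf{i}}) \in V_{\mathbf{i}}$, the first $\psi_j$ lands us in $V_{\sigma_j(\mathbf{i})}$ by simple relabeling, and the second $\psi_j$ (now a black-over-red crossing) produces
\[
\sum_{\alpha+\beta=\mathbf{i}_j}(-1)^{\beta} X_{\#_B(j)+1,\mathbf{i}}^{\alpha}\, E_{\beta}(Y_{s_{\#_R(j)+1}})\, f(X_{\mathbf{i}}),
\]
matching the RHS of \eqref{r2} since $x_{j+1}$ acts on $V_{\mathbf{i}}$ (after the original $\sigma_j$) as multiplication by $X_{\#_B(j)+1,\mathbf{i}}$. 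The mirror case is symmetric. The mixed dot-slide/crossing relations \eqref{blackdot} and \eqref{reddot} are a direct unwinding of the action.

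The hardest verification, and the main obstacle, is the braid relation \eqref{r3}, specifically in the case $\mathbf{i}_{j+1} \in I_{\mathbf{s}}$ and $\mathbf{i}_j = \mathbf{i}_{j+2} = \mathfrak{b}$, where it acquires a non-zero error term. For the triple-black and triple-red cases and the cases with a red strand on the boundary, the relation reduces to the classical nilHecke braid identity or is trivial. For the pivotal case, I would compute $\psi_j\psi_{j+1}\psi_j$ and $\psi_{j+1}\psi_j\psi_{j+1}$ explicitly as operators $V_{\mathbf{i}} \to V_{\sigma_j\sigma_{j+1}\sigma_j(\mathbf{i})}$, tracking how the red strand passes through both black strands. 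Each triple composition will produce a sum of monomials in two $X$-variables multiplied by elementary symmetric polynomials $E_\beta(Y_{s_k})$, and the desired difference should telescope using the standard identity
\[
\sum_{\alpha+\beta=N}(-1)^\beta X_1^{\alpha} E_\beta(Y) - \sum_{\alpha+\beta=N}(-1)^\beta X_2^{\alpha} E_\beta(Y) = (X_1-X_2)\sum_{d_1+d_2+d_3=N-1}(-1)^{d_3} X_1^{d_1} X_2^{d_2} E_{d_3}(Y),
\]
after dividing by $X_1-X_2$ via the first divided difference. Matching this expansion term-by-term against the RHS of \eqref{r3} yields the relation.

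Once all the relations hold on $V$, the universal property of $W(\mathbf{s},n)$ gives the desired $W(\mathbf{s},n)$-module structure. I would record this check formally by induction on the number of generators involved or simply by running through the list of relations one at a time, noting that $V$ is $\mathbb{Z}$-graded in a way compatible with the degrees listed for the generators, which provides a consistency check at each step.
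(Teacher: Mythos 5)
Your verification is correct and fills in the details of what the paper dismisses as a routine exercise; checking the relations one by one — reducing the all-black cases to classical divided-difference identities, and handling the mixed quadratic relation \eqref{r2} and the braid relation \eqref{r3} by explicit computation using the telescoping identity for elementary symmetric polynomials — is exactly the natural and correct route. One small imprecision worth a sentence in a final write-up: the cases of \eqref{r3} with a red strand at position $j$ or $j+2$ (e.g.\ $\mathbf{i}_j=\mathbf{i}_{j+1}=\mathfrak{b}$, $\mathbf{i}_{j+2}\in I_{\mathbf{s}}$) are not literally instances of the nilHecke braid identity nor are they trivially zero; rather, tracking indices shows that both triple products act as $\partial$ composed (in opposite orders) with multiplication by a polynomial of the form $q(X_{r})q(X_{r+1})$ with $q(X)=\sum_{\alpha+\beta=c}(-1)^\beta X^\alpha E_\beta(Y)$, which is symmetric in $X_r,X_{r+1}$ and hence commutes past the divided difference, giving equality of the two sides.
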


\begin{proof}
The fact that the generators of $W(\mathbf{s},n)$ prescribed above extend to a representation of the algebra on $V$ is a routine exercise.
\end{proof}

Let $\mathbf{i}, \mathbf{i}'  \in \mathrm{Seq}(\mathbf{s},n)$.  There is a partial order on
$\mathrm{Seq}(\mathbf{s},n)$ be declaring $\mathbf{i} < \mathbf{i}'$ if

\begin{equation}
\label{order}
\#_{B,\mathbf{i}}(j) > \#_{B,\mathbf{i}'}(j)
\quad \quad \text{ for } j=1,\ldots,m+n.
\end{equation}
Informally, the more to the left the black strands of $\mathbf{i}$ are, the smaller $\mathbf{i}$ is.

\begin{example}
Let $\mathbf{s}=(1,1)$ and $n=1$.  Then we order the elements of $\mathrm{Seq}(\mathbf{s},n)$ as follows:
\[
(\mathfrak{b},1,1) < (1,\mathfrak{b},1) < (1,1,\mathfrak{b}).
\]
\end{example}

Consider a diagram $D$ whose bottom boundary is determined by the sequence $\mathbf{i}$ and whose top boundary is determined by $\mathbf{i}'$.
Using the relations of the algebra we may assume that any pair of strands of $D$ intersect at most once and that all of the dots are at the bottom of the diagram.
Such a $D$ is determined by a minimal presentation $\tilde{w}=\sigma_{i_1} \cdots \sigma_{i_r}$ for an element $w \in S_{n+m}$ (along with the dots at the bottom).

Let ${}_{\mathbf{i}'} S_{\mathbf{i}} $ be the subset of $S_{n+m}$ consisting of permutations each of which takes $\mathbf{i}$ to $\mathbf{i}'$ by the standard action of $S_{n+m}$ on sequences.  For each $w \in {}_{\mathbf{i}'} S_{\mathbf{i}}$
we convert a minimal presentation $\tilde{w}$ into an element $\hat{w}$ of
$e(\mathbf{i}') W(\mathbf{s},n) e(\mathbf{i})$.
Let
\[
{}_{\mathbf{i}'} \hat{S}_{\mathbf{i}} =
\{ \hat{w} | w \in {}_{\mathbf{i}'} S_{\mathbf{i}}
\}.
\]
Let
\[
{}_{\mathbf{i}'} B {}_{\mathbf{i}}
=
\{
\hat{w}
\prod_{j \notin I_{\mathbf{s}}} x_j^{r_j}
\prod_{j \in I_{\mathbf{s}}}
\prod_{\gamma=1}^{\mathbf{i}_j}
E(\gamma)_j^{r_{\gamma_j}}
e(\mathbf{i})
| w \in {}_{\mathbf{i}'} S_{\mathbf{i}},
r_j, r_{\gamma_j} \in \Z_{\geq 0},
 \}.
\]
The diagram for each element in this set consists of crossings matching
$\mathbf{i}'$ and $\mathbf{i}$ on the top and dots on the bottom.  The red dots are labelled by elementary symmetric functions.

The proof of the following proposition is similar to the proof of \cite[Theorem 2.5]{KL1}.

\begin{proposition} \label{prop:basis}
$ e(\mathbf{i}') W(\mathbf{s},n) e(\mathbf{i})$ is a free graded abelian group with a homogeneous basis
${}_{\mathbf{i}'} B {}_{\mathbf{i}}$.
\end{proposition}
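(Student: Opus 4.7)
The plan is to establish both spanning and linear independence in the standard KLR style, with the faithfulness result of Proposition 2.36 doing the heavy lifting for independence.

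For spanning, I would start with an arbitrary diagram in $e(\mathbf{i}')W(\mathbf{s},n)e(\mathbf{i})$ and rewrite it as a linear combination of elements of ${}_{\mathbf{i}'}B{}_{\mathbf{i}}$ using the defining relations. First, use the dot-sliding relations \eqref{blackdot}, \eqref{reddot} together with the nilHecke dot/crossing relation from Section~2 to slide all dots past crossings to the bottom of the diagram; this is allowed at the cost of terms with strictly fewer crossings (the nilHecke error term) or with dots repositioned onto the other strand. Next, iteratively reduce the number of crossings: double black crossings give $\psi_j^2 = 0$ by \eqref{r2}, a double mixed crossing rewrites as a sum of dotted non-crossing diagrams via \eqref{RBr2-rel}, and the braid relation \eqref{r3} (and the variant \eqref{r3-2}) allows reordering of the strands with error terms that again have fewer crossings. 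By induction on the total number of crossings, one arrives at a linear combination of diagrams $\hat{w}\cdot (\text{monomial at the bottom})\cdot e(\mathbf{i})$ where $\hat{w}$ is a reduced (minimal) presentation of some $w \in {}_{\mathbf{i}'}S_{\mathbf{i}}$; two different reduced words for the same $w$ yield the same element up to terms with fewer crossings by \eqref{r3}, so we can fix one choice of $\hat{w}$ per $w$. The bottom monomials on black strands are ordinary monomials in the $x_i$'s, and on red strands they are symmetric polynomials in $Y_{s_j}$ (which can be taken as products of the $E(d)_i$'s since the elementary symmetric functions freely generate $\Sym(Y_{s_j})$).

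For linear independence, I would deploy the faithful representation of Proposition 2.36 on $V = \bigoplus_{\mathbf{i}} V_{\mathbf{i}}$ and prove a triangularity statement with respect to the partial order \eqref{order} on $\mathrm{Seq}(\mathbf{s},n)$ together with the length function on $S_{m+n}$. Concretely, for each $w \in {}_{\mathbf{i}'}S_{\mathbf{i}}$ and each monomial $p$ in the $x_i$'s (for black strands) and the $E(d)_i$'s (for red strands), I would apply $\hat{w}\cdot p\cdot e(\mathbf{i})$ to well-chosen test elements of $V_{\mathbf{i}}$ (specifically, monomials $X^{\alpha}$ in the black variables). The explicit formulas for the action of $\psi_j$ --- $\partial$-operators when both strands are black, a relabelling $X_{\bullet,\mathbf{i}}\mapsto X_{\bullet,\sigma_j(\mathbf{i})}$ when a red strand is hopped from left to right, and the sum $\sum_\alpha (-1)^\beta X^\alpha E_\beta(Y)$ when a red strand is hopped from right to left --- all have a well-defined ``leading term'' obtained by taking the identity component of the divided difference or the $\alpha = \mathbf{i}_{j+1}, \beta = 0$ summand. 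Reading these off, the leading term of $\hat{w}\cdot p \cdot e(\mathbf{i})$ acting on $X^{\alpha}$ is (up to a nonzero scalar) $w(X^{\alpha})\cdot p$ in $V_{\mathbf{i}'}$, and the correction terms correspond to strictly shorter permutations or to sequences $\mathbf{i}''$ strictly smaller in the order \eqref{order} than $\mathbf{i}'$. Since $R[X_1,\ldots,X_n]$ is free over $R$ with basis given by the $X^{\alpha}$, and $R$ itself is free with basis given by products of elementary symmetric polynomials in each $Y_{s_j}$, the family of leading terms as $(w, p)$ varies is linearly independent, forcing any linear relation among elements of ${}_{\mathbf{i}'}B{}_{\mathbf{i}}$ to be trivial.

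The main obstacle I expect is the bookkeeping in the triangularity step, particularly handling the mixed crossing $\mathbf{i}_j = \mathfrak{b},\ \mathbf{i}_{j+1}\in I_{\mathbf{s}}$, whose action produces a whole sum $\sum_{\alpha+\beta=\mathbf{i}_{j+1}}(-1)^\beta X^\alpha E_\beta(Y)$ rather than a single monomial. The correct way to handle this is to extract the top-degree-in-$X$ part (the $\beta = 0$ term) as the genuine leading term, and absorb the $\beta \geq 1$ terms into ``lower order'' by an inductive filtration on the pair (length of $w$, degree in $X$). A related subtlety is that lifting $w \in {}_{\mathbf{i}'}S_{\mathbf{i}}$ to a chosen reduced word $\hat{w}$ requires checking that the difference between two reduced words is a sum of products of generators involving strictly shorter permutations, which follows from \eqref{r3} applied inductively to Matsumoto-type transformations of reduced words. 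Once these are in place, the counting of parameters in ${}_{\mathbf{i}'}B{}_{\mathbf{i}}$ matches the rank of $e(\mathbf{i}')W(\mathbf{s},n)e(\mathbf{i})$ predicted by spanning, completing the proof.
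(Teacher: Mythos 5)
Your spanning argument is essentially the paper's (which simply asserts that spanning is straightforward). Where you diverge is linear independence. The paper sets up an induction on $\mathbf{i}'$ in the partial order \eqref{order}. At the minimum (all black strands of $\mathbf{i}'$ to the left), any $w\in{}_{\mathbf{i}'}S_{\mathbf{i}}$ is factored as $w=w_1w_0$, where the chosen lift $\hat{w}_0$ of the unique minimal element is built entirely from crossings of the pure relabelling type and $\hat{w}_1$ lies in the nilHecke algebra on the $n$ black strands; faithfulness of the nilHecke action on the polynomial ring is then invoked as a black box. The inductive step never returns to $V$ at all: taking $\mathbf{i}''=\sigma_k(\mathbf{i}')>\mathbf{i}'$, it multiplies ${}_{\mathbf{i}''}B_{\mathbf{i}}$ on the left by $\psi_k$ and exhibits an explicit injection $\delta\colon{}_{\mathbf{i}''}B_{\mathbf{i}}\to{}_{\mathbf{i}'}B_{\mathbf{i}}$ with $\psi_k y=\delta(y)$ plus lower-order terms, so independence of ${}_{\mathbf{i}''}B_{\mathbf{i}}$ follows from the inductive hypothesis for ${}_{\mathbf{i}'}B_{\mathbf{i}}$. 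Your proposal instead attempts a single global triangularity inside $V$ without separating the nilHecke crossings from the red-black ones.

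That global triangularity, as you state it, does not hold, and this is a real gap. You claim the leading term of $\hat{w}\cdot p\cdot e(\mathbf{i})$ acting on $X^{\alpha}$ is (up to scalar) $w(X^{\alpha})\cdot p$, with the leading term on a black-black crossing given by "the identity component of the divided difference." But $\partial_r$ is a degree-lowering operator: $\partial_r(X^{\alpha})$ has degree $|\alpha|-1$, while $\sigma_r(X^{\alpha})$ has degree $|\alpha|$, so the degrees of your claimed leading term and actual output do not even match, and there is no component of $\partial_r$ that reproduces $\sigma_r$. Any honest leading term of $\partial_r(X^{\alpha})$ (for $\alpha_r>\alpha_{r+1}$, something like $X^{\alpha-e_r}$ in a suitable monomial order) is degree-shifted, and different pairs $(w,\alpha)$ can produce identical outputs (e.g.\ $\partial_1(x_1)=1=\mathrm{id}(1)$), so "the family of leading terms as $(w,p)$ varies is linearly independent" is not something you can read off. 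To salvage your route you would need either to redo the degree-sensitive triangularity for the nilHecke factor from scratch (choosing staircase-type test monomials and proving the correct degree-shifted analogue of your claim), or to factor the nilHecke crossings away from the red-black crossings and invoke nilHecke faithfulness separately — which is exactly what the paper's reduction to the extremal $\mathbf{i}'$ followed by the $\delta$-induction accomplishes more cleanly.
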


\begin{proof}
It is straightforward to check that ${}_{\mathbf{i}'} B {}_{\mathbf{i}}$ is a spanning set.

Let $ {}_{\mathbf{i}'} M_{\mathbf{i}}$ be the dotless diagram with fewest number of crossings such that the bottom boundary points correspond to $\mathbf{i}$ and the top boundary points correspond to $\mathbf{i}'$.
Note that strands with the same color do not intersect in $ {}_{\mathbf{i}'} M_{\mathbf{i}}$ and that $ {}_{\mathbf{i}} M_{\mathbf{i}}=e(\mathbf{i})$.
We have
\begin{equation}
\label{doublematchresolve}
({}_{\mathbf{i}} M_{\mathbf{i}'})
({}_{\mathbf{i}'} M_{\mathbf{i}})
=
\prod \sum_{\alpha+\beta=\mathbf{i}_b} (-1)^{\beta} x_a^{\alpha} E({\beta})_b
+
\prod \sum_{\alpha+\beta=\mathbf{i}_a} (-1)^{\alpha} E({\alpha})_a x_b^{\beta}
\end{equation}
where the first product in ~\eqref{doublematchresolve} is over pairs $(a,b)$ such that $ 1 \leq a < b \leq m+n$ and the arcs in ${}_{\mathbf{i}'} M_{\mathbf{i}}$ ending at the $a$-th and $b$-th bottom points counting from the left intersect and $ \mathbf{i}_a=\mathfrak{b}$.
The second product in ~\eqref{doublematchresolve} is over pairs $(a,b)$ such that  $ 1 \leq a < b \leq m+n$ and the arcs in ${}_{\mathbf{i}'} M_{\mathbf{i}}$ ending at the $a$th and $b$th bottom points counting from the left intersect and $ \mathbf{i}_a \neq \mathfrak{b}$.
Thus elements of the form $ {}_{\mathbf{i}'} M_{\mathbf{i}}$ are non-zero.

We prove that the elements of ${}_{\mathbf{i}'} B {}_{\mathbf{i}}$ are linearly independent by induction on $\mathbf{i}'$ using the partial order defined in ~\eqref{order}.  The minimal possible $\mathbf{i}'$ is of the form
\[
(\mathfrak{b}, \ldots, \mathfrak{b}, s_1, \ldots, s_m).
\]
Each $w \in {}_{\mathbf{i}'} S_{\mathbf{i}}$ could be written as
$w=w_1 w_0$ where $w_0$
is an element in ${}_{\mathbf{i}'} S_{\mathbf{i}}$ with no black strands crossing
and $w_1 \in S_n$.
Each minimal length representative $\tilde{w}_1 $ of $w_1$ determines the same element $\hat{w}_1$ of ${}_{\mathbf{i}'} \hat{S}_{\mathbf{i}}$.
Similarly,
each minimal length representative $\tilde{w}_0 $ of $w_0$ determines the same element $\hat{w}_0$ of ${}_{\mathbf{i}'} \hat{S}_{\mathbf{i}'}$.

The elements of ${}_{\mathbf{i}'} B {}_{\mathbf{i}}$ are of the form
$\hat{w}_1 \hat{w}_0 x^{u} e(\mathbf{i})$ where
$x^u$ is a product of dots.
Let $f$ be a monomial in $V_{\mathbf{i}}$.  By the action defined in Proposition ~\ref{proprep}, the element
$ \hat{w}_0 x^{u}$ takes $f$ to $x^{u} f \in V_{\mathbf{i}'}$ (suitably relabelled).
The element $\hat{w}_1$ acts on monomials as products of divided difference operators.  Since the action of the nilHecke algebra on the ring of polynomials is faithful, the elements $\hat{w}_1 \hat{w}_0 x^{u} e(\mathbf{i})$ are linearly independent.

Assume inductively that the elements of ${}_{\mathbf{i}'} B {}_{\mathbf{i}}$ are linearly independent.  Assume $\mathbf{i}_k'=\mathfrak{b}, \mathbf{i}_{k+1}' \neq \mathfrak{b}$ and that $\mathbf{i}''=\sigma_k (\mathbf{i}')$.
In order to show that ${}_{\mathbf{i}''} B {}_{\mathbf{i}}$ is a linearly independent set, we consider its image under the map
\[
\psi_k \colon e(\mathbf{i}'') W(\mathbf{s},n) e(\mathbf{i})
\rightarrow
e(\mathbf{i}') W(\mathbf{s},n) e(\mathbf{i}).
\]

Define a partial order on ${}_{\mathbf{i}'} B {}_{\mathbf{i}}$ by
$w_1 f_1 < w_2 f_2$ if $l(w_1)<l(w_2)$ or if $w_1=w_2$ and the dots on the first $j-1$ red strands counting from left to right are the same but the degree of $E(s_j)$ in $f_1$ is less than the degree of $E(s_j)$ in $f_2$ where the $j$-th red strand is labelled $s_j$.  Extend this partial order to a total order.

Let $\delta \colon {}_{\mathbf{i}''} B {}_{\mathbf{i}} \rightarrow {}_{\mathbf{i}'} B {}_{\mathbf{i}} $ be defined by $\delta(y)= \psi_k y$ if the strands with top boundary points at positions $k$ and $k+1$ are disjoint.

If the strands do intersect, set $\delta(y)=y' E(d)_{\ell}$
where $y'$ is obtained from $y$ by removing the crossing between the two strands and the red strand ending at position $k+1$ on the top is labelled by $d$ and the bottom boundary of that strand is at position $\ell$ from the left.

The map $\delta$ is clearly injective.
Note that $\psi_k y=\delta(y) $ plus lower terms.  Now the linear independence of
${}_{\mathbf{i}''} B {}_{\mathbf{i}}$ follows from the inductive hypothesis.
\end{proof}

\begin{corollary}
The action of $W(\mathbf{s},n)$ on $V$ is faithful.
\end{corollary}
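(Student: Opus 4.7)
The plan is to observe that the preceding proof of the basis proposition implicitly establishes faithfulness, so the corollary follows by repackaging that argument. First, since $1 = \sum_{\mathbf{i} \in \mathrm{Seq}(\mathbf{s},n)} e(\mathbf{i})$ and the idempotents $e(\mathbf{i})$ act as projectors onto the direct summands $V_{\mathbf{i}} \subset V$, any element $a \in W(\mathbf{s},n)$ decomposes as $a = \sum_{\mathbf{i},\mathbf{i}'} e(\mathbf{i}')\, a\, e(\mathbf{i})$, and the action of $a$ on $V$ is zero if and only if each truncation $e(\mathbf{i}')\, a\, e(\mathbf{i})$ acts as zero from $V_{\mathbf{i}}$ to $V_{\mathbf{i}'}$. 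Thus it suffices to show that every nonzero element of $e(\mathbf{i}') W(\mathbf{s},n) e(\mathbf{i})$ acts nontrivially on $V_{\mathbf{i}}$.

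Fix a nonzero element $a \in e(\mathbf{i}') W(\mathbf{s},n) e(\mathbf{i})$ and expand it in the basis ${}_{\mathbf{i}'} B {}_{\mathbf{i}}$ as $a = \sum c_{w,u}\, \hat{w}\, x^u\, e(\mathbf{i})$ with some $c_{w,u} \neq 0$. I would rerun the inductive argument from the previous proposition and read off faithfulness directly. In the base case, when $\mathbf{i}'$ is minimal (all blacks to the left), the proof showed that on a monomial $f \in V_{\mathbf{i}}$ the element $\hat{w}_1 \hat{w}_0\, x^u$ acts by the divided-difference operator $\hat{w}_1$ applied to $x^u f$, and then invoked faithfulness of the nilHecke action on polynomials to conclude linear independence. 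Exactly the same computation shows that these operators are linearly independent as elements of $\Hom(V_{\mathbf{i}}, V_{\mathbf{i}'})$, so $a$ must act nontrivially on some $f$. In the inductive step, the map $\delta \colon {}_{\mathbf{i}''} B {}_{\mathbf{i}} \to {}_{\mathbf{i}'} B {}_{\mathbf{i}}$ was shown to be injective, and the triangular identity $\psi_k \cdot b = \delta(b) + (\text{lower order in the fixed total order})$ was established, which transfers nontriviality of the action across the weight $\mathbf{i}''$ to the weight $\mathbf{i}'$ via the operator $\psi_k$.

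The main obstacle, really a matter of bookkeeping rather than a new idea, is confirming that the triangular identity $\psi_k \cdot b = \delta(b) + \text{lower order}$ lifts from an identity in $W(\mathbf{s},n)$ to the corresponding identity for the induced operators on $V$. But this is automatic because the preceding proof deduced the algebra-level identity from the action on $V$ itself. Combining these observations, $\sum c_{w,u}\, \hat{w}\, x^u\, e(\mathbf{i})$ cannot act as zero on $V_{\mathbf{i}}$ unless all coefficients $c_{w,u}$ vanish, giving the desired faithfulness.
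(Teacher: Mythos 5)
Your proposal is correct and reconstructs exactly the argument the paper leaves implicit: the base case of the basis proposition already exhibits a set of operators on $V_{\mathbf{i}}$ that are linearly independent (via faithfulness of the nilHecke action on polynomials), and the inductive step transports that injectivity from $e(\mathbf{i}')W(\mathbf{s},n)e(\mathbf{i})$ to $e(\mathbf{i}'')W(\mathbf{s},n)e(\mathbf{i})$ by post-composing with $\psi_k$ and invoking the triangularity of $\psi_k\cdot(-)$ on the basis. One small clarification: the triangular identity $\psi_k y = \delta(y) + \text{lower}$ passes automatically to the induced operators on $V$ simply because the action is an algebra homomorphism, not because the algebra-level identity was itself derived from the action on $V$ (it is derived from the defining relations).
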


\begin{example}
\label{basisexample}
Let $\mathbf{s}=(1,1)$ and $n=1$.
There are three elements in $\mathrm{Seq}(\mathbf{s},n)$:
\[
\mathbf{i}=(1,\mathfrak{b},1), \quad \mathbf{j}=(\mathfrak{b},1,1), \quad \mathbf{k}=(1,1,\mathfrak{b}).
\]

Elements of ${}_{\mathbf{j}} B {}_{\mathbf{i}}$, ${}_{\mathbf{i}} B {}_{\mathbf{i}}$, and
${}_{\mathbf{k}} B {}_{\mathbf{i}}$ respectively are of the form:
\[
\hackcenter{\begin{tikzpicture}[scale=0.8]
    \draw[red,thick, ] (0,0) .. controls (0,.75) and (.75,.75) .. (.75,1.5)
        node[pos=.25, shape=coordinate](DOT){};
            \filldraw[red]  (DOT) circle (2.5pt);
    \draw[thick, ] (.75,0) .. controls (.75,.75) and (0,.75) .. (0,1.5)
            node[pos=.25, shape=coordinate](DOT){};
    \filldraw[]  (DOT) circle (2.5pt);
        \draw[red,thick,  ] (1.5,0) -- (1.5,1.5) node[pos=.25, shape=coordinate](DOT2){};
    \filldraw[red]  (DOT2) circle (2.5pt);
    \node at (0,-.2) {$\scs 1$};
        \node at (1.5,-.2) {$\scs 1$};
    \node at (-.35,.35) {$\scs {\sf e}_{1}^{r_1}$};
                    \node at (1.9,.35) {$\scs {\sf e}_{1}^{r_3}$};
                                    \node at (1.05,.35) {$\scs r_2$};
\end{tikzpicture}}
\quad \quad \quad \quad
\hackcenter{\begin{tikzpicture}[scale=0.8]
        \draw[red,thick,  ] (-.75,0) -- (-.75,1.5) node[pos=.25, shape=coordinate](DOT2){};
        \filldraw[red]  (DOT2) circle (2.5pt);
             \draw[thick, ] (0,0) -- (0,1.5) node[pos=.25, shape=coordinate](DOT2){};
                     \filldraw[]  (DOT2) circle (2.5pt);
            \node at (-0.75,-.2) {$\scs 1$};
            \node at (0.75,-.2) {$\scs 1$};
               \draw[red,thick,  ] (.75,0) -- (.75,1.5) node[pos=.25, shape=coordinate](DOT2){};
                       \filldraw[red]  (DOT2) circle (2.5pt);
        \node at (-1.15,.35) {$\scs {\sf e}_{1}^{r_1}$};
                \node at (.35,.35) {$\scs r_2$};
                \node at (1.15,.35) {$\scs {\sf e}_{1}^{r_3}$};
\end{tikzpicture}}
\qquad \quad \quad \quad
\hackcenter{\begin{tikzpicture}[scale=0.8]
        \draw[red,thick,  ] (-.75,0) -- (-.75,1.5) node[pos=.25, shape=coordinate](DOT2){};
                \filldraw[red]  (DOT2) circle (2.5pt);
    \draw[thick, ] (0,0) .. controls (0,.75) and (.75,.75) .. (.75,1.5)node[pos=.25, shape=coordinate](DOT2){};
                    \filldraw[]  (DOT2) circle (2.5pt);
    \draw[red,thick, ] (.75,0) .. controls (.75,.75) and (0,.75) .. (0,1.5)
        node[pos=.25, shape=coordinate](DOT){};
    \filldraw[red]  (DOT) circle (2.75pt);
            \node at (-0.75,-.2) {$\scs 1$};
            \node at (0.75,-.2) {$\scs 1$};
            \node at (-1.15,.35) {$\scs {\sf e}_{1}^{r_1}$};
    \node at (1.05,.35) {$\scs {\sf e}_{1}^{r_3}$};
                    \node at (-.25,.35) {$\scs r_2$};
\end{tikzpicture}}
\]
where $r_1, r_2, r_3 \in \Z_{\geq 0}$.
\end{example}

\subsection{Connection to Webster's algebra}

Let $\tilde{T}$ be the algebra introduced by Webster in ~\cite[Section 4]{Web} for $\mathfrak{sl}_2$ for $\underline{\lambda}=(s_1,\ldots,s_m)$ and $(i_1,\ldots,i_n)=(1,\ldots,1)$.

Let $I(\mathbf{s},n)$ be the two-sided ideal which is generated by $ E(d)_j$, where $1\leq j\leq m+n$ and $1\leq d$.  Then $\tilde{T} \cong W(\mathbf{s},n)/I(\mathbf{s},n)$.




\subsection{Symmetries of the algebra}
There is an algebra automorphism  $\tau$ of the nilHecke algebra defined diagrammatically  by the symmetry of rescaling each crossing $\xy {\ar@{-} (2.5,-2.5)*{};(-2.5,2.5)*{}}; {\ar@{-} (-2.5,-2.5)*{};(2.5,2.5)*{} };
(4,0)*{};(-4,0)*{};\endxy
\mapsto -\xy {\ar@{-} (2.5,-2.5)*{};(-2.5,2.5)*{}}; {\ar@{-} (-2.5,-2.5)*{};(2.5,2.5)*{} };
(4,0)*{};(-4,0)*{};\endxy$ and reflecting diagrams across the vertical axis.  Algebraically this corresponds to the algebra homomorphism defined on generators by
\begin{equation}
\tau \maps x_{i} \mapsto x_{n+1-i}, \qquad \partial_{i} \mapsto - \partial_{n-i}.
\end{equation}
This symmetry immediately extends to the algebras $W(\mathbf{s},n)$ since the relations involving red strands remain invariant under this symmetry.

In \cite{Lau1} this symmetry (denoted by $\tilde{\sigma}$ there) is extended to define a 2-functor $\tau \maps \cal{U}(\mf{sl}_2) \to \cal{U}(\mf{sl}_2)$ define diagrammatically by rescaling each crossing, reflecting across the vertical axis, and sending weights $\lambda$ to $-\lambda$.   In \cite{KLMS} the symmetry $\tau$ is used to deduce new thick calculus relations from others by applying $\tau$.  Working with the symmetry $\tau$ in the Karoubi envelope $\dot{\cal{U}} = Kar(\cal{U})$ is somewhat subtle as the idempotents $e_a$ used to define divided powers $\cE^{(a)}$ are not stable under the symmetry $\tau$.  One can see that
\begin{equation}
 \tau
 \left( \; \;\; \hackcenter{\begin{tikzpicture}
    \draw[thick, ->] (-1.2,0) -- (-1.2,1.5);
    \draw[thick, ->] (-.6,0) -- (-.6,1.5);
    \draw[thick, -> ] (.6,0) -- (.6,1.5);
        \draw[thick, -> ] (1.2,0) -- (1.2,1.5);
    \node[draw, fill=white!20 ,rounded corners ] at (0,.75) {$ \qquad \quad e_a \quad \qquad$};
    \node at (0, .35) {$\cdots$};
    \node at (1.75,1.0) {$ \lambda$};
\end{tikzpicture}}
\; \right)
\quad := \quad (-1)^{\frac{a(a-1)}{2}}\;
   \hackcenter{\begin{tikzpicture}
    \draw[thick, ->] (-1.2,0) -- (-1.2,1.5);
    \draw[thick, ->] (-.6,0) -- (-.6,1.5);
    \draw[thick, -> ] (.6,0) -- (.6,1.5);
        \draw[thick, -> ] (1.2,0) -- (1.2,1.5);
    \node[draw, fill=white!20 ,rounded corners ] at (0,.45) {$ \qquad \quad D_a \quad \qquad$};
    \node at (0, .95) {$\cdots$};
    \node at (-.6,1.15) {$\bullet$};
    \node at (.6,1.15) {$\bullet$};
    \node at (.27,1.3) {$\scs  {a-2}$};
    \node at (1.2,1.15) {$\bullet$};
    \node at (1.5,1.3) {$\scs a-1$};
    \node at (-1.75,1.0) {$  -\lambda$};
\end{tikzpicture}}
\end{equation}
and that the idempotent $\oneml e_a$ is equivalent to the idempotent $\tau(e_a\onel)$ since
\[
\tau(e_a\onel) (\oneml e_a) = \tau(e_a\onel), \quad
\text{and} \quad
(\oneml e_a)\tau(e_a\onel) = (\oneml e_a).
\]
Since these idempotents are equivalent they give rise to isomorphic 1-morphisms in $\dot{\cal{U}}$.

In what follows we would like to apply the symmetry $\tau$ to thick calculus identities in $W(\mathbf{s},n)$ in order to obtain new thick calculus identities.  However, doing so is complicated by the above observation that $\tau( \cE^{(a)} \onel) := ( \oneml \cE^{a}, \tau(e_a\onel))$ so to obtain new thick calculus identities we must use the isomorphisms relating $\tau(e_a\onel)$ to $\oneml e_a$.

\subsubsection{Signed sequences}
Let $\uep=\epsilon_1^{a_1} \dots \varepsilon_m^{a_m}$ denote a {\it divided power signed sequence} with each $\varepsilon_i \in \{ +, -\}$ and each $a_i \in \Z_{\geq 0}$. To each such sequence we define a 1-morphism
\[
\cE_{\uep} \onel= \cE_{\varepsilon_1}^{(a_1)} \dots\cE_{\varepsilon_m}^{(a_m)} \onel
\]
in $\dot{\cal{U}}$, where $\cE_{+} := \cE$ and $\cE_{-} := \cF$.   Write
$
 e_{\uep}\onel
$
for the idempotent $e_{a_1} \otimes e_{a_2}\otimes \dots \otimes e_{a_m}\onel$ built from the horizontal composition of idempotents defining each $\cE_{\varepsilon_i}^{(a_i)}$, so that in $\dot{\cal{U}}$ we have
\[
 \cE_{\uep} \onel := (\cE_{\varepsilon_1}^{a_1} \dots \cE_{\varepsilon_m}^{a_m}\onel , e_{\uep}\onel).
\]

Given a divided power signed sequence $\uep$ let
\[
\tau( \uep) := \varepsilon_{m}^{a_m} \varepsilon_{m-1}^{a_{m-1}} \dots \varepsilon_{1}^{a_1}.
\]
This should not be confused with the result of applying the 2-isomorphism $\tau$ to the idempotent $e_{\uep}$.  These two possibilities are related by the following lemma.

\begin{lemma}
There is an isomorphism
\[
z_{\uep} \maps
\tau\left(\cE_{\uep} \onel, e_{\uep} \right)
    = \left(  \oneml\cE_{\varepsilon_m}^{a_m} \cE_{\varepsilon_{m-1}}^{a_{m-1}} \dots \cE_{\varepsilon_1}^{a_1}, \tau(e_{\uep}) \right) \longrightarrow \oneml\cE_{\tau(\uep)}
\]
between 1-morphisms in $\dot{\cal{U}}$.
\end{lemma}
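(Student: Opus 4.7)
The plan is to build $z_{\uep}$ from the local isomorphism $\tau(e_a\onel)\cong \oneml e_a$ recorded immediately before the lemma and then horizontally compose the pieces, keeping careful track of signs. More precisely, for each factor $\cE_{\varepsilon_i}^{(a_i)}$ in $\cE_{\uep}\onel$, let $\mu_i$ denote the weight passing through it, so that $\tau$ sends the 1-morphism to one acting on weight $-\mu_i$. By the observation preceding the lemma, the idempotents $\tau(e_{a_i}\1_{\mu_i})$ and $\1_{-\mu_i}e_{a_i}$ are equivalent via the diagrams
\[
z_{a_i} \;=\; \tau(e_{a_i}\1_{\mu_i})\cdot(\1_{-\mu_i}e_{a_i}),
\qquad
z_{a_i}^{-1} \;=\; (\1_{-\mu_i}e_{a_i})\cdot\tau(e_{a_i}\1_{\mu_i}),
\]
so in $\dot{\cal{U}}$ they present isomorphic 1-morphisms. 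I would first verify (a one-line calculation from the identities displayed before the lemma) that $z_{a_i}\circ z_{a_i}^{-1}$ is the identity on $\oneml e_{a_i}$ and $z_{a_i}^{-1}\circ z_{a_i}$ is the identity on $\tau(e_{a_i}\1_{\mu_i})$.

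Next, I would combine these local isomorphisms. The 2-functor $\tau$ reflects diagrams about the vertical axis, which accounts for the reversal of order in $\tau(\uep) = \varepsilon_m^{a_m}\cdots\varepsilon_1^{a_1}$: the horizontal composition $e_{a_1}\otimes\cdots\otimes e_{a_m}$ is sent to $\tau(e_{a_m})\otimes\cdots\otimes\tau(e_{a_1})$ acting on the reversed sequence of weights. I would then define
\[
z_{\uep} \;=\; z_{a_m}\otimes z_{a_{m-1}}\otimes\cdots\otimes z_{a_1},
\]
horizontally composed in the reversed order, and check that this formula lands in $\Hom_{\dot{\cal{U}}}\bigl(\tau(\cE_{\uep}\onel,e_{\uep}),\oneml\cE_{\tau(\uep)}\bigr)$, i.e.\ that it satisfies $e_{\tau(\uep)}\cdot z_{\uep}\cdot \tau(e_{\uep}) = z_{\uep}$. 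This reduces to the corresponding identity for each $z_{a_i}$, which follows because each $z_{a_i}$ was manufactured precisely as the composition of the two relevant idempotents.

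The inverse is defined analogously by horizontally composing the $z_{a_i}^{-1}$ in reverse order, and the verification $z_{\uep}\circ z_{\uep}^{-1}=\mathrm{id}$ and $z_{\uep}^{-1}\circ z_{\uep}=\mathrm{id}$ in $\dot{\cal{U}}$ reduces termwise to the already-checked local identities. The signs $(-1)^{a_i(a_i-1)/2}$ produced by $\tau$ on crossings inside each $e_{a_i}$ are absorbed into the definition of $z_{a_i}$ itself and therefore do not obstruct the composition.

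The main obstacle I anticipate is purely bookkeeping rather than conceptual: one must be careful that the reversal of order induced by $\tau$ matches the ordering $\tau(\uep)$ on the nose and that the weights threading through successive idempotents on the target side are exactly $-\mu_i$ in the reversed order, so that each local $z_{a_i}$ is a well-defined 2-morphism between the right 1-morphisms. Once the weights are lined up, the lemma follows from the equivalence of idempotents observed before the statement, applied factor by factor.
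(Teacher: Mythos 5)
Your approach is essentially the same as the paper's, which simply sets $z_{\uep} = e_{\tau(\uep)}$ and $z_{\uep}^{-1} = \tau(e_{\uep})$, leaving the verification to the reader. Your factor-by-factor build-up is harmless but unnecessary: since horizontal composition of 2-morphisms is compatible with vertical composition, the displayed single-factor identities immediately give $e_{\tau(\uep)}\cdot\tau(e_{\uep}) = e_{\tau(\uep)}$ and $\tau(e_{\uep})\cdot e_{\tau(\uep)} = \tau(e_{\uep})$ for the full tensor product.

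One small bookkeeping slip you should fix: your formula $z_{a_i} = \tau(e_{a_i}\1_{\mu_i})\cdot(\1_{-\mu_i}e_{a_i})$ simplifies, via the identity $\tau(e_a\onel)(\oneml e_a) = \tau(e_a\onel)$, to $\tau(e_{a_i})$, so the horizontal composite you call $z_{\uep}$ equals $\tau(e_{\uep})$. But the lemma's $z_{\uep}$ must go \emph{from} the object with idempotent $\tau(e_{\uep})$ \emph{to} the one with idempotent $e_{\tau(\uep)}$; a 2-morphism $\phi$ in that direction must satisfy $e_{\tau(\uep)}\cdot\phi\cdot\tau(e_{\uep}) = \phi$, which $\tau(e_{\uep})$ does not (that composite is $e_{\tau(\uep)}$, not $\tau(e_{\uep})$). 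In other words, your $z_{a_i}$ and $z_{a_i}^{-1}$ are the paper's $z^{-1}$ and $z$ in that order; just swap the labels and the argument goes through exactly as written.
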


\begin{proof}
Let $z_{\uep} = e_{\tau(\uep)}$ and $z^{-1}_{\uep} = \tau(e_{\uep})$.
\end{proof}

\begin{proposition}
Given an equality of 2-morphisms
\[
 X\onel = Y \onel \maps \cE_{\uep} \onel \to \cE_{\uep'}\onel
\]
in $\dot{\cal{U}}$ we have an equality
\[
\left(z_{\uep'}\tau(X\onel)z_{\uep}^{-1}\right) = \left(z_{\uep'}\tau(Y\onel)z_{\uep}^{-1}\right) \maps \oneml\cE_{\tau(\uep)} \to \oneml\cE_{\tau(\uep')}.
\]
\end{proposition}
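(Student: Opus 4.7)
The plan is to prove this essentially formally, by exploiting the fact that $\tau$ is a strict 2-functor on $\cal{U}(\mf{sl}_2)$ (as defined in \cite{Lau1}) which extends to $\dot{\cal{U}}$, and that the $z_{\uep}$ are honest isomorphisms of 1-morphisms in $\dot{\cal{U}}$.

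First I would apply the 2-functor $\tau$ to the given equality $X \onel = Y \onel$ of 2-morphisms $\cE_{\uep} \onel \to \cE_{\uep'} \onel$. Since $\tau$ is a functor, it preserves this equality, yielding
\[
\tau(X \onel) = \tau(Y \onel) \maps \tau\bigl(\cE_{\uep}\onel, e_{\uep}\bigr) \longrightarrow \tau\bigl(\cE_{\uep'}\onel, e_{\uep'}\bigr)
\]
as 2-morphisms between 1-morphisms in $\dot{\cal{U}}$, where the source and target come from applying $\tau$ to the idempotent data defining the divided powers.

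Next I would precompose both sides with $z_{\uep}^{-1} \maps \oneml\cE_{\tau(\uep)} \to \tau(\cE_{\uep}\onel, e_{\uep})$ and postcompose with $z_{\uep'} \maps \tau(\cE_{\uep'}\onel, e_{\uep'}) \to \oneml\cE_{\tau(\uep')}$ as furnished by the previous lemma. Since composition of 2-morphisms in $\dot{\cal{U}}$ respects equality, this gives
\[
z_{\uep'} \tau(X\onel) z_{\uep}^{-1} = z_{\uep'} \tau(Y\onel) z_{\uep}^{-1}
\]
as 2-morphisms $\oneml\cE_{\tau(\uep)} \to \oneml\cE_{\tau(\uep')}$, which is exactly the claim.

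The only real content here is the verification (already done in the previous lemma) that $z_{\uep}$ and $z_{\uep'}$ are genuine 2-isomorphisms in $\dot{\cal{U}}$, and that the target $\oneml\cE_{\tau(\uep)}$ of $z_{\uep}$ really is well-defined in the Karoubi envelope using the standard idempotents $e_{\tau(\uep)}$ rather than the transported idempotents $\tau(e_{\uep})$. The potential obstacle is purely bookkeeping: ensuring the sign discrepancy between $\tau(e_a\onel)$ and $\oneml e_a$ (arising from the scalar $(-1)^{a(a-1)/2}$ noted in the preceding discussion) is correctly absorbed into the isomorphisms $z_{\uep}$, so that no sign error appears when conjugating. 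Once this is in place, the statement follows formally from functoriality of $\tau$ and the interchange law.
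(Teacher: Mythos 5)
Your proof is correct, and it is exactly the argument the paper has in mind (the paper in fact states this proposition without an explicit proof, treating it as an immediate consequence of the preceding lemma and the functoriality of $\tau$). Your two-step argument — apply the 2-functor $\tau$ to preserve the equality of 2-morphisms, then conjugate by the 2-isomorphisms $z_{\uep}$ and $z_{\uep'}$ — is precisely what is needed, and your remark about tracking the sign $(-1)^{a(a-1)/2}$ being absorbed into the $z$'s correctly identifies the only subtlety.
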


The above proposition gives rise to a simple effective rule for obtaining new thick calculus identities by reflecting thick diagrams across the vertical axis and rescaling all splitters
\begin{equation}
\tau \left( \; \hackcenter{ \begin{tikzpicture} [scale=.75]
\draw[thick,  double, <-](0,2).. controls ++(0,-.75) and ++(0,.3) ..(.6,1);
\draw[thick,  double, <- ](1.2,2).. controls ++(0,-.75) and ++(0,.3) ..(.6,1) to (.6,0);;
 \node at (0,2.2) {$\scs a$};
 \node at (1.2,2.2) {$\scs b$};
\node at (.6,-.2) {$\scs a+b$};
\end{tikzpicture}}
\; \right)
\;\; = \;\;
(-1)^{ab}
\hackcenter{ \begin{tikzpicture} [scale=.75]
\draw[thick,  double, <-](0,2).. controls ++(0,-.75) and ++(0,.3) ..(.6,1);
\draw[thick,  double, <- ](1.2,2).. controls ++(0,-.75) and ++(0,.3) ..(.6,1) to (.6,0);;
 \node at (0,2.2) {$\scs b$};
 \node at (1.2,2.2) {$\scs a$};
\node at (.6,-.2) {$\scs a+b$};
\end{tikzpicture}}
\end{equation}
Note that this definition is consistent with the factorization of a thin crossing through a thickness two strand.
\[
\hackcenter{\begin{tikzpicture}[scale=0.75]
    \draw[thick, ->] (0,0) .. controls (0,.75) and (.75,.75) .. (.75,1.5);
    \draw[thick, ->] (.75,0) .. controls (.75,.75) and (0,.75) .. (0,1.5);
\end{tikzpicture}}
\;\; =
\;\;
\hackcenter{\begin{tikzpicture}[scale=0.75]
     \draw[thick,double, -] (0,.45) to (0,1.05);
     \draw[thick,->, ] (0,1.05) .. controls ++(.1,0) and ++(0,-.3) ..  (.35,1.5);
     \draw[thick,->, ] (0,1.05) .. controls ++(-.1,0) and ++(0,-.3) ..  (-.35,1.5);
     \draw[thick] (.35, 0).. controls ++(0,.3) and ++(.1,0) .. (0,.45);
     \draw[thick](-.35, 0) .. controls ++(0,.3) and ++(-.1,0).. (0,.45);
\end{tikzpicture}}
\]

\subsection{Inclusion map}

Let $\mathbf{s} = (s_{1}, s_{2}, \dots , s_{m})$ be a sequence of $m$ non-negative integers.
For $j=1,\ldots, m-1$,
denote by $\mathbf{s}^j$, the sequence of $m-1$ integers obtained from $\mathbf{s}$ by replacing the pair $(s_{j}, s_{j+1})$ with the singleton $s_{j}+s_{j+1}$. Denote by $\phi_{j,a}(\mathbf{s})$, where $j=1,...,m$ and $0\leq a\leq s_{j}$, the sequence of $m+1$ integers obtained from $\mathbf{s}$ by replacing the integer $s_j$ with the pair $(s_j-a,a)$:
\begin{eqnarray*}
\mathbf{s}^{j} &=& (s_{1}, \dots, s_{j-1},  s_{j}+  s_{j+1}, s_{j+2}, \dots, s_{m}),\\
\phi_{j,a}(\mathbf{s})&=&(s_1,\ldots,s_{j-1},s_{j}-a,a,s_{j+1},\ldots, s_m).
\end{eqnarray*}
Note that we have $\phi_{j,s_{j+1}}(\mathbf{s}^j)=\mathbf{s}$.

The map $\phi_{j,a}$ extends to a map from $\mathrm{Seq}(\mathbf{s},n)$ to $\mathrm{Seq}(\phi_{j,a}(\mathbf{s}),n)$ by replacing the integer $s_j$ in $\mathbf{i}$ with the pair $(s_j-a,a)$.
When $\mathbf{i}_\ell=s_j$,
\begin{eqnarray*}
\phi_{j,a}(\mathbf{i})&=& (...\mathbf{i}_{\ell-1},s_{j}-a,a,\mathbf{i}_{\ell+1},...)\in \mathrm{Seq}(\phi_{j,a}(\mathbf{s}),n).
\end{eqnarray*}

Define an inclusion map of algebras which is non-unital
\begin{equation}
 \Phi_{j,a} \maps W(\mathbf{s},n) \to W(\phi_{j,a}(\mathbf{s}), n)
\end{equation}
determined by sending idempotents $e(\mathbf{i})$ for $\mathrm{Seq}(\mathbf{s},n)$ by
\begin{equation*}
\Phi_{j,a}(e(\mathbf{i})) = e(\phi_{j,a}(\mathbf{i})).
\end{equation*}

Assume the $j$-th red entry occur in position $\ell'$ of $\mathbf{i}$.
The ``dot" generators $E(d)_\ell$ are mapped via
\begin{equation*}
\Phi_{j,a}(E(d)_\ell e(\mathbf{i})) =
\begin{cases}
E(d)_\ell e(\phi_{j,a}(\mathbf{i}))& \text{ if } \ell<\ell' \\
E(d)_{\ell+1} e(\phi_{j,a}(\mathbf{i}))& \text{ if } \ell'<\ell \\
\sum_{d_1+d_2=d}E(d_1)_{\ell}E(d_2)_{\ell+1} e(\phi_{j,a}(\mathbf{i}))& \text{ if } \ell'=\ell, 1\leq d\leq s_j
\end{cases}
\end{equation*}
On ``dot" generators $x_\ell$ define it by
\begin{equation*}
\Phi_{j,a}(x_\ell e(\mathbf{i})) =
\begin{cases}
x_\ell e(\phi_{j,a}(\mathbf{i}))& \text{ if } \ell<\ell' \\
x_{\ell+1} e(\phi_{j,a}(\mathbf{i}))& \text{ if } \ell'<\ell
\end{cases}
\end{equation*}
On ``crossing" generators $\psi_{\ell}$ define it by
\begin{equation*}
\Phi_{j,a}(\psi_\ell e(\mathbf{i})) =
\begin{cases}
\psi_{\ell} e(\phi_{j,a}(\mathbf{i})) & \text{ if } \ell< \ell'-1 \\
\psi_{\ell+1}\psi_{\ell} e(\phi_{j,a}(\mathbf{i})) & \text{ if } \mathbf{i}_{\ell'-1}=\mf{b}, \ell= \ell'-1 \\
\psi_{\ell}\psi_{\ell+1} e(\phi_{j,a}(\mathbf{i})) & \text{ if } \mathbf{i}_{\ell'+1}=\mf{b}, \ell= \ell' \\
\psi_{\ell+1} e(\phi_{j,a}(\mathbf{i})) & \text{ if } \ell> \ell'
\end{cases}
\end{equation*}

In particular, $\Phi_{j,a}$ is diagrammatically represented by
\begin{equation}
  \Phi_{j,a} \left(
\hackcenter{\begin{tikzpicture}[scale=0.8]
    \draw[thick,red, double, ] (0,0) to (0,1);
    \node   at (0,-.25) {$\scs s_j$};
    \filldraw[red]  (0,.5) circle (2.75pt);
    \node at (-.35,.5) {$\scs {\sf e}_d$};
\end{tikzpicture}}
  \right)
 \mapsto
  \sum_{d_1+d_2 = d}
  \hackcenter{\begin{tikzpicture}[scale=0.8]
    \draw[thick,red, double, ] (0,0) to (0,1);
    \draw[thick,red, double, ] (.8,0) to (.8,1);
    \node   at (0,-.25) {$\scs s_j-a  $};
    \node   at (.8,-.25) {$\scs   a$};
    \filldraw[red]  (0,.5) circle (2.75pt);
    \node at (-.35,.5) {$\scs {\sf e}_{d_1}$};
    \filldraw[red]  (.8,.5) circle (2.75pt);
    \node at (1.25,.5) {$\scs {\sf e}_{d_2}$};
\end{tikzpicture}}
\end{equation}

\begin{align}
 \Phi_{j,a}\left(  \;\;
\hackcenter{\begin{tikzpicture}[scale=0.8]
    \draw[thick, ] (0,0) .. controls (0,.75) and (.8,.75) .. (.8,1.5);
    \draw[thick,red, double, ] (.8,0) .. controls (.8,.75) and (0,.75) .. (0,1.5);
    \node   at (.8,-.25) {$\scs s_j$};
\end{tikzpicture}}
 \right)
 & \;\; \mapsto \;\;
 \hackcenter{\begin{tikzpicture}[scale=0.8]
    \draw[thick, ] (0,0) .. controls (0,.75) and (1.6,.75) .. (1.6,1.5);
    \draw[thick,red, double, ] (.8,0) .. controls (.8,.75) and (0,.75) .. (0,1.5);
    \draw[thick,red, double, ] (1.6,0) .. controls (1.6,.75) and (.8,.75) .. (.8,1.5);
    \node   at (.8,-.25) {$\scs s_j-a  $};
    \node   at (1.6,-.25) {$\scs  a$};
\end{tikzpicture}}
&
 \Phi_{j,a}\left(  \;
\hackcenter{\begin{tikzpicture}[scale=0.8]
    \draw[thick,red, double, ] (0,0) .. controls (0,.75) and (.8,.75) .. (.8,1.5);
    \draw[thick, ] (.8,0) .. controls (.8,.75) and (0,.75) .. (0,1.5);
    \node   at (0,-.25) {$\scs s_j$};
\end{tikzpicture}}
\; \right)
 \;\; \mapsto \;\;
 \hackcenter{\begin{tikzpicture}[scale=0.8]
    \draw[thick, ] (0,0) .. controls (0,.75) and (-1.6,.75) .. (-1.6,1.5);
    \draw[thick,red, double, ] (-.8,0) .. controls (-.8,.75) and (-0,.75) .. (-0,1.5);
    \draw[thick,red, double, ] (-1.6,0) .. controls (-1.6,.75) and (-.8,.75) .. (-.8,1.5);
    \node   at (-.8,-.25) {$\scs a$};
    \node   at (-1.6,-.25) {$\scs s_j-a$};
\end{tikzpicture}}
\end{align}

\begin{proposition}
The map
\begin{equation} \label{eq:inclusion}
\Phi_{j,s_{j+1}}:W(\mathbf{s}^j,n)\to W(\mathbf{s},n).
\end{equation}
is injective.
\end{proposition}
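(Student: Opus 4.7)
My plan is to deduce injectivity of $\Phi_{j,s_{j+1}}$ from the faithful polynomial representations constructed above. Write $V = \bigoplus_{\mathbf{i}} V_{\mathbf{i}}$ for the faithful $W(\mathbf{s}^j,n)$-module of Proposition~\ref{proprep}, and $V' = \bigoplus_{\mathbf{j}} V'_{\mathbf{j}}$ for the analogous faithful $W(\mathbf{s},n)$-module. I would first construct an injective $\Bbbk$-linear map $\iota \colon V \to V'$ sending each summand $V_{\mathbf{i}}$ into $V'_{\phi_{j,s_{j+1}}(\mathbf{i})}$. On each summand, $\iota$ acts as the identity on the polynomial variables $X_{r,\mathbf{i}}$ and on the alphabets $\Sym(Y_{s_i})$ for $i \neq j$, and uses the classical inclusion $\Sym(Y_{s_j+s_{j+1}}) \hookrightarrow \Sym(Y_{s_j}) \otimes \Sym(Y_{s_{j+1}})$ obtained by identifying $Y_{s_j+s_{j+1}}$ with the disjoint union $Y_{s_j} \sqcup Y_{s_{j+1}}$. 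Injectivity of this inclusion of symmetric-function rings is standard (it factors through $\Sym(Y_{s_j+s_{j+1}}) \hookrightarrow \Bbbk[Y_{s_j+s_{j+1}}] \cong \Bbbk[Y_{s_j}] \otimes \Bbbk[Y_{s_{j+1}}]$), so $\iota$ is injective.

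Next I would verify the intertwining identity
\[
\iota(a \cdot v) \;=\; \Phi_{j,s_{j+1}}(a) \cdot \iota(v)
\]
for every generator $a \in W(\mathbf{s}^j,n)$ and $v \in V$, where $W(\mathbf{s}^j,n)$ acts on $V'$ via $\Phi_{j,s_{j+1}}$. For the idempotents $e(\mathbf{i})$ and the black-dot generators $x_\ell$, this is immediate from the definition of $\Phi_{j,s_{j+1}}$. For $E(d)_\ell e(\mathbf{i})$ with $\mathbf{i}_\ell = s_j+s_{j+1}$, the required identity reduces to
\[
E_d(Y_{s_j} \cup Y_{s_{j+1}}) \;=\; \sum_{d_1+d_2=d} E_{d_1}(Y_{s_j})\, E_{d_2}(Y_{s_{j+1}}),
\]
which matches the defining formula for $\Phi_{j,s_{j+1}}(E(d)_\ell)$. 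For a crossing $\psi_\ell$ between a black strand and the red strand labelled $s_j+s_{j+1}$, the image $\Phi_{j,s_{j+1}}(\psi_\ell)$ is a product $\psi_{\ell+1}\psi_\ell$ or $\psi_\ell\psi_{\ell+1}$ that pushes the black strand past the pair of red strands. Successive application of the two crossings produces the double sum $\sum (-1)^{\beta_1+\beta_2} X^{\alpha_1+\alpha_2} E_{\beta_1}(Y_{s_j}) E_{\beta_2}(Y_{s_{j+1}})$, which collapses via the symmetric-function identity above to $\sum_{\alpha+\beta} (-1)^{\beta} X^{\alpha} E_{\beta}(Y_{s_j+s_{j+1}})$ after applying $\iota$; this is precisely $\iota(\psi_\ell \cdot v)$. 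Crossings not involving the split red strand are sent to a single crossing in $\Phi_{j,s_{j+1}}$ and the compatibility is immediate.

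Once the intertwining property is established, injectivity of $\Phi_{j,s_{j+1}}$ follows formally: if $a \in \ker \Phi_{j,s_{j+1}}$, then $\Phi_{j,s_{j+1}}(a)\cdot \iota(v) = 0$ for all $v \in V$, hence $\iota(a \cdot v) = 0$, hence $a \cdot v = 0$ by injectivity of $\iota$, and finally $a = 0$ by faithfulness of $V$.

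The bulk of the work—and the main obstacle—is the bookkeeping for crossing compatibility: keeping track of how the indices $\#_B(\ell)$ and $\#_R(\ell)$ shift under $\phi_{j,s_{j+1}}$ across every relative configuration of the crossing with respect to the split red strand, and confirming that the alternating-sign sums produced by the two successive crossings in $W(\mathbf{s},n)$ reassemble (after passing through $\iota$) into the single crossing formula in $W(\mathbf{s}^j,n)$.
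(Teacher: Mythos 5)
Your proposal is correct, but it takes a genuinely different route from the paper's. The paper dispatches injectivity in one line by invoking the explicit basis ${}_{\mathbf{i}'}B_{\mathbf{i}}$ of $W(\mathbf{s},n)$ constructed earlier: since $\Phi_{j,s_{j+1}}$ is written out on basis elements, one reads off directly that linearly independent elements map to linearly independent elements (the "leading" term $E(d)_\ell E(0)_{\ell+1}$ in the expansion $\sum_{d_1+d_2=d}E(d_1)_\ell E(d_2)_{\ell+1}$ recovers the original $E(d)_\ell$, and similarly the crossing and black-dot generators map to distinguished basis elements). Your approach instead constructs an explicit intertwiner $\iota$ between the two faithful polynomial modules, bootstrapped from the classical injection $\Sym(Y_{s_j+s_{j+1}})\hookrightarrow \Sym(Y_{s_j})\otimes\Sym(Y_{s_{j+1}})$, and then deduces injectivity formally from faithfulness. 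Both arguments ultimately rest on the faithful representation of Proposition~\ref{proprep} --- the paper's basis theorem is itself proved through it --- but yours cuts out the middleman and is in some ways more conceptual, exhibiting $\Phi_{j,s_{j+1}}$ concretely as "restriction of scalars along an injective intertwiner." The price you pay is the case-by-case intertwining verification, and the one computation you do spell out (the alternating double sum for $\psi_\ell$ with black on the left crossing the split red pair collapsing via $E_\beta(Y_{s_j}\sqcup Y_{s_{j+1}})=\sum_{\beta_1+\beta_2=\beta}E_{\beta_1}(Y_{s_j})E_{\beta_2}(Y_{s_{j+1}})$) is the right one and is correct; a careful writeup would also want to check explicitly that the index $\#_{B}(\ell)+1$ of the $X$-variable is unchanged across both intermediate crossings in $W(\mathbf{s},n)$, so that $X^{\alpha_1}\cdot X^{\alpha_2}$ genuinely combines into $X^{\alpha_1+\alpha_2}$ on a single variable, and that the complementary red-black crossing (red on the left) is just a relabelling on both sides. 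None of that threatens the argument --- it goes through --- but be aware the paper's proof, relying as it does on the already-established basis, is substantially shorter to state once that basis is in hand.
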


\begin{proof}
Injectivity of $\Phi_{j,s_{j+1}}$ is an immediate consequence of its definition on basis elements.
\end{proof}

\section{Reddotted bimodules}
In this section we will introduce certain bimodules $\mathsf{E}^{(a)}_i$ and
$\mathsf{F}^{(a)}_i$ over $W(\mathbf{s},n)$ for various sequences $\mathbf{s}$.
The Lie theoretic notation of these bimodules is chosen so that these bimodules will be the targets of the $1$-morphisms of a categorical quantum group action.

In order to describe these bimodules we will first introduce "halves" of these bimodules which we call {\it splitter} bimodules.  Then we will introduce
{\it ladder} bimodules.

\subsection{Splitter bimodules} \label{subsec:splitter}
The bimodules which we will first define algebraically are called splitter bimodules because of their graphical descriptions.

First note that the inclusion $\Phi_{j,s_{j+1}}$ determines the left action of $W(\mathbf{s}^j,n)$ on $W(\mathbf{s},n)$.
Define the $(W(\mathbf{s}^j,n) , W(\mathbf{s},n))$-bimodule $\bigtriangleup^j(\mathbf{s})$ as the bimodule
\begin{equation}
 \bigtriangleup^j(\mathbf{s}) := W(\mathbf{s}^j,n) \otimes_{W(\mathbf{s}^j,n)} W(\mathbf{s},n)\{-s_j \cdot s_{j+1}\}
\end{equation}
and the $(W(\mathbf{s},n),W(\mathbf{s}^j,n))$-bimodule $\bigtriangledown^j(\mathbf{s})$ as the bimodule
\begin{equation}
 \bigtriangledown^j(\mathbf{s}) := W(\mathbf{s},n) \otimes_{W(\mathbf{s}^j,n)} W(\mathbf{s}^j,n).
\end{equation}
We diagrammatically represent elements of these bimodules
as follows
\[
\bigtriangleup^j(\mathbf{s}) \ni \hackcenter{ \begin{tikzpicture} [scale=.75]
\draw[thick,red, double, ] (-1.75,0)--(-1.75,2);
\draw[thick,red, double, ] (-.75,0)--(-.75,2);
\draw[thick, red, double] (0,0).. controls ++(0,.3) and ++(0,-.3) ..(.6,1);
\draw[thick,red, double, ] (1.2,0).. controls ++(0,.3) and ++(0,-.3) ..(.6,1) to (.6,2);
\draw[thick,red, double, ] (1.95,0)--(1.95,2);
\draw[thick,red, double, ] (2.95,0)--(2.95,2);
 \node at (-1.25,1) {$\cdots$};
 \node at (2.45,1) {$\cdots$};
 \node at (-1.75,-.2) {$\scs s_1$};
 \node at (-.75,-.2) {$\scs s_{j-1}$};
 \node at (2,-.2) {$\scs s_{j+2}$};
 \node at (2.95,-.2) {$\scs s_m$};
 \node at (0,-.2) {$\scs s_j$};
 \node at (1.2,-.2) {$\scs s_{j+1}$};
\node at (.6,2.2) {$\scs s_j+s_{j+1}$};
\end{tikzpicture}}
\qquad
\bigtriangledown^j(\mathbf{s}) \ni \hackcenter{ \begin{tikzpicture} [scale=.75]
\draw[thick,red, double, ] (-1.75,0)--(-1.75,2);
\draw[thick,red, double, ] (-.75,0)--(-.75,2);
\draw[thick,red, double, ](0,2).. controls ++(0,-.3) and ++(0,.3) ..(.6,1);
\draw[thick,red, double, ](1.2,2).. controls ++(0,-.3) and ++(0,.3) ..(.6,1) to (.6,0);
\draw[thick,red, double, ] (1.95,0)--(1.95,2);
\draw[thick,red, double, ] (2.95,0)--(2.95,2);
 \node at (-1.25,1) {$\cdots$};
 \node at (2.45,1) {$\cdots$};
 \node at (-1.75,-.2) {$\scs s_1$};
 \node at (-.75,-.2) {$\scs s_{j-1}$};
 \node at (1.95,-.2) {$\scs s_{j+2}$};
 \node at (2.95,-.2) {$\scs s_m$};
 \node at (0,2.2) {$\scs s_{j}$};
 \node at (1.2,2.2) {$\scs s_{j+1}$};
\node at (.6,-.2) {$\scs s_j+s_{j+1}$};
\end{tikzpicture}}
\]
with $n$ black strands intersecting the pictures in a manner similar to that described in Section ~\ref{sectiondiagdescription} along with red and black dots.

\begin{proposition}
\label{folk-rel}
There are relations
\[
\hackcenter{ \begin{tikzpicture} [scale=.75]
\draw[thick, red, double] (0,0).. controls ++(0,.3) and ++(0,-.3) ..(.6,1.2);
\draw[thick,red, double, ] (1.2,0).. controls ++(0,.3) and ++(0,-.3) ..(.6,1.2) to (.6,2);
\node at (0,-.2) {$\scs s_j$};
\node at (1.2,-.2) {$\scs s_{j+1}$};
\node at (.6,2.2) {$\scs s_j+s_{j+1}$};
\filldraw[red]  (0.6,1.6) circle (2.75pt);
\node at (.25,1.6) {$\scs {\sf e}_d$};
\end{tikzpicture}}
\;\; = \;\;
  \sum_{d_1+d_2 = d}
\hackcenter{ \begin{tikzpicture} [scale=.75]
\draw[thick, red, double] (0,0).. controls ++(0,.3) and ++(0,-.3) ..(.6,1.2);
\draw[thick,red, double, ] (1.2,0).. controls ++(0,.3) and ++(0,-.3) ..(.6,1.2) to (.6,2);
\node at (0,-.2) {$\scs s_j$};
\node at (1.2,-.2) {$\scs s_{j+1}$};
\node at (.6,2.2) {$\scs s_j+s_{j+1}$};
    \filldraw[red]  (0.2,.5) circle (2.75pt);
    \node at (-.25,.5) {$\scs {\sf e}_{d_1}$};
    \filldraw[red]  (1.0,.5) circle (2.75pt);
    \node at (1.45,.5) {$\scs {\sf e}_{d_2}$};
\end{tikzpicture}}
\qquad
\qquad
\hackcenter{ \begin{tikzpicture} [scale=.75, rotate=180]
\draw[thick, red, double] (0,0).. controls ++(0,.3) and ++(0,-.3) ..(.6,1.2);
\draw[thick,red, double, ] (1.2,0).. controls ++(0,.3) and ++(0,-.3) ..(.6,1.2) to (.6,2);
 \node at (0,-.2) {$\scs s_{j+1}$};
 \node at (1.2,-.2) {$\scs s_{j}$};
\node at (.6,2.2) {$\scs s_j+s_{j+1}$};
    \filldraw[red]  (0.6,1.6) circle (2.75pt);
    \node at (.25,1.6) {$\scs {\sf e}_d$};
\end{tikzpicture}}
\;\; = \;\;
  \sum_{d_1+d_2 = d}
\hackcenter{ \begin{tikzpicture} [scale=.75, rotate=180]
\draw[thick, red, double] (0,0).. controls ++(0,.3) and ++(0,-.3) ..(.6,1.2);
\draw[thick,red, double, ] (1.2,0).. controls ++(0,.3) and ++(0,-.3) ..(.6,1.2) to (.6,2);
 \node at (0,-.2) {$\scs s_{j+1}$};
 \node at (1.2,-.2) {$\scs s_{j}$};
\node at (.6,2.2) {$\scs s_j+s_{j+1}$};
    \filldraw[red]  (0.2,.5) circle (2.75pt);
    \node at (-.25,.5) {$\scs {\sf e}_{d_2}$};
    \filldraw[red]  (1.0,.5) circle (2.75pt);
    \node at (1.45,.5) {$\scs {\sf e}_{d_1}$};
\end{tikzpicture}}
\]
\[
\hackcenter{ \begin{tikzpicture} [scale=.75]
\draw[thick, red, double] (0,0).. controls ++(0,.3) and ++(0,-.3) ..(.6,1.2);
\draw[thick,red, double, ] (1.2,0).. controls ++(0,.3) and ++(0,-.3) ..(.6,1.2) to (.6,2);
 \node at (0,-.2) {$\scs s_j$};
 \node at (1.2,-.2) {$\scs s_{j+1}$};
\node at (.6,2.2) {$\scs s_j+s_{j+1}$};
\draw[thick,] (-.6,0) .. controls ++(0,.5) and ++(0,-1.8) .. (1.6,2);
\end{tikzpicture}}
\;\; = \;\;
\hackcenter{ \begin{tikzpicture} [scale=.75]
\draw[thick, red, double] (0,0).. controls ++(0,.3) and ++(0,-.3) ..(.6,1.2);
\draw[thick,red, double, ] (1.2,0).. controls ++(0,.3) and ++(0,-.3) ..(.6,1.2) to (.6,2);
 \node at (0,-.2) {$\scs s_j$};
 \node at (1.2,-.2) {$\scs s_{j+1}$};
\node at (.6,2.2) {$\scs s_j+s_{j+1}$};
\draw[thick,] (-.6,0) .. controls ++(0,1.6) and ++(0,-.8) .. (1.6,2);
\end{tikzpicture}}
\qquad
\qquad
\hackcenter{ \begin{tikzpicture} [scale=.75]
\draw[thick, red, double] (0,0).. controls ++(0,.3) and ++(0,-.3) ..(.6,1.2);
\draw[thick,red, double, ] (1.2,0).. controls ++(0,.3) and ++(0,-.3) ..(.6,1.2) to (.6,2);
 \node at (0,-.2) {$\scs s_j$};
 \node at (1.2,-.2) {$\scs s_{j+1}$};
\node at (.6,2.2) {$\scs s_j+s_{j+1}$};
\draw[thick,] (1.8,0) .. controls ++(0,.5) and ++(0,-1.8) .. (-.2,2);
\end{tikzpicture}}
\;\; = \;\;
\hackcenter{ \begin{tikzpicture} [scale=.75]
\draw[thick, red, double] (0,0).. controls ++(0,.3) and ++(0,-.3) ..(.6,1.2);
\draw[thick,red, double, ] (1.2,0).. controls ++(0,.3) and ++(0,-.3) ..(.6,1.2) to (.6,2);
 \node at (0,-.2) {$\scs s_j$};
 \node at (1.2,-.2) {$\scs s_{j+1}$};
\node at (.6,2.2) {$\scs s_j+s_{j+1}$};
\draw[thick,] (1.8,0) .. controls ++(0,1.6) and ++(0,-.8) .. (-.2,2);
\end{tikzpicture}}
\]
\[
\hackcenter{ \begin{tikzpicture} [scale=.75, rotate=180]
\draw[thick, red, double] (0,0).. controls ++(0,.3) and ++(0,-.3) ..(.6,1.2);
\draw[thick,red, double, ] (1.2,0).. controls ++(0,.3) and ++(0,-.3) ..(.6,1.2) to (.6,2);
 \node at (0,-.2) {$\scs s_{j+1}$};
 \node at (1.2,-.2) {$\scs s_{j}$};
\node at (.6,2.2) {$\scs s_j+s_{j+1}$};
\draw[thick,] (-.6,0) .. controls ++(0,.5) and ++(0,-1.8) .. (1.6,2);
\end{tikzpicture}}
\;\; = \;\;
\hackcenter{ \begin{tikzpicture} [scale=.75, rotate=180]
\draw[thick, red, double] (0,0).. controls ++(0,.3) and ++(0,-.3) ..(.6,1.2);
\draw[thick,red, double, ] (1.2,0).. controls ++(0,.3) and ++(0,-.3) ..(.6,1.2) to (.6,2);
 \node at (0,-.2) {$\scs s_{j+1}$};
 \node at (1.2,-.2) {$\scs s_{j}$};
\node at (.6,2.2) {$\scs s_j+s_{j+1}$};
\draw[thick,] (-.6,0) .. controls ++(0,1.6) and ++(0,-.8) .. (1.6,2);
\end{tikzpicture}}
\qquad
\qquad
\hackcenter{ \begin{tikzpicture} [scale=.75, rotate=180]
\draw[thick, red, double] (0,0).. controls ++(0,.3) and ++(0,-.3) ..(.6,1.2);
\draw[thick,red, double, ] (1.2,0).. controls ++(0,.3) and ++(0,-.3) ..(.6,1.2) to (.6,2);
 \node at (0,-.2) {$\scs s_{j+1}$};
 \node at (1.2,-.2) {$\scs s_{j}$};
\node at (.6,2.2) {$\scs s_j+s_{j+1}$};
\draw[thick,] (1.8,0) .. controls ++(0,.5) and ++(0,-1.8) .. (-.2,2);
\end{tikzpicture}}
\;\; = \;\;
\hackcenter{ \begin{tikzpicture} [scale=.75, rotate=180]
\draw[thick, red, double] (0,0).. controls ++(0,.3) and ++(0,-.3) ..(.6,1.2);
\draw[thick,red, double, ] (1.2,0).. controls ++(0,.3) and ++(0,-.3) ..(.6,1.2) to (.6,2);
 \node at (0,-.2) {$\scs s_{j+1}$};
 \node at (1.2,-.2) {$\scs s_{j}$};
\node at (.6,2.2) {$\scs s_j+s_{j+1}$};
\draw[thick,] (1.8,0) .. controls ++(0,1.6) and ++(0,-.8) .. (-.2,2);
\end{tikzpicture}}
\]
\end{proposition}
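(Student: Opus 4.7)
The plan is to unwind the definitions of the splitter bimodules as balanced tensor products and reduce each of the four equalities to the defining formula for $\Phi_{j,s_{j+1}}$ on crossing generators. Recall that $\bigtriangleup^j(\mathbf{s}) = W(\mathbf{s}^j,n) \otimes_{W(\mathbf{s}^j,n)} W(\mathbf{s},n)$, where the right $W(\mathbf{s}^j,n)$-action on $W(\mathbf{s},n)$ is through $\Phi_{j,s_{j+1}}$. The splitter diagram with no black strand present represents the element $1\otimes 1$; a black strand drawn entirely in the top (respectively bottom) half of the picture represents left multiplication in $W(\mathbf{s}^j,n)$ (respectively right multiplication in $W(\mathbf{s},n)$).

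For the first identity, the diagram on the left is $1\otimes (\psi_{b,s_{j+1}}\psi_{b,s_j})$, where the two $\psi$'s are the black--red crossings of the black strand with the $s_j$ and $s_{j+1}$ strands below the merge. The diagram on the right is $\psi_{b,s_j+s_{j+1}}\otimes 1$, namely the single black--red crossing above the merge, applied from the left. The tensor relation lets me push the element $\psi_{b,s_j+s_{j+1}}$ from left to right provided I apply $\Phi_{j,s_{j+1}}$. But the definition of $\Phi_{j,a}$ on $\psi_\ell$ (the case $\mathbf{i}_{\ell'}=s_j$, $\mathbf{i}_{\ell'-1}=\mf{b}$, $\ell=\ell'-1$) says exactly that
\[
\Phi_{j,s_{j+1}}(\psi_{b,s_j+s_{j+1}}) \;=\; \psi_{b,s_{j+1}}\psi_{b,s_j},
\]
so the two sides of the equation represent the same element of $\bigtriangleup^j(\mathbf{s})$. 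The three remaining equalities for $\bigtriangleup^j(\mathbf{s})$ and $\bigtriangledown^j(\mathbf{s})$ are treated in the same way, each invoking one of the four symmetric cases of the definition of $\Phi_{j,a}$ on $\psi_\ell$ (left versus right of the split/merge, top versus bottom of the diagram).

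The only nontrivial bookkeeping is matching each picture to the correct case in the piecewise definition of $\Phi_{j,a}$: one must carefully track whether the black strand is immediately to the left or to the right of the thickened red strand and whether it sits above or below the vertex, since this determines which of the four piecewise branches $\psi_{\ell+1}\psi_\ell$, $\psi_\ell\psi_{\ell+1}$, $\psi_\ell$, $\psi_{\ell+1}$ governs the translation. No relation of $W(\mathbf{s},n)$ beyond the tensor relation and the definition of $\Phi_{j,s_{j+1}}$ is needed, so the statement really is a formal consequence of the bimodule construction.

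The main ``obstacle'' is therefore purely notational: one must organize the four cases (left/right, top/bottom) and verify that the piecewise formula for $\Phi_{j,s_{j+1}}(\psi_\ell)$ matches the diagrammatic composition in each case. No deeper input is required, which is why the lemma is labelled ``folk-rel'' and why the diagrammatic calculus was designed so as to make these slides automatic.
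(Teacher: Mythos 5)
Your proposal is correct and takes essentially the same route as the paper's one-line proof, which simply invokes the definition of the non-unital inclusion $\Phi_{j,s_{j+1}}$ from \eqref{eq:inclusion}: since the splitter is the balanced tensor $W(\mathbf{s}^j,n)\otimes_{W(\mathbf{s}^j,n)}W(\mathbf{s},n)$ with left action twisted by $\Phi_{j,s_{j+1}}$, sliding a black strand across the vertex is exactly the tensor relation $a\otimes 1 = 1\otimes\Phi_{j,s_{j+1}}(a)$, and the four cases match the four branches of the piecewise definition of $\Phi_{j,a}$ on crossing generators. The ``relations in the algebra'' mentioned in the paper's proof sketch play no further role beyond what you identify, so your more explicit bookkeeping is an accurate unwinding of the intended argument.
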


\begin{proof}
Use the definition of the inclusion \eqref{eq:inclusion} along with relations in the algebra.
\end{proof}

The first two equalities of Proposition \ref{folk-rel} imply the following.
\begin{corollary}
\label{reddot-slide}
\[
\hackcenter{ \begin{tikzpicture} [scale=.75]
\draw[thick, red, double] (0,0).. controls ++(0,.3) and ++(0,-.3) ..(.6,1.2);
\draw[thick,red, double, ] (1.2,0).. controls ++(0,.3) and ++(0,-.3) ..(.6,1.2) to (.6,2);
\node at (0,-.2) {$\scs s_j$};
\node at (1.2,-.2) {$\scs s_{j+1}$};
\node at (.6,2.2) {$\scs s_j+s_{j+1}$};
\filldraw[red]  (0.2,.5) circle (2.75pt);
\node at (-.25,.5) {$\scs {\sf e}_d$};
\end{tikzpicture}}
\;\; = \;\;
  \sum_{d_1+d_2 = d}(-1)^{d_2}
\hackcenter{ \begin{tikzpicture} [scale=.75]
\draw[thick, red, double] (0,0).. controls ++(0,.3) and ++(0,-.3) ..(.6,1.2);
\draw[thick,red, double, ] (1.2,0).. controls ++(0,.3) and ++(0,-.3) ..(.6,1.2) to (.6,2);
\node at (0,-.2) {$\scs s_j$};
\node at (1.2,-.2) {$\scs s_{j+1}$};
\node at (.6,2.2) {$\scs s_j+s_{j+1}$};
    \filldraw[red]  (0.6,1.6) circle (2.75pt);
    \node at (.25,1.6) {$\scs {\sf e}_{d_1}$};
    \filldraw[red]  (1.0,.5) circle (2.75pt);
    \node at (1.45,.5) {$\scs {\sf h}_{d_2}$};
\end{tikzpicture}}
\qquad
\qquad
\hackcenter{ \begin{tikzpicture} [scale=.75]
\draw[thick, red, double] (0,0).. controls ++(0,.3) and ++(0,-.3) ..(.6,1.2);
\draw[thick,red, double, ] (1.2,0).. controls ++(0,.3) and ++(0,-.3) ..(.6,1.2) to (.6,2);
\node at (0,-.2) {$\scs s_j$};
\node at (1.2,-.2) {$\scs s_{j+1}$};
\node at (.6,2.2) {$\scs s_j+s_{j+1}$};
    \filldraw[red]  (1.0,.5) circle (2.75pt);
    \node at (1.45,.5) {$\scs {\sf e}_{d}$};
\end{tikzpicture}}
\;\; = \;\;
  \sum_{d_1+d_2 = d}(-1)^{d_2}
\hackcenter{ \begin{tikzpicture} [scale=.75]
\draw[thick, red, double] (0,0).. controls ++(0,.3) and ++(0,-.3) ..(.6,1.2);
\draw[thick,red, double, ] (1.2,0).. controls ++(0,.3) and ++(0,-.3) ..(.6,1.2) to (.6,2);
\node at (0,-.2) {$\scs s_j$};
\node at (1.2,-.2) {$\scs s_{j+1}$};
\node at (.6,2.2) {$\scs s_j+s_{j+1}$};
    \filldraw[red]  (0.6,1.6) circle (2.75pt);
    \node at (.25,1.6) {$\scs {\sf e}_{d_1}$};
\filldraw[red]  (0.2,.5) circle (2.75pt);
\node at (-.25,.5) {$\scs {\sf h}_{d_2}$};
\end{tikzpicture}}
\]
\[
\hackcenter{ \begin{tikzpicture} [scale=.75, rotate=180]
\draw[thick, red, double] (0,0).. controls ++(0,.3) and ++(0,-.3) ..(.6,1.2);
\draw[thick,red, double, ] (1.2,0).. controls ++(0,.3) and ++(0,-.3) ..(.6,1.2) to (.6,2);
 \node at (0,-.2) {$\scs s_{j+1}$};
 \node at (1.2,-.2) {$\scs s_{j}$};
\node at (.6,2.2) {$\scs s_j+s_{j+1}$};
    \filldraw[red]  (1.0,.5) circle (2.75pt);
    \node at (1.4,.5) {$\scs {\sf e}_d$};
\end{tikzpicture}}
\;\; = \;\;
  \sum_{d_1+d_2 = d}(-1)^{d_2}
\hackcenter{ \begin{tikzpicture} [scale=.75, rotate=180]
\draw[thick, red, double] (0,0).. controls ++(0,.3) and ++(0,-.3) ..(.6,1.2);
\draw[thick,red, double, ] (1.2,0).. controls ++(0,.3) and ++(0,-.3) ..(.6,1.2) to (.6,2);
 \node at (0,-.2) {$\scs s_{j+1}$};
 \node at (1.2,-.2) {$\scs s_{j}$};
\node at (.6,2.2) {$\scs s_j+s_{j+1}$};
    \filldraw[red]  (0.6,1.6) circle (2.75pt);
    \node at (.15,1.6) {$\scs {\sf e}_{d_1}$};
\filldraw[red]  (0.2,.5) circle (2.75pt);
\node at (-.35,.5) {$\scs {\sf h}_{d_2}$};
\end{tikzpicture}}
\qquad
\qquad
\hackcenter{ \begin{tikzpicture} [scale=.75, rotate=180]
\draw[thick, red, double] (0,0).. controls ++(0,.3) and ++(0,-.3) ..(.6,1.2);
\draw[thick,red, double, ] (1.2,0).. controls ++(0,.3) and ++(0,-.3) ..(.6,1.2) to (.6,2);
 \node at (0,-.2) {$\scs s_{j+1}$};
 \node at (1.2,-.2) {$\scs s_{j}$};
\node at (.6,2.2) {$\scs s_j+s_{j+1}$};
\filldraw[red]  (0.2,.5) circle (2.75pt);
\node at (-.25,.5) {$\scs {\sf e}_d$};
\end{tikzpicture}}
\;\; = \;\;
  \sum_{d_1+d_2 = d}(-1)^{d_2}
\hackcenter{ \begin{tikzpicture} [scale=.75, rotate=180]
\draw[thick, red, double] (0,0).. controls ++(0,.3) and ++(0,-.3) ..(.6,1.2);
\draw[thick,red, double, ] (1.2,0).. controls ++(0,.3) and ++(0,-.3) ..(.6,1.2) to (.6,2);
 \node at (0,-.2) {$\scs s_{j+1}$};
 \node at (1.2,-.2) {$\scs s_{j}$};
\node at (.6,2.2) {$\scs s_j+s_{j+1}$};
    \filldraw[red]  (0.6,1.6) circle (2.75pt);
    \node at (.15,1.6) {$\scs {\sf e}_{d_1}$};
    \filldraw[red]  (1.0,.5) circle (2.75pt);
    \node at (1.45,.5) {$\scs {\sf h}_{d_2}$};
\end{tikzpicture}}
\]
\end{corollary}

\subsection{Ladder bimodules}
The main bimodules introduced in this section are called ladder bimodules because of their graphical depiction.

For a sequence $\mathbf{s} = (s_{1}, s_{2}, \dots , s_{m})$, we have maps
\begin{eqnarray*}
\phi_{j,a}(\mathbf{s})^{j+1}&=&(...,s_{j-1},s_{j}-a,s_{j+1}+a,s_{j+2},...),\\
\phi_{j+1,s_{j+1}-a}(\mathbf{s})^{j}&=&(...,s_{j-1},s_{j}+a,s_{j+1}-a,s_{j+2},...),
\end{eqnarray*}
We denote by $\alpha^{+j,a}(\mathbf{s})$ the sequence $\phi_{j+1,s_{j+1}-a}(\mathbf{s})^{j}$ and denote by $\alpha^{-j,a}(\mathbf{s})$ the sequence $\phi_{j,a}(\mathbf{s})^{j+1}$.
These maps $\alpha^{\pm j,a}$ extends into the set $\mathrm{Seq}(\mathbf{s},n)$ by
\begin{eqnarray*}
\alpha^{+j,a}(\mathbf{i})&=& (...,s_{j}+a,\mf{b},....,\mf{b},s_{j+1}-a,...),\\
\alpha^{-j,a}(\mathbf{i})&=& (...,s_{j}-a,\mf{b},....,\mf{b},s_{j+1}+a,...).
\end{eqnarray*}
$\alpha^{+j,a}$ maps the  elements $s_{j}$ and $s_{j+1}$ in $\mathbf{i}$ into $s_{j}+a$ and $s_{j+1}-a$ and $\alpha^{-j,a}$ maps $s_{j}$ and $s_{j+1}$ into $s_{j}-a$ and $s_{j+1}+a$.

We define the $(W(\alpha^{+i,a}({\bf s}),n),W(\mathbf{s},n))$-bimodule
$\mathsf{E}^{(a)}_i \mathsf{1}_{\mathbf{s}}$ as the bimodule
\begin{eqnarray*}
\mathsf{E}^{(a)}_i \mathsf{1}_{\mathbf{s}}
&=&
\bigtriangleup^i(\phi_{i+1,s_{i+1}-a}(\mathbf{s})) \otimes_{W(\phi_{i+1,s_{i+1}-a}(\mathbf{s}),n)}\bigtriangledown^{i+1}(\phi_{i+1,s_{i+1}-a}(\mathbf{s})) \\
&\cong&
W(\phi_{i+1,s_{i+1}-a}(\mathbf{s})^i,n) \otimes_{W(\phi_{i+1,s_{i+1}-a}(\mathbf{s}),n)}W(\phi_{i+1,s_{i+1}-a}(\mathbf{s}),n)\otimes_{W(\mathbf{s},n)} W(\mathbf{s},n)\{-s_i \cdot a\}
\end{eqnarray*}
and the $(W(\alpha^{-i,a}({\bf s}),n),W(\mathbf{s},n))$-bimodule $\mathsf{F}^{(a)}_i \mathsf{1}_{\mathbf{s}}$ as the bimodule
\begin{eqnarray*}
\mathsf{F}^{(a)}_i \mathsf{1}_{\mathbf{s}}&=&\bigtriangleup^{i+1}(\phi_{i,a}(\mathbf{s})) \otimes_{W(\phi_{i,a}(\mathbf{s}),n)}\bigtriangledown^i(\phi_{i,a}(\mathbf{s}))\\
&\cong&W(\phi_{i,a}(\mathbf{s})^{i+1},n)\otimes_{W(\phi_{i,a}(\mathbf{s})^{i+1},n)}W(\phi_{i,a}(\mathbf{s}),n)\otimes_{W(\mathbf{s},n)} W(\mathbf{s},n)\{-s_{i+1} \cdot a\}
\end{eqnarray*}
By definition, these bimodules have elements diagrammatically depicted by
\[
\mathsf{E}^{(a)}_i \mathsf{1}_{\mathbf{s}}
\ni \hackcenter{
\begin{tikzpicture}[scale=.4]
\node at (-5.5,0) { $\dots$};
\node at (5.5,0) { $\dots$};
    \draw [thick, red, double,, ] (-4,-2) to (-4,2);
    \draw [thick, red, double,, ] (4,-2) to (4,2);
	\draw [thick, red, double,, ] (2,-.5) to (2,2);
	\draw [thick, red, double,, ] (2,-.5) to (-2,.5);
	\draw [thick, red, double,, ] (-2,-2) to (-2,.5);
	\draw [thick, red, double,, ] (-2,.5) to (-2,2);
	\draw [thick, red, double,, ] (2,-2) to (2,-.5);
\node at (-4,-2.55) {\tiny $s_{i-1}$};	
\node at (-2,-2.55) {\tiny $s_i$};	
\node at (2,-2.55) {\tiny $s_{i+1}$};
\node at (4,-2.55) {\tiny $s_{i+2}$};
	\node at (-2,2.5) {\tiny $s_i+a$};
	\node at (2,2.5) {\tiny $s_{i+1}-a$};
	\node at (0,0.75) {\tiny $a$};
\node at (0,-1.5) { $\dots$};
\end{tikzpicture}}
\quad \quad
\mathsf{F}^{(a)}_i \mathsf{1}_{\mathbf{s}}
\ni \hackcenter{
\begin{tikzpicture}[scale=.4]
\node at (-5.5,0) { $\dots$};
\node at (5.5,0) { $\dots$};
    \draw [thick, red, double,, ] (-4,-2) to (-4,2);
    \draw [thick, red, double,, ] (4,-2) to (4,2);
	\draw [thick, red, double,, ] (2,.5) to (2,2);
	\draw [thick, red, double,, ] (-2,-.5) to (2,.5);
	\draw [thick, red, double,, ] (-2,-2) to (-2,-.5);
	\draw [thick, red, double,, ] (-2,-.5) to (-2,2);
	\draw [thick, red, double,, ] (2,-2) to (2,.5);
\node at (-4,-2.55) {\tiny $s_{i-1}$};	
\node at (-2,-2.55) {\tiny $s_i$};	
\node at (2,-2.55) {\tiny $s_{i+1}$};
\node at (4,-2.55) {\tiny $s_{i+2}$};
	\node at (-2,2.5) {\tiny $s_i- a$};
	\node at (2,2.5) {\tiny $s_{i+1}+a$};
	\node at (0,0.75) {\tiny $a$};
\node at (0,-1.5) { $\dots$};
\end{tikzpicture}}
\]
with $n$ black strands intersecting the pictures in a manner similar to that described in Section ~\ref{sectiondiagdescription} along with red and black dots.
We will refer to the red strand connecting vertical strands as a step.  When a step is not labelled, following earlier conventions we will assume that it has a label of $1$.

\begin{proposition}
\label{EiEi+1generators}
The bimodules
$\mathsf{E}^{}_{i} \mathsf{E}^{}_{i+1} \mathsf{1}_{\mathbf{s}} $ and
$\mathsf{E}^{}_{i+1} \mathsf{E}^{}_i \mathsf{1}_{\mathbf{s}} $ are generated by diagrams of the form
\begin{equation*}
\hackcenter{
\begin{tikzpicture}[scale=.4]
	\draw [thick,red, double, ] (6,-2) to (6,2);
	\draw [thick,red, double, ] (2,-2) to (2,2);
	\draw [thick,red, double, ] (-2,-2) to (-2,2);
	\draw [very thick, red ] (2,.5) to (-2,1.5);
	\draw [very thick, red ] (6,-1.5) to (2,-.5);
\node at (-2,-2.55) {\tiny $s_i$};	
\node at (2,-2.55) {\tiny $s_{i+1}$};
\node at (6,-2.55) {\tiny $s_{i+2}$};
	\node at (-2,2.5) {\tiny $s_i+1$};
	\node at (2,2.5) {\tiny $s_{i+1}$};
		\node at (6,2.5) {\tiny $s_{i+2}-1$};
\node at (0,-2.5) {\tiny $k_1$};
\draw [very thick,double,] (0,-2) to (0,2);
\node at (4,-2.5) {\tiny $k_2$};
\draw [very thick,double,] (4,-2) to (4,2);
\end{tikzpicture}}
\quad \quad \quad
\hackcenter{
\begin{tikzpicture}[scale=.4]
	\draw [thick,red, double, ] (6,-2) to (6,2);
	\draw [thick,red, double, ] (2,-2) to (2,2);
	\draw [thick,red, double, ] (-2,-2) to (-2,2);
	\draw [very thick, red ] (6,.5) to (2,1.5);
	\draw [very thick, red ] (2,-1.5) to (-2,-.5);
\node at (-2,-2.55) {\tiny $s_i$};	
\node at (2,-2.55) {\tiny $s_{i+1}$};
\node at (6,-2.55) {\tiny $s_{i+2}$};
	\node at (-2,2.5) {\tiny $s_i+1$};
	\node at (2,2.5) {\tiny $s_{i+1}$};
		\node at (6,2.5) {\tiny $s_{i+2}-1$};
\node at (0.05,-2.5) {\tiny $k_1$};
\draw [very thick,double,] (0,-2)  .. controls ++(0,1.6) and ++(0,-1.4) .. (-.6,2);
\node at (4.6,-2.5) {\tiny $k_3$};
\draw [very thick,double,] (4.55,-2) .. controls ++(0,1.6) and ++(0,-1.4) .. (4,2);
\draw[very thick,double,] (3.25,-2) .. controls ++(0,1.6) and ++(0,-1.4) .. (1,2);
\node at (3.3,-2.5) {\tiny $k_2$};
\end{tikzpicture}}
\end{equation*}
respectively with
$k_1, k_2, k_3 \geq 0$.
\end{proposition}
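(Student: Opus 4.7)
The plan is to reduce an arbitrary diagram representing an element of the bimodule to the indicated standard form using the bimodule structure (which lets us move material at the top of the diagram into the left $W$-action and material at the bottom into the right $W$-action) together with the sliding relations of Proposition~\ref{folk-rel} and the local dot/crossing relations (\ref{blackdot})--(\ref{reddot}).

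First I would treat the case of $\mathsf{E}_{i}\mathsf{E}_{i+1}\mathsf{1}_{\mathbf{s}}$. By the tensor factorization
\[
\mathsf{E}_{i}\mathsf{E}_{i+1}\mathsf{1}_{\mathbf{s}} \;\cong\; W(\alpha^{+i,1}\alpha^{+i+1,1}(\mathbf{s}),n)\otimes_{W_{\mathrm{mid}}}W(\alpha^{+i+1,1}(\mathbf{s}),n)\otimes_{W(\mathbf{s},n)}W(\mathbf{s},n),
\]
an element is represented diagrammatically by three horizontal strips, the top and bottom ones containing arbitrary diagrams in the relevant $W$-algebras and the middle strip containing a single red step from column $i+1$ to column $i$ (from the top splitter/merge pair) positioned above a single red step from column $i+2$ to column $i+1$ (from the bottom pair). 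Any $W$-diagram in either the top or bottom strip can be absorbed into the corresponding bimodule action, so the generating problem reduces to controlling the middle strip. Using the basis from the earlier section, the middle strip is a linear combination of diagrams consisting of black strands (possibly with dots and mutual crossings) together with dots on the red strand segments. Apply Proposition~\ref{folk-rel} to slide every black strand freely through each red splitter/merge; this lets us push black dots and black--black crossings in the middle into either the top or bottom strip (and thus absorb them into the bimodule action). Similarly, red dots on the intermediate red segments slide across the splitter/merge using \eqref{reddot} and get absorbed. What remains in the middle is a purely topological configuration of $n$ undecorated black strands passing between two stacked red steps; up to the bimodule relations this is completely determined by the two non-negative integers $k_1$ (the number of black strands running between red columns $i$ and $i+1$) and $k_2$ (the number between columns $i+1$ and $i+2$).

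For $\mathsf{E}_{i+1}\mathsf{E}_{i}\mathsf{1}_{\mathbf{s}}$ I would argue along the same lines. Now the middle strip carries a red step from column $i+1$ down to column $i$ (from the top pair) stacked above a red step from column $i+2$ down to column $i+1$ (from the bottom pair), and these two steps cross. Proposition~\ref{folk-rel} again allows black strands to slide through each splitter/merge, so black dots, red dots on the step segments, and black--black crossings in the middle can all be absorbed into the top or bottom action. The residual data is the topological position of each of the $n$ undecorated black strands relative to the crossing region formed by the two red steps: each black strand lies either entirely to the left of the crossing region, entirely within the central diamond between the two crossing steps, or entirely to its right. These three possibilities yield exactly the parameters $k_1$, $k_2$, $k_3$ in the claimed standard diagram.

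The principal technical point to verify in a full writeup is that every sliding operation can genuinely be performed: a dotted or crossed black strand in the middle might a priori obstruct itself, but the nilHecke relations together with \eqref{blackdot} and Proposition~\ref{folk-rel} let one move these decorations past the red step in either direction without producing additional middle-strip generators. The bookkeeping in the $\mathsf{E}_{i+1}\mathsf{E}_{i}$ case is slightly more delicate because one must ensure that the routing of a black strand through the central diamond is unique up to absorption, but this follows because any two routings through the diamond differ by moves that can be pushed to the top or bottom strip via the sliding relations.
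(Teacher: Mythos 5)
The structural outline — push decorations to the top and bottom strips and classify the residual black-strand routing by $k_1,k_2$ (resp.\ $k_1,k_2,k_3$) — is the right framework, and your treatment of the black strands (sliding through splitters via Proposition~\ref{folk-rel}, absorbing dots and crossings into the bimodule actions) is sound. But the heart of this proposition is what happens to \emph{red} dots that cannot be absorbed directly, and that is exactly where your argument goes wrong.

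Concretely: the middle red column (labelled $s_{i+1}$) has an intermediate segment between the two steps whose thickness is $s_{i+1}+1$ (for $\mathsf{E}_i\mathsf{E}_{i+1}$) or $s_{i+1}-1$ (for $\mathsf{E}_{i+1}\mathsf{E}_i$), and a red dot $E_d$ on that segment is neither in the image of the left $W$-action nor of the right one. You claim such a dot ``slides across the splitter/merge using \eqref{reddot},'' but \eqref{reddot} is the relation for sliding a red dot past a \emph{black} strand at a red--black crossing; it says nothing about red splitters. The correct tool is the coproduct identity for elementary symmetric functions across a red merge, $E_d = \sum_{a+b=d}E_a\otimes E_b$ (the red analogue of \eqref{eq:splitters}). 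Applying it only partially solves the problem: one factor lands on the top (or bottom) of the column-$(i+1)$ strand and is absorbed, but the other factor is a dot on the \emph{step}, which is a new kind of obstruction you never mention. That step-dot must then be converted — using a complete--versus--elementary symmetric function identity — into dots on the top and bottom of the outer vertical red strand, which can finally be absorbed. This two-stage elimination (intermediate dot $\to$ top dot $+$ step dot, then step dot $\to$ outer vertical dots) is precisely the content of the paper's proof, and it is the nontrivial part of the statement; your proposal skips it and substitutes an inapplicable relation. To repair the proof you should replace the appeal to \eqref{reddot} with the symmetric function arguments just described and explicitly address red dots sitting on the step segments.
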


\begin{proof}
Using the first two equalities of Proposition \ref{folk-rel} and Corollary \ref{reddot-slide}, any red dot on the above diagram could be replaced by a sum of terms with red dots at the top and bottom of strands of the diagram.
For instance, a term with a red dot in the middle of the vertical strand labelled $s_{i+1}$ of the left diagram could be replaced by a sum of terms with red dots at the top of that middle vertical strand and dots on the upper step of the diagram.  A term with a dot on the upper step could be expressed as a sum of terms with dots on the top and bottom of the left-most vertical red strand using a complete-elementary symmetric function relation.
\end{proof}

\begin{proposition}
\label{EiEi+1Eigenerators}
The bimodules
$\mathsf{E}^{}_{i} \mathsf{E}^{}_{i+1} \mathsf{E}^{}_{i} \mathsf{1}_{\mathbf{s}} $
and
$\mathsf{E}^{}_{i+1} \mathsf{E}^{}_{i} \mathsf{E}^{}_{i+1} \mathsf{1}_{\mathbf{s}} $
are generated by elements of the form
\begin{equation}
\label{Eii+-1igen}
\hackcenter{
\begin{tikzpicture}[scale=.4]
	\draw [thick,red, double, ] (6,-2) to (6,4);
	\draw [thick,red, double, ] (2,-2) to (2,4);
	\draw [thick,red, double, ] (-2,-2) to (-2,4);
		\draw [very thick, red ] (2,2.5) to (-2,3.5);
		\draw [very thick, red ] (6,0) to (2,1);
		\draw [very thick, red ] (2,-1.5) to (-2,-.5);
\node at (-2,-2.55) {\tiny $s_i$};	
\node at (2,-2.55) {\tiny $s_{i+1}$};
\node at (6,-2.55) {\tiny $s_{i+2}$};
	\node at (-2,4.5) {\tiny $s_i+2$};
	\node at (2,4.5) {\tiny $s_{i+1}-1$};
	\node at (6,4.5) {\tiny $s_{i+2}-1$};
	\filldraw[red]  (0,-1) circle (5pt);
\node at (0,-1.6) {\tiny $f$};
\node at (-1,-2.5) {\tiny $k_1$};
\draw [very thick,double,] (-1,-2) to (-1,4);
\node at (4.65,-2.5) {\tiny $k_3$};
\draw [very thick,double,] (4.6,-2) to (4.6,4);
\draw[very thick,double,] (3.2,-2) .. controls ++ (0,1.2) and ++(0,-1.4) .. (.8,2) to (.8,4);
\node at (3.25,-2.5) {\tiny $k_2$};
    \end{tikzpicture}}
    \quad \quad \quad
\hackcenter{
\begin{tikzpicture}[scale=.4]
	\draw [thick,red, double, ] (6,-2) to (6,4);
	\draw [thick,red, double, ] (2,-2) to (2,4);
	\draw [thick,red, double, ] (-2,-2) to (-2,4);
		\draw [very thick, red ] (6,2.5) to (2,3.5);
		\draw [very thick, red ] (2,1) to (-2,2);
		\draw [very thick, red ] (6,-1.5) to (2,-.5);
\node at (-2,-2.55) {\tiny $s_i$};	
\node at (2,-2.55) {\tiny $s_{i+1}$};
\node at (6,-2.55) {\tiny $s_{i+2}$};
	\node at (-2,4.5) {\tiny $s_i+1$};
	\node at (2,4.5) {\tiny $s_{i+1}+1$};
	\node at (6,4.5) {\tiny $s_{i+2}-2$};
	\filldraw[red]  (4,3) circle (5pt);
\node at (4,2.4) {\tiny $f$};
\node at (-1,-2.5) {\tiny $k_1$};
\draw [very thick,double,] (-1,-2) to (-1,4);
\node at (4.75,-2.5) {\tiny $k_3$};
\draw [very thick,double,] (4.75,-2) to (4.75,4);
\draw[very thick,double,] (3.3,-2) to (3.3,.5) .. controls ++ (0,1.2) and ++(0,-1.6) ..  (.8,4);
\node at (3.3,-2.5) {\tiny $k_2$};
    \end{tikzpicture}}
\end{equation}

respectively with
$f, k_1, k_2, k_3 \geq 0$.
\end{proposition}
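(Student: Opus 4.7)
The plan is to adapt and extend the argument sketched in the proof of Proposition~\ref{EiEi+1generators} to a three-step ladder. I treat the first case $\mathsf{E}_i\mathsf{E}_{i+1}\mathsf{E}_i\mathsf{1}_{\bf s}$; the second case is symmetric and follows either by an analogous argument or by applying the symmetry $\tau$ to relate the $\mathsf{E}$-side calculus for $\mathsf{E}_i\mathsf{E}_{i+1}\mathsf{E}_i$ and $\mathsf{E}_{i+1}\mathsf{E}_i\mathsf{E}_{i+1}$.

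First I would observe that, by definition of the ladder bimodules as iterated tensor products of splitter bimodules, every element of $\mathsf{E}_i\mathsf{E}_{i+1}\mathsf{E}_i\mathsf{1}_{\bf s}$ is represented by a three-step red ladder diagram (bottom step between columns $i$ and $i{+}1$, middle step between columns $i{+}1$ and $i{+}2$, top step between columns $i$ and $i{+}1$) decorated with arbitrary red dots $E_d$ on red segments and with black strands carrying dots and crossings. Using the nilHecke-type and red/black sliding relations \eqref{RBr2-rel}, \eqref{blackdot}, \eqref{reddot}, \eqref{r3-2}, together with Proposition~\ref{folk-rel}, I can normalize so that all black dots are concentrated on a few designated vertical black strands and the black strand topology has the shape shown in \eqref{Eii+-1igen}.

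Next I would push red dots off the interior. Using the splitter slide relations~\eqref{eq:splitters} (adapted to the red ladder via Proposition~\ref{folk-rel}) and the elementary/complete symmetric function identity
\[
\varepsilon_d(x,y) \;=\; \varepsilon_d(y) \,+\, x\,\varepsilon_{d-1}(y),
\]
a red dot $E_d$ on any interior vertical red segment (above the bottom step or between/above the upper two steps) can be rewritten as a sum of dots at the tops or bottoms of the two branches of the adjacent splitter. Iterating, every interior red dot either is absorbed into the left or right bimodule action (when it reaches the very top or very bottom of the diagram) or lands on one of the three steps. This is the same reduction strategy used to eliminate all red dots in the proof of Proposition~\ref{EiEi+1generators}.

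Then I would reduce dots on the upper two steps to dots on the bottom step. Treating the middle and top steps as the two steps of a sub-diagram, I apply the argument of Proposition~\ref{EiEi+1generators} verbatim to convert red dots there into red dots on the vertical red segments of that sub-diagram; these in turn are pushed, as above, either out to the boundary (absorbed into the bimodule action) or down onto the bottom step. The residual red decorations on the bottom step are polynomials in the elementary symmetric functions of the single merged red strand there, which, once the step is merged through its splitter, collect into a single symbol $E_f$ on the bottom step for varying $f \geq 0$, possibly at the cost of additional contributions already lying in the generating set. Finally, any leftover black-strand complications are normalized using the nilHecke-type relations \eqref{RBr2-rel} and \eqref{r3-2} into the configuration of crossings and the three black dot counts $k_1, k_2, k_3$ shown in \eqref{Eii+-1igen}.

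The main obstacle is the bookkeeping in the third step: sliding red dots from the middle and top steps down past the top splitter and then past the bottom splitter produces a proliferation of terms (one per decomposition $x+y = d$ from \eqref{eq:splitters}), and one must check that the terms which are not already absorbed into the left/right action combine into $\Bbbk$-linear combinations of elements in the stated form. The use of the symmetry $\tau$ gives a useful consistency check by matching the first case with the (geometrically mirrored) second case.
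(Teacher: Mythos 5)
Your proposal is essentially correct and follows the same normalization strategy the paper uses: apply the splitter slide relations and Proposition~\ref{folk-rel} to push red dots off interior vertical segments to the boundary (absorbed by the bimodule action) or onto a step, then reduce step dots. The paper's own proof is terse but hinges on exactly the observation you reach at the end, namely that --- in contrast to Proposition~\ref{EiEi+1generators}, where every red step dot can be eliminated --- here a red dot on any of the three steps can only be rewritten as a linear combination of boundary dots plus a residual red dot on the one designated step (the bottom $\mathsf{E}_i$ step in the first diagram, the top $\mathsf{E}_{i+1}$ step in the second). You effectively discover this residue in your third step, though your wording (``collect into a single symbol $E_f$\dots possibly at the cost of additional contributions already lying in the generating set'') makes it sound like a bookkeeping nuisance rather than the crux; a cleaner formulation, matching the paper, would front-load the claim that red step dots reduce to boundary terms plus a dot on the indicated step, and then check this via a single chain of slides through the shared column $i{+}1$. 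Your suggestion to use $\tau$ to relate the two cases is a legitimate shortcut and a fine consistency check, though the paper handles both cases uniformly.
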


\begin{proof}
The proof of this is similar to the proof of Proposition ~\ref{EiEi+1generators}.
Note that in this case we can always write a term with a red dot on a step as a linear combination of terms with red dots at the top and bottom of the diagram and at the step of the ladder as indicated.
\end{proof}

\section{Bimodule homomorphisms}

\subsection{Morphisms $\partial_{(s_{j}-1,1)}^j$ and $\partial_{(1,s_{j}-1)}^j$}
We define the \textit{derivation} maps $\partial_{(s_{j}-1,1)}^j$ and $\partial_{(1,s_{j}-1)}^j$.

These are $(W({\bf s},n),W({\bf s},n))$-bimodule maps
\begin{align*}
\partial_{(s_{j}-1,1)}^j &\colon \bigtriangleup^j(\phi_{j,1}({\bf s})) \otimes_{W(\phi_{j,1}({\bf s}),n)} \bigtriangledown^j(\phi_{j,1}({\bf s})) \rightarrow W({\bf s},n) \\
\partial_{(1,s_{j}-1)}^j &\colon \bigtriangleup^j(\phi_{j,s_j-1}({\bf s})) \otimes_{W(\phi_{j,s_j-1}({\bf s}),n)} \bigtriangledown^j(\phi_{j,s_j-1}({\bf s})) \rightarrow W({\bf s},n)
\end{align*}
determined by
\begin{equation}
\label{bimodmap1}
\partial_{(s_{j}-1,1)}^j:\hackcenter{ \begin{tikzpicture} [scale=.75]
\draw[thick, red, double] (0,0).. controls ++(0,.3) and ++(0,-.3) ..(.6,.5) to (.6,1);
\draw[very thick,red,   ] (1.2,0).. controls ++(0,.3) and ++(0,-.3) .. (.6,.5) node[pos=.5, shape=coordinate](DOT){};
\draw[very thick,red,] (.6,-.5).. controls ++(0,.3) and ++(0,-.3) ..(1.2,0) ;
\draw[thick, red, double] (0,0).. controls ++(0,-.3) and ++(0,.3) ..(.6,-.5) to (.6,-1);
    \filldraw[red]  (DOT) circle (3.0pt);
\node at (-.5,0) {$\scs s_{j}-1$};
\node at (1.4,0) {$\scs 1$};
\node at (1.2,0.5) {$\scs m$};
\node at (.6,1.2) {$\scs s_{j}$};
\node at (.6,-1.2) {$\scs s_{j}$};
\end{tikzpicture}}
\mapsto
\hackcenter{ \begin{tikzpicture} [scale=.75]
\draw[thick,red, double, ] (0,-1) to (0,1) node[pos=.5, shape=coordinate](DOT){};
    \filldraw[red]  (DOT) circle (3.0pt);
\node at (.9,0) {$\scs {\sf h}_{m-s_{j}+1}$};
\node at (0,-1.2) {$\scs s_{j}$};
\end{tikzpicture}}
,
\qquad
\partial_{(1,s_{j}-1)}^j:\hackcenter{ \begin{tikzpicture} [scale=.75]
\draw[thick, red, double] (0,0).. controls ++(0,.3) and ++(0,-.3) ..(-.6,.5) to (-.6,1);
\draw[very thick,red,   ] (-1.2,0).. controls ++(0,.3) and ++(0,-.3) .. (-.6,.5) node[pos=.5, shape=coordinate](DOT){};
\draw[very thick,red,] (-.6,-.5).. controls ++(0,.3) and ++(0,-.3) ..(-1.2,0) ;
\draw[thick, red, double] (0,0).. controls ++(0,-.3) and ++(0,.3) ..(-.6,-.5) to (-.6,-1);
    \filldraw[red]  (DOT) circle (3.0pt);
\node at (.5,0) {$\scs s_{j}-1$};
\node at (-1.4,0) {$\scs 1$};
\node at (-1.2,0.5) {$\scs m$};
\node at (-.6,1.2) {$\scs s_{j}$};
\node at (-.6,-1.2) {$\scs s_{j}$};
\end{tikzpicture}}
\mapsto
(-1)^{s_j-1}
\hackcenter{ \begin{tikzpicture} [scale=.75]
\draw[thick,red, double, ] (0,-1) to (0,1) node[pos=.5, shape=coordinate](DOT){};
    \filldraw[red]  (DOT) circle (3.0pt);
\node at (.9,0) {$\scs {\sf h}_{m-s_{j}+1}$};
\node at (0,-1.2) {$\scs s_{j}$};
\end{tikzpicture}},
\end{equation}
where ${\sf h}_\ell$ is the $\ell$-th complete symmetric function in terms of the elementary symmetric functions ${\sf e}_1$,...,${\sf e}_{s_{j}}$.

The degrees of the morphisms $\partial_{(s_{j}-1,1)}^j$ and $\partial_{(1,s_{j}-1)}^j$ are both $1-s_j$.

\subsection{Morphisms $\iota_{(s_{j}-1,1)}^j$ and $\iota_{(1,s_{j}-1)}^j$}
We define the \textit{creation} maps $\iota_{(s_{j}-1,1)}^j$ and $\iota_{(1,s_{j}-1)}^j$.

There are $(W({\bf s},n),W({\bf s},n))$-bimodule maps $\iota_{(s_{j}-1,1)}^j: W({\bf s},n)\rightarrow \bigtriangleup^j(\phi_{j,1}({\bf s})) \otimes_{W(\phi_{j,1}({\bf s}),n)} \bigtriangledown^j(\phi_{j,1}({\bf s}))$ and $\iota_{(1,s_{j}-1)}^j:W({\bf s},n) \rightarrow \bigtriangleup^j(\phi_{j,s_j-1}({\bf s})) \otimes_{W(\phi_{j,s_j-1}({\bf s}),n)} \bigtriangledown^j(\phi_{j,s_j-1}({\bf s}))$ determined by
\begin{equation}
\label{bimodmap2}
\iota_{(s_{j}-1,1)}^j:
\hackcenter{ \begin{tikzpicture} [scale=.75]
\draw[thick,red, double, ] (0,-1) to (0,1);
\node at (0,-1.2) {$\scs s_{j}$};
\end{tikzpicture}}
\mapsto
\hackcenter{ \begin{tikzpicture} [scale=.75]
\draw[thick, red, double] (0,0).. controls ++(0,.3) and ++(0,-.3) ..(.6,.5) to (.6,1);
\draw[very thick,red,   ] (1.2,0).. controls ++(0,.3) and ++(0,-.3) .. (.6,.5) node[pos=.5, shape=coordinate](DOT){};
\draw[very thick,red,] (.6,-.5).. controls ++(0,.3) and ++(0,-.3) ..(1.2,0) ;
\draw[thick, red, double] (0,0).. controls ++(0,-.3) and ++(0,.3) ..(.6,-.5) to (.6,-1);
\node at (-.5,0) {$\scs s_{j}-1$};
\node at (1.4,0) {$\scs 1$};
\node at (.6,1.2) {$\scs s_{j}$};
\node at (.6,-1.2) {$\scs s_{j}$};
\end{tikzpicture}}
,
\qquad
\iota_{(1,s_{j}-1)}^j:
\hackcenter{ \begin{tikzpicture} [scale=.75]
\draw[thick,red, double, ] (0,-1) to (0,1);
\node at (0,-1.2) {$\scs s_{j}$};
\end{tikzpicture}}
\mapsto
\hackcenter{ \begin{tikzpicture} [scale=.75]
\draw[thick, red, double] (0,0).. controls ++(0,.3) and ++(0,-.3) ..(-.6,.5) to (-.6,1);
\draw[very thick,red,   ] (-1.2,0).. controls ++(0,.3) and ++(0,-.3) .. (-.6,.5) node[pos=.5, shape=coordinate](DOT){};
\draw[very thick,red,] (-.6,-.5).. controls ++(0,.3) and ++(0,-.3) ..(-1.2,0) ;
\draw[thick, red, double] (0,0).. controls ++(0,-.3) and ++(0,.3) ..(-.6,-.5) to (-.6,-1);
\node at (.5,0) {$\scs s_{j}-1$};
\node at (-1.4,0) {$\scs 1$};
\node at (-.6,1.2) {$\scs s_{j}$};
\node at (-.6,-1.2) {$\scs s_{j}$};
\end{tikzpicture}}
\end{equation}

The degrees of the morphisms $\iota_{(s_{j}-1,1)}^j$ and $\iota_{(1,s_{j}-1)}^j$ are $1-s_j$.

\subsection{Morphisms $\upsilon_{(s_j,1)}^j$ and $\upsilon_{(1,s_{j+1})}^j$}\label{sec:unzip}

Let ${\bf s}_{(j)}$ be a sequence $(s_1,s_2,...,s_m)$ of $m$ non-negative integers such that the $j$-th integer $s_j$ is one.

For a sequence $\mathbf{i}_1\in \mathrm{Seq}({\bf s}_{(j+1)},n)$ such that $s_{j}$ and $s_{j+1}$ are neighbors and a sequence $\mathbf{i}_2\in \mathrm{Seq}({\bf s}_{(j)},n)$ such that $s_{j}$ and $s_{j+1}$ are neighbors (i.e., $\mathbf{i}_1=(...,s_{j},1,...)$ and $\mathbf{i}_2=(...,1,s_{j+1},...)$), there is a map
\begin{equation*}
e(\mathbf{i}_1)(\bigtriangledown^j({\bf s}_{(j+1)}) \otimes_{W({\bf s}_{(j+1)}^j,n)} \bigtriangleup^j({\bf s}_{(j+1)}))e(\mathbf{i}_1) \rightarrow e(\mathbf{i}_1)W({\bf s}_{(j+1)},n)e(\mathbf{i}_1)
\end{equation*}
and a map
\begin{equation*}
e(\mathbf{i}_2)(\bigtriangledown^j({\bf s}_{(j)}) \otimes_{W({\bf s}_{(j)}^j,n)} \bigtriangleup^j({\bf s}_{(j)}))e(\mathbf{i}_2) \rightarrow e(\mathbf{i}_2)W({\bf s}_{(j)},n)e(\mathbf{i}_2)
\end{equation*}
determined by
\begin{equation}
\label{unzip1}
\hackcenter{
\begin{tikzpicture}[scale=.9]
\draw (0,0) -- (.5,.25)[thick,red, double, ];
\draw (1,0) -- (.5,.25)[very thick,red,   ];
\draw (0,1.5) -- (.5,1.25)[thick,red, double, ];
\draw (1,1.5) -- (.5,1.25)[very thick,red,  ];
\draw (.5,.25) -- (.5,1.25)[thick,red, double, ];
        \node at (0,-.2) {$\scs s_j$};
        \node at (1,-.2) {$\scs 1$};
        \node at (0.95,0.75) {$\scs s_j+1$};
        \node at (0,1.7) {$\scs s_j$};
        \node at (1,1.7) {$\scs 1$};
\end{tikzpicture}}
\mapsto
\hackcenter{
\begin{tikzpicture}[scale=.9]
\draw (0,0) -- (0,1.5)[thick,red, double, ];
\draw (1,0) -- (1,1.5)[very thick,red,  ];
        \node at (0,-.2) {$\scs $};
        \node at (1,-.2) {$\scs $};
        \node at (0,-.2) {$\scs s_j$};
        \node at (1,-.2) {$\scs 1$};
\end{tikzpicture}},
\qquad
\hackcenter{
\begin{tikzpicture}[scale=.9]
\draw (0,0) -- (.5,.25)[very thick,red,   ];
\draw (1,0) -- (.5,.25)[thick,red, double, ];
\draw (0,1.5) -- (.5,1.25)[very thick,red,  ];
\draw (1,1.5) -- (.5,1.25)[thick,red, double, ];
\draw (.5,.25) -- (.5,1.25)[thick,red, double, ];
        \node at (0,-.2) {$\scs 1$};
        \node at (1,-.2) {$\scs s_{j+1}$};
        \node at (1.25,0.75) {$\scs s_{j+1}+1$};
        \node at (0,1.7) {$\scs 1$};
        \node at (1,1.7) {$\scs s_{j+1}$};
\end{tikzpicture}}
\mapsto
\hackcenter{
\begin{tikzpicture}[scale=.9]
\draw (0,0) -- (0,1.5)[very thick,red,   ];
\draw (1,0) -- (1,1.5)[thick,red, double, ];
        \node at (0,-.2) {$\scs $};
        \node at (1,-.2) {$\scs $};
        \node at (0,-.2) {$\scs 1$};
        \node at (1,-.2) {$\scs s_{j+1}$};
\end{tikzpicture}}
\end{equation}

The degree of the first morphism is $s_j$ the degree of the second morphism is $s_{j+1}$.

This implies the morphisms
\begin{equation*}
e(\mathbf{i}_1)(\bigtriangledown^j({\bf s}_{(j+1)}) \otimes_{W({\bf s}_{(j+1)}^j,n)} \bigtriangleup^j({\bf s}_{(j+1)}))e(\mathbf{i}_1) \rightarrow e(\mathbf{i}_1)W({\bf s}_{(j+1)},n)e(\mathbf{i}_1)
\end{equation*}
\begin{equation*}
e(\mathbf{i}_2)(\bigtriangledown^j({\bf s}_{(j)}) \otimes_{W({\bf s}_{(j)}^j,n)} \bigtriangleup^j({\bf s}_{(j)}))e(\mathbf{i}_2) \rightarrow e(\mathbf{i}_2)W({\bf s}_{(j)},n)e(\mathbf{i}_2)
\end{equation*}
where $\mathbf{i}_1=(...,1,\underbrace{\mf{b},...,\mf{b}}_{k},s_{j+1},...)$ and $\mathbf{i}_2=(...,s_{j},\underbrace{\mf{b},...,\mf{b}}_{k},1,...)$ for $0\leq k\leq n$ are of degrees $s_j$ and $s_{j+1}$, respectively.

As a diagrammatic presentation, the first map is determined by the following mapping.
\begin{eqnarray}
\label{unzip}
&&
\hackcenter{
\begin{tikzpicture}[scale=.9]
\draw (-1,0) -- (.5,.25)[thick,red, double, ];
\draw (-1,1.5) -- (.5,1.25)[thick,red, double, ];
\draw (2,0) -- (.5,.25)[very thick,red,   ];
\draw (2,1.5) -- (.5,1.25)[very thick,red, ];
\draw (.5,.25) -- (.5,1.25)[thick,red, double, ];
\draw[thick,->] (.25,-.1) .. controls ++(0,.25) and ++(0,-.25) .. (.05,.5) -- (.05,1) .. controls ++(0,.25) and ++(0,-.25) .. (.25,1.6);
\draw[thick,->] (-.5,-.1) .. controls ++(0,.25) and ++(0,-.25) .. (-.7,.5) -- (-.7,1) .. controls ++(0,.25) and ++(0,-.25) .. (-.5,1.6);
\draw[thick,->] (.75,-.1) .. controls ++(0,.25) and ++(0,-.25) .. (.95,.5) -- (.95,1) .. controls ++(0,.25) and ++(0,-.25) .. (.75,1.6);
\draw[thick,->] (1.5,-.1) .. controls ++(0,.25) and ++(0,-.25) .. (1.7,.5) -- (1.7,1) .. controls ++(0,.25) and ++(0,-.25) .. (1.5,1.6);
        \node at (-1,-.2) {$\scs s_j$};
        \node at (2,-.2) {$\scs 1$};
        \node at (-1,1.7) {$\scs s_j$};
        \node at (2,1.7) {$\scs 1$};
        \node at (-.125,-.1) {$\cdots$};
        \node at (-.125,-.45) {$\underbrace{\hspace{.75cm}}_{k_1}$};
        \node at (1.125,-.1) {$\cdots$};
        \node at (1.125,-.45) {$\underbrace{\hspace{.75cm}}_{k_2}$};
\end{tikzpicture}}
\mapsto
\hackcenter{
\begin{tikzpicture}[scale=.9]
\draw[thick,red, double] (-1,0) .. controls ++(0,.25) and ++(0,-.25) .. (.25,.5) -- (.25,1) .. controls ++(0,.25) and ++(0,-.25) .. (-1,1.5);
\draw[very thick,red ] (2,0) .. controls ++(0,.25) and ++(0,-.25) .. (.75,.5) -- (.75,1) .. controls ++(0,.25) and ++(0,-.25) .. (2,1.5);
\draw[thick,->] (.25,-.1) .. controls ++(0,.25) and ++(0,-.25) .. (.05,.5) -- (.05,1) .. controls ++(0,.25) and ++(0,-.25) .. (.25,1.6);
\draw[thick,->] (-.5,-.1) .. controls ++(0,.25) and ++(0,-.25) .. (-.7,.5) -- (-.7,1) .. controls ++(0,.25) and ++(0,-.25) .. (-.5,1.6);
\draw[thick,->] (.75,-.1) .. controls ++(0,.25) and ++(0,-.25) .. (.95,.5) -- (.95,1) .. controls ++(0,.25) and ++(0,-.25) .. (.75,1.6);
\draw[thick,->] (1.5,-.1) .. controls ++(0,.25) and ++(0,-.25) .. (1.7,.5) -- (1.7,1) .. controls ++(0,.25) and ++(0,-.25) .. (1.5,1.6);
        \node at (-1,-.2) {$\scs s_j$};
        \node at (2,-.2) {$\scs 1$};
        \node at (-1,1.7) {$\scs s_j$};
        \node at (2,1.7) {$\scs 1$};
        \node at (-.125,-.1) {$\cdots$};
        \node at (-.125,-.45) {$\underbrace{\hspace{.75cm}}_{k_1}$};
        \node at (1.125,-.1) {$\cdots$};
        \node at (1.125,-.45) {$\underbrace{\hspace{.75cm}}_{k_2}$};
\end{tikzpicture}}
\end{eqnarray}
where $k_1+k_2=k$.

The second map is determined by
\begin{eqnarray}
\label{unzip}
&&
\hackcenter{
\begin{tikzpicture}[scale=.9]
\draw (-1,0) -- (.5,.25)[very thick,red,   ];
\draw (-1,1.5) -- (.5,1.25)[very thick,red,  ];
\draw (2,0) -- (.5,.25)[thick,red, double, ];
\draw (2,1.5) -- (.5,1.25)[thick,red, double, ];
\draw (.5,.25) -- (.5,1.25)[thick,red, double, ];
\draw[thick,->] (.25,-.1) .. controls ++(0,.25) and ++(0,-.25) .. (.05,.5) -- (.05,1) .. controls ++(0,.25) and ++(0,-.25) .. (.25,1.6);
\draw[thick,->] (-.5,-.1) .. controls ++(0,.25) and ++(0,-.25) .. (-.7,.5) -- (-.7,1) .. controls ++(0,.25) and ++(0,-.25) .. (-.5,1.6);
\draw[thick,->] (.75,-.1) .. controls ++(0,.25) and ++(0,-.25) .. (.95,.5) -- (.95,1) .. controls ++(0,.25) and ++(0,-.25) .. (.75,1.6);
\draw[thick,->] (1.5,-.1) .. controls ++(0,.25) and ++(0,-.25) .. (1.7,.5) -- (1.7,1) .. controls ++(0,.25) and ++(0,-.25) .. (1.5,1.6);
        \node at (-1,-.2) {$\scs 1$};
        \node at (2,-.2) {$\scs s_{j+1}$};
        \node at (-1,1.7) {$\scs 1$};
        \node at (2,1.7) {$\scs s_{j+1}$};
        \node at (-.125,-.1) {$\cdots$};
        \node at (-.125,-.45) {$\underbrace{\hspace{.75cm}}_{k_1}$};
        \node at (1.125,-.1) {$\cdots$};
        \node at (1.125,-.45) {$\underbrace{\hspace{.75cm}}_{k_2}$};
\end{tikzpicture}}
\mapsto
\hackcenter{
\begin{tikzpicture}[scale=.9]
\draw[very thick,red ] (-1,0) .. controls ++(0,.25) and ++(0,-.25) .. (.25,.5) -- (.25,1) .. controls ++(0,.25) and ++(0,-.25) .. (-1,1.5);
\draw[thick,red, double] (2,0) .. controls ++(0,.25) and ++(0,-.25) .. (.75,.5) -- (.75,1) .. controls ++(0,.25) and ++(0,-.25) .. (2,1.5);
\draw[thick,->] (.25,-.1) .. controls ++(0,.25) and ++(0,-.25) .. (.05,.5) -- (.05,1) .. controls ++(0,.25) and ++(0,-.25) .. (.25,1.6);
\draw[thick,->] (-.5,-.1) .. controls ++(0,.25) and ++(0,-.25) .. (-.7,.5) -- (-.7,1) .. controls ++(0,.25) and ++(0,-.25) .. (-.5,1.6);
\draw[thick,->] (.75,-.1) .. controls ++(0,.25) and ++(0,-.25) .. (.95,.5) -- (.95,1) .. controls ++(0,.25) and ++(0,-.25) .. (.75,1.6);
\draw[thick,->] (1.5,-.1) .. controls ++(0,.25) and ++(0,-.25) .. (1.7,.5) -- (1.7,1) .. controls ++(0,.25) and ++(0,-.25) .. (1.5,1.6);
        \node at (-1,-.2) {$\scs 1$};
        \node at (2,-.2) {$\scs s_{j+1}$};
        \node at (-1,1.7) {$\scs 1$};
        \node at (2,1.7) {$\scs s_{j+1}$};
        \node at (-.125,-.1) {$\cdots$};
        \node at (-.125,-.45) {$\underbrace{\hspace{.75cm}}_{k_1}$};
        \node at (1.125,-.1) {$\cdots$};
        \node at (1.125,-.45) {$\underbrace{\hspace{.75cm}}_{k_2}$};
\end{tikzpicture}}
\end{eqnarray}
where $k_1+k_2=k$.
Note that the diagram in the image of this map can be simplified using Relation \eqref{RthickBr2-rel}.

We define the bimodule morphisms $\upsilon_{(s_j,1)}^j$ and $\upsilon_{(1,s_{j+1})}^j$, called \textit{lollipop} (unzip) maps,
\[
\upsilon_{(s_j,1)}^j:\bigtriangledown^j({\bf s}_{(j+1)}) \otimes_{W({\bf s}_{(j+1)}^j,n)} \bigtriangleup^j({\bf s}_{(j+1)})
\rightarrow W({\bf s}_{(j+1)},n)
\]
and
\[
\upsilon_{(1,s_{j+1})}^j:\bigtriangledown^j({\bf s}_{(j)}) \otimes_{W({\bf s}_{(j)}^j,n)} \bigtriangleup^j({\bf s}_{(j)}) \rightarrow W({\bf s}_{(j)},n)
\]
determined by the morphisms in \eqref{unzip1}.

Moreover, composing the morphism $\upsilon_{(s_j,1)}^j$ and $\partial_{(1,s_{j+1}-1)}^{j+1}$ (resp. $\upsilon_{(1,s_j)}^j$ and $\partial_{(s_{j}-1,1)}^{j}$) using \eqref{RthickBr2-rel} we define the following morphism $\upsilon_{{\bf s},l}^j$ (resp. $\upsilon_{{\bf s},r}^j$)
\begin{eqnarray*}
\upsilon_{{\bf s},l}^j&:&
  \mathsf{F}_j \mathsf{E}_j \mathsf{1}_{\mathbf{s}}  \rightarrow W({\bf s},n),
\\
\upsilon_{{\bf s},r}^j&:&
    \mathsf{E}_j \mathsf{F}_j \mathsf{1}_{\mathbf{s}} \rightarrow W({\bf s},n).
\end{eqnarray*}
By definition, the degree of the morphism $\upsilon_{s,l}^j$ is $s_j-s_{j+1}+1$ and the degree of the morphism $\upsilon_{s,r}^j$ is $-s_j+s_{j+1}+1$.

As a diagrammatic presentation, the morphism $\upsilon_{{\bf s},l}^j$ is determined by the following mappings.
\begin{eqnarray*}
\hackcenter{
\begin{tikzpicture}[scale=.9]
\draw (0,0) -- (0,1.5)[thick,red, double, ];
\draw (1.15,0) -- (1.15,1.5)[very thick, double, ] ;
\draw (2.3,0) -- (2.3,1.5)[thick,red, double, ];
\draw  (0,.5) to (2.3,.25)[very thick, red ];
\draw [very thick, red ] (2.3,1.25) to (0,1);
\filldraw[red]  (1.65,.3) circle (2.5pt);
        \node at (1.65,.55) {$\scs \delta$};
        \node at (0,-.2) {$\scs s_j$};
        \node at (2.3,-.2) {$\scs s_{j+1}$};
        \node at (0,1.7) {$\scs s_j$};
        \node at (2.3,1.7) {$\scs s_{j+1}$};
        \node at (2.85,.75) {$\scs s_{j+1}-1$};
        \node at (-.45,.75) {$\scs s_j+1$};
        \node at (.75,.25) {$\scs 1$};
        \node at (.75,1.25) {$\scs 1$};
        \node at (1.15,-.2) {$\scs k$};
\end{tikzpicture}}
&\stackrel{\upsilon_{(s_j,1)}^j}{\mapsto}&
\hackcenter{
\begin{tikzpicture}[scale=.9]
\draw (0,0) -- (0,1.5)[thick,red, double, ];
\draw (1.15,0) -- (1.15,1.5)[very thick, double, ];
\draw (2.3,0) -- (2.3,1.5)[thick,red, double, ];
\draw [very thick, red, ] (2.3,.25) .. controls (-.2,.5) and (-.2,1) .. (2.3,1.25)node[pos=.5, shape=coordinate](DOT7){};
\filldraw[red]  (DOT7) circle (2.5pt);
        \node at (.4,1) {$\scs \delta$};
        \node at (0,-.2) {$\scs s_j$};
        \node at (2.3,-.2) {$\scs s_{j+1}$};
        \node at (0,1.7) {$\scs s_j$};
        \node at (2.3,1.7) {$\scs s_{j+1}$};
        \node at (2.85,.75) {$\scs s_{j+1}-1$};
        \node at (1.15,-.2) {$\scs k$};
\end{tikzpicture}}
\stackrel{\eqref{RthickBr2-rel}}{=}
\sum_{d=0}^{k}
(-1)^{k-d}
\hackcenter{
\begin{tikzpicture}[scale=.9]
\draw (0,0) -- (0,1.5)[thick,red, double, ];
\draw (2.3,0) -- (2.3,1.5)[thick,red, double];
\draw [very thick, red ] (2.3,.25) .. controls (1.9,.5) and (1.9,1) .. (2.3,1.25)node[pos=.75, shape=coordinate](DOT1){};
\draw (1.0,0) -- (1.0,1.5)[very thick, double, ] node[pos=.5, shape=coordinate](DOT2){};
        \node at (0,-.2) {$\scs s_j$};
        \node at (2.3,-.2) {$\scs s_{j+1}$};
        \node at (0,1.7) {$\scs s_j$};
        \node at (2.3,1.7) {$\scs s_{j+1}$};
        \node at (2.85,.75) {$\scs s_{j+1}-1$};
        \node at (1.0,-.2) {$\scs k$};
    \filldraw[red]  (DOT1) circle (2.5pt);
    \filldraw  (DOT2) circle (2.5pt);
        \node at (.7,.75) {$\scs {\sf e}_d$};
        \node at (1.7,1.25) {$\scs  k-d+\delta $};
\end{tikzpicture}}
\\
&\stackrel{\partial_{(1,s_{j+1}-1)}^{j+1}}{\mapsto}&
\sum_{d=0}^{k}
(-1)^{s_{j+1}-1+k-d}
\hackcenter{
\begin{tikzpicture}[scale=.9]
\draw (0,0) -- (0,1.5)[thick,red, double, ];
\draw (2.3,0) -- (2.3,1.5)[thick,red, double]node[pos=.75, shape=coordinate](DOT1){};
        \node at (0,-.2) {$\scs s_j$};
        \node at (2.3,-.2) {$\scs s_{j+1}$};
        \node at (0,1.7) {$\scs s_j$};
        \node at (2.3,1.7) {$\scs s_{j+1}$};
        \node at (1.15,-.2) {$\scs k$};
\draw (1.15,0) -- (1.15,1.5)[very thick, double, ] node[pos=.5, shape=coordinate](DOT2){};
    \filldraw[red]  (DOT1) circle (2.5pt);
    \filldraw  (DOT2) circle (2.5pt);
        \node at (.85,.75) {$\scs {\sf e}_d$};
        \node at (3.5,1.2) {$\scs {\sf h}_{k-d-s_{j+1}+1+\delta}$};
\end{tikzpicture}}
\end{eqnarray*}
The morphism $\upsilon_{{\bf s},r}^j$ is determined by the following mappings.
\begin{eqnarray*}
\hackcenter{
\begin{tikzpicture}[scale=.9]
\draw (0,0) -- (0,1.5)[thick,red, double, ];
\draw (2.3,0) -- (2.3,1.5)[thick,red, double, ];
\draw [very thick, red ] (0,.25) to (2.3,.5) ;
\draw [very thick, red ] (2.3,1) to (0,1.25);
\draw (1.15,0) -- (1.15,1.5)[very thick, double, ];
\filldraw[red]  (.65,.3) circle (2.5pt);
        \node at (.65,-.05) {$\scs \delta$};
        \node at (0,-.2) {$\scs s_j$};
        \node at (2.3,-.2) {$\scs s_{j+1}$};
        \node at (0,1.7) {$\scs s_j$};
        \node at (2.3,1.7) {$\scs s_{j+1}$};
        \node at (2.85,.75) {$\scs s_{j+1}+1$};
        \node at (-.45,.75) {$\scs s_j-1$};
        \node at (1.55,.25) {$\scs 1$};
        \node at (1.55,1.25) {$\scs 1$};
        \node at (1.15,-.2) {$\scs k$};
\end{tikzpicture}}
&\stackrel{\upsilon_{(1,s_j)}^j}{\mapsto}&
\hackcenter{
\begin{tikzpicture}[scale=.9]
\draw (0,0) -- (0,1.5)[thick,red, double, ];
\draw (2.3,0) -- (2.3,1.5)[thick,red, double, ];
\draw [very thick, red ] (0,.25) .. controls (2.5,.5) and (2.5,1) .. (0,1.25) node[pos=.5, shape=coordinate](DOT6){};
\filldraw[red]  (DOT6) circle (2.5pt);
\draw (1.15,0) -- (1.15,1.5)[very thick, double, ];
        \node at (1.9,1) {$\scs \delta$};
        \node at (0,-.2) {$\scs s_j$};
        \node at (2.3,-.2) {$\scs s_{j+1}$};
        \node at (0,1.7) {$\scs s_j$};
        \node at (2.3,1.7) {$\scs s_{j+1}$};
        \node at (-.45,.75) {$\scs s_j-1$};
        \node at (1.15,-.2) {$\scs k$};
\end{tikzpicture}}
\stackrel{\eqref{RthickBr2-rel}}{=}
\sum_{d=0}^{k}
(-1)^{k-d}
\hackcenter{
\begin{tikzpicture}[scale=.9]
\draw (0,0) -- (0,1.5)[thick,red, double, ];
\draw (2.3,0) -- (2.3,1.5)[thick,red, double, ];
\draw [very thick, red ] (0,.25) .. controls (.4,.5) and (.4,1) .. (0,1.25)node[pos=.75, shape=coordinate](DOT1){};
\draw (1.35,0) -- (1.35,1.5)[very thick, double, ] node[pos=.5, shape=coordinate](DOT2){};
        \node at (0,-.2) {$\scs s_j$};
        \node at (2.3,-.2) {$\scs s_{j+1}$};
        \node at (0,1.7) {$\scs s_j$};
        \node at (2.3,1.7) {$\scs s_{j+1}$};
        \node at (-.45,.75) {$\scs s_j-1$};
        \node at (1.35,-.2) {$\scs k$};
    \filldraw[red]  (DOT1) circle (2.5pt);
    \filldraw  (DOT2) circle (2.5pt);
        \node at (1.05,.75) {$\scs {\sf e}_d$};
        \node at (0.65,1.25) {$\scs  k-d+\delta$};
\end{tikzpicture}}
\\
&\stackrel{\partial_{(s_{j}-1,1)}^{j}}{\mapsto}&
\sum_{d=0}^{k}
(-1)^{k-d}
\hackcenter{
\begin{tikzpicture}[scale=.9]
\draw (0,0) -- (0,1.5)[thick,red, double, ]node[pos=.75, shape=coordinate](DOT1){};
\draw (2.3,0) -- (2.3,1.5)[thick,red, double, ];
\draw (1.15,0) -- (1.15,1.5)[very thick, double, ] node[pos=.5, shape=coordinate](DOT2){};
        \node at (0,-.2) {$\scs s_j$};
        \node at (2.3,-.2) {$\scs s_{j+1}$};
        \node at (0,1.7) {$\scs s_j$};
        \node at (2.3,1.7) {$\scs s_{j+1}$};
        \node at (1.15,-.2) {$\scs k$};
    \filldraw[red]  (DOT1) circle (2.5pt);
    \filldraw  (DOT2) circle (2.5pt);
        \node at (.85,.75) {$\scs {\sf e}_d$};
        \node at (-1.05,1.2) {$\scs {\sf h}_{k-d-s_j+1+\delta}$};
\end{tikzpicture}}
\end{eqnarray*}

\subsection{Morphisms $\zeta_{{\bf s},r}^j$ and $\zeta_{{\bf s},l}^j$}
First we define the \textit{zip} maps $\zeta_{(s_j,1)}^j$ and $\zeta_{(1,s_{j+1})}^j$.  Recall the sequence
${\bf s}_{(j)}$ defined in Section~\ref{sec:unzip}.

For  $\mathbf{i}_1=(...,s_{j},\underbrace{\mf{b},...,\mf{b}}_{k},1,...)\in \mathrm{Seq}({\bf s}_{(j+1)},n)$ and $\mathbf{i}_2=(...,1,\underbrace{\mf{b},...,\mf{b}}_{k},s_{j+1},...)\in \mathrm{Seq}({\bf s}_{(j)},n)$,
%
there is a map
\[
\zeta_{(s_j,1)}^j:W({\bf s}_{(j+1)},n) \rightarrow \bigtriangledown^j({\bf s}_{(j+1)}) \otimes_{W({\bf s}_{(j+1)}^{j},n)} \bigtriangleup^j({\bf s}_{(j+1)})
\]
of degree $s_j$ determined by
\begin{align}
\label{bimodmap7}
\zeta_{(s_j,1)}^j:
\hackcenter{
\begin{tikzpicture}[scale=.9]
\draw (0,0) -- (0,1.5)[thick,red, double, ];
\draw (1,0) -- (1,1.5)[very thick, double, ];
\draw (2,0) -- (2,1.5)[very thick,red,];
        \node at (0,-.2) {$\scs s_j$};
        \node at (2,-.2) {$\scs 1$};
        \node at (1,-.2) {$\scs k$};
        \node at (.5,2) {};
\end{tikzpicture}}
&\mapsto
(-1)^{k-1}
\hackcenter{
\begin{tikzpicture} [scale=.75]
\draw[thick, red, double] (-1,0) .. controls (-1,.5) .. (0,.75);
\draw[very thick, red] (1,0) .. controls (1,.5) ..  (0,.75);
\draw[thick, red, double] (0,.75) to (0,1.25);
\draw[thick, red, double] (0,1.25) .. controls (-1,1.5) .. (-1,2);
\draw[very thick, red] (0,1.25) .. controls (1,1.5) ..  (1,2);
\draw[very thick, double ] (0,0) to (0,.5);
\draw[thick] (0,.5) .. controls ++(-1.5,0) and ++(-1.5,0) .. (0,1.5);
\draw[very thick, double] (0,.5) .. controls ++(1.5,0) and ++(1.5,0) .. (0,1.5);
\draw[very thick, double,] (0,1.5) to (0,2);
 \node at (-1,-.2) {$\scs s_j$};
 \node at (1,-.2) {$\scs 1$};
 \node at (0,-.2) {$\scs k$};
 \node at (-1,2.2) {$\scs s_j$};
 \node at (1,2.2) {$\scs 1$};
    \node at (.45,1) {$\scs s_j+1$};
    \node at (1.65,1) {$\scs k-1$};
    \node at (-1.25,1) {$\scs 1$};
\end{tikzpicture}}
\;\; + \;\;
\sum_{d_1+d_2+d_3\atop =s_j-k}
(-1)^{s_j+d_1+d_2}
\hackcenter{
\begin{tikzpicture}[scale=.9]
\draw (0,0) -- (.5,.25)[thick,red, double, ]node[pos=.5, shape=coordinate](DOT2){};
\draw (2,0) -- (.5,.25)[very thick,red,   ];
\draw (0,1.5) -- (.5,1.25)[thick,red, double, ];
\draw (2,1.5) -- (.5,1.25)[very thick,red,   ]node[pos=.25, shape=coordinate](DOT1){};
\draw (.5,.25) -- (.5,1.25)[thick,red, double, ];
\draw[very thick,double ,] (1,-.1) .. controls ++(0,.25) and ++(0,-.25) .. (1.35,.5) -- (1.35,1) .. controls ++(0,.25) and ++(0,-.25) .. (1,1.6)node[pos=0, shape=coordinate](DOT3){};
    \filldraw[red]  (DOT1) circle (2.5pt);
    \filldraw[red]  (DOT2) circle (2.5pt);
    \filldraw  (DOT3) circle (2.5pt);
        \node at (1.6,1.7) {$\scs d_2$};
        \node at (.1,0.4) {$\scs {\sf e}_{d_3}$};
        \node at (0,-.2) {$\scs s_j$};
        \node at (2,-.2) {$\scs 1$};
        \node at (0.05,0.85) {$\scs s_j+1$};
        \node at (0,1.7) {$\scs s_j$};
        \node at (2,1.7) {$\scs 1$};
        \node at (1.75,1) {$\scs {\sf h}_{d_1}$};
\end{tikzpicture}},
\end{align}
and a map
\[
\zeta_{(1,s_{j+1})}^j:W({\bf s}_{(j)},n) \rightarrow \bigtriangledown^j({\bf s}_{(j)}) \otimes_{W({\bf s}_{(j)}^j,n)} \bigtriangleup^j({\bf s}_{(j)})
 \]
 of degree $s_{j+1}$ determined by
\begin{align}
\zeta_{(1,s_{j+1})}^j:
\hackcenter{
\begin{tikzpicture}[scale=.9]
\draw (0,0) -- (0,1.5)[very thick,red,  ];
\draw (1,0) -- (1,1.5)[very thick, double, ];
\draw (2,0) -- (2,1.5)[thick,red, double, ];
        \node at (0,-.2) {$\scs 1$};
        \node at (2,-.2) {$\scs s_{j+1}$};
        \node at (1,-.2) {$\scs k$};
        \node at (.5,2) {};
\end{tikzpicture}}
\mapsto&
(-1)^{s_{j+1}}
\hackcenter{
\begin{tikzpicture} [scale=.75]
\draw[very thick, red, ] (-1,0) .. controls (-1,.5) .. (0,.75);
\draw[thick, red, double] (1,0) .. controls (1,.5) ..  (0,.75);
\draw[thick, red, double] (0,.75) to (0,1.25);
\draw[very thick, red] (0,1.25) .. controls (-1,1.5) .. (-1,2);
\draw[thick, red, double] (0,1.25) .. controls (1,1.5) ..  (1,2);
\draw[very thick, double ] (0,0) to (0,.5);
\draw[very thick, double] (0,.5) .. controls ++(-1.35,0) and ++(-1.35,0) .. (0,1.5);
\draw[thick] (0,.5) .. controls ++(1.95,0) and ++(1.95,0) .. (0,1.5);
\draw[very thick, double  ,] (0,1.5) to (0,2);
 \node at (-1,-.2) {$\scs 1$};
 \node at (1,-.2) {$\scs s_{j+1}$};
 \node at (-1,2.2) {$\scs 1$};
 \node at (1,2.2) {$\scs s_{j+1}$};
    \node at (.75,1) {$\scs s_{j+1}+1$};
    \node at (1.45,.55) {$\scs 1$};
    \node at (-1.65,1) {$\scs k-1$};
\end{tikzpicture}} 
\;\; + \;\;
\sum_{d_1+d_2+d_3\atop =s_{j+1}-k}
(-1)^{s_{j+1}+d_3+k}
\hackcenter{
\begin{tikzpicture}[scale=.9]
\draw (-1,0) -- (.5,.25)[very thick,red,  ];
\draw (1,0) -- (.5,.25)[thick,red, double, ] node[pos=.5, shape=coordinate](DOT2){};
\draw (-1,1.5) -- (.5,1.25)[very thick,red,   ] node[pos=.25, shape=coordinate](DOT1){};
\draw (1,1.5) -- (.5,1.25)[thick,red, double, ];
\draw (.5,.25) -- (.5,1.25)[thick,red, double, ];
\draw[very thick,double ,] (0,-.1) .. controls ++(0,.25) and ++(0,-.25) .. (-.35,.5) -- (-.35,1) .. controls ++(0,.25) and ++(0,-.25) .. (0,1.6) node[pos=0, shape=coordinate](DOT3){};
    \filldraw[red]  (DOT1) circle (2.5pt);
    \filldraw[red]  (DOT2) circle (2.5pt);
    \filldraw  (DOT3) circle (2.5pt);
        \node at (-0.65,1.75) {$\scs d_2$};
        \node at (.95,0.3) {$\scs {\sf e}_{d_3}$};
        \node at (-1,-.2) {$\scs 1$};
        \node at (1,-.2) {$\scs s_{j+1}$};
        \node at (1.05,0.75) {$\scs s_{j+1}+1$};
        \node at (-1,1.7) {$\scs 1$};
        \node at (1,1.8) {$\scs s_{j+1}$};
        \node at (-.7,1) {$\scs {\sf h}_{d_1}$};
\end{tikzpicture}}.
\end{align}
Note that these maps are determined by the image of a thick black strand because by Lemma~\ref{thm_Ea} the identity on $k$ strands factors through the thickness $k$ thick strand. For example,
\[
\hackcenter{
\begin{tikzpicture} [scale=.85]
\draw[very thick, red  ] (0,0) to (0,2);
\draw[very thick, red  ] (2.4,0) to (2.4,2);
\draw[thick,  ] (.6,0) to (.6,2);
\draw[thick,  ] (1.2,0) to (1.2,2);
\draw[thick,  ] (1.8,0) to (1.8,2);
\draw[thick,  ] (.6,0) to (.6,2);
 \node at (1.5,1) {$\scs \cdots$};
 \node at (0,-.2) {$\scs 1$};
 \node at (.6,-.2) {$\scs 1$};
 \node at (1.2,-.2) {$\scs 1$};
  \node at (1.8,-.2) {$\scs 1$};
 \node at (2.4,-.2) {$\scs 1$};
\end{tikzpicture}}
\;\; = \;\;
\sum_{\und{\ell} \in \Sq(k)}(-1)^{|\hat{\und{\ell}}|}\;\;
\hackcenter{
\begin{tikzpicture} [scale=0.65]
\draw[very thick, red  ] (-2,-.5) to (-2,3.5);
\draw[very thick, red  ] (2,-.5) to (2,3.5);
\draw[thick,  ] (-.6,-.5) .. controls ++(0,.5) and ++(-.4,-.1) .. (0,1.15);
\draw[thick,  ] (.6,-.5) .. controls ++(0,.5) and ++(.4,-.1) .. (0,1.15);
\draw[thick,  ] (-1.2,-.5) .. controls ++(0,.7) and ++(-.4,-.1) .. (0,1.15);
\draw[thick,  ] (1.2,-.5) .. controls ++(0,.7) and ++(.4,-.1) .. (0,1.15);
\draw[thick,  double] (0,1.15) to (0,1.85);
\draw[thick,  ] (0,1.85) .. controls ++(.25,.1) and ++(0,-.5) .. (.6,3.5);
\draw[thick,  ] (0,1.85) .. controls ++(-.25,.1) and ++(0,-.5) .. (-.6,3.5);
\draw[thick,  ] (0,1.85) .. controls ++(.25,.1) and ++(0,-.7) .. (1.2,3.5);
\draw[thick,  ] (0,1.85) .. controls ++(-.25,.1) and ++(0,-.7) .. (-1.2,3.5);
\node[draw, fill=white!20 ,rounded corners ] at (0,2.7) {$ \qquad {\sf e}_{\und{\ell}} \qquad $};
\node[draw, fill=white!20 ,rounded corners ] at (0,.35) {$ \qquad   \und{x}^{\hat{\und{\ell}}} \qquad $};
%
 \node at (-1.2,-.7) {$\scs 1$};
 \node at (-.6,-.7) {$\scs 1$};
 \node at (.6,-.7) {$\scs 1$};
 \node at (1.2,-.7) {$\scs 1$};
  \node at (2,-.7) {$\scs 1$};
 \node at (-2,-.7) {$\scs 1$};
\end{tikzpicture}}
\;\; \mapsto \;\;
-\sum_{\und{\ell} \in \Sq(k)}(-1)^{|\hat{\und{\ell}}|}\;\;
\hackcenter{
\begin{tikzpicture} [scale=.65]
\draw[very thick, red , out=90, in=210 ] (-1.5,-1.65) to (0,1.25);
\draw[very thick, red , out=90, in=-30 ] (1.5,-1.65) to (0,1.25);
\draw[thick, red, double] (0,1.25) to (0,1.75);
\draw[very thick, red , out=150, in=-90 ] (0,1.75) to (-1.5,4.65);
\draw[very thick, red , out=30, in=-90 ] (0,1.75) to (1.5,4.65);
\draw[thick,   double ] (0,0) to (0,.75);
\draw[thick,   double, out =150, in=-90 ] (0,.75) to (-.6,1.5);
\draw[thick,    out =30, in=-90 ] (0,.75) to (.6,1.5);
\draw[thick,   double, out =90, in=200 ]   (-.6,1.5) to (0,2.25);
\draw[thick,    out =90, in=-20 ]   (.6,1.5) to (0,2.25);
\draw[thick,   double  ] (0,2.25) to (0,3);
\draw[thick,  ] (0,3) .. controls ++(.25,.1) and ++(0,-.5) .. (.3,4.65);
\draw[thick,  ] (0,3) .. controls ++(-.25,.1) and ++(0,-.5) .. (-.3,4.65);
\draw[thick,  ] (0,3) .. controls ++(.25,.1) and ++(0,-.7) .. (.9,4.65);
\draw[thick, ] (0,3) .. controls ++(-.25,.1) and ++(0,-.7) .. (-.9,4.65);
\node[draw, fill=white!20 ,rounded corners ] at (0,3.85) {$ \quad\; {\sf e}_{\und{\ell}} \quad\; $};
\draw[thick,  ] (-.3,-1.65) .. controls ++(0,.5) and ++(-.4,-.1) .. (0,0);
\draw[thick,  ] (.3,-1.65) .. controls ++(0,.5) and ++(.4,-.1) .. (0,0);
\draw[thick,  ] (-.9,-1.65) .. controls ++(0,.7) and ++(-.4,-.1) .. (0,0);
\draw[thick,  ] (.9,-1.65) .. controls ++(0,.7) and ++(.4,-.1) .. (0,0);
\node[draw, fill=white!20 ,rounded corners ] at (0,-.95) {$ \quad\;   \und{x}^{\hat{\und{\ell}}} \;\quad $};
 \node at (-1.5,-1.85) {$\scs 1$};
 \node at (-.9,-1.85) {$\scs 1$};
 \node at (-.3,-1.85) {$\scs 1$};
 \node at (.3,-1.85) {$\scs 1$};
 \node at (.9,-1.85) {$\scs 1$};
 \node at (1.5,-1.85) {$\scs 1$};
    \node at (.85,1.8) {$\scs 1$};
    \node at (-1.1,1.8) {$\scs k-1$};
\end{tikzpicture}}
\]
which can be generalized to thicker red strands in the obvious way.
Furthermore, this determines the image of any element  $x \in \nh_k$ by placing $x$ at the top or bottom of the diagram on the right.  This is well-defined by the following Lemmas \ref{lem:dot_through_onek} and \ref{lem:crossing_through_onek}.
Note that the proofs of each of the following two lemmas do not use maneuvers using the red strands.  However, the presence of the red strands complicates the proof (since both diagrams would be zero without them).

\begin{lemma}  \label{lem:dot_through_onek}
For arbitrary thickness of red strands $a$ and $b$, the following identity holds:
\begin{equation}\label{eq:push-lemma}
\sum_{\und{\ell} \in \Sq(k)}(-1)^{|\hat{\und{\ell}}|}\;\;
\hackcenter{
\begin{tikzpicture} [scale=.65]
\draw[very thick, red , out=90, in=210 ] (-1.5,-1.65) to (0,1.25);
\draw[very thick, red , out=90, in=-30 ] (1.5,-1.65) to (0,1.25);
\draw[thick, red, double] (0,1.25) to (0,1.75);
\draw[very thick, red , out=150, in=-90 ] (0,1.75) to (-1.5,4.65);
\draw[very thick, red , out=30, in=-90 ] (0,1.75) to (1.5,4.65);
\draw[thick,   double ] (0,.3) to (0,.75);
\draw[thick,   double, out =150, in=-90 ] (0,.75) to (-.6,1.5);
\draw[thick,    out =30, in=-90 ] (0,.75) to (.6,1.5);
\draw[thick,   double, out =90, in=200 ]   (-.6,1.5) to (0,2.25);
\draw[thick,    out =90, in=-20 ]   (.6,1.5) to (0,2.25);
\draw[thick,   double  ] (0,2.25) to (0,2.6);
\draw[thick,  ] (0,2.6) to (0,4.65);
\draw[thick,  ] (0,2.6) .. controls ++(.25,.1) and ++(0,-.7) .. (.9,4.65);
\draw[thick, ] (0,2.6) .. controls ++(-.25,.1) and ++(0,-.7) .. (-.9,4.65);
\node[draw, fill=white!20 ,rounded corners ] at (0,3.55) {$ \quad\; {\sf e}_{\und{\ell}} \quad\; $};
\draw[thick,  ] (0,-1.65) to (0,.3);
\draw[thick,  ] (-.9,-1.65) .. controls ++(0,.7) and ++(-.4,-.1) .. (0,.3);
\draw[thick,  ] (.9,-1.65) .. controls ++(0,.7) and ++(.4,-.1) .. (0,.3);
\node[draw, fill=white!20 ,rounded corners ] at (0,-.65) {$ \quad\;   \und{x}^{\hat{\und{\ell}}} \;\quad $};
\filldraw  (0,4.2) circle (2.5pt);
 \node at (-1.5,-1.85) {$\scs a$};
 \node at (-.9,-1.85) {$\scs 1$};
 \node at (0,-1.85) {$\scs 1$};
 \node at (.9,-1.85) {$\scs 1$};
 \node at (1.5,-1.85) {$\scs b$};
    \node at (.85,1.8) {$\scs 1$};
    \node at (-1.1,1.8) {$\scs k-1$};
    \node at (.45,-1.5) {$\scs \cdots$};
    \node at (-.45,-1.5) {$\scs \cdots$};
    \node at (.45,4.45) {$\scs \cdots$};
    \node at (-.45,4.45) {$\scs \cdots$};
\end{tikzpicture}}
\;\; = \;\;
\sum_{\und{\ell} \in \Sq(k)}(-1)^{|\hat{\und{\ell}}|}\;\;
\hackcenter{
\begin{tikzpicture} [scale=.65]
\draw[very thick, red , out=90, in=210 ] (-1.5,-1.65) to (0,1.5);
\draw[very thick, red , out=90, in=-30 ] (1.5,-1.65) to (0,1.5);
\draw[thick, red, double] (0,1.5) to (0,2);
\draw[very thick, red , out=150, in=-90 ] (0,2) to (-1.5,4.65);
\draw[very thick, red , out=30, in=-90 ] (0,2) to (1.5,4.65);
\draw[thick,   double ] (0,.5) to (0,1);
\draw[thick,   double, out =150, in=-90 ] (0,1) to (-.6,1.75);
\draw[thick,    out =30, in=-90 ] (0,1) to (.6,1.75);
\draw[thick,   double, out =90, in=200 ]   (-.6,1.75) to (0,2.5);
\draw[thick,    out =90, in=-20 ]   (.6,1.75) to (0,2.5);
\draw[thick,   double  ] (0,2.5) to (0,3);
\draw[thick,  ] (0,3) to(0,4.65);
\draw[thick,  ] (0,3) .. controls ++(.25,.1) and ++(0,-.7) .. (.9,4.65);
\draw[thick, ] (0,3) .. controls ++(-.25,.1) and ++(0,-.7) .. (-.9,4.65);
\node[draw, fill=white!20 ,rounded corners ] at (0,3.85) {$ \quad\; {\sf e}_{\und{\ell}} \quad\; $};
\draw[thick,  ] (0,-1.65) to (0,.5);
\draw[thick,  ] (-.9,-1.65) .. controls ++(0,.7) and ++(-.4,-.1) .. (0,.5);
\draw[thick,  ] (.9,-1.65) .. controls ++(0,.7) and ++(.4,-.1) .. (0,.5);
\node[draw, fill=white!20 ,rounded corners ] at (0,-.45) {$ \quad\;   \und{x}^{\hat{\und{\ell}}} \;\quad $};
\filldraw  (0,-1.2) circle (2.5pt);
 \node at (-1.5,-1.85) {$\scs a$};
 \node at (-.9,-1.85) {$\scs 1$};
 \node at (0,-1.85) {$\scs 1$};
 \node at (.9,-1.85) {$\scs 1$};
 \node at (1.5,-1.85) {$\scs b$};
    \node at (.85,1.8) {$\scs 1$};
    \node at (-1.1,1.8) {$\scs k-1$};
    \node at (.45,-1.5) {$\scs \cdots$};
    \node at (-.45,-1.5) {$\scs \cdots$};
    \node at (.45,4.45) {$\scs \cdots$};
    \node at (-.45,4.45) {$\scs \cdots$};
\end{tikzpicture}}
\end{equation}
\end{lemma}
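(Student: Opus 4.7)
The strategy is to reduce \eqref{eq:push-lemma} to an identity on pure thin black strands via Lemma~\ref{thm_Ea}, which together with Remark~\ref{rem:dual-bases} guarantees that the sum
\[
\sum_{\und{\ell} \in \Sq(k)}(-1)^{|\hat{\und{\ell}}|}\; \varepsilon_{\und{\ell}} \cdot (\text{thick-}k\text{ identity}) \cdot \und{x}^{\hat{\und{\ell}}}
\]
realizes the identity on $k$ thin black strands factored through the thick strand. The middle configuration in \eqref{eq:push-lemma} consists of two essentially independent pieces: a thick red strand splitter/merger surrounding the black configuration, and a thick black splitter/merger $(k) \to (k-1, 1) \to (k)$ with the emerging thin strand threading past the thick red strand.

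The first step is to decouple the red and black portions. By Proposition~\ref{folk-rel}, the thick red splitter/merger is isotopic in a way that permits sliding the thin black strand to one side of the thick red strand; any crossings of black strands with red strands produced in this isotopy do not obstruct the black dot, since \eqref{blackdot} and the thick analog in Lemma~\ref{RthickBr2-lemma} allow black dots to slide freely through red strands. After this maneuver the red strand portion is disentangled from the black strand configuration, contributing only a topologically trivial overall factor common to both sides of \eqref{eq:push-lemma}.

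The second step is to analyze the remaining thick black split-merge. Here the configuration $(k) \to (k-1, 1) \to (k)$ together with the dot on one of the resulting thin strands is expanded using Lemma~\ref{diff} and Lemma~\ref{id-dots-C}: the dot on the thin strand produced by the splitter can be repositioned onto the thick strand at the cost of lower-order corrections involving $\varepsilon_d$ and extra dots. Bracketed between $\varepsilon_{\und{\ell}}$ above and $\und{x}^{\hat{\und{\ell}}}$ below and summed over $\und{\ell} \in \Sq(k)$, these corrections are absorbed by the dual-basis identity of Remark~\ref{rem:dual-bases}, and Lemma~\ref{reduction-EX} provides the mechanism for the bookkeeping of the reduction from $\Sq(k)$ to $\Sq(k-1)$.

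The third step is to invoke Lemma~\ref{thm_Ea}: the reduced expression is exactly the identity on $k$ thin black strands with a single dot on the designated strand, and a dot placed at the top of the identity equals a dot placed at the bottom by trivial nilHecke isotopy. The main obstacle is the careful bookkeeping in step two, tracking how the dot commutes past the thick splitter/merger when expanded via the thick calculus; this is where the full strength of the dual-basis structure must be used to cancel extraneous bubble contributions so that the middle configuration contributes only the identity on the thick strand.
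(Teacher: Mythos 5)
Your first step --- disentangling the red lollipop from the black split-and-merge --- is where the proof breaks down. Proposition~\ref{folk-rel} lets a black strand slide past the corner of a red splitter without changing the crossing type; it does not let you unknot a black strand from a red lollipop. The thin black strand that splits off the thick $k$-strand passes on the opposite side of the thick red segment from the thick $(k-1)$-strand, and pulling it across the thick red segment produces a nontrivial sum of dotted corrections governed by Lemma~\ref{RthickBr2-lemma}, not an isotopy. Worse, if one \emph{could} disentangle, the surviving black configuration $(k) \to (k-1,1) \to (k)$ is a digon and therefore vanishes for degree reasons in $\nh_k$; indeed the paper remarks immediately before the lemma that both diagrams ``would be zero without'' the red strands. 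So there is no pure thin-black identity to fall back on in your third step, and steps two and three inherit the error made in step one.

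The paper's argument is shorter precisely because it never tries to simplify the interior of the diagram. Treat the whole boxed region between $\und{x}^{\hat{\und\ell}}$ and $\varepsilon_{\und\ell}$ as opaque. On the left-hand side, rewrite $x_i\,\varepsilon_{\und\ell}$ as a $\Lambda_k$-linear combination of the basis $\{\varepsilon_{\und\ell'}\}$ (possible by Remark~\ref{rem:dual-bases}). The $\Lambda_k$-coefficients are symmetric functions, so by \eqref{eq:splitters} they can be absorbed onto the thick $k$-labelled strand; since symmetric functions commute with thick black splitters (\eqref{eq:splitters} again) and slide across all black-red crossings (\eqref{blackdot}), they pass through the interior unchanged and emerge at the bottom. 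There, combining with $\und{x}^{\hat{\und\ell}}$ and re-summing reproduces exactly the right-hand side, using the duality of Remark~\ref{rem:dual-bases} and the first identity of Lemma~\ref{thm_Ea}. In particular, Lemma~\ref{diff}, Lemma~\ref{id-dots-C}, and Lemma~\ref{reduction-EX} are not needed.
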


\begin{proof}
Recall from Remark~\ref{rem:dual-bases} that the sets $\{{\sf e}_{\und{\ell}} \mid \und{\ell} \in \Sq(k) \}$ and $\{ \und{x}^{\hat{\und{\ell}}} \mid \und{\ell} \in \Sq(k)\}$ both form bases for the polynomial ring as a free module over the ring of symmetric functions $\Lambda_k$.   In particular, if we multiply any element in either of these bases by $x_i$, it is possible to rewrite the resulting polynomial as some $\Lambda_k$-combination of basis elements. The first part of Lemma~\ref{thm_Ea} shows that multiplying by a black dot on the $i$-th strand on top of the sum
\[
\sum_{\und{\ell} \in \Sq(k)}(-1)^{|\hat{\und{\ell}}|}\;\;
\hackcenter{
\begin{tikzpicture} [scale=0.75]
\draw[thick,  ] (-.6,-.5) .. controls ++(0,.5) and ++(-.4,-.1) .. (0,1.15);
\draw[thick,  ] (.6,-.5) .. controls ++(0,.5) and ++(.4,-.1) .. (0,1.15);
\draw[thick,  ] (-1.2,-.5) .. controls ++(0,.7) and ++(-.4,-.1) .. (0,1.15);
\draw[thick,  ] (1.2,-.5) .. controls ++(0,.7) and ++(.4,-.1) .. (0,1.15);
\draw[thick,  double] (0,1.15) to (0,1.85);
\draw[thick,  ] (0,1.85) .. controls ++(.25,.1) and ++(0,-.5) .. (.6,3.5);
\draw[thick,  ] (0,1.85) .. controls ++(-.25,.1) and ++(0,-.5) .. (-.6,3.5);
\draw[thick,  ] (0,1.85) .. controls ++(.25,.1) and ++(0,-.7) .. (1.2,3.5);
\draw[thick,  ] (0,1.85) .. controls ++(-.25,.1) and ++(0,-.7) .. (-1.2,3.5);
\node[draw, fill=white!20 ,rounded corners ] at (0,2.7) {$ \qquad {\sf e}_{\und{\ell}} \qquad $};
\node[draw, fill=white!20 ,rounded corners ] at (0,.35) {$ \qquad   \und{x}^{\hat{\und{\ell}}} \qquad $};
%
\end{tikzpicture}}
\]
gives the same result as multiplying by a black dot on the bottom of the sum.  Using \eqref{eq:splitters} all of the symmetric functions produced in rewriting either basis can be put on the thick $k$ labelled strand.  Hence, if we rewrite each diagram on the left-hand side of \eqref{eq:push-lemma} in the basis $\{{\sf e}_{\und{\ell}} \mid \und{\ell} \in \Sq(k) \}$ and push the symmetric functions onto the thick $k$-labelled strand, these symmetric functions can be pushed to the bottom of the diagram using \eqref{eq:splitters}.  By Lemma~\ref{thm_Ea} this gives the same result as rewriting each diagram on the right hand side of \eqref{eq:push-lemma} using the basis $\{ \und{x}^{\hat{\und{\ell}}} \mid \und{\ell} \in \Sq(k)\}$ establishing the lemma.
\end{proof}

\begin{lemma} \label{lem:crossing_through_onek} For arbitrary thickness of red strands $a$ and $b$, the identity
\begin{equation} \label{eq:crossing_through_onek}
\sum_{\und{\ell} \in \Sq(k)}(-1)^{|\hat{\und{\ell}}|}\;\;
\hackcenter{
\begin{tikzpicture} [scale=.65]
\draw[very thick, red , out=90, in=210 ] (-1.5,-1.65) to (0,1.25);
\draw[very thick, red , out=90, in=-30 ] (1.5,-1.65) to (0,1.25);
\draw[thick, red, double] (0,1.25) to (0,1.75);
\draw[very thick, red , out=150, in=-90 ] (0,1.75) to (-1.5,4.65);
\draw[very thick, red , out=30, in=-90 ] (0,1.75) to (1.5,4.65);
\draw[thick,   double ] (0,.3) to (0,.75);
\draw[thick,   double, out =150, in=-90 ] (0,.75) to (-.6,1.5);
\draw[thick,    out =30, in=-90 ] (0,.75) to (.6,1.5);
\draw[thick,   double, out =90, in=200 ]   (-.6,1.5) to (0,2.25);
\draw[thick,    out =90, in=-20 ]   (.6,1.5) to (0,2.25);
\draw[thick,   double  ] (0,2.25) to (0,2.6);
\draw[thick,   ](-.3,3.75) .. controls ++(0,.4) and ++(0,-.3) .. (.3,4.65);
\draw[thick,   ](.3,3.75) .. controls ++(0,.4) and ++(0,-.3) .. (-.3,4.65);
\draw[thick,  ] (0,2.6) .. controls ++(.15,.1) and ++(0,-.5) .. (.3,3.65);
\draw[thick,  ] (0,2.6) .. controls ++(-.15,.1) and ++(0,-.5) .. (-.3,3.65);
\draw[thick,  ] (0,2.6) .. controls ++(.25,.1) and ++(0,-.7) .. (1.1,4.65);
\draw[thick, ] (0,2.6) .. controls ++(-.25,.1) and ++(0,-.7) .. (-1.1,4.65);
\node[draw, fill=white!20 ,rounded corners ] at (0,3.55) {$ \quad\; {\sf e}_{\und{\ell}} \quad\; $};
\draw[thick,  ] (-.3,-1.65) .. controls ++(0,.5) and ++(-.4,-.1) .. (0,.3);
\draw[thick,  ] (.3,-1.65) .. controls ++(0,.5) and ++(.4,-.1) .. (0,.3);
\draw[thick,  ] (-1.1,-1.65) .. controls ++(0,.7) and ++(-.4,-.1) .. (0,.3);
\draw[thick,  ] (1.1,-1.65) .. controls ++(0,.7) and ++(.4,-.1) .. (0,.3);
\node[draw, fill=white!20 ,rounded corners ] at (0,-.65) {$ \quad\;   \und{x}^{\hat{\und{\ell}}} \;\quad $};
 \node at (-1.5,-1.85) {$\scs a$};
 \node at (-1.0,-1.85) {$\scs 1$};
 \node at (.3,-1.85) {$\scs 1$};
\node at (-.3,-1.85) {$\scs 1$};
 \node at (1.0,-1.85) {$\scs 1$};
 \node at (1.5,-1.85) {$\scs b$};
    \node at (.85,1.8) {$\scs 1$};
    \node at (-1,1.8) {$\scs k-1$};
    \node at (.6,-1.5) {$\scs \cdots$};
    \node at (-.55,-1.5) {$\scs \cdots$};
    \node at (.6,4.45) {$\scs \cdots$};
    \node at (-.6,4.45) {$\scs \cdots$};
\end{tikzpicture}}
\;\; = \;\;
\sum_{\und{\ell} \in \Sq(k)}(-1)^{|\hat{\und{\ell}}|}\;\;
\hackcenter{
\begin{tikzpicture} [scale=.65]
\draw[very thick, red , out=90, in=210 ] (-1.5,-1.65) to (0,1.5);
\draw[very thick, red , out=90, in=-30 ] (1.5,-1.65) to (0,1.5);
\draw[thick, red, double] (0,1.5) to (0,2);
\draw[very thick, red , out=150, in=-90 ] (0,2) to (-1.5,4.65);
\draw[very thick, red , out=30, in=-90 ] (0,2) to (1.5,4.65);
\draw[thick,   double ] (0,.5) to (0,1);
\draw[thick,   double, out =150, in=-90 ] (0,1) to (-.6,1.75);
\draw[thick,    out =30, in=-90 ] (0,1) to (.6,1.75);
\draw[thick,   double, out =90, in=200 ]   (-.6,1.75) to (0,2.5);
\draw[thick,    out =90, in=-20 ]   (.6,1.75) to (0,2.5);
\draw[thick,   double  ] (0,2.5) to (0,3);
\draw[thick,  ] (0,3) .. controls ++(.25,.1) and ++(0,-.5) .. (.3,4.65);
\draw[thick,  ] (0,3) .. controls ++(-.25,.1) and ++(0,-.5) .. (-.3,4.65);
\draw[thick,  ] (0,3) .. controls ++(.25,.1) and ++(0,-.7) .. (1.0,4.65);
\draw[thick, ] (0,3) .. controls ++(-.25,.1) and ++(0,-.7) .. (-1.0,4.65);
\node[draw, fill=white!20 ,rounded corners ] at (0,3.85) {$ \quad\; {\sf e}_{\und{\ell}} \quad\; $};
\draw[thick,   ](-.3,-1.65) .. controls ++(0,.3) and ++(0,-.3) .. (.3,-0.75);
\draw[thick,   ](.3,-1.65) .. controls ++(0,.3) and ++(0,-.3) .. (-.3,-0.75);
\draw[thick,  ] (-.3,-.55) .. controls ++(0,.5) and ++(-.2,-.1) .. (0,.5);
\draw[thick,  ] (.3,-.55) .. controls ++(0,.5) and ++(.2,-.1) .. (0, .5);
\draw[thick,  ] (-1.0,-1.65) .. controls ++(0,.7) and ++(-.4,-.1) .. (0,.5);
\draw[thick,  ] (1.0,-1.65) .. controls ++(0,.7) and ++(.4,-.1) .. (0,.5);
\node[draw, fill=white!20 ,rounded corners ] at (0,-.35) {$ \quad\;   \und{x}^{\hat{\und{\ell}}} \;\quad $};
 \node at (-1.5,-1.85) {$\scs a$};
 \node at (-1.0,-1.85) {$\scs 1$};
 \node at (.3,-1.85) {$\scs 1$};
\node at (-.3,-1.85) {$\scs 1$};
 \node at (1.0,-1.85) {$\scs 1$};
 \node at (1.5,-1.85) {$\scs b$};
    \node at (.85,1.8) {$\scs 1$};
    \node at (-1,1.8) {$\scs k-1$};
    \node at (.6,-1.5) {$\scs \cdots$};
    \node at (-.55,-1.5) {$\scs \cdots$};
    \node at (.6,4.45) {$\scs \cdots$};
    \node at (-.6,4.45) {$\scs \cdots$};
\end{tikzpicture}}
\end{equation}
holds.
\end{lemma}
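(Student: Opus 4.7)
My plan is to adapt the dual-basis argument used for Lemma~\ref{lem:dot_through_onek} to the case of a crossing.  The underlying reason both diagrams are equal is the same: the sum $\sum_{\und{\ell}\in\Sq(k)}(-1)^{|\hat{\und{\ell}}|}\varepsilon_{\und{\ell}}\,\cdot\,\und{x}^{\hat{\und{\ell}}}$ sandwiching the thick-strand/red-merge structure represents an identity-like element on the $k$ black strands (this is exactly the content of Lemma~\ref{thm_Ea}), so inserting the black crossing $\psi=\partial$ above or below this identity must give the same operator.

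More concretely, I would proceed as follows.  First, recall from Remark~\ref{rem:dual-bases} that $\{\varepsilon_{\und{\ell}}\}$ and $\{\und{x}^{\hat{\und{\ell}}}\}$ are dual bases of $\Z[x_1,\dots,x_k]$ as a free $\Lambda_k$-module.  The divided difference $\partial$ (the generator corresponding to the thin crossing at the top of the LHS of \eqref{eq:crossing_through_onek}) is $\Lambda_k$-linear, since $\partial(fg)=\partial(f)\sigma(g)+f\partial(g)$ and $\partial$ annihilates $\Lambda_k$.  Hence expanding $\partial(\varepsilon_{\und{\ell}})=\sum_{\und{\ell}'}c_{\und{\ell}}^{\und{\ell}'}\varepsilon_{\und{\ell}'}$ with $c_{\und{\ell}}^{\und{\ell}'}\in\Lambda_k$, duality gives $\partial(\und{x}^{\hat{\und{\ell}'}})=\sum_{\und{\ell}}c_{\und{\ell}}^{\und{\ell}'}\und{x}^{\hat{\und{\ell}}}$ (up to the signs $(-1)^{|\hat{\und{\ell}}|}$), and the symmetric coefficients $c_{\und{\ell}}^{\und{\ell}'}$ can be slid freely through any splitter using \eqref{eq:splitters}.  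This is precisely the mechanism that made Lemma~\ref{lem:dot_through_onek} work.

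Second, I would apply this mechanism to \eqref{eq:crossing_through_onek}.  On the LHS, use the nilHecke dot/crossing relations to rewrite $\partial\cdot\varepsilon_{\und{\ell}}$ as a $\Lambda_k$-combination of the $\varepsilon_{\und{\ell}'}$'s plus a correction involving $\partial$ applied trivially (the Leibniz term carrying the transposition).  The $\Lambda_k$-coefficients slide down through the thick strand and its red merges/splits using \eqref{eq:splitters}, eventually reaching the bottom where, by duality, they reassemble the corresponding crossing acting on $\und{x}^{\hat{\und{\ell}}}$ from below.  The nilHecke dot-sliding relation \eqref{eq:nil-dot} guarantees that the transposition contribution also passes cleanly, so the end result is exactly the RHS of \eqref{eq:crossing_through_onek}.

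The main obstacle, as in Lemma~\ref{lem:dot_through_onek}, is bookkeeping: one must check that the red cups/caps and merges in the middle region genuinely commute with the sliding of symmetric functions, and that the Leibniz correction produced when $\partial$ acts on a product of partial elementary symmetric polynomials does not introduce any terms that fail to cancel after summing over $\und{\ell}$.  Both of these reduce to the already-stated facts that symmetric functions can be slid through splitters \eqref{eq:splitters} and that the dual bases $\{\varepsilon_{\und{\ell}}\}$, $\{\und{x}^{\hat{\und{\ell}}}\}$ pair to $\delta_{\und{\ell},\und{\ell}'}$ under $(x,y)=\partial_{w_0}(xy)$, so the proof is essentially a symmetry argument rather than a computation with the red relations.
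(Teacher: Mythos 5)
Your approach is a genuinely different route from the paper's, which proves this lemma by direct computation: it factors $\varepsilon_{\ell_i}^{(i)} = \varepsilon_{\ell_i}^{(i-1)} + x_i\varepsilon_{\ell_i-1}^{(i-1)}$, uses $\partial_i^2=0$ and $\partial_i x_i = x_{i+1}\partial_i + 1$ to reduce the left-hand side, and on the right splits the sum over $\Sq(k)$ by the sign of $\hat{\ell}_i - \hat{\ell}_{i+1}$ and invokes Lemma~\ref{diff1} to land on the same normal form. Your opening observation is right that the Leibniz term $s_i(\varepsilon_{\und{\ell}})\psi_i$ vanishes pointwise (not merely after summing) because $\psi_i D_k = \partial_i\partial_{w_0}=0$, and likewise $e_k\psi_i=0$ at the bottom, so the crossing really does act as $\partial_i$ on the polynomial boxes on both sides.

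The gap is at the pivot ``duality gives $\partial(\und{x}^{\hat{\und{\ell}'}})=\sum_{\und{\ell}}c_{\und{\ell}}^{\und{\ell}'}\und{x}^{\hat{\und{\ell}}}$.'' Duality of the bases only says that the matrix of a $\Lambda_k$-linear operator in $\{\varepsilon_{\und{\ell}}\}$ is the transpose of the matrix of its \emph{adjoint} in the other basis; to conclude that $\partial_i$'s matrices in the two bases are (sign-corrected) transposes of each other, you need $\partial_i$ to be self-adjoint for the Frobenius pairing $(f,g)\mapsto\partial_{w_0}(fg)$. That is a genuine extra fact, not among the ``already-stated facts'' you cite: it follows from $\partial_{w_0}\partial_i=0$ combined with the twisted Leibniz rule, the $s_i$-invariance of anything in the image of $\partial_i$, and $\partial_{w_0}\circ s_i=-\partial_{w_0}$. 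It is exactly what separates this lemma from Lemma~\ref{lem:dot_through_onek}, where the operator is multiplication by $x_i$, whose self-adjointness is automatic -- which is why the duality argument closes effortlessly in the dot case. Once you supply the self-adjointness of $\partial_i$ (or equivalently, invoke the linear independence of $\{\varepsilon_{\und{\ell}'}D_k\und{x}^{\hat{\und{\ell}}}\}$ in $\nh_k$ to match the two expansions that Lemma~\ref{thm_Ea} produces), your route does close and is more conceptual than the paper's; without it, the argument is incomplete at that step.
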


\begin{proof}
Suppose we act by a black crossing on the $i$-th and $(i+1)$-st black strands.
It is helpful to note that the elementary symmetric function ${\sf e}_{\ell_{i}}^{(i)}$ on the first $i$ black strands in the standard elementary monomial ${\sf e}_{\und{\ell}}$ can be written as
\begin{equation} \label{eq:elem-reduction}
 {\sf e}_{\ell_{i}}^{(i)} = {\sf e}_{\ell_{i}}^{(i-1)} + x_{i} {\sf e}_{\ell_{i}-1}^{(i-1)}.
\end{equation}
Since the $(i,i+1)$ black crossing commutes with ${\sf e}_{\ell_a}^{(a)}$ for $a\neq i$, the left-hand side of \eqref{eq:crossing_through_onek} can be simplified using \eqref{eq:elem-reduction}, where the first term involving ${\sf e}_{\ell_{i}}^{(i-1)}$ vanishes as a result of the definition of the splitter and (1) in the definition of the nilHecke algebra ($\partial_i^2=0$).  The remaining term can be further simplified using (4) in the definition of the nilHecke algebra ($\partial_i x_i =  x_{i+1} \partial_i +1$) and again noting that the term where the black crossing does not resolve, again vanishes by the definition of the splitter and (1) in the definition of the nilHecke algebra.  Hence, the left-hand side reduces to
\begin{equation} \label{lem:crossing_goal}
\sum_{\und{\ell} \in \Sq(k)}(-1)^{|\hat{\und{\ell}}|}\;\;
\hackcenter{
\begin{tikzpicture} [scale=.65]
\draw[very thick, red , out=90, in=210 ] (-1.5,-1.65) to (0,1.25);
\draw[very thick, red , out=90, in=-30 ] (1.5,-1.65) to (0,1.25);
\draw[thick, red, double] (0,1.25) to (0,1.75);
\draw[very thick, red , out=150, in=-90 ] (0,1.75) to (-1.5,4.65);
\draw[very thick, red , out=30, in=-90 ] (0,1.75) to (1.5,4.65);
\draw[thick,   double ] (0,0) to (0,.75);
\draw[thick,   double, out =150, in=-90 ] (0,.75) to (-.6,1.5);
\draw[thick,    out =30, in=-90 ] (0,.75) to (.6,1.5);
\draw[thick,   double, out =90, in=200 ]   (-.6,1.5) to (0,2.25);
\draw[thick,    out =90, in=-20 ]   (.6,1.5) to (0,2.25);
\draw[thick,   double  ] (0,2.25) to (0,3);
\draw[thick,  ] (0,3) .. controls ++(.25,.1) and ++(0,-.5) .. (.3,4.65);
\draw[thick,  ] (0,3) .. controls ++(-.25,.1) and ++(0,-.5) .. (-.3,4.65);
\draw[thick,  ] (0,3) .. controls ++(.25,.1) and ++(0,-.7) .. (.9,4.65);
\draw[thick, ] (0,3) .. controls ++(-.25,.1) and ++(0,-.7) .. (-.9,4.65);
\node[draw, fill=white!20 ,rounded corners ] at (0,3.85) {$ \quad\; {\sf e}_{\und{\ell}'} \quad\; $};
\draw[thick,  ] (-.3,-1.65) .. controls ++(0,.5) and ++(-.4,-.1) .. (0,0);
\draw[thick,  ] (.3,-1.65) .. controls ++(0,.5) and ++(.4,-.1) .. (0,0);
\draw[thick,  ] (-.9,-1.65) .. controls ++(0,.7) and ++(-.4,-.1) .. (0,0);
\draw[thick,  ] (.9,-1.65) .. controls ++(0,.7) and ++(.4,-.1) .. (0,0);
\node[draw, fill=white!20 ,rounded corners ] at (0,-.95) {$ \quad\;   \und{x}^{\hat{\und{\ell}}} \;\quad $};
 \node at (-1.5,-1.85) {$\scs 1$};
 \node at (-.9,-1.85) {$\scs 1$};
 \node at (-.3,-1.85) {$\scs 1$};
 \node at (.3,-1.85) {$\scs 1$};
 \node at (.9,-1.85) {$\scs 1$};
 \node at (1.5,-1.85) {$\scs 1$};
    \node at (.85,1.8) {$\scs 1$};
    \node at (-1,1.8) {$\scs k-1$};
\end{tikzpicture}}
\end{equation}
where
\[
{\sf e}_{\und{\ell}'} =
{\sf e}_{\ell_1}^{(1)} \dots {\sf e}_{\ell_{i-1}}^{(i-1)} {\sf e}_{\ell_{i}-1}^{(i-1)} {\sf e}_{\ell_{i+1}}^{(i+1)} \dots
{\sf e}_{\ell_{k}}^{(k)},
\]
so that there are no elementary symmetric functions in the first $i$ variables.

For the right-hand side of \eqref{eq:crossing_through_onek} it is helpful to break the summation into two pieces depending on if $\ell_{i+1} < \ell_{i}+1$ (so that $\hat{\ell}_i < \hat{\ell}_{i+1}$) or $\ell_{i}+1< \ell_{i+1}$ (so that $\hat{\ell}_i > \hat{\ell}_{i+1}$.)  The case when $\hat{\ell}_i = \hat{\ell}_{i+1}$ vanishes.  Then using Lemma~\ref{diff1} we can simplify
\[
\hackcenter{\begin{tikzpicture}[scale=0.8]
    \draw[thick] (0,0) .. controls ++(0,.5) and ++(0,-.5) .. (.75,1) node[pos=1, shape=coordinate](DOT2){};
    \draw[thick] (.75,0) .. controls ++(0,.5) and ++(0,-.5) .. (0,1)  node[pos=1, shape=coordinate](DOT1){};
    \draw[thick,] (0,1 ) .. controls ++(0,.5) and ++(0,-.5) .. (.75,2);
    \draw[thick, ] (.75,1) .. controls ++(0,.5) and ++(0,-.5) .. (0,2);
    \node at (0,-.25) {$\;$};
    \node at (0,2.25) {$\;$};
    \filldraw  (DOT1) circle (2.5pt);
    \filldraw  (DOT2) circle (2.5pt);
    \node at (-.35,1) {$\scs \hat{\ell}_i$};
    \node at (1.25,1) {$\scs \hat{\ell}_{i+1}$};
\end{tikzpicture}}
 \;\; = \;\;
\left\{
  \begin{array}{ll}
    \displaystyle{\sum_{\overset{A+B=}{ \hat{\ell}_{i+1} -\hat{\ell}_i -1}  } }
\hackcenter{\begin{tikzpicture}[scale=0.8]
    \draw[thick,  ] (0,0) .. controls (0,.75) and (.75,.75) .. (.75,1.5) node[pos=.25, shape=coordinate](DOT1){};
    \draw[thick, ] (.75,0) .. controls (.75,.75) and (0,.75) .. (0,1.5) node[pos=.25, shape=coordinate](DOT2){};
      \filldraw  (DOT1) circle (2.5pt);
      \filldraw  (DOT2) circle (2.5pt);
    \node at (-.45,.5) {$\scs \hat{\ell}_i + A$};
    \node at (1.3,.5) {$\scs \hat{\ell_i} + B$};
\end{tikzpicture}}
 & \hbox{if $\hat{\ell}_i < \hat{\ell}_{i+1}$;} \\
 \displaystyle{\sum_{\overset{A+B=}{\hat{\ell}_i -\hat{\ell}_{i+1} -1}  } }
\hackcenter{\begin{tikzpicture}[scale=0.8]
    \draw[thick,  ] (0,0) .. controls (0,.75) and (.75,.75) .. (.75,1.5) node[pos=.25, shape=coordinate](DOT1){};
    \draw[thick, ] (.75,0) .. controls (.75,.75) and (0,.75) .. (0,1.5) node[pos=.25, shape=coordinate](DOT2){};
      \filldraw  (DOT1) circle (2.5pt);
      \filldraw  (DOT2) circle (2.5pt);
    \node at (-.65,.5) {$\scs \hat{\ell}_{i+1} + A$};
    \node at (1.4,.5) {$\scs \hat{\ell}_{i+1} + B$};
\end{tikzpicture}}
   & \hbox{if $\hat{\ell}_i > \hat{\ell}_{i+1}$.}
  \end{array}
\right.
\]
where $\hat{\ell}_i:=i-\ell_i$ and $\hat{\ell}_{i+1}=i+1 - \ell_{i+1}$.  Combining this fact with \eqref{eq:elem-reduction} one can show that the right-hand side of \eqref{eq:crossing_through_onek} is equal to the sum \eqref{lem:crossing_goal} completing the proof.
\end{proof}

\begin{proposition} \label{prop-bimodhom(1-a)}
The assignments $\zeta_{(a,1)}^j$ and $\zeta_{(1,a)}^j$ define bimodule homomorphisms.
\end{proposition}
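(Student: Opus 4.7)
The proof plan is to verify that each map commutes with the left and right actions of all generators of the source algebra $W(\mathbf{s}_{(j+1)}, n)$ (resp.\ $W(\mathbf{s}_{(j)}, n)$). Since $\zeta_{(s_j,1)}^j$ and $\zeta_{(1,s_{j+1})}^j$ are related by the reflection symmetry $\tau$ discussed earlier, we concentrate on $\zeta := \zeta_{(s_j,1)}^j$. Because the map is defined on each idempotent $e(\mathbf{i})$ and then extended as a bimodule map, the content of the claim is that for every generator $g$ lying in $e(\sigma(\mathbf{i})) W e(\mathbf{i})$ (where $\sigma$ is the induced permutation, trivial for dots), we have the equality $g \cdot \zeta(e(\mathbf{i})) = \zeta(e(\sigma(\mathbf{i}))) \cdot g$ in the target bimodule.

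First I would dispense with the routine cases. Any generator supported on strands outside the region between the red $s_j$ and red $1$ strands slides trivially past the splitter structure in the image, so the compatibility is immediate. Black dots on strands inside the region slide through the thick black edge of the image using Lemma~\ref{lem:dot_through_onek}; crossings between adjacent black strands inside the region pass through using Lemma~\ref{lem:crossing_through_onek}. Red dots on the vertical $s_j$ strand commute by \eqref{reddot}, using compatibility of the splitter bimodule with red dots on the thick strand; analogously for the $1$-labeled vertical strand.

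The genuinely interesting cases are: (a) a crossing between the rightmost black strand in the region and the red $1$ strand, and (b) the interaction of red dots on the $1$-labeled red strand with the splitter structure after crossing with the rightmost black strand. For (a) we apply Relation~\eqref{RBr2-rel} to resolve the crossing once it has been pushed through the splitter, obtaining a sum over $d$ of terms carrying $e_d$ on the black thick strand and a complementary dot on the red strand. The correction summation appearing in the second term of the definition of $\zeta$ (indexed by $d_1 + d_2 + d_3 = s_j - k$) is calibrated precisely so that the two sides of the resulting identity collapse to the same image; the verification reduces, after using Lemma~\ref{thm_Ea} and Lemma~\ref{reduction-EX} to rewrite the identity in terms of standard elementary monomials, to an identity between products of complete and elementary symmetric functions. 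For (b) we use Proposition~\ref{folk-rel} to slide the red dot across the splitter into the thick interior edge, and then \eqref{RBr2-rel} again to redistribute it across the thin strands on either side.

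The principal technical obstacle is case (a): checking that the explicit correction sum produces the right result when confronted with the resolution of a mixed crossing. The combinatorics requires careful tracking of how the black dots arising from \eqref{RBr2-rel} redistribute under the splitter sliding identities of Proposition~\ref{folk-rel}, and that the resulting symmetric-function identity matches the $(d_1,d_2,d_3)$ sum in the definition. We expect to use the $h$--$\varepsilon$ duality together with Lemma~\ref{id-dots-C} to perform the final reduction, and the signs ultimately work out because of the consistent weighting $(-1)^{s_j + d_1 + d_2}$ (respectively $(-1)^{s_{j+1} + d_3 + k}$) built into the definition.
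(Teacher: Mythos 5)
Your proposal is correct in outline and follows essentially the same strategy as the paper: verify compatibility with each generator acting on the source bimodule, dispense with black dots and internal black crossings using Lemmas~\ref{lem:dot_through_onek} and~\ref{lem:crossing_through_onek}, and reduce the main remaining work to the red--black crossing case, applying~\eqref{RBr2-rel} and the thick-calculus lemmas (Lemma~\ref{thm_Ea}, Lemma~\ref{id-dots-C}, Lemma~\ref{reduction-EX}, and the identities involving $C_k$ and $D_k$) to collapse both sides to the same symmetric-function expression. The paper handles $\zeta_{(1,s_{j+1})}^j$ rather than $\zeta_{(s_j,1)}^j$ and simply declares the other case ``similar'' rather than invoking the reflection symmetry $\tau$, but this is a cosmetic difference.

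One place where your plan understates the work: you treat the compatibility of red dots (both the $E(d)$-dots on the thin $1$-labeled red strand and the $E_i$-dots on the thick $s_j$-labeled red strand) as following routinely from~\eqref{reddot} and splitter compatibility. In the paper these are actually two separate, non-trivial computations: establishing~\eqref{red-action} requires~\eqref{RBr2-rel}, \eqref{blackdot}, \eqref{reddot}, and \cite[Proposition 2.4.1]{KLMS} together with elementary-symmetric-function bookkeeping, and the thick red dot case~\eqref{eq:xx1} is then deduced by sliding $E_i$ and reusing~\eqref{red-action}. Neither follows directly from red-dot sliding alone, because the correction summation $\sum_{d_1+d_2+d_3=\cdots}$ in the definition of $\zeta$ must be shown to reproduce itself exactly after the dot is moved across the splitter and fork structure. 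You correctly identify the red--black crossing as the hardest case, but the red-dot verifications are a genuine second pillar of the argument, not a routine check.
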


\begin{proof}
Here, we show that $\zeta_{(1,a)}^j$ is a bimodule morphism.  The proof for $\zeta_{(a,1)}^j$ is similar.

\noindent
$\bullet$ Red dot action:   First, we check the equation
\begin{eqnarray}
\label{red-action}&&
\hackcenter{
}
\]
together with \eqref{r3-2}.
\end{proof}

Composing the morphisms $\iota_{(1,s_{j+1}-1)}^{j+1}$ and $\zeta_{(s_j,1)}^j$ (resp. $\iota_{(s_{j}-1,1)}^{j}$ and $\zeta_{(1,s_{j+1})}^j$) we define the following bimodule homomorphisms
\begin{eqnarray*}
\zeta_{{\bf s},r}^j&:&
W({\bf s},n)
 \rightarrow    \mathsf{F}_j \mathsf{E}_j \mathsf{1}_{\mathbf{s}},
\\
\zeta_{{\bf s},l}^j&:&
W({\bf s},n)
    \rightarrow
     \mathsf{E}_j\mathsf{F}_j \mathsf{1}_{\mathbf{s}}.
\end{eqnarray*}
By definition, the degree of the morphism $\zeta_{{\bf s},r}^j$ is $s_j-s_{j+1}+1$ and the degree of the morphism $\zeta_{{\bf s},l}^j$ is $-s_j+s_{j+1}+1$.

As a diagrammatic presentation, the morphism $\zeta_{{\bf s},r}^j$ is determined by the following mapping.
\begin{align}
\zeta_{{\bf s},r}^j:
\hackcenter{
\begin{tikzpicture}[scale=.9]
\draw (0,0) -- (0,2)[thick,red, double, ];
\draw (1,0) -- (1,2)[very thick,double,];
\draw (2,0) -- (2,2)[thick,red, double];
        \node at (0,-.2) {$\scs s_j$};
        \node at (2,-.2) {$\scs s_{j+1}$};
        \node at (1,-.2) {$\scs k$};
        \node at (.5,2) {};
\end{tikzpicture}}
\stackrel{\iota_{(1,s_{j+1}-1)}^{j+1}}{\mapsto}
\hackcenter{
\begin{tikzpicture}[scale=.9]
\draw (0,0) -- (0,2)[thick,red, double, ];
\draw (1,0) -- (1,2)[very thick,double,];
        \node at (0,-.2) {$\scs s_j$};
        \node at (2.3,-.2) {$\scs s_{j+1}$};
        \node at (1,-.2) {$\scs k$};
        \node at (.5,2) {};
\draw[thick,red, double, ] (2.3,1).. controls ++(0,.3) and ++(0,-.3) ..(2,1.5) to (2,2);
\draw[very thick, red] (1.7,1).. controls ++(0,.3) and ++(0,-.3) ..(2,1.5);
\draw[thick,red, double, ] (2,0) to (2,.5).. controls ++(0,.3) and ++(0,-.3) ..(2.3,1) ;
\draw[very thick, red] (1.7,1).. controls ++(0,-.3) and ++(0,.3) ..(2,.5);
\node at (2.95,1) {$\scs s_{j+1}-1$};
\node at (1.5,1) {$\scs 1$};
\end{tikzpicture}} \hspace{2.8in}
\\\nonumber
\qquad \qquad \qquad
\stackrel{\zeta_{(s_j,1)}^j}{\mapsto}
(-1)^{k-1}
\hackcenter{
\begin{tikzpicture}[scale=.9]
\draw (0,0) -- (0,2)[thick,red, double, ];
\draw (1.15,.25) .. controls ++(0,.25) and ++(0,-.25) .. (-.2,.8) -- (-.2,1.2) .. controls ++(0,.25) and ++(0,-.25) .. (1.15,1.75)[thick];
\draw (1.15,0) -- (1.15,.25) .. controls ++(0,.25) and ++(0,-.25) .. (1.3,.8) -- (1.3,1.2) .. controls ++(0,.25) and ++(0,-.25) .. (1.15,1.75)--(1.15,2)[very thick,double,];
\draw (2.3,0) -- (2.3,2)[thick,red, double, ];
\draw [very thick, red] (0,.75) to (2.3,.5);
\draw [very thick, red] (2.3,1.5) to (0,1.25);
        \node at (0,-.2) {$\scs s_j$};
        \node at (2.6,-.2) {$\scs s_{j+1}$};
        \node at (0,2.2) {$\scs s_j$};
        \node at (2.6,2.2) {$\scs s_{j+1}$};
        \node at (2.95,1) {$\scs s_{j+1}-1$};
        \node at (.45,1) {$\scs s_j+1$};
        \node at (1.7,.4) {$\scs 1$};
        \node at (1.7,1.6) {$\scs 1$};
        \node at (-.4,1) {$\scs 1$};
        \node at (1.7,1) {$\scs k-1$};
        \node at (1.15,-.2) {$\scs k$};
\end{tikzpicture}}
+
\sum_{\overset{d_1+d_2+d_3}{=s_j-k}}
(-1)^{s_j+d_1+d_2}
\hackcenter{
\begin{tikzpicture}[scale=.9]
\draw (0,0) -- (0,2)[thick,red, double, ]node[pos=.1, shape=coordinate](DOT1){};
\draw (1.15,0) -- (1.15,2)[very thick,double,]node[pos=.5, shape=coordinate](DOT2){};
\draw (2.3,0) -- (2.3,2)[thick,red, double, ];
\draw [thick, red] (0,.5) to (2.3,.25);
\draw [thick, red] (2.3,1.75) to (0,1.5);
    \filldraw[red]  (DOT1) circle (2.5pt);
    \filldraw  (DOT2) circle (2.5pt);
    \filldraw[red]  (2.0125,1.71875) circle (2.5pt);
        \node at (0,-.2) {$\scs s_j$};
        \node at (2.6,-.2) {$\scs s_{j+1}$};
        \node at (0,2.2) {$\scs s_j$};
        \node at (2.6,2.2) {$\scs s_{j+1}$};
        \node at (2.95,1) {$\scs s_{j+1}-1$};
        \node at (-.45,1) {$\scs s_j+1$};
        \node at (.5,.65) {$\scs 1$};
        \node at (.5,1.7) {$\scs 1$};
        \node at (1.15,-.2) {$\scs k$};
        \node at (1.5,1) {$\scs {\sf h}_{d_1}$};
        \node at (-.4,0.2) {$\scs {\sf e}_{d_3}$};
        \node at (2.0125,1.95) {$\scs {d_2}$};
\end{tikzpicture}}
\end{align}

The morphism $\zeta_{{\bf s},l}^j$ is determined by the following mappings.
\begin{align}
\zeta_{{\bf s},l}^j:
\hackcenter{
\begin{tikzpicture}[scale=.9]
\draw (0,0) -- (0,2)[thick,red, double, ];
\draw (1,0) -- (1,2)[very thick,double,];
\draw (2,0) -- (2,2)[thick,red, double, ];
        \node at (0,-.2) {$\scs s_j$};
        \node at (2,-.2) {$\scs s_{j+1}$};
        \node at (1,-.2) {$\scs k$};
        \node at (.5,2) {};
\end{tikzpicture}}
\stackrel{\iota_{(s_{j}-1,1)}^{j}}{\mapsto}
\hackcenter{
\begin{tikzpicture}[scale=.9]
\draw[very thick,red] (.3,1).. controls ++(0,.3) and ++(0,-.3) ..(0,1.5);
\draw[thick,red, double] (-.3,1).. controls ++(0,.3) and ++(0,-.3) ..(0,1.5) to (0,2);
\draw[very thick,red] (0,.5).. controls ++(0,.3) and ++(0,-.3) ..(.3,1) ;
\draw[thick, red, double] (-.3,1).. controls ++(0,-.3) and ++(0,.3) ..(0,.5) to (0,0);
\draw (1,0) -- (1,2)[very thick,double,];
\draw (2,0) -- (2,2)[thick,red, double, ];
        \node at (0,-.2) {$\scs s_j$};
        \node at (2,-.2) {$\scs s_{j+1}$};
        \node at (1,-.2) {$\scs k$};
        \node at (.5,2) {};
\node at (-.7,1) {$\scs s_{j}-1$};
\node at (.5,1) {$\scs 1$};
\end{tikzpicture}} \hspace{3in}
\\\nonumber
\qquad \qquad \qquad
\stackrel{\zeta_{(1,s_{j+1})}^{j+1}}{\mapsto}
(-1)^{s_{j+1}}
\hackcenter{
\begin{tikzpicture}[scale=.9]
\draw (0,0) -- (0,2)[thick,red, double, ];
\draw (1.15,.25) .. controls ++(0,.25) and ++(0,-.25) .. (2.5,.8) -- (2.5,1.2) .. controls ++(0,.25) and ++(0,-.25) .. (1.15,1.75)[thick];
\draw (1.15,0) -- (1.15,.25) .. controls ++(0,.25) and ++(0,-.25) .. (1,.8) -- (1,1.2) .. controls ++(0,.25) and ++(0,-.25) .. (1.15,1.75)--(1.15,2)[very thick,double,];
\draw (2.3,0) -- (2.3,2)[thick,red, double, ];
\draw [very thick, red] (2.3,.75) to (0,.5);
\draw [very thick, red] (0,1.5) to (2.3,1.25);
        \node at (0,-.2) {$\scs s_j$};
        \node at (2.3,-.2) {$\scs s_{j+1}$};
        \node at (0,2.2) {$\scs s_j$};
        \node at (2.3,2.2) {$\scs s_{j+1}$};
        \node at (1.7,1) {$\scs s_{j+1}+1$};
        \node at (-.5,1) {$\scs s_j-1$};
        \node at (.6,.4) {$\scs 1$};
        \node at (.6,1.6) {$\scs 1$};
        \node at (2.7,1) {$\scs 1$};
        \node at (.6,1) {$\scs k-1$};
        \node at (1.15,-.2) {$\scs k$};
\end{tikzpicture}}
+
\sum_{\overset{d_1+d_2+d_3}{=s_{j+1}-k}}
(-1)^{s_{j+1}+d_3+k}
\hackcenter{
\begin{tikzpicture}[scale=.9]
\draw (0,0) -- (0,2)[thick,red, double, ];
\draw (1.15,0) -- (1.15,2)[very thick,double,]node[pos=.5, shape=coordinate](DOT2){};
\draw (2.3,0) -- (2.3,2)[thick,red, double, ]node[pos=.1, shape=coordinate](DOT1){};
\draw [very thick, red] (2.3,.5) to (0,.25);
\draw [very thick, red] (0,1.75) to (2.3,1.5);
    \filldraw[red]  (DOT1) circle (2.5pt);
    \filldraw  (DOT2) circle (2.5pt);
    \filldraw[red]  (.2875,1.71875) circle (2.5pt);
        \node at (0,-.2) {$\scs s_j$};
        \node at (2.6,-.2) {$\scs s_{j+1}$};
        \node at (0,2.2) {$\scs s_j$};
        \node at (2.6,2.2) {$\scs s_{j+1}$};
        \node at (2.95,1) {$\scs s_{j+1}+1$};
        \node at (-.45,1) {$\scs s_j-1$};
        \node at (1.5,.6) {$\scs 1$};
        \node at (1.5,1.75) {$\scs 1$};
        \node at (1.15,-.2) {$\scs k$};
        \node at (1.5,1) {$\scs {\sf h}_{d_1}$};
        \node at (2.7,0.2) {$\scs {\sf e}_{d_3}$};
        \node at (.35,1.95) {$\scs {d_2}$};
\end{tikzpicture}}
\end{align}

\subsection{Morphisms $\aleph_{i,j}$}
\allowdisplaybreaks
In this subsection we will prove that for $|i-j|=1$ there are bimodule homomorphisms
\begin{equation*}
\aleph_{i,j} \colon \mathsf{E}_i \mathsf{E}_j \mathsf{1}_{\mathbf{s}}
\rightarrow
\mathsf{E}_j \mathsf{E}_i \mathsf{1}_{\mathbf{s}}.
\end{equation*}
When $i$ and $j$ satisfy other conditions, we will also have maps
$\mathsf{E}_i \mathsf{E}_j \mathsf{1}_{\mathbf{s}}
\rightarrow
\mathsf{E}_j \mathsf{E}_i \mathsf{1}_{\mathbf{s}}$.
When $|i-j| \neq 1$, it is easier to prove these are bimodule homomorphisms, so we consider only the difficult cases in this subsection.

It is easier to define $\aleph_{i,j}$ when $j=i+1$.
\begin{equation}
\aleph_{i,i+1} \colon
\left(\hackcenter{
\begin{tikzpicture}[scale=.4]
	\draw [thick,red, double, ] (6,-2) to (6,2);
	\draw [thick,red, double, ] (2,-2) to (2,2);
	\draw [thick,red, double, ] (-2,-2) to (-2,2);
	\draw [very thick, red ] (2,.5) to (-2,1.5);
	\draw [very thick, red ] (6,-1.5) to (2,-.5);
\node at (-2,-2.55) {\tiny $s_i$};	
\node at (2,-2.55) {\tiny $s_{i+1}$};
\node at (6,-2.55) {\tiny $s_{i+2}$};
	\node at (-2,2.5) {\tiny $s_i+1$};
	\node at (2,2.5) {\tiny $s_{i+1}$};
		\node at (6,2.5) {\tiny $s_{i+2}-1$};
    \node at (-1,-2.5) {\tiny $k_1$};
\draw [very thick,double,] (-1,-2) to (-1,2);
\node at (5,-2.5) {\tiny $k_2$};
\draw [very thick,double,] (5,-2) to (5,2);
\end{tikzpicture}}
\mapsto
\hackcenter{
\begin{tikzpicture}[scale=.4]
	\draw [thick,red, double, ] (6,-2) to (6,2);
	\draw [thick,red, double, ] (2,-2) to (2,2);
	\draw [thick,red, double, ] (-2,-2) to (-2,2);
	\draw [very thick, red ] (6,.5) to (2,1.5);
	\draw [very thick, red ] (2,-1.5) to (-2,-.5);
\node at (-2,-2.55) {\tiny $s_i$};	
\node at (2,-2.55) {\tiny $s_{i+1}$};
\node at (6,-2.55) {\tiny $s_{i+2}$};
	\node at (-2,2.5) {\tiny $s_i+1$};
	\node at (2,2.5) {\tiny $s_{i+1}$};
		\node at (6,2.5) {\tiny $s_{i+2}-1$};
    \node at (-1,-2.5) {\tiny $k_1$};
\draw [very thick,double,] (-1,-2) to (-1,2);
\node at (5,-2.5) {\tiny $k_2$};
\draw [very thick,double,] (5,-2) to (5,2);
\end{tikzpicture}}\right)
\end{equation}
The other case is more involved since there is an extra set of generators given by a thick black strand going through the middle vertical red line.  The definition of $\aleph_{i+1,i}$ breaks down into three cases depending upon the thickness of this black strand.
\begin{align}
\label{i+1,icrossing}
&\left(\hackcenter{
\begin{tikzpicture}[scale=.4]
\draw [thick,red, double, ] (6,-2) to (6,2);
\draw [thick,red, double, ] (2,-2) to (2,2);
	\draw [thick,red, double, ] (-2,-2) to (-2,2);
	\draw [very thick, red ] (6,.5) to (2,1.5);
	\draw [very thick, red ] (2,-1.5) to (-2,-.5);
\node at (-2,-2.55) {\tiny $s_i$};	
\node at (2,-2.55) {\tiny $s_{i+1}$};
\node at (6,-2.55) {\tiny $s_{i+2}$};
	\node at (-2,2.5) {\tiny $s_i+1$};
	\node at (2,2.5) {\tiny $s_{i+1}$};
		\node at (6,2.5) {\tiny $s_{i+2}-1$};
    \node at (-1,-2.5) {\tiny $k_1$};
\draw [very thick,double,] (-1,-2) to (-1,2);
\node at (5,-2.5) {\tiny $k_2$};
\draw [very thick,double,] (5,-2) to (5,2);
\end{tikzpicture}}
\mapsto
\hackcenter{
\begin{tikzpicture}[scale=.4]
	\draw [thick,red, double, ] (6,-2) to (6,2);
	\draw [thick,red, double, ] (2,-2) to (2,2);
	\draw [thick,red, double, ] (-2,-2) to (-2,2);
	\draw [very thick, red ] (2,.5) to (-2,1.5);
\filldraw[red]  (3.8,-.95) circle (6pt);
\draw [very thick, red ] (6,-1.5) to (2,-.5);
\node at (-2,-2.55) {\tiny $s_i$};	
\node at (2,-2.55) {\tiny $s_{i+1}$};
\node at (6,-2.55) {\tiny $s_{i+2}$};
	\node at (-2,2.5) {\tiny $s_i+1$};
	\node at (2,2.5) {\tiny $s_{i+1}$};
		\node at (6,2.5) {\tiny $s_{i+2}-1$};
    \node at (-1,-2.5) {\tiny $k_1$};
\draw [very thick,double,] (-1,-2) to (-1,2);
\node at (4.7,-2.5) {\tiny $k_2$};
\draw [very thick,double,] (4.7,-2) to (4.7,2);
    \end{tikzpicture}}
	-
	\hackcenter{
    \begin{tikzpicture}[scale=.4]
    \draw [thick,red, double, ] (6,-2) to (6,2);
	\draw [thick,red, double, ] (2,-2) to (2,2);
	\draw [thick,red, double, ] (-2,-2) to (-2,2);
	\draw [very thick, red ] (2,.5) to (-2,1.5);
	\draw [very thick, red ] (6,-1.5) to (2,-.5);
	\filldraw[red]  (0,1) circle (6pt);
\node at (-2,-2.55) {\tiny $s_i$};	
\node at (2,-2.55) {\tiny $s_{i+1}$};
\node at (6,-2.55) {\tiny $s_{i+2}$};
	\node at (-2,2.5) {\tiny $s_i+1$};
	\node at (2,2.5) {\tiny $s_{i+1}$};
		\node at (6,2.5) {\tiny $s_{i+2}-1$};
    \node at (-1,-2.5) {\tiny $k_1$};
\draw [very thick,double,] (-1,-2) to (-1,2);
\node at (4.7,-2.5) {\tiny $k_2$};
\draw [very thick,double,] (4.7,-2) to (4.7,2);
    \end{tikzpicture}} \right)  \\
  &  \left(\hackcenter{
\begin{tikzpicture}[scale=.4]
	\draw [thick,red, double, ] (6,-2) to (6,2);
	\draw [thick,red, double, ] (2,-2) to (2,2);
	\draw [thick,red, double, ] (-2,-2) to (-2,2);
	\draw [very thick, red ] (6,.5) to (2,1.5);
	\draw [very thick, red ] (2,-1.5) to (-2,-.5);
\node at (-2,-2.55) {\tiny $s_i$};	
\node at (2,-2.55) {\tiny $s_{i+1}$};
\node at (6,-2.55) {\tiny $s_{i+2}$};
	\node at (-2,2.5) {\tiny $s_i+1$};
	\node at (2,2.5) {\tiny $s_{i+1}$};
		\node at (6,2.5) {\tiny $s_{i+2}-1$};
	\node at (0,-.65) {\tiny $1$};
	\node at (4,.65) {\tiny $1$};
    \node at (-1,-2.5) {\tiny $k_1$};
\draw [very thick,double,] (-1,-2) to (-1,2);
\node at (4.7,-2.5) {\tiny $k_2$};
\draw [very thick,double,] (4.7,-2) to (4.7,2);
\draw[very thick,] (3.3,-2) .. controls ++(0,1.6) and ++(0,-.8) .. (1,2);
\node at (3.3,-2.5) {\tiny $1$};
\end{tikzpicture}}
\mapsto
\hackcenter{
\begin{tikzpicture}[scale=.4]
	\draw [thick,red, double, ] (6,-2) to (6,2);
	\draw [thick,red, double, ] (2,-2) to (2,2);
	\draw [thick,red, double, ] (-2,-2) to (-2,2);
	\draw [very thick, red ] (2,.5) to (-2,1.5);
\draw [very thick, red ] (6,-1.5) to (2,-.5);
\node at (-2,-2.55) {\tiny $s_i$};	
\node at (2,-2.55) {\tiny $s_{i+1}$};
\node at (6,-2.55) {\tiny $s_{i+2}$};
	\node at (-2,2.5) {\tiny $s_i+1$};
	\node at (2,2.5) {\tiny $s_{i+1}$};
	\node at (6,2.5) {\tiny $s_{i+2}-1$};
	\node at (0,.65) {\tiny $1$};
		\node at (3,-.3) {\tiny $1$};
    \node at (-1,-2.5) {\tiny $k_1$};
\draw [very thick,double,] (-1,-2) to (-1,2);
\node at (4.75,-2.5) {\tiny $k_2$};
\draw [very thick,double,] (4.75,-2) to (4.75,2);
\draw[thick,] (3.3,-2) .. controls ++(0,.5) and ++(0,-1) .. (1,0) to  (1,2);
\node at (3.3,-2.5) {\tiny $1$};
    \end{tikzpicture}}
	-
	\hackcenter{
    \begin{tikzpicture}[scale=.4]
    \draw [thick,red, double, ] (6,-2) to (6,2);
	\draw [thick,red, double, ] (2,-2) to (2,2);
	\draw [thick,red, double, ] (-2,-2) to (-2,2);
	\draw [very thick, red ] (2,.5) to (-2,1.5);
	\draw [very thick, red ] (6,-1.5) to (2,-.5);
	\node at (-2,-2.55) {\tiny $s_i$};	
	\node at (2,-2.55) {\tiny $s_{i+1}$};
	\node at (6,-2.55) {\tiny $s_{i+2}$};
	\node at (-2,2.5) {\tiny $s_i+1$};
	\node at (2,2.5) {\tiny $s_{i+1}$};
	\node at (6,2.5) {\tiny $s_{i+2}-1$};
	\node at (1,.25) {\tiny $1$};
	\node at (4,-.65) {\tiny $1$};
	\node at (-1,-2.5) {\tiny $k_1$};
\draw [very thick,double,] (-1,-2) to (-1,2);
\node at (4.75,-2.5) {\tiny $k_2$};
\draw [very thick,double,] (4.75,-2) to (4.75,2);
\node at (3.3,-2.5) {\tiny $1$};
\draw[thick,] (3.3,-2) to (3.3,0) .. controls ++(0,1) and ++(0,-.5) .. (1,2);
    \end{tikzpicture}} \right) \\
&\left(\hackcenter{
\begin{tikzpicture}[scale=.4]
	\draw [thick,red, double, ] (6,-2) to (6,2);
	\draw [thick,red, double, ] (2,-2) to (2,2);
	\draw [thick,red, double, ] (-2,-2) to (-2,2);
	\draw [very thick, red ] (6,.5) to (2,1.5);
	\draw [very thick, red ] (2,-1.5) to (-2,-.5);
	\node at (-2,-2.55) {\tiny $s_i$};	
	\node at (2,-2.55) {\tiny $s_{i+1}$};
	\node at (6,-2.55) {\tiny $s_{i+2}$};
	\node at (-2,2.5) {\tiny $s_i+1$};
	\node at (2,2.5) {\tiny $s_{i+1}$};
	\node at (6,2.5) {\tiny $s_{i+2}-1$};
	\node at (0,-.65) {\tiny $1$};
	\node at (4,.65) {\tiny $1$};
	\node at (-1,-2.5) {\tiny $k_1$};
\draw [very thick,double,] (-1,-2) to (-1,2);
\node at (4.75,-2.5) {\tiny $k_3$};
\draw [very thick,double,] (4.75,-2) to (4.75,2);
\draw[very thick,double,] (3.3,-2) .. controls ++(0,1.6) and ++(0,-.8) .. (1,2);
\node at (3.3,-2.5) {\tiny $k_2$};
\end{tikzpicture}}
\mapsto
-
\hackcenter{
\begin{tikzpicture}[scale=.4]
	\draw [thick,red, double, ] (6,-2) to (6,2);
	\draw [thick,red, double, ] (2,-2) to (2,2);
	\draw [thick,red, double, ] (-2,-2) to (-2,2);
	\draw [very thick, red ] (2,.5) to (-2,1.5);
\draw [very thick, red ] (6,-1.5) to (2,-.5);
\node at (-2,-2.55) {\tiny $s_i$};	
\node at (2,-2.55) {\tiny $s_{i+1}$};
\node at (6,-2.55) {\tiny $s_{i+2}$};
	\node at (-2,2.5) {\tiny $s_i+1$};
	\node at (2,2.5) {\tiny $s_{i+1}$};
	\node at (6,2.5) {\tiny $s_{i+2}-1$};
	\node at (0,.65) {\tiny $1$};
	\node at (3,.75) {\tiny $1$};
	\node at (4.0,-.55) {\tiny $1$};
\node at (-1,-2.5) {\tiny $k_1$};
\draw [very thick,double,] (-1,-2) to (-1,2);
\node at (4.75,-2.5) {\tiny $k_3$};
\draw [very thick,double,] (4.75,-2) to (4.75,2);
\node at (3.3,-2.5) {\tiny $k_2$};
\draw[thick,] (3.3,-1.65) .. controls ++(.75,2.5) and ++(.5,-.5) .. (1,1.5);
\draw[thick,double,] (3.4,-2) .. controls ++(0,1) and ++(0,-1.5) ..(1,0) to  (1,2);
    \end{tikzpicture}} \right)
\end{align}
When $k=0$, \eqref{i+1,icrossing} is a bimodule homomorphism by Soergel calculus.
In order to check that all of the $\aleph_{i,j}$ are in fact bimodule homomorphisms, we must compare how $W({\bf s},n)$ acts on the top and bottom of the diagrams for these generators.

\noindent
$\bullet$ Black dot action: The compatibility of the actions of black dots with respect to these maps follows from the black dot sliding relation \eqref{blackdot}.

\noindent
$\bullet$ Red dot action:  The compatibility of the actions of red dots with respect to these maps follows from standard manipulations of elementary symmetric functions.

\noindent
$\bullet$ The action of red-black crossings:
We show the bimodule map structure of the $s_{i+1}$-red-black crossing action for the following generator.
We leave the other cases to the reader.  In what follows below we will often indicate a reindexing in a summation in a parenthesis under the new index.  Recall that

$$
\hackcenter{
}
\end{eqnarray*}
This is the same as the right-hand side of the equality \eqref{bimod_str_k}.

\section{Categorical symmetric Howe duality}
\label{sectionconj2rep}

In this section we define a $2$-representation of the $2$-category $\cal{U}$ using ladder bimodules of $W({\bf s},n)$.  The proof that the assignments defined here give rise to a $2$-representation is checked in the next section where it is shown that all relations of the $2$-morphisms hold.

\subsection{The $2$-category $\Bim(n)$}
Here we define the target 2-category for our 2-representation of the 2-category $\cal{U}$.

\begin{definition}
Define a graded additive 2-category $\Bim(n)$ as the idempotent completion inside the 2-category of graded bimodules  with:
\begin{itemize}
  \item objects consisting of sequences ${\bf s}=(s_1,\ldots,s_m)$ such that each $s_i \in \Z_{\geq 0}$ and a symbol $\ast$.
  \item  The 1-morphisms are generated under tensor product of bimodules by identity bimodules $1_{\mathbf{s}}=W(\mathbf{s},n)$ and
\[
 \mathsf{E}_i^{(a)}1_{\mathbf{s}} \;\;   \;\; = \;\; \xy
(0,0)*{
\begin{tikzpicture}[scale=.4]
\node at (-5.5,0) { $\dots$};
\node at (5.5,0) { $\dots$};
    \draw [thick, red, double,, ] (-4,-2) to (-4,2);
    \draw [thick, red, double,, ] (4,-2) to (4,2);
	\draw [thick, red, double,, ] (2,-.5) to (2,2);
	\draw [thick, red, double,, ] (2,-.5) to (-2,.5);
	\draw [thick, red, double,, ] (-2,-2) to (-2,.5);
	\draw [thick, red, double,, ] (-2,.5) to (-2,2);
	\draw [thick, red, double,, ] (2,-2) to (2,-.5);
\node at (-4,-2.55) {\tiny $s_{i-1}$};	
\node at (-2,-2.55) {\tiny $s_i$};	
\node at (2,-2.55) {\tiny $s_{i+1}$};
\node at (4,-2.55) {\tiny $s_{i+2}$};
	\node at (-2,2.5) {\tiny $s_i+a$};
	\node at (2,2.5) {\tiny $s_{i+1}-a$};
	\node at (0,0.75) {\tiny $a$};
\node at (0,-1.5) { $\dots$};
    \draw [thick,] (1.5,-2) to (1.5,2);
    \draw [thick,] (-1.5,-2) to (-1.5,2);
\end{tikzpicture}
};
\endxy
\qquad   \mathsf{F}_i^{(a)}1_{\mathbf{s}} \;\;   \;\; = \;\; \xy
(0,0)*{
\begin{tikzpicture}[scale=.4]
\node at (-5.5,0) { $\dots$};
\node at (5.5,0) { $\dots$};
    \draw [thick, red, double,, ] (-4,-2) to (-4,2);
    \draw [thick, red, double,, ] (4,-2) to (4,2);
	\draw [thick, red, double,, ] (2,.5) to (2,2);
	\draw [thick, red, double,, ] (-2,-.5) to (2,.5);
	\draw [thick, red, double,, ] (-2,-2) to (-2,-.5);
	\draw [thick, red, double,, ] (-2,-.5) to (-2,2);
	\draw [thick, red, double,, ] (2,-2) to (2,.5);
\node at (-4,-2.55) {\tiny $s_{i-1}$};	
\node at (-2,-2.55) {\tiny $s_i$};	
\node at (2,-2.55) {\tiny $s_{i+1}$};
\node at (4,-2.55) {\tiny $s_{i+2}$};
	\node at (-2,2.5) {\tiny $s_i- a$};
	\node at (2,2.5) {\tiny $s_{i+1}+a$};
	\node at (0,0.75) {\tiny $a$};
\node at (0,-1.5) { $\dots$};
    \draw [thick,] (1.5,-2) to (1.5,2);
    \draw [thick,] (-1.5,-2) to (-1.5,2);
\end{tikzpicture}
};
\endxy
\]
together with their grading shifts. For the symbol $\ast$, the set of 1-morphisms is the empty set.
\[ \Hom(\ast.\ast)=\Hom(\ast.\mathbf{s})=\Hom(\mathbf{s},\ast)=\emptyset\]
Hence, an arbitrary 1-morphism is a summand of a direct sum of tensor products of these bimodules and their shifts.

\item The 2-morphisms of $\Bim(n)$ are the degree preserving bimodule homomorphisms between these bimodules.
\end{itemize}
\end{definition}

\subsection{The 2-representation}

There is a $2$-representation
\begin{align}
  \Psi \maps \Ucat \;\; &\to \;\;  \Bim(n) \\
  {\bf s} \;\; & \mapsto \left\{\begin{array}{ll}{\bf s}&\text{if } {\bf s}\in \Z_{\geq 0}^m \\ \ast &\text{if } {\bf s}\not\in \Z_{\geq 0}^m\end{array}\right.
\end{align}
By definition we have $\Hom(\ast,\ast)=\Hom(\ast,\mathbf{s})=\Hom(\mathbf{s},\ast)=0$.
Therefore, when ${\bf s}\not\in \Z^{m}_{\geq 0}$, the $1$-morphisms and $2$-morphisms get mapped to zero.
When ${\bf s}\in \Z^{m}_{\geq 0}$, we define
\begin{align}
  \mc{E}_i^{}1_{\mathbf{s}} \;\; &\mapsto \;\; \;\;  \mathsf{E}_i^{}1_{\mathbf{s}} \nn \\
   \mc{F}_i^{}1_{\mathbf{s}} \;\; &\mapsto \;\; \;\; \mathsf{F}_i^{}1_{\mathbf{s}} \nn
\end{align}
with 2-morphisms defined by the following assignments.
\\
\begin{eqnarray*}
\Psi\left(\;\; \hackcenter{
}
\end{equation*}
\end{remark}

\subsection{Formulas implied by definition of $\Psi$}
In this section we compute the bimodule homomorphisms associated to non-generating 2-morphisms in $\cal{U}$.  These formulas will be useful for verifying the defining relations of $\cal{U}$ in Section~\ref{sectionrelationsproved}.

\subsubsection{Sideways crossings (same labelling)}
\label{subsectionsidewayssame}
The crossing formulas above along with cup and cap maps imply the following
formulas.

\begin{lemma} The $ii$-sideways crossings are given by the following formulas.
\begin{align}
&\Psi \left( \; \; \hackcenter{
}
\end{equation}
There are three cases to consider: $k_2=0$, $k_2=1$, and $k_2 > 1$.

For $k_2=0$, the image of one of the crossing maps produces a sum of dots on the  red step strands labelled by $1$ along with a change in the relative heights of those red strands.  The other crossing map simply changes the relative heights of the red strands labelled by $1$.

The cases $k_2>0$ are verified in a straightforward manner from the definitions using Lemma ~\ref{RthickBr2-lemma}.
For $k_2>1$, one must use \cite[Proposition 2.5.3]{KLMS}
to simplify the diagrams.
\end{proof}

\begin{proposition}
If $|i-j|>1$ then

\begin{equation*}
\Psi \left( \;\;\hackcenter{
\begin{tikzpicture}[scale=0.8]
    \draw[thick, ->] (0,0) .. controls ++(0,.5) and ++(0,-.4) .. (.75,.8) .. controls ++(0,.4) and ++(0,-.5) .. (0,1.6);
    \draw[thick, ->] (.75,0) .. controls ++(0,.5) and ++(0,-.4) .. (0,.8) .. controls ++(0,.4) and ++(0,-.5) .. (.75,1.6);
    \node at (1.1,1.25) { ${\bf s}$};
    \node at (-.2,.1) {\tiny $i$};
    \node at (.95,.1) {\tiny $j$};
\end{tikzpicture}} \right)
  \quad = \quad
  \Psi \left( \;\; \hackcenter{
\begin{tikzpicture}[scale=0.8]
    \draw[thick, ->] (0,0) to (0,1.6);
    \draw[thick, ->] (.75,0) to (.75,1.6);
    \node at (1.1,1.25) { ${\bf s}$};
    \node at (-.2,.1) {\tiny $i$};
    \node at (.95,.1) {\tiny $j$};
\end{tikzpicture}} \right).
\end{equation*}
\end{proposition}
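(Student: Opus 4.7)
The strategy is to exploit the fact that when $|i-j|>1$, the bimodules $\mathsf{E}_i^{} \mathsf{1}_{\mathbf{s}}$ and $\mathsf{E}_j^{} \mathsf{1}_{\mathbf{s}}$ are supported on disjoint regions of the sequence $\mathbf{s}$: the generator for $\mathsf{E}_i$ only involves a red step between the $i$-th and $(i{+}1)$-st red strands (together with the auxiliary thick black strand passing between them), and similarly for $\mathsf{E}_j$. Since $|i-j|>1$, the four red strands at positions $i,i{+}1,j,j{+}1$ and the two thick black strands between them are all distinct, so the tensor product $\mathsf{E}_i\mathsf{E}_j\mathsf{1}_{\mathbf{s}}$ is simply the ladder bimodule with two independent step configurations in non-overlapping columns.

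With this observation in hand, I would first read off from Lemma~\ref{Sidecrossingform1} and from the definition of $\Psi$ on the crossing 2-morphism $\hackcenter{\begin{tikzpicture}[scale=0.6]\draw[thick, ->] (0,0) .. controls (0,.5) and (.75,.5) .. (.75,1.0);\draw[thick, ->] (.75,0) .. controls (.75,.5) and (0,.5) .. (0,1.0);\end{tikzpicture}}$ for $j \gg i$ (and dually for $j\ll i$) that the crossing map is the planar-isotopy homomorphism which just slides the $j$-step past the $i$-step without producing any corrections, dots, or symmetric-function insertions. This is because the formulas given in Section~\ref{2categorysection} assign trivial scalars $t_{ij}$ to the $i\cdot j=0$ case, and the bimodule map is forced to coincide with the obvious identification of the two different bimodule presentations of the same underlying ladder.

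Next, I would compose two such crossings, once as $\mathsf{E}_i\mathsf{E}_j\mathsf{1}_{\mathbf{s}}\to\mathsf{E}_j\mathsf{E}_i\mathsf{1}_{\mathbf{s}}$ and then as $\mathsf{E}_j\mathsf{E}_i\mathsf{1}_{\mathbf{s}}\to\mathsf{E}_i\mathsf{E}_j\mathsf{1}_{\mathbf{s}}$. Since each crossing is simply the swap of the two disjoint step configurations, the composition swaps them twice and hence is the identity homomorphism on $\mathsf{E}_i\mathsf{E}_j\mathsf{1}_{\mathbf{s}}$. This has to be checked on all generators listed in Proposition~\ref{EiEi+1generators} (adapted to the $|i-j|>1$ setting), but because no black strands ever cross the middle of the diagram and no red step is ever resolved, the check reduces to a pure isotopy argument together with the bimodule bilinearity already verified in Section~\ref{sectionconj2rep}.

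The main (and only) potential obstacle is bookkeeping: one must separately handle the four sign-and-orientation subcases ($j>i{+}1$ versus $j<i{-}1$, and whether the auxiliary thick black strand associated with $\mathsf{E}_i$ sits to the left or right of that associated with $\mathsf{E}_j$) and verify that no sign $(-1)^{i\cdot j}$ or scalar $t_{ij}$ slips in. Since $i\cdot j=0$ and $t_{ij}\cdot t_{ji}=1$ in this regime, each subcase collapses to the identity without residual terms, completing the proof.
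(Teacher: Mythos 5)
Your proposal is correct and takes essentially the same route as the paper: since $|i-j|>1$, the $i$-step and $j$-step sit in disjoint columns of the ladder, the crossing $\Psi$-image for far-apart indices is defined as the planar-isotopy identification, and so the double crossing is the identity, with $t_{ij}=1$ and $i\cdot j=0$ ensuring no residual scalar. The paper states this in one line ("each one just corresponds to an isotopy of diagrams"); your extra bookkeeping of the subcases is sound but not strictly necessary, and your citation of Lemma~\ref{Sidecrossingform1} (sideways crossings) should really just be the upward-crossing definition from Section~\ref{sectionconj2rep}.
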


\begin{proof}
This follows immediately from the definitions of the bimodule homomorphisms since each one just corresponds to an isotopy of diagrams of elements in the bimodule.
\end{proof}

\begin{proposition}
\label{nilheckeactionE^a}
There is an action of the nilHecke algebra $\nh_a$ on the bimodule $\mathsf{E}^{a}_i \mathsf{1}_{\mathbf{s}} $.  That is, we have equalities of the following bimodule homomorphisms
\begin{equation*}
\Psi \left( \;\; \hackcenter{
\begin{tikzpicture}[scale=0.8]
    \draw[thick, ->] (0,0) .. controls ++(0,.5) and ++(0,-.4) .. (.75,.8) .. controls ++(0,.4) and ++(0,-.5) .. (0,1.6);
    \draw[thick, ->] (.75,0) .. controls ++(0,.5) and ++(0,-.4) .. (0,.8) .. controls ++(0,.4) and ++(0,-.5) .. (.75,1.6);
    \node at (1.1,1.25) { ${\bf s}$};
    \node at (-.2,.1) {\tiny $i$};
    \node at (.95,.1) {\tiny $i$};
\end{tikzpicture}} \right)
  \quad = \quad
0
\end{equation*}

\begin{equation*}
\Psi \left(
\hackcenter{\begin{tikzpicture}[scale=0.8]
    \draw[thick, ->] (0,0) .. controls ++(0,.55) and ++(0,-.5) .. (.75,1)
        node[pos=.25, shape=coordinate](DOT){};
    \draw[thick, ->] (.75,0) .. controls ++(0,.5) and ++(0,-.5) .. (0,1);
    \filldraw  (DOT) circle (2.5pt);
    \node at (-.2,.15) {\tiny $i$};
    \node at (.95,.15) {\tiny $i$};
\node at (1.1, .75) { ${\bf s}$};
\end{tikzpicture}}
\quad-\quad
\hackcenter{\begin{tikzpicture}[scale=0.8]
    \draw[thick, ->] (0,0) .. controls ++(0,.55) and ++(0,-.5) .. (.75,1)
        node[pos=.75, shape=coordinate](DOT){};
    \draw[thick, ->] (.75,0) .. controls ++(0,.5) and ++(0,-.5) .. (0,1);
    \filldraw  (DOT) circle (2.5pt);
    \node at (-.2,.15) {\tiny $i$};
    \node at (.95,.15) {\tiny $i$};
\node at (1.1, .75) { ${\bf s}$};
\end{tikzpicture}} \right)
\;\; = \Psi \left(\hackcenter{\begin{tikzpicture}[scale=0.8]
    \draw[thick, ->] (0,0) .. controls ++(0,.55) and ++(0,-.5) .. (.75,1);
    \draw[thick, ->] (.75,0) .. controls ++(0,.5) and ++(0,-.5) .. (0,1) node[pos=.75, shape=coordinate](DOT){};
    \filldraw  (DOT) circle (2.5pt);
    \node at (-.2,.15) {\tiny $i$};
    \node at (.95,.15) {\tiny $i$};
\node at (1.1, .75) { ${\bf s}$};
\end{tikzpicture}}
\quad-\quad
\hackcenter{\begin{tikzpicture}[scale=0.8]
    \draw[thick, ->] (0,0) .. controls ++(0,.55) and ++(0,-.5) .. (.75,1);
    \draw[thick, ->] (.75,0) .. controls ++(0,.5) and ++(0,-.5) .. (0,1) node[pos=.25, shape=coordinate](DOT){};
    \filldraw  (DOT) circle (2.5pt);
    \node at (-.2,.15) {\tiny $i$};
    \node at (.95,.15) {\tiny $i$};
\node at (1.1, .75) { ${\bf s}$};
\end{tikzpicture}} \right)
  \;\; = \;\;
 \Psi  \left(  \hackcenter{\begin{tikzpicture}[scale=0.8]
    \draw[thick, ->] (0,0) to  (0,1);
    \draw[thick, ->] (.75,0)to (.75,1) ;
    \node at (-.2,.15) {\tiny $i$};
    \node at (.95,.15) {\tiny $i$};
\node at (1.1, .75) { ${\bf s}$};
\end{tikzpicture}} \right)
\end{equation*}

\begin{equation*}
\Psi \left(
 \hackcenter{\begin{tikzpicture}[scale=0.8]
    \draw[thick, ->] (0,0) .. controls ++(0,1) and ++(0,-1) .. (1.5,2);
    \draw[thick, ] (.75,0) .. controls ++(0,.5) and ++(0,-.5) .. (0,1);
    \draw[thick, ->] (0,1) .. controls ++(0,.5) and ++(0,-.5) .. (0.75,2);
    \draw[thick, ->] (1.5,0) .. controls ++(0,1) and ++(0,-1) .. (0,2);
    \node at (-.2,.15) {\tiny $i$};
    \node at (.95,.15) {\tiny $i$};
    \node at (1.75,.15) {\tiny $i$};
\node at (1.9, 1.25) { ${\bf s}$};
\end{tikzpicture}} \right)
 \;\; =\;\;
 \Psi \left(
\hackcenter{\begin{tikzpicture}[scale=0.8]
    \draw[thick, ->] (0,0) .. controls ++(0,1) and ++(0,-1) .. (1.5,2);
    \draw[thick, ] (.75,0) .. controls ++(0,.5) and ++(0,-.5) .. (1.5,1);
    \draw[thick, ->] (1.5,1) .. controls ++(0,.5) and ++(0,-.5) .. (0.75,2);
    \draw[thick, ->] (1.5,0) .. controls ++(0,1) and ++(0,-1) .. (0,2);
    \node at (-.2,.15) {\tiny $i$};
    \node at (.95,.15) {\tiny $i$};
    \node at (1.75,.15) {\tiny $i$};
\node at (1.9, 1.25) { ${\bf s}$};
\end{tikzpicture}}\right)
\end{equation*}
along with identities arising from $\Psi$ for far away strands.
\end{proposition}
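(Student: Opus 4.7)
The plan is to verify the three nilHecke relations by reducing them to divided-difference identities on polynomial expressions in the red-dot labels, using the explicit formula for the $ii$-crossing map given in Section~\ref{sectionconj2rep}. Recall that $\Psi$ of the $ii$-crossing sends a generator of $\mathsf{E}_i \mathsf{E}_i \mathsf{1}_{\mathbf{s}}$ with red dots of degrees $x,y$ on the two red step strands to the generator with the same underlying diagram but with label $\partial_1(x_1^x x_2^y)$ placed vertically across the two steps. In particular, the crossing acts as the first nilHecke divided difference on the pair of red step dots, and is the identity on all other decoration (black strands, outer red dots, thick black strand passing through).

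For the first relation $\Psi(\text{crossing})^2=0$, I would simply note that composing two crossings amounts to applying $\partial_1^2$ on the red-step dot labels, and $\partial_1^2=0$ in $\nh_2$. One has to be slightly careful because after the first crossing the resulting element is a sum of diagrams whose red step labels may have been rewritten via the elementary--complete relation; but after pushing everything onto the two red steps using the relations~\eqref{reddot} and the fact that a red dot on a step can be expressed in terms of red dots at the top/bottom endpoints of the ladder, the second application of $\partial_1$ annihilates the result.

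For the dot-sliding relation, I would evaluate the left-hand side on the generator with red-step dots $x,y$ and an extra black dot on one of the two strands. Translating to polynomial data, the identity becomes
\[
\partial_1(x_1 \cdot x_1^x x_2^y) - x_1\cdot\partial_1(x_1^x x_2^y) \;=\; x_2^{x+y},
\]
and the symmetric one with $x_2$ on the other side, which are exactly the defining relations $\partial_1 x_1 - x_1 \partial_1 = 1$ and $\partial_1 x_2 - x_2\partial_1 = -1$ of the nilHecke algebra (matching the signs in the $\mf{gl}_m$ presentation). Checking this on the generators from Proposition~\ref{EiEi+1generators} (the two cases with $k_1,k_2$ and the version with or without the thick black strand) is routine once one uses that the additional black dot commutes with the divided-difference action on red labels.

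For the braid relation on three $i$-strands, I would test on the generators described in Proposition~\ref{EiEi+1Eigenerators}: diagrams with dots $f$ on one red step and thick black strands of thicknesses $k_1,k_2,k_3$. Applying $\Psi$ of each of the two sides reduces the check to the identity
\[
\partial_1\partial_2\partial_1 = \partial_2\partial_1\partial_2
\]
in $\nh_3$ acting on the three red-step dot labels; the surrounding black and red data are unaffected. I would verify this on a monomial $x_1^a x_2^b x_3^c$ using the standard nilHecke calculation, and then propagate the result through the generators using Lemma~\ref{reduction-EX} to absorb any elementary/complete symmetric function rewriting that arises when red dots are slid between steps and endpoints.

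The main obstacle I anticipate is not the algebraic content of the three nilHecke identities (they all reduce transparently to $\nh_n$ relations on red-step labels), but rather the bookkeeping needed to identify the divided-difference output with a standard element of the bimodule. Specifically, after applying $\partial_1$ the resulting red-step label is generally not of the form $x^{x'} y^{y'}$, and one must invoke the elementary--complete symmetric function identities together with \eqref{reddot} and Lemmas~\ref{diff1}, \ref{id-dots-C}, and \ref{reduction-EX} to rewrite it in terms of the chosen generators. The braid relation in the presence of a nontrivial thick black strand passing through the middle $s_{i+1}$-labelled red line is the place where this bookkeeping is most delicate, and I expect the computation there to parallel the bimodule-homomorphism verifications already performed in Section~\ref{sec:reddotted} and Proposition~\ref{prop-bimodhom(1-a)}.
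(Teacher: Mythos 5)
Your overall plan matches the paper's argument exactly: one observes that a nilHecke dot acts as a red dot on the corresponding red step of the $a$-fold ladder and that a nilHecke crossing acts as the divided difference operator on the two adjacent red-step labels, and the three relations then follow because these are precisely the defining relations of $\nh_a$ in its polynomial representation. This is the key observation, and the paper treats the verification as immediate once it is made.

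There are, however, a few concrete errors in your write-up. The dot-sliding identity you state is wrong: $\partial_1 x_1 - x_1 \partial_1$ equals the transposition $s$ of the two variables, not $1$. The correct nilHecke relation is $\partial_1 x_1 - x_2 \partial_1 = 1$, which on a monomial reads $\partial_1(x_1^{x+1} x_2^y) - x_2 \partial_1(x_1^x x_2^y) = x_1^x x_2^y$; the dot on the far side of the crossing is on the \emph{other} red step, and the right-hand side is $x_1^x x_2^y$, not $x_2^{x+y}$. You also cite Propositions~\ref{EiEi+1generators} and \ref{EiEi+1Eigenerators} for the generators of $\mathsf{E}_i\mathsf{E}_i$ and $\mathsf{E}_i\mathsf{E}_i\mathsf{E}_i$, but those propositions concern the differently coloured bimodules $\mathsf{E}_i\mathsf{E}_{i+1}$ and $\mathsf{E}_i\mathsf{E}_{i+1}\mathsf{E}_i$; the relevant spanning set for $\mathsf{E}_i^a 1_{\mathbf{s}}$ consists of the diagrams with $a$ parallel red steps and dotted black background strands, as displayed in the paper's proof. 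Finally, the ``bookkeeping'' you anticipate is not an obstacle: the crossing map is by definition $\Bbbk$-linear in the red-step polynomial label (one expands $\partial_1(x_1^x x_2^y)$ into monomials and reassembles diagrams), so $\partial_1^2=0$ holds literally, and no passage through elementary--complete rewriting, relation~\eqref{reddot}, or Lemma~\ref{reduction-EX} is needed. That is exactly why the paper can declare the verification immediate.
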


\begin{proof}
Recall that the bimodule $\mathsf{E}^{a}_i \mathsf{1}_{\mathbf{s}} $ is spanned by diagrams as in ~\eqref{bimodE^ainproof} with black strands in the background and all strands may carry dots.

\begin{equation}
\label{bimodE^ainproof}
\hackcenter{
\begin{tikzpicture}[scale=.4]
	\draw [thick,red, double, ] (2,-4) to (2,2);
	\draw [thick,red, double, ] (-2,-4) to (-2,2);
	\draw[very thick, red ] (2,-0) to (-2,1)  node[pos=.25, shape=coordinate](DOT){};
	\draw[very thick, red ] (2,-3) to (-2,-2) node[pos=.75, shape=coordinate](DOT2){};
\node at (-2,-4.55) {\tiny $s_i$};	
\node at (2,-4.55) {\tiny $s_{i+1}$};
	\node at (-2,2.5) {\tiny $s_i+a$};
	 \node[rotate=90 ] at (0,-1) { $\dots$};
	\node at (2,2.5) {\tiny $s_{i+1}-a$};
\end{tikzpicture}}
\end{equation}

A nilHecke dot on the $i$-th strand starting from the right corresponds to a dot on the $i$-th red strand labelled by $1$ starting from the bottom.
A nilHecke crossing on the $i$-th and $(i+1)$-st strands starting from the right corresponds to the divided difference operator applied to dots on the $i$-th and $(i+1)$-st red strands starting from the bottom.
It is then immediate that the nilHecke relations are satisfied.

\end{proof}

\begin{proposition}
For $i \neq j$ the dot sliding relations
\begin{equation*}
\Psi \left(
\hackcenter{\begin{tikzpicture}[scale=0.8]
    \draw[thick, ->] (0,0) .. controls ++(0,.55) and ++(0,-.5) .. (.75,1)
        node[pos=.75, shape=coordinate](DOT){};
    \draw[thick, ->] (.75,0) .. controls ++(0,.5) and ++(0,-.5) .. (0,1);
    \filldraw  (DOT) circle (2.5pt);
    \node at (-.2,.15) {\tiny $i$};
    \node at (.95,.15) {\tiny $j$};
\node at (1.1, .75) { ${\bf s}$};
\end{tikzpicture}}\right)
 \;\; =
\Psi \left(\hackcenter{\begin{tikzpicture}[scale=0.8]
    \draw[thick, ->] (0,0) .. controls ++(0,.55) and ++(0,-.5) .. (.75,1)
        node[pos=.25, shape=coordinate](DOT){};
    \draw[thick, ->] (.75,0) .. controls ++(0,.5) and ++(0,-.5) .. (0,1);
    \filldraw  (DOT) circle (2.5pt);
    \node at (-.2,.15) {\tiny $i$};
    \node at (.95,.15) {\tiny $j$};
\node at (1.1, .75) { ${\bf s}$};
\end{tikzpicture}} \right)
\qquad \Psi \left( \hackcenter{\begin{tikzpicture}[scale=0.8]
    \draw[thick, ->] (0,0) .. controls ++(0,.55) and ++(0,-.5) .. (.75,1);
    \draw[thick, ->] (.75,0) .. controls ++(0,.5) and ++(0,-.5) .. (0,1) node[pos=.75, shape=coordinate](DOT){};
    \filldraw  (DOT) circle (2.5pt);
    \node at (-.2,.15) {\tiny $i$};
    \node at (.95,.15) {\tiny $j$};
\node at (1.1, .75) { ${\bf s}$};
\end{tikzpicture}} \right)
\;\;  =
\Psi \left(  \hackcenter{\begin{tikzpicture}[scale=0.8]
    \draw[thick, ->] (0,0) .. controls ++(0,.55) and ++(0,-.5) .. (.75,1);
    \draw[thick, ->] (.75,0) .. controls ++(0,.5) and ++(0,-.5) .. (0,1) node[pos=.25, shape=coordinate](DOT){};
    \filldraw  (DOT) circle (2.5pt);
    \node at (-.2,.15) {\tiny $i$};
    \node at (.95,.15) {\tiny $j$};
\node at (1.1, .75) { ${\bf s}$};
\end{tikzpicture}} \right)
\end{equation*}
hold.
\end{proposition}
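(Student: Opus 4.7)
The plan is to verify the four dot sliding identities by tracking, on each side, where the dot ends up under the definition of $\Psi$ and then comparing. Recall that $\Psi$ sends a dot on an upward strand labelled $i$ to a red dot on the single step strand (of thickness one) connecting the $i$-th and $(i+1)$-st red columns in the ladder bimodule, and that the position of this red dot (above or below the point where the step meets the tall red column) is dictated by the cyclic normalization. The defining red-dot sliding relation~\eqref{reddot} asserts that red dots slide freely through thin black strands and across crossings of thin black strands with thick red strands, and the full strength of \eqref{RBr2-rel} lets us commute red dots across the step strands themselves.

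First I would dispatch the case $|i-j|>1$. Here the crossing 2-morphism is sent by $\Psi$ to the bimodule isomorphism that merely swaps the relative heights of two step strands living on disjoint pairs of red columns; this is exactly the content of Lemma~\ref{Sidecrossingform1} together with the definition of $\Psi$ on the crossings in Section~\ref{sectionconj2rep}. Since a dot on the $i$-strand becomes a red dot living on the $i$-th step (entirely to the left of the $j$-th step) and is unaffected by the height reordering, each of the four identities is a direct verification from the definitions.

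Next I would treat the case $|i-j|=1$, say $j=i+1$; the case $j=i-1$ follows either by the symmetry $\tau$ discussed in the paper or by a parallel calculation. Here the crossing formulas are given by the three-case definition of $\Psi$ broken down by the thickness $k_2$ of the black strand passing through the shared middle red column. In each sub-case (no middle black strand; a single thin black strand; a thick black strand), I would apply the crossing formula to a generator of $\mathsf{E}_i\mathsf{E}_j\mathbf{1}_{\mathbf{s}}$ already decorated with a dot either at the bottom of the $i$-strand or at the bottom of the $j$-strand. The dot becomes a red dot on a definite step, and using \eqref{reddot} together with \eqref{RBr2-rel} (to slide a red dot across any generated red--black double crossing) and the standard manipulation of elementary symmetric functions from~\eqref{eq:splitters}, each dotted term is shown to equal the corresponding term obtained from first applying the undotted crossing and then applying the dot.

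The main obstacle I expect is bookkeeping in the sub-case $k_2\geq 1$: the undotted crossing formula is already a sum of several terms, each involving elementary and complete symmetric function insertions on the black ladder rung, and one has to check that propagating a red dot through these insertions produces exactly the image of the dotted input. The crucial simplification is that the red dot representing $\Psi(\text{dot})$ is supported on a step strand distinct from the one being rearranged in the crossing formula, so it commutes past all interior black and red manipulations by \eqref{reddot}--\eqref{reddot}, \eqref{blackdot}, and the red-over-red symmetric-function slide in \eqref{eq:splitters}, reducing the check to the simple comparison already handled in the $|i-j|>1$ case.
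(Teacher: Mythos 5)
Your proposal reflects essentially the same reasoning that the paper relies on when it dismisses this as "follows easily from the definition of $\Psi$," and your casework on $|i-j|$ is the right organizing principle: for $|i-j|>1$ the two steps live on disjoint pairs of tall red columns and the crossing is just a height swap, so the red dot representing $\Psi(\text{dot})$ trivially travels with its step; for $|i-j|=1$ one must feed dotted generators through the more involved crossing formulas of Section~\ref{sectionconj2rep} and check commutativity.

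Your framing of the $|i-j|=1$ case is slightly off, though. You write that "the red dot representing $\Psi(\text{dot})$ is supported on a step strand distinct from the one being rearranged in the crossing formula," but when $j=i\pm 1$ both steps share the middle tall red column, and the crossing rearranges both of them (indeed, for the hard direction $\mathsf{E}_{i+1}\mathsf{E}_i\to\mathsf{E}_i\mathsf{E}_{i+1}$ the formula even introduces new red dots on \emph{both} steps). What actually makes the argument go through is the following: the red dot for the dot on the $i$-strand sits at the \emph{outer} end of step $i$ (near the column $s_i$, which neither step $i+1$ nor the interior black-strand manipulations touch), and the crossing formula's insertions --- the $E_d$, $h_d$, and the black ladder rungs --- are all concentrated near the shared column $s_{i+1}$ and the black strands crossing through it. Thus the dot commutes by \eqref{reddot}, \eqref{blackdot}, and \eqref{RBr2-rel}, even though its step is being moved. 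You will also want to be explicit that a dotted generator of $\mathsf{E}_{i+1}\mathsf{E}_i$ is not one of the normal-form generators of Proposition~\ref{EiEi+1generators}, so one must first rewrite it (pushing the step dot onto the tall columns via elementary/complete symmetric function relations) before invoking bimodule-linearity of the crossing map; your "bookkeeping" remark gestures at this but it deserves to be spelled out, since this is where most of the actual work is. None of this is a fatal gap --- your outline is sound and would compile into a correct proof --- but the stated "crucial simplification" is not quite the right reason the dot commutes.
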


\begin{proof}
This follows easily from the definition of $\Psi$.
\end{proof}

\begin{proposition}
Unless $i = k$ and $|i-j|=1$, the relation
\begin{equation*}
\Psi \left(
 \hackcenter{\begin{tikzpicture}[scale=0.8]
    \draw[thick, ->] (0,0) .. controls ++(0,1) and ++(0,-1) .. (1.5,2);
    \draw[thick, ] (.75,0) .. controls ++(0,.5) and ++(0,-.5) .. (0,1);
    \draw[thick, ->] (0,1) .. controls ++(0,.5) and ++(0,-.5) .. (0.75,2);
    \draw[thick, ->] (1.5,0) .. controls ++(0,1) and ++(0,-1) .. (0,2);
    \node at (-.2,.15) {\tiny $i$};
    \node at (.95,.15) {\tiny $j$};
    \node at (1.75,.15) {\tiny $k$};
\node at (1.9, 1.25) { ${\bf s}$};
\end{tikzpicture}} \right)
 \;\; =\;\;
 \Psi \left(
\hackcenter{\begin{tikzpicture}[scale=0.8]
    \draw[thick, ->] (0,0) .. controls ++(0,1) and ++(0,-1) .. (1.5,2);
    \draw[thick, ] (.75,0) .. controls ++(0,.5) and ++(0,-.5) .. (1.5,1);
    \draw[thick, ->] (1.5,1) .. controls ++(0,.5) and ++(0,-.5) .. (0.75,2);
    \draw[thick, ->] (1.5,0) .. controls ++(0,1) and ++(0,-1) .. (0,2);
    \node at (-.2,.15) {\tiny $i$};
    \node at (.95,.15) {\tiny $j$};
    \node at (1.75,.15) {\tiny $k$};
\node at (1.9, 1.25) { ${\bf s}$};
\end{tikzpicture}}\right)
\end{equation*}
holds. Otherwise, $|i-j|=1$ and
\begin{equation*}
\Psi \left(
 \hackcenter{\begin{tikzpicture}[scale=0.8]
    \draw[thick, ->] (0,0) .. controls ++(0,1) and ++(0,-1) .. (1.5,2);
    \draw[thick, ] (.65,0) .. controls ++(0,.5) and ++(0,-.5) .. (0,1);
    \draw[thick, ->] (0,1) .. controls ++(0,.5) and ++(0,-.5) .. (0.75,2);
    \draw[thick, ->] (1.5,0) .. controls ++(0,1) and ++(0,-1) .. (0,2);
    \node at (-.2,.15) {\tiny $i$};
    \node at (1.05,.15) {\tiny $i\pm 1$};
    \node at (1.75,.15) {\tiny $i$};
\node at (1.9, 1.25) { ${\bf s}$};
\end{tikzpicture}}
 \;\; - \;\;
\hackcenter{\begin{tikzpicture}[scale=0.8]
    \draw[thick, ->] (0,0) .. controls ++(0,1) and ++(0,-1) .. (1.5,2);
    \draw[thick, ] (.65,0) .. controls ++(0,.65) and ++(0,-.5) .. (1.5,1);
    \draw[thick, ->] (1.5,1) .. controls ++(0,.5) and ++(0,-.5) .. (0.75,2);
    \draw[thick, ->] (1.5,0) .. controls ++(0,1) and ++(0,-1) .. (0,2);
    \node at (-.2,.15) {\tiny $i$};
    \node at (1.05,.15) {\tiny $i\pm 1$};
    \node at (1.75,.15) {\tiny $i$};
\node at (1.9, 1.25) { ${\bf s}$};
\end{tikzpicture}}\right)
\;\; = \;\;
\mp
\Psi \left(
\hackcenter{\begin{tikzpicture}[scale=0.8]
    \draw[thick, ->] (0,0) to (0,2);
    \draw[thick, ->] (.65,0) to (.65,2);
    \draw[thick, ->] (1.5,0) to (1.5,2);
    \node at (-.2,.15) {\tiny $i$};
    \node at (1.05,.15) {\tiny $i\pm 1$};
    \node at (1.75,.15) {\tiny $i$};
\node at (1.9, 1.25) { ${\bf s}$};
\end{tikzpicture}}
\right)
\end{equation*}
\end{proposition}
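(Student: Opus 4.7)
The plan is to verify the triple-crossing relation case by case, reducing as much as possible to computations already done in the paper and isolating the genuinely new work to the adjacent mixed case $i=k$, $|i-j|=1$.

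First, I would dispose of the easy cases. When the three labels $i,j,k$ are mutually non-adjacent in the Dynkin diagram, the assignment $\Psi$ sends each crossing to an isotopy of ladder bimodules (see Lemma \ref{Sidecrossingform1} and the corresponding upward-crossing definitions), and so both sides of the triple-crossing identity literally equal the same ladder-isotopy bimodule map; the braid-like relation is immediate. When $i=j=k$, the equality is exactly the nilHecke braid relation, which holds on $\mathsf{E}_i^{(3)}\mathsf{1}_{\mathbf{s}}$ by Proposition \ref{nilheckeactionE^a}. The remaining mixed configurations where not all three labels agree but no two among $\{i,k\}$ collide with $j$ under the adjacency assumption reduce to checking that the unique non-trivial crossing in \eqref{i+1,icrossing}, namely the $\aleph_{i+1,i}$ map, commutes with a far-away identity crossing; this is again a direct consequence of the local nature of the ladder maps.

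The core of the proof is the case $i=k$ with $j=i\pm 1$. Here I would work with the bimodule $\mathsf{E}_i\mathsf{E}_{i\pm 1}\mathsf{E}_i\mathsf{1}_{\mathbf{s}}$, whose generators have been enumerated in Proposition \ref{EiEi+1Eigenerators} as ladders with parameters $(k_1,k_2,k_3,f)$ and a single red dot on one middle step. By naturality of the bimodule maps under the $W(\mathbf{s},n)$-action (verified in Proposition \ref{prop-bimodhom(1-a)} and in the relations proved for $\aleph_{i,j}$), it suffices to check the identity on the subset of these generators with $f=0$ and $k_2=0$, since dots and thick black strands can be slid to the top or bottom of a triple crossing. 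On such a generator, the left-hand side is a composition of three crossing maps, one of which (the $\aleph_{i+1,i}$ crossing) splits into the signed difference of two terms displayed in \eqref{i+1,icrossing}.

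The heart of the computation is then to expand the two compositions $\aleph_1\aleph_2\aleph_1$ and $\aleph_2\aleph_1\aleph_2$, each of which produces a sum of four terms after distributing the two-term formula for the mixed crossing, and to show that six of these eight contributions cancel pairwise via the step-sliding identity of Proposition \ref{folk-rel} together with the KLR-type relation \eqref{r3-2}. The two surviving terms on each side differ by precisely a ladder diagram representing the identity endomorphism of $\mathsf{E}_i\mathsf{E}_{i\pm 1}\mathsf{E}_i\mathsf{1}_{\mathbf{s}}$, with sign determined by whether $j=i+1$ or $j=i-1$, matching the desired parameter $t_{ij}=\mp 1$ under our fixed choice of KLR scalars.

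The main obstacle will be bookkeeping the signs and positions of red dots produced by the iterated application of \eqref{i+1,icrossing}; some of these dots live on distinct red strands of the ladder and must be transported across thick black strands using the red-black crossing compatibility established in the proof of Proposition \ref{prop-bimodhom(1-a)}. A secondary subtlety, parallel to the one that forced a case split in Proposition \ref{crossingcyclicity}, is that when $j=i+1$ the map $\aleph_{i,i+1}$ is a pure isotopy while $\aleph_{i+1,i}$ is not, so the two triple compositions are a priori asymmetric; I expect the asymmetry to produce the single residual identity term whose sign flips when $j$ is replaced by $i-1$, thereby yielding the $\mp$ in the statement.
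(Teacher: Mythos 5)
The reductions you propose in the hard case are not available, and this is exactly where the paper has to do real work.

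You claim that by naturality of the bimodule homomorphisms under the $W(\mathbf{s},n)$-action it suffices to check on the generators of Proposition \ref{EiEi+1Eigenerators} with $f=0$ and $k_2=0$, "since dots and thick black strands can be slid to the top or bottom of a triple crossing." But the two parameters you discard are precisely the two that cannot be slid out. The thick black strand of thickness $k_2$ (what the paper's proof calls $k$) threads \emph{diagonally across the middle vertical red strand of the ladder}: its bottom endpoint sits between the red strands labelled $s_{i+1}$ and $s_{i+2}$ while its top endpoint sits between $s_i$ and $s_{i+1}$. That red--black crossing lives in the interior of the bimodule element, not in the $W$-factor that you stack above or below it, so a $k_2>0$ generator is not of the form $a\cdot m_0\cdot b$ with $m_0$ the $k_2=0$ generator and $a,b\in W$. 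Likewise, the red dot $f$ sits on a step of the ladder, not on a vertical red strand; the proof of Proposition \ref{EiEi+1Eigenerators} explicitly notes that such step-dots can only be \emph{partially} pushed to the boundary, which is why the generating set keeps the parameter $f$. Checking only on $f=0,k_2=0$ therefore does not determine the bimodule homomorphisms.

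The paper's actual proof keeps $f$ general and splits into the three cases $k=0$, $k=1$, and $k>1$ for the threading strand, and the three cases genuinely differ: $k=0$ is a divided-difference computation on red step-dots (equations \eqref{ii+1irel2}--\eqref{ii+1irel5}), $k=1$ involves the single threading black strand and the $\partial_1$-box (equations \eqref{ii+1irel6}--\eqref{ii+1irel7}), and $k>1$ involves the thick-calculus formula for $\aleph_{i+1,i}$ and an extra sum over $g$ (equations \eqref{ii+1irel8}--\eqref{ii+1irel9}). Your term-counting ("four terms," "six of eight cancel pairwise") does not match any of these: each side of the relation produces a \emph{sum} over an auxiliary index (e.g.\ $\gamma$ from $0$ to $f-1$ in the $k=0$ case), and the cancellation is a telescoping argument, not a pairwise cancellation of a fixed small number of terms. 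Your dispatch of the easy cases ($i=j=k$ via Proposition \ref{nilheckeactionE^a}, far-apart labels via locality) is fine and agrees with the paper, but the core of the proposal does not survive scrutiny: without a proof that $f>0$ and $k_2>0$ generators are redundant — a claim the paper's proof structure contradicts — the argument has a genuine gap.
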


\begin{proof}
We provide most of the details for the case $j=i+1$ and $k=i$.
The bimodule is generated by
\begin{equation}
\label{ii+1irel1}
\hackcenter{
\begin{tikzpicture}[scale=.4]
	\draw [thick,red, double, ] (6,-2) to (6,4);
	\draw [thick,red, double, ] (2,-2) to (2,4);
	\draw [thick,red, double, ] (-2,-2) to (-2,4);
		\draw [very thick, red ] (2,2.5) to (-2,3.5);
		\draw [very thick, red ] (6,0) to (2,1);
		\draw [very thick, red ] (2,-1.5) to (-2,-.5);
\node at (-2,-2.55) {\tiny $s_i$};	
\node at (2,-2.55) {\tiny $s_{i+1}$};
\node at (6,-2.55) {\tiny $s_{i+2}$};
\node at (-1,-2.55) {\tiny $k_1$};	
\node at (3.3,-2.55) {\tiny $k$};	
\node at (4.7,-2.55) {\tiny $k_2$};	
\node at (-2,4.5) {\tiny $s_i+2$};
	\node at (2,4.5) {\tiny $s_{i+1}-1$};
	\node at (6,4.5) {\tiny $s_{i+2}-1$};
	\filldraw[red]  (0,-1) circle (6pt);
\node at (0,-1.6) {\tiny $f$};
\draw[very thick,double,] (3.3,-2) .. controls ++(0,.75) and ++(0,-2) .. (1,2) to (1,4);
\draw [very thick,double,] (4.7,-2) to (4.7,4);
\draw [very thick,double,] (-1,-2) to (-1,4);
    \end{tikzpicture}}
\end{equation}
For simplicity we may assume $k_1=k_2=0$.    There are three cases to consider separately: $k=0$, $k=1$, and $k>1$.

First we let $k=0$.
Then the generator \eqref{ii+1irel1} gets mapped under
\begin{equation*}
  \Psi\left(\hackcenter{\begin{tikzpicture}[scale=0.8]
    \draw[thick, ->] (0,0) .. controls ++(0,.55) and ++(0,-.5) .. (.75,1)
        node[pos=.25, shape=coordinate](DOT){};
    \draw[thick, ->] (.75,0) .. controls ++(0,.5) and ++(0,-.5) .. (0,1);
    \draw[thick, ->] (1.5,0)to (1.5,1);
    \node at (-.2,.15) {\tiny $i$};
    \node at (.95,.15) {\tiny $j$};
    \node at (1.7,.15) {\tiny $i$};
\end{tikzpicture}} \right)
\end{equation*}
to the element
\begin{equation}
\label{ii+1irel2}
\hackcenter{
\begin{tikzpicture}[scale=.4]
	\draw [thick,red, double, ] (6,-2) to (6,4);
	\draw [thick,red, double, ] (2,-2) to (2,4);
	\draw [thick,red, double, ] (-2,-2) to (-2,4);
		\draw [very thick, red ] (2,1.5) to (-2,2.5);
		\draw [very thick, red ] (6,2.5) to (2,3.5);
		\draw [very thick, red ] (2,-1.5) to (-2,-.5);
\node at (-2,-2.55) {\tiny $s_i$};	
\node at (2,-2.55) {\tiny $s_{i+1}$};
\node at (6,-2.55) {\tiny $s_{i+2}$};
	\node at (-2,4.5) {\tiny $s_i+2$};
	\node at (2,4.5) {\tiny $s_{i+1}-1$};
	\node at (6,4.5) {\tiny $s_{i+2}-1$};
	\filldraw[red]  (0,-1) circle (6pt);
\node at (0,-1.6) {\tiny $f$};
    \end{tikzpicture}}
\end{equation}
Since the map
\begin{equation*}
  \Psi\left(\hackcenter{\begin{tikzpicture}[scale=0.8]
    \draw[thick, ->] (0,0) .. controls ++(0,.55) and ++(0,-.5) .. (.75,1)
        node[pos=.25, shape=coordinate](DOT){};
    \draw[thick, ->] (.75,0) .. controls ++(0,.5) and ++(0,-.5) .. (0,1);
    \draw[thick, ->] (-.75,0)to (-.75,1);
    \node at (-.2,.15) {\tiny $i$};
    \node at (.95,.15) {\tiny $i$};
    \node at (-.95,.15) {\tiny $j$};
\end{tikzpicture}} \right)
\end{equation*}
is a divided difference operator applied to dots on the lowermost red strands labelled by $1$, it sends the element in ~\eqref{ii+1irel2} to
\begin{equation}
\label{ii+1irel3}
-\sum_{\gamma=0}^{f-1}
\hackcenter{
\begin{tikzpicture}[scale=.4]
	\draw [thick,red, double, ] (6,-2) to (6,4);
	\draw [thick,red, double, ] (2,-2) to (2,4);
	\draw [thick,red, double, ] (-2,-2) to (-2,4);
		\draw [very thick, red ] (2,1.5) to (-2,2.5);
		\draw [very thick, red ] (6,2.5) to (2,3.5);
		\draw [very thick, red ] (2,-1.5) to (-2,-.5);
\node at (-2,-2.55) {\tiny $s_i$};	
\node at (2,-2.55) {\tiny $s_{i+1}$};
\node at (6,-2.55) {\tiny $s_{i+2}$};
	\node at (-2,4.5) {\tiny $s_i+2$};
	\node at (2,4.5) {\tiny $s_{i+1}-1$};
	\node at (6,4.5) {\tiny $s_{i+2}-1$};
	\filldraw[red]  (0,-1) circle (6pt);
\node at (0,-1.6) {\tiny $\gamma$};
	\filldraw[red]  (0,2) circle (6pt);
\node at (0,1.4) {\tiny $f-1-\gamma$};
    \end{tikzpicture}}
\end{equation}
Finally, the map
\begin{equation*}
  \Psi\left(\hackcenter{\begin{tikzpicture}[scale=0.8]
    \draw[thick, ->] (0,0) .. controls ++(0,.55) and ++(0,-.5) .. (.75,1)
        node[pos=.25, shape=coordinate](DOT){};
    \draw[thick, ->] (.75,0) .. controls ++(0,.5) and ++(0,-.5) .. (0,1);
    \draw[thick, ->] (1.5,0)to (1.5,1);
    \node at (-.2,.15) {\tiny $j$};
    \node at (.95,.15) {\tiny $i$};
    \node at (1.7,.15) {\tiny $i$};
\end{tikzpicture}} \right)
\end{equation*}
sends the element ~\eqref{ii+1irel3} to
\begin{equation}
\label{ii+1irel4}
\sum_{\gamma=0}^{f-1}
\hackcenter{
\begin{tikzpicture}[scale=.4]
	\draw [thick,red, double, ] (6,-2) to (6,4);
	\draw [thick,red, double, ] (2,-2) to (2,4);
	\draw [thick,red, double, ] (-2,-2) to (-2,4);
		\draw [very thick, red ] (2,2.5) to (-2,3.5);
		\draw [very thick, red ] (6,0) to (2,1);
		\draw [very thick, red ] (2,-1.5) to (-2,-.5);
\node at (-2,-2.55) {\tiny $s_i$};	
\node at (2,-2.55) {\tiny $s_{i+1}$};
\node at (6,-2.55) {\tiny $s_{i+2}$};
	\node at (-2,4.5) {\tiny $s_i+2$};
	\node at (2,4.5) {\tiny $s_{i+1}-1$};
	\node at (6,4.5) {\tiny $s_{i+2}-1$};
	\filldraw[red]  (0,-1) circle (6pt);
\node at (0,-1.6) {\tiny $\gamma$};
	\filldraw[red]  (0,3) circle (6pt);
\node at (0,3.5) {\tiny $f-\gamma$};
    \end{tikzpicture}}
    -
    \sum_{\gamma=0}^{f-1}
\hackcenter{
\begin{tikzpicture}[scale=.4]
	\draw [thick,red, double, ] (6,-2) to (6,4);
	\draw [thick,red, double, ] (2,-2) to (2,4);
	\draw [thick,red, double, ] (-2,-2) to (-2,4);
		\draw [very thick, red ] (2,2.5) to (-2,3.5);
		\draw [very thick, red ] (6,0) to (2,1);
		\draw [very thick, red ] (2,-1.5) to (-2,-.5);
\node at (-2,-2.55) {\tiny $s_i$};	
\node at (2,-2.55) {\tiny $s_{i+1}$};
\node at (6,-2.55) {\tiny $s_{i+2}$};
	\node at (-2,4.5) {\tiny $s_i+2$};
	\node at (2,4.5) {\tiny $s_{i+1}-1$};
	\node at (6,4.5) {\tiny $s_{i+2}-1$};
	\filldraw[red]  (0,-1) circle (6pt);
\node at (0,-1.6) {\tiny $\gamma$};
	\filldraw[red]  (0,3) circle (6pt);
\node at (0,3.5) {\tiny $f-\gamma-1$};
	\filldraw[red]  (4,.5) circle (6pt);
    \end{tikzpicture}}
\end{equation}
Similarly one computes that
\begin{equation*}
  \Psi \left(
\hackcenter{\begin{tikzpicture}[scale=0.8]
    \draw[thick, ->] (0,0) .. controls ++(0,1) and ++(0,-1) .. (1.5,2);
    \draw[thick, ] (.75,0) .. controls ++(0,.5) and ++(0,-.5) .. (1.5,1);
    \draw[thick, ->] (1.5,1) .. controls ++(0,.5) and ++(0,-.5) .. (0.75,2);
    \draw[thick, ->] (1.5,0) .. controls ++(0,1) and ++(0,-1) .. (0,2);
    \node at (-.2,.15) {\tiny $i$};
    \node at (.95,.15) {\tiny $j$};
    \node at (1.75,.15) {\tiny $k$};
\node at (1.9, 1.25) { ${\bf s}$};
\end{tikzpicture}}\right)
\end{equation*}
applied to the element ~\eqref{ii+1irel1} is
\begin{equation}
\label{ii+1irel5}
\sum_{\gamma=0}^{f}
\hackcenter{
\begin{tikzpicture}[scale=.4]
	\draw [thick,red, double, ] (6,-2) to (6,4);
	\draw [thick,red, double, ] (2,-2) to (2,4);
	\draw [thick,red, double, ] (-2,-2) to (-2,4);
		\draw [very thick, red ] (2,2.5) to (-2,3.5);
		\draw [very thick, red ] (6,0) to (2,1);
		\draw [very thick, red ] (2,-1.5) to (-2,-.5);
\node at (-2,-2.55) {\tiny $s_i$};	
\node at (2,-2.55) {\tiny $s_{i+1}$};
\node at (6,-2.55) {\tiny $s_{i+2}$};
	\node at (-2,4.5) {\tiny $s_i+2$};
	\node at (2,4.5) {\tiny $s_{i+1}-1$};
	\node at (6,4.5) {\tiny $s_{i+2}-1$};
	\filldraw[red]  (0,-1) circle (6pt);
\node at (0,-1.6) {\tiny $\gamma$};
	\filldraw[red]  (0,3) circle (6pt);
\node at (0,3.5) {\tiny $f-\gamma$};
    \end{tikzpicture}}
    -
    \sum_{\gamma=0}^{f-1}
\hackcenter{
\begin{tikzpicture}[scale=.4]
	\draw [thick,red, double, ] (6,-2) to (6,4);
	\draw [thick,red, double, ] (2,-2) to (2,4);
	\draw [thick,red, double, ] (-2,-2) to (-2,4);
		\draw [very thick, red ] (2,2.5) to (-2,3.5);
		\draw [very thick, red ] (6,0) to (2,1);
		\draw [very thick, red ] (2,-1.5) to (-2,-.5);
\node at (-2,-2.55) {\tiny $s_i$};	
\node at (2,-2.55) {\tiny $s_{i+1}$};
\node at (6,-2.55) {\tiny $s_{i+2}$};
	\node at (-2,4.5) {\tiny $s_i+2$};
	\node at (2,4.5) {\tiny $s_{i+1}-1$};
	\node at (6,4.5) {\tiny $s_{i+2}-1$};
	\filldraw[red]  (0,-1) circle (6pt);
\node at (0,-1.6) {\tiny $\gamma$};
	\filldraw[red]  (0,3) circle (6pt);
\node at (0,3.5) {\tiny $f-\gamma-1$};
	\filldraw[red]  (4,.5) circle (6pt);
    \end{tikzpicture}}
\end{equation}
Subtracting the element ~\eqref{ii+1irel5} from the element ~\eqref{ii+1irel4} produces
\begin{equation*}
- \hackcenter{
\begin{tikzpicture}[scale=.4]
	\draw [thick,red, double, ] (6,-2) to (6,4);
	\draw [thick,red, double, ] (2,-2) to (2,4);
	\draw [thick,red, double, ] (-2,-2) to (-2,4);
		\draw [very thick, red ] (2,2.5) to (-2,3.5);
		\draw [very thick, red ] (6,0) to (2,1);
		\draw [very thick, red ] (2,-1.5) to (-2,-.5);
\node at (-2,-2.55) {\tiny $s_i$};	
\node at (2,-2.55) {\tiny $s_{i+1}$};
\node at (6,-2.55) {\tiny $s_{i+2}$};
	\node at (-2,4.5) {\tiny $s_i+2$};
	\node at (2,4.5) {\tiny $s_{i+1}-1$};
	\node at (6,4.5) {\tiny $s_{i+2}-1$};
	\filldraw[red]  (0,-1) circle (6pt);
\node at (0,-1.6) {\tiny $f$};
    \end{tikzpicture}}
\end{equation*}
which verifies the proposition in this case.

Next we let $k=1$.  Again recall that we assume $k_1=k_2=0$.  Then a straightforward calculation gives that the map
\begin{equation*}
 \Psi \left(
 \hackcenter{\begin{tikzpicture}[scale=0.8]
    \draw[thick, ->] (0,0) .. controls ++(0,1) and ++(0,-1) .. (1.5,2);
    \draw[thick, ] (.65,0) .. controls ++(0,.5) and ++(0,-.5) .. (0,1);
    \draw[thick, ->] (0,1) .. controls ++(0,.5) and ++(0,-.5) .. (0.75,2);
    \draw[thick, ->] (1.5,0) .. controls ++(0,1) and ++(0,-1) .. (0,2);
    \node at (-.2,.15) {\tiny $i$};
    \node at (1.05,.15) {\tiny $i\pm 1$};
    \node at (1.75,.15) {\tiny $i$};
\node at (1.9, 1.25) { ${\bf s}$};
\end{tikzpicture}} \right)
\end{equation*}
sends the generator ~\eqref{ii+1irel1} to
\begin{equation}
\label{ii+1irel6}
\hackcenter{
\begin{tikzpicture}[scale=.4]
	\draw [thick,red, double, ] (6,-2) to (6,4);
	\draw [thick,red, double, ] (2,-2) to (2,4);
	\draw [thick,red, double, ] (-2,-2) to (-2,4);
		\draw [very thick, red ] (2,2.5) to (-2,3.5);
		\draw [very thick, red ] (6,0) to (2,1);
		\draw [very thick, red ] (2,-1.5) to (-2,-.5);
\node at (-2,-2.55) {\tiny $s_i$};	
\node at (2,-2.55) {\tiny $s_{i+1}$};
\node at (6,-2.55) {\tiny $s_{i+2}$};
\node at (-2,4.5) {\tiny $s_i+2$};
	\node at (2,4.5) {\tiny $s_{i+1}-1$};
	\node at (6,4.5) {\tiny $s_{i+2}-1$};
\filldraw[]  (1.2,1) circle (6pt);
\draw[thick,]  (3,-2) .. controls ++(0,.65) and ++(0,-2) .. (1,2) to  (1,4);
\node[draw, fill=white!20 ,rounded corners, rotate=90 ] at (-.8,1) {\tiny $\hspace{.25in} \partial_1(x_1^f) \hspace{.25in}$};
    \end{tikzpicture}}
    -\hackcenter{
\begin{tikzpicture}[scale=.4]
	\draw [thick,red, double, ] (6,-2) to (6,4);
	\draw [thick,red, double, ] (2,-2) to (2,4);
	\draw [thick,red, double, ] (-2,-2) to (-2,4);
		\draw [very thick, red ] (2,2.5) to (-2,3.5);
		\draw [very thick, red ] (6,0) to (2,1);
		\draw [very thick, red ] (2,-1.5) to (-2,-.5);
\node at (-2,-2.55) {\tiny $s_i$};	
\node at (2,-2.55) {\tiny $s_{i+1}$};
\node at (6,-2.55) {\tiny $s_{i+2}$};
\node at (-2,4.5) {\tiny $s_i+2$};
	\node at (2,4.5) {\tiny $s_{i+1}-1$};
	\node at (6,4.5) {\tiny $s_{i+2}-1$};
\draw[thick,]  (3,-2) .. controls ++(0,.65) and ++(0,-2) .. (1,2) to (1,4);
\node[draw, fill=white!20 ,rounded corners, rotate=90 ] at (-.8,1) {\tiny $\hspace{.25in} \partial_1(x_1^f x_2) \hspace{.25in}$};
    \end{tikzpicture}}
    -\hackcenter{
\begin{tikzpicture}[scale=.4]
	\draw [thick,red, double, ] (6,-2) to (6,4);
	\draw [thick,red, double, ] (2,-2) to (2,4);
	\draw [thick,red, double, ] (-2,-2) to (-2,4);
		\draw [very thick, red ] (2,2.5) to (-2,3.5);
		\draw [very thick, red ] (6,0) to (2,1);
		\draw [very thick, red ] (2,-1.5) to (-2,-.5);
\node at (-2,-2.55) {\tiny $s_i$};	
\node at (2,-2.55) {\tiny $s_{i+1}$};
\node at (6,-2.55) {\tiny $s_{i+2}$};
\node at (-2,4.5) {\tiny $s_i+2$};
	\node at (2,4.5) {\tiny $s_{i+1}-1$};
	\node at (6,4.5) {\tiny $s_{i+2}-1$};
\filldraw[]  (3,1.5) circle (6pt);
\draw[thick,]   (3,-2) to (3,2) .. controls ++(0,1) and ++(0,-.5) .. (1,4);
\node[draw, fill=white!20 ,rounded corners, rotate=90 ] at (-.8,1) {\tiny $\hspace{.25in} \partial_1(x_1^f) \hspace{.25in}$};
    \end{tikzpicture}}
  +  \hackcenter{
\begin{tikzpicture}[scale=.4]
	\draw [thick,red, double, ] (6,-2) to (6,4);
	\draw [thick,red, double, ] (2,-2) to (2,4);
	\draw [thick,red, double, ] (-2,-2) to (-2,4);
		\draw [very thick, red ] (2,2.5) to (-2,3.5);
		\draw [very thick, red ] (6,0) to (2,1);
		\draw [very thick, red ] (2,-1.5) to (-2,-.5);
\node at (-2,-2.55) {\tiny $s_i$};	
\node at (2,-2.55) {\tiny $s_{i+1}$};
\node at (6,-2.55) {\tiny $s_{i+2}$};
\node at (-2,4.5) {\tiny $s_i+2$};
	\node at (2,4.5) {\tiny $s_{i+1}-1$};
	\node at (6,4.5) {\tiny $s_{i+2}-1$};
\draw[thick,]  (3,-2) to (3,2) .. controls ++(0,1) and ++(0,-.5) .. (1,4);
\node[draw, fill=white!20 ,rounded corners, rotate=90 ] at (-.8,1) {\tiny $\hspace{.25in} \partial_1(x_1^f x_2) \hspace{.25in}$};
    \end{tikzpicture}}
\end{equation}

Under the map
\begin{equation*}
 \Psi\left( \hackcenter{\begin{tikzpicture}[scale=0.8]
    \draw[thick, ->] (0,0) .. controls ++(0,1) and ++(0,-1) .. (1.5,2);
    \draw[thick, ] (.65,0) .. controls ++(0,.65) and ++(0,-.5) .. (1.5,1);
    \draw[thick, ->] (1.5,1) .. controls ++(0,.5) and ++(0,-.5) .. (0.75,2);
    \draw[thick, ->] (1.5,0) .. controls ++(0,1) and ++(0,-1) .. (0,2);
    \node at (-.2,.15) {\tiny $i$};
    \node at (1.05,.15) {\tiny $i\pm 1$};
    \node at (1.75,.15) {\tiny $i$};
\node at (1.9, 1.25) { ${\bf s}$};
\end{tikzpicture}} \right)
\end{equation*}
the generator ~\eqref{ii+1irel1} gets sent to
\begin{equation}
\label{ii+1irel7}
\hackcenter{
\begin{tikzpicture}[scale=.4]
	\draw [thick,red, double, ] (6,-2) to (6,4);
	\draw [thick,red, double, ] (2,-2) to (2,4);
	\draw [thick,red, double, ] (-2,-2) to (-2,4);
		\draw [very thick, red ] (2,2.5) to (-2,3.5);
		\draw [very thick, red ] (6,0) to (2,1);
		\draw [very thick, red ] (2,-1.5) to (-2,-.5);
\node at (-2,-2.55) {\tiny $s_i$};	
\node at (2,-2.55) {\tiny $s_{i+1}$};
\node at (6,-2.55) {\tiny $s_{i+2}$};
\node at (-2,4.5) {\tiny $s_i+2$};
	\node at (2,4.5) {\tiny $s_{i+1}-1$};
	\node at (6,4.5) {\tiny $s_{i+2}-1$};
\filldraw[]  (1.2,1) circle (6pt);
\draw[thick,]  (3,-2) .. controls ++(0,.65) and ++(0,-2) .. (1,2) to  (1,4);
\node[draw, fill=white!20 ,rounded corners, rotate=90 ] at (-.8,1) {\tiny $\hspace{.25in} \partial_1(x_1^f) \hspace{.25in}$};
    \end{tikzpicture}}
    -\hackcenter{
\begin{tikzpicture}[scale=.4]
	\draw [thick,red, double, ] (6,-2) to (6,4);
	\draw [thick,red, double, ] (2,-2) to (2,4);
	\draw [thick,red, double, ] (-2,-2) to (-2,4);
		\draw [very thick, red ] (2,2.5) to (-2,3.5);
		\draw [very thick, red ] (6,0) to (2,1);
		\draw [very thick, red ] (2,-1.5) to (-2,-.5);
\node at (-2,-2.55) {\tiny $s_i$};	
\node at (2,-2.55) {\tiny $s_{i+1}$};
\node at (6,-2.55) {\tiny $s_{i+2}$};
\node at (-2,4.5) {\tiny $s_i+2$};
	\node at (2,4.5) {\tiny $s_{i+1}-1$};
	\node at (6,4.5) {\tiny $s_{i+2}-1$};
\filldraw[red]  (1.5,-1.3) circle (6pt);
\draw[thick,]  (3,-2) .. controls ++(0,.65) and ++(0,-2) .. (1,2) to  (1,4);
\node[draw, fill=white!20 ,rounded corners, rotate=90 ] at (-.8,1) {\tiny $\hspace{.25in} \partial_1(x_1^f ) \hspace{.25in}$};
    \end{tikzpicture}}
    -\hackcenter{
\begin{tikzpicture}[scale=.4]
	\draw [thick,red, double, ] (6,-2) to (6,4);
	\draw [thick,red, double, ] (2,-2) to (2,4);
	\draw [thick,red, double, ] (-2,-2) to (-2,4);
		\draw [very thick, red ] (2,2.5) to (-2,3.5);
		\draw [very thick, red ] (6,0) to (2,1);
		\draw [very thick, red ] (2,-1.5) to (-2,-.5);
\node at (-2,-2.55) {\tiny $s_i$};	
\node at (2,-2.55) {\tiny $s_{i+1}$};
\node at (6,-2.55) {\tiny $s_{i+2}$};
\node at (-2,4.5) {\tiny $s_i+2$};
	\node at (2,4.5) {\tiny $s_{i+1}-1$};
	\node at (6,4.5) {\tiny $s_{i+2}-1$};
\filldraw[]  (3,1.5) circle (6pt);
\draw[thick,]   (3,-2) to (3,2) .. controls ++(0,1) and ++(0,-.5) .. (1,4);
\node[draw, fill=white!20 ,rounded corners, rotate=90 ] at (-.8,1) {\tiny $\hspace{.25in} \partial_1(x_1^f) \hspace{.25in}$};
    \end{tikzpicture}}
  +  \hackcenter{
\begin{tikzpicture}[scale=.4]
	\draw [thick,red, double, ] (6,-2) to (6,4);
	\draw [thick,red, double, ] (2,-2) to (2,4);
	\draw [thick,red, double, ] (-2,-2) to (-2,4);
		\draw [very thick, red ] (2,2.5) to (-2,3.5);
		\draw [very thick, red ] (6,0) to (2,1);
		\draw [very thick, red ] (2,-1.5) to (-2,-.5);
\node at (-2,-2.55) {\tiny $s_i$};	
\node at (2,-2.55) {\tiny $s_{i+1}$};
\node at (6,-2.55) {\tiny $s_{i+2}$};
\node at (-2,4.5) {\tiny $s_i+2$};
	\node at (2,4.5) {\tiny $s_{i+1}-1$};
	\node at (6,4.5) {\tiny $s_{i+2}-1$};
\draw[thick,]   (3,-2) to (3,2) .. controls ++(0,1) and ++(0,-.5) .. (1,4);
\node[draw, fill=white!20 ,rounded corners, rotate=90 ] at (-.8,1) {\tiny $\hspace{.25in} \partial_1(x_1^f x_2) \hspace{.25in}$};
    \end{tikzpicture}}
\end{equation}
Subtracting \eqref{ii+1irel7} from \eqref{ii+1irel6} yields
\begin{equation*}
-\hackcenter{
\begin{tikzpicture}[scale=.4]
	\draw [thick,red, double, ] (6,-2) to (6,4);
	\draw [thick,red, double, ] (2,-2) to (2,4);
	\draw [thick,red, double, ] (-2,-2) to (-2,4);
		\draw [very thick, red ] (2,2.5) to (-2,3.5);
		\draw [very thick, red ] (6,0) to (2,1);
		\draw [very thick, red ] (2,-1.5) to (-2,-.5);
\node at (-2,-2.55) {\tiny $s_i$};	
\node at (2,-2.55) {\tiny $s_{i+1}$};
\node at (6,-2.55) {\tiny $s_{i+2}$};
\node at (-2,4.5) {\tiny $s_i+2$};
	\node at (2,4.5) {\tiny $s_{i+1}-1$};
	\node at (6,4.5) {\tiny $s_{i+2}-1$};
	\filldraw[red]  (0,-1) circle (6pt);
\node at (0,-1.6) {\tiny $f$};
\draw[thick,]  (3,-2) .. controls ++(0,.65) and ++(0,-2) .. (1,2) to  (1,4);
    \end{tikzpicture}}
\end{equation*}
which verifies the proposition in this case.

Now assume $k>1$.  Then the map
\begin{equation*}
  \Psi \left(\hackcenter{\begin{tikzpicture}[scale=0.8]
    \draw[thick, ->] (0,0) .. controls ++(0,1) and ++(0,-1) .. (1.5,2);
    \draw[thick, ] (.65,0) .. controls ++(0,.5) and ++(0,-.5) .. (0,1);
    \draw[thick, ->] (0,1) .. controls ++(0,.5) and ++(0,-.5) .. (0.75,2);
    \draw[thick, ->] (1.5,0) .. controls ++(0,1) and ++(0,-1) .. (0,2);
    \node at (-.2,.15) {\tiny $i$};
    \node at (1.05,.15) {\tiny $i\pm 1$};
    \node at (1.75,.15) {\tiny $i$};
\node at (1.9, 1.25) { ${\bf s}$};
\end{tikzpicture}} \right)
\end{equation*}
sends the generator ~\eqref{ii+1irel1} to
\begin{equation}
\label{ii+1irel8}
\sum_{g=1}^k (-1)^{g+1}\hspace{-.1in}
\hackcenter{
\begin{tikzpicture}[scale=.4]
	\draw [thick,red, double, ] (5.75,-2) to (5.75,4);
	\draw [thick,red, double, ] (2,-2) to (2,4);
	\draw [thick,red, double, ] (-2,-2) to (-2,4);
		\draw [very thick, red ] (2,2.5) to (-2,3.5);
		\draw [very thick, red ] (5.75,0) to (2,1);
		\draw [very thick, red ] (2,-1.5) to (-2,-.5);
\draw[thick]  (3.3,-1.5) .. controls ++(1.5,.5) and ++(1.5,.5) .. (1,3.5);
\draw[very thick, double, ]  (3.4,-2) to (3.4,-1.7) .. controls ++(0,.5) and ++(0,-2) .. (1,2) to  (1,4);
\filldraw[]  (1.4,.55) circle (6pt);
\node at (-2,-2.55) {\tiny $s_i$};	
\node at (2,-2.55) {\tiny $s_{i+1}$};
\node at (5.75,-2.55) {\tiny $s_{i+2}$};
\node at (-2,4.5) {\tiny $s_i+2$};
	\node at (2,4.5) {\tiny $s_{i+1}-1$};
	\node at (5.6,4.5) {\tiny $s_{i+2}-1$};
\node at (.8,-.3) {\tiny ${\sf e}_{k-g}$};
\node[draw, fill=white!20 ,rounded corners, rotate=90 ] at (-.9,1) {\tiny $\hspace{.25in} \partial_1(x_1^f x_2^g) \hspace{.25in}$};
    \end{tikzpicture}}
  \hspace{-.1in}  +\sum_{g=0}^{k-1} (-1)^{g+1} \hspace{-.1in}
\hackcenter{
\begin{tikzpicture}[scale=.4]
	\draw [thick,red, double, ] (5.75,-2) to (5.75,4);
	\draw [thick,red, double, ] (2,-2) to (2,4);
	\draw [thick,red, double, ] (-2,-2) to (-2,4);
		\draw [very thick, red ] (2,2.5) to (-2,3.5);
		\draw [very thick, red ] (5.75,0) to (2,1);
		\draw [very thick, red ] (2,-1.5) to (-2,-.5);
		\filldraw[red]  (.2,2.9) circle (6pt);
\node at (-2,-2.55) {\tiny $s_i$};	
\node at (2,-2.55) {\tiny $s_{i+1}$};
\node at (5.75,-2.55) {\tiny $s_{i+2}$};
\node at (-2,4.5) {\tiny $s_i+2$};
	\node at (2,4.5) {\tiny $s_{i+1}-1$};
	\node at (5.6,4.5) {\tiny $s_{i+2}-1$};
\node at (.75,-.45) {\tiny ${\sf e}_{\overset{k-g}{-1}}$};
\draw[thick]  (3.3,-1.5) .. controls ++(1.5,.5) and ++(1.5,.5) .. (1,3.5);
\draw[very thick, double, ]  (3.4,-2) to (3.4,-1.7) .. controls ++(0,.5) and ++(0,-2) .. (1,2) to  (1,4);
\filldraw[]  (1.4,.55) circle (6pt);
\node[draw, fill=white!20 ,rounded corners, rotate=90 ] at (-.9,1) {\tiny $\hspace{.25in} \partial_1(x_1^f x_2^g) \hspace{.25in}$};
    \end{tikzpicture}}
 \hspace{-.1in}   +\sum_{g=0}^{k-1} (-1)^{g+1} \hspace{-.1in}
\hackcenter{
\begin{tikzpicture}[scale=.4]
	\draw [thick,red, double, ] (5.75,-2) to (5.75,4);
	\draw [thick,red, double, ] (2,-2) to (2,4);
	\draw [thick,red, double, ] (-2,-2) to (-2,4);
		\draw [very thick, red ] (2,2.5) to (-2,3.5);
		\draw [very thick, red ] (5.75,0) to (2,1);
		\draw [very thick, red ] (2,-1.5) to (-2,-.5);
\node at (-2,-2.55) {\tiny $s_i$};	
\node at (2,-2.55) {\tiny $s_{i+1}$};
\node at (5.75,-2.55) {\tiny $s_{i+2}$};
\node at (-2,4.5) {\tiny $s_i+2$};
	\node at (2,4.5) {\tiny $s_{i+1}-1$};
	\node at (5.6,4.5) {\tiny $s_{i+2}-1$};
\node at (.75,-.5) {\tiny ${\sf e}_{\overset{k-g}{-1}}$};
\draw[thick]  (3.35,-1.5) .. controls ++(2,4) and ++(2,0) .. (1.6,.3);
\draw[very thick, double, ]  (3.4,-2) .. controls ++(0,.85) and ++(-0,-.8) .. (1,1) to  (1,4);
\filldraw[]  (2.55,-.85) circle (6pt);
\node[draw, fill=white!20 ,rounded corners, rotate=90 ] at (-.9,1) {\tiny $\hspace{.25in} \partial_1(x_1^f x_2^g) \hspace{.25in}$};
    \end{tikzpicture}}
\end{equation}

Under the map
 \begin{equation*}
 \Psi\left( \hackcenter{\begin{tikzpicture}[scale=0.8]
    \draw[thick, ->] (0,0) .. controls ++(0,1) and ++(0,-1) .. (1.5,2);
    \draw[thick, ] (.65,0) .. controls ++(0,.65) and ++(0,-.5) .. (1.5,1);
    \draw[thick, ->] (1.5,1) .. controls ++(0,.5) and ++(0,-.5) .. (0.75,2);
    \draw[thick, ->] (1.5,0) .. controls ++(0,1) and ++(0,-1) .. (0,2);
    \node at (-.2,.15) {\tiny $i$};
    \node at (1.05,.15) {\tiny $i\pm 1$};
    \node at (1.75,.15) {\tiny $i$};
\node at (1.9, 1.25) { ${\bf s}$};
\end{tikzpicture}} \right)
\end{equation*}
the generator ~\eqref{ii+1irel1} gets sent to
\begin{equation}
\label{ii+1irel9}
\sum_{g=0}^{k-1} (-1)^{g}\hspace{-.1in}
\hackcenter{
\begin{tikzpicture}[scale=.4]
	\draw [thick,red, double, ] (5.75,-2) to (5.75,4);
	\draw [thick,red, double, ] (2,-2) to (2,4);
	\draw [thick,red, double, ] (-2,-2) to (-2,4);
		\draw [very thick, red ] (2,2.5) to (-2,3.5);
		\draw [very thick, red ] (5.75,0) to (2,1);
		\draw [very thick, red ] (2,-1.5) to (-2,-.5);
		\filldraw[red]  (1,-1.25) circle (6pt);
		\node at (1,-1.8) {\tiny $g$};	
\node at (-2,-2.55) {\tiny $s_i$};	
\node at (2,-2.55) {\tiny $s_{i+1}$};
\node at (5.75,-2.55) {\tiny $s_{i+2}$};
\node at (-2,4.5) {\tiny $s_i+2$};
	\node at (2,4.5) {\tiny $s_{i+1}-1$};
	\node at (5.6,4.5) {\tiny $s_{i+2}-1$};
\node at (.75,-.5) {\tiny ${\sf e}_{k-g-1}$};
\draw[thick]  (3.3,-1.5) .. controls ++(1.5,.5) and ++(1.5,.5) .. (1,3.5);
\draw[very thick, double, ]  (3.4,-2) to (3.4,-1.7) .. controls ++(0,.5) and ++(0,-2) .. (1,2) to  (1,4);
\filldraw[]  (1.4,.55) circle (6pt);
\node[draw, fill=white!20 ,rounded corners, rotate=90 ] at (-1.1,1) {\tiny $\hspace{.25in} \partial_1(x_1^f x_2) \hspace{.25in}$};
    \end{tikzpicture}}
   \hspace{-.1in} -\sum_{g=0}^{k-1} (-1)^{g}\hspace{-.1in}
\hackcenter{
\begin{tikzpicture}[scale=.4]
	\draw [thick,red, double, ] (5.75,-2) to (5.75,4);
	\draw [thick,red, double, ] (2,-2) to (2,4);
	\draw [thick,red, double, ] (-2,-2) to (-2,4);
		\draw [very thick, red ] (2,2.5) to (-2,3.5);
		\draw [very thick, red ] (5.75,0) to (2,1);
		\draw [very thick, red ] (2,-1.5) to (-2,-.5);
		\filldraw[red]  (0,3) circle (6pt);
			\filldraw[red]  (1,-1.25) circle (6pt);
		\node at (1,-1.8) {\tiny $g$};	
\node at (-2,-2.55) {\tiny $s_i$};	
\node at (2,-2.55) {\tiny $s_{i+1}$};
\node at (5.75,-2.55) {\tiny $s_{i+2}$};
\node at (-2,4.5) {\tiny $s_i+2$};
	\node at (2,4.5) {\tiny $s_{i+1}-1$};
	\node at (5.6,4.5) {\tiny $s_{i+2}-1$};
\node at (.75,-.5) {\tiny ${\sf e}_{k-g-1}$};
\draw[thick]  (3.3,-1.5) .. controls ++(1.5,.5) and ++(1.5,.5) .. (1,3.5);
\draw[very thick, double, ]  (3.4,-2) to (3.4,-1.7) .. controls ++(0,.5) and ++(0,-2) .. (1,2) to  (1,4);
\filldraw[]  (1.4,.55) circle (6pt);
\node[draw, fill=white!20 ,rounded corners, rotate=90 ] at (-1.1,1) {\tiny $\hspace{.25in} \partial_1(x_1^f) \hspace{.25in}$};
    \end{tikzpicture}}
    \hspace{-.1in}-\sum_{g=0}^{k-1} (-1)^{g}\hspace{-.1in}
\hackcenter{
\begin{tikzpicture}[scale=.4]
	\draw [thick,red, double, ] (5.75,-2) to (5.75,4);
	\draw [thick,red, double, ] (2,-2) to (2,4);
	\draw [thick,red, double, ] (-2,-2) to (-2,4);
		\draw [very thick, red ] (2,2.5) to (-2,3.5);
		\draw [very thick, red ] (5.75,0) to (2,1);
		\draw [very thick, red ] (2,-1.5) to (-2,-.5);
\node at (-2,-2.55) {\tiny $s_i$};	
\node at (2,-2.55) {\tiny $s_{i+1}$};
\node at (5.75,-2.55) {\tiny $s_{i+2}$};
\node at (-2,4.5) {\tiny $s_i+2$};
	\node at (2,4.5) {\tiny $s_{i+1}-1$};
	\node at (5.6,4.5) {\tiny $s_{i+2}-1$};
\node at (.75,-.3) {\tiny ${\sf e}_{\overset{k-g}{-1}}$};
		\filldraw[red]  (1,-1.25) circle (6pt);
		\node at (1,-1.8) {\tiny $g$};	
\draw[thick]  (3.35,-1.5) .. controls ++(2,4) and ++(2,0) .. (1.6,.3);
\draw[very thick, double, ]  (3.4,-2) .. controls ++(0,.85) and ++(-0,-.8) .. (1,1) to  (1,4);
\filldraw[]  (2.55,-.85) circle (6pt);
\node[draw, fill=white!20 ,rounded corners, rotate=90 ] at (-1.1,1) {\tiny $\hspace{.25in} \partial_1(x_1^f) \hspace{.25in}$};
    \end{tikzpicture}}
\end{equation}
Subtracting ~\eqref{ii+1irel9} from ~\eqref{ii+1irel8} yields
\begin{equation*}
-\hackcenter{
\begin{tikzpicture}[scale=.4]
	\draw [thick,red, double, ] (6,-2) to (6,4);
	\draw [thick,red, double, ] (2,-2) to (2,4);
	\draw [thick,red, double, ] (-2,-2) to (-2,4);
		\draw [very thick, red ] (2,2.5) to (-2,3.5);
		\draw [very thick, red ] (6,0) to (2,1);
		\draw [very thick, red ] (2,-1.5) to (-2,-.5);
\node at (-2,-2.55) {\tiny $s_i$};	
\node at (2,-2.55) {\tiny $s_{i+1}$};
\node at (6,-2.55) {\tiny $s_{i+2}$};
\node at (-2,4.5) {\tiny $s_i+2$};
	\node at (2,4.5) {\tiny $s_{i+1}-1$};
	\node at (6,4.5) {\tiny $s_{i+2}-1$};
	\filldraw[red]  (0,-1) circle (6pt);
\node at (0,-1.6) {\tiny $f$};
\draw[very thick,double,] (3.2,-2) .. controls ++(0,.5) and ++(0,-2) ..(.8,2) to  (.8,4);
    \end{tikzpicture}}
\end{equation*}
which verifies the proposition in this case.

The case $j=i-1$ and $k=i$ is similar.

The proposition follows immediately from the definitions when $i=k$ and $|i-j|>1$.

The case $i=j=k$ is covered by Proposition ~\ref{nilheckeactionE^a}.

The remaining cases are straightforward calculations.
\end{proof}

\subsection{Infinite Grassmannian relations}
For notational convenience in this section we assume that $s_i=a$ and $s_{i+1}=b$, so that $\bar{s}_i=a-b$.  We show that the 2-functor $\Psi$ preserves the infinite Grassmannian relations.
\begin{align}
&\Psi \left( \;\; \sum_{x+y=\alpha} \hackcenter{ \begin{tikzpicture}
 \draw (-.15,.3) node {$\scs i$};
 \draw  (0,0) arc (180:360:0.5cm) [thick];
 \draw[,<-](1,0) arc (0:180:0.5cm) [thick];
\filldraw  [black] (.1,-.25) circle (2.5pt);
 \node at (-.35,-.45) {\tiny $\overset{\bar{s}_i-1}{+x}$};
 \node at (.85,1) { ${\bf s}$};
\end{tikzpicture}  \begin{tikzpicture}
  \draw (-.15,.3) node {$\scs i$};
 \draw  (0,0) arc (180:360:0.5cm) [thick];
 \draw[->](1,0) arc (0:180:0.5cm) [thick];
\filldraw  [black] (.9,-.25) circle (2.5pt);
 \node at (1.25,-.5) {\tiny $\overset{-\bar{s}_i-1}{+y}$};
\end{tikzpicture}  }
\;\; \right)
\;\; \maps \;\;
\hackcenter{
\begin{tikzpicture}[scale=.4]
	\draw [thick,red, double, ] (2,-2) to (2,2);
	\draw [thick,red, double, ] (-2,-2) to (-2,2);
\node at (-2,-2.55) {\tiny $a$};	
\node at (2,-2.55) {\tiny $b$};
	\node at (-2,2.5) {\tiny $a$};
    \node at (0,2.5) {\tiny $k$};
    \node at (0,-2.5) {\tiny $k$};
	\node at (2,2.5) {\tiny $b$};
    \draw [very thick, double, ] (0,-2) to (0,2);
\end{tikzpicture}} \hspace{1.9in}\\
&\qquad \;\; \mapsto \;\;
-\sum_{x+y=\alpha}
\sum_{f=0}^y
\sum_{g=0}^x
(-1)^{f+g}
\hackcenter{
\begin{tikzpicture}[scale=.4]
\draw [thick,red, double, ] (2,-2) to (2,2);
\draw [thick,red, double, ] (-2,-2) to (-2,2);
\filldraw[thick, red]  (2,1) circle (5pt);
\filldraw[thick, red]  (-2,1) circle (5pt);
    \node at (2.9,1) {\tiny ${\sf e}_{g}$};
    \node at (-3.2,1) {\tiny ${\sf h}_{x-g}$};
\node at (-2,-2.55) {\tiny $a$};	
\node at (2,-2.55) {\tiny $b$};
	\node at (-2,2.5) {\tiny $a$};
    \node at (0,2.5) {\tiny $k$};
    \node at (0,-2.5) {\tiny $k$};
	\node at (2,2.5) {\tiny $b$};
    \draw [very thick, double, ] (0,-2) to (0,2);
\filldraw[thick, red]  (-2,-.5) circle (5pt);
\filldraw[thick, red]  (2,-.5) circle (5pt);
    \node at (-2.9,-.5) {\tiny ${\sf e}_{f}$};
    \node at (3.2,-.5) {\tiny ${\sf h}_{y-f}$};
\end{tikzpicture}}
\end{align}
For a term in this triple sum to be nonzero we must have $x-g \geq 0$ and $y-f=\alpha-x-f \geq 0$ \\(or $g \leq x$ and $x \leq \alpha-f$) so we have
\begin{align}
-\sum_{f=0}^a
\sum_{g=0}^b
  \sum_{x=g}^{\alpha-f}
(-1)^{f+g}
\hackcenter{
\begin{tikzpicture}[scale=.4]
	\draw [thick,red, double, ] (2,-2) to (2,2);
\draw [thick,red, double, ] (-2,-2) to (-2,2);
\filldraw[thick, red]  (2,1) circle (5pt);
\filldraw[thick, red]  (-2,1) circle (5pt);
    \node at (2.9,1) {\tiny ${\sf e}_{g}$};
    \node at (-3.2,1) {\tiny ${\sf h}_{x-g}$};
\node at (-2,-2.55) {\tiny $a$};	
\node at (2,-2.55) {\tiny $b$};
	\node at (-2,2.5) {\tiny $a$};
    \node at (0,2.5) {\tiny $k$};
    \node at (0,-2.5) {\tiny $k$};
	\node at (2,2.5) {\tiny $b$};
    \draw [very thick, double, ] (0,-2) to (0,2);
\filldraw[thick, red]  (-2,-.5) circle (5pt);
\filldraw[thick, red]  (2,-.5) circle (5pt);
    \node at (-2.9,-.5) {\tiny ${\sf e}_{f}$};
    \node at (3.2,-.5) {\tiny ${\sf h}_{\overset{\alpha-x}{-f}}$};
\end{tikzpicture}}
= -
\sum_{f=0}^a
\sum_{g=0}^b
 \sum_{x'=0}^{\alpha-f-g}
(-1)^{f+g}
\hackcenter{
\begin{tikzpicture}[scale=.4]
	\draw [thick,red, double, ] (2,-2) to (2,2);
\draw [thick,red, double, ] (-2,-2) to (-2,2);
\filldraw[thick, red]  (2,1) circle (5pt);
\filldraw[thick, red]  (-2,1) circle (5pt);
    \node at (2.9,1) {\tiny ${\sf e}_{g}$};
    \node at (-3.2,1) {\tiny ${\sf h}_{x'}$};
\node at (-2,-2.55) {\tiny $a$};	
\node at (2,-2.55) {\tiny $b$};
	\node at (-2,2.5) {\tiny $a$};
    \node at (0,2.5) {\tiny $k$};
    \node at (0,-2.5) {\tiny $k$};
	\node at (2,2.5) {\tiny $b$};
    \draw [very thick, double, ] (0,-2) to (0,2);
\filldraw[thick, red]  (-2,-.5) circle (5pt);
\filldraw[thick, red]  (2,-.5) circle (5pt);
    \node at (-2.9,-.5) {\tiny ${\sf e}_{f}$};
    \node at (3.2,-.5) {\tiny ${\sf h}_{\overset{\alpha-x'-}{g-f}}$};
\end{tikzpicture}}
\end{align}
Now observe that if $b> \alpha-f$ then the $g$ summation may as well stop at $\alpha-f$ since we have ${\sf h}_{(\alpha-f)-g-x'}$.  However, if $b \leq \alpha-f$ then we may as well keep on summing $g$ all the way to $\alpha-f$ since all these terms are zero by ${\sf e}_g$ on thickness $b$ strand.  Either way, we can take the $g$ summation to $\alpha-f$, so the above is
\begin{align}
&= -
\sum_{f=0}^a
\sum_{g=0}^{\alpha-f}\;
 \sum_{x'=0}^{\alpha-f-g}
(-1)^{f+g}
\hackcenter{
\begin{tikzpicture}[scale=.4]
\draw [thick,red, double, ] (2,-2) to (2,2);
\draw [thick,red, double, ] (-2,-2) to (-2,2);
\filldraw[thick, red]  (2,1) circle (5pt);
\filldraw[thick, red]  (-2,1) circle (5pt);
    \node at (2.9,1) {\tiny ${\sf e}_{g}$};
    \node at (-3.2,1) {\tiny ${\sf h}_{x'}$};
\node at (-2,-2.55) {\tiny $a$};	
\node at (2,-2.55) {\tiny $b$};
	\node at (-2,2.5) {\tiny $a$};
    \node at (0,2.5) {\tiny $k$};
    \node at (0,-2.5) {\tiny $k$};
	\node at (2,2.5) {\tiny $b$};
    \draw [very thick, double, ] (0,-2) to (0,2);
\filldraw[thick, red]  (-2,-.5) circle (5pt);
\filldraw[thick, red]  (2,-.5) circle (5pt);
    \node at (-2.9,-.5) {\tiny ${\sf e}_{f}$};
    \node at (3.3,-.5) {\tiny ${\sf h}_{\overset{\alpha-x'-}{g-f}}$};
\end{tikzpicture}}
= -
\sum_{f=0}^a
 \sum_{x'=0}^{\alpha-f}
 \sum_{g=0}^{\alpha-f-x'}\;
(-1)^{f+g}
\hackcenter{
\begin{tikzpicture}[scale=.4]
	\draw [thick,red, double, ] (2,-2) to (2,2);
\draw [thick,red, double, ] (-2,-2) to (-2,2);
\filldraw[thick, red]  (2,1) circle (5pt);
\filldraw[thick, red]  (-2,1) circle (5pt);
    \node at (2.9,1) {\tiny ${\sf e}_{g}$};
    \node at (-3.2,1) {\tiny ${\sf h}_{x'}$};
\node at (-2,-2.55) {\tiny $a$};	
\node at (2,-2.55) {\tiny $b$};
	\node at (-2,2.5) {\tiny $a$};
    \node at (0,2.5) {\tiny $k$};
    \node at (0,-2.5) {\tiny $k$};
	\node at (2,2.5) {\tiny $b$};
    \draw [very thick, double, ] (0,-2) to (0,2);
\filldraw[thick, red]  (-2,-.5) circle (5pt);
\filldraw[thick, red]  (2,-.5) circle (5pt);
    \node at (-2.9,-.5) {\tiny ${\sf e}_{f}$};
    \node at (3.3,-.5) {\tiny ${\sf h}_{\overset{\alpha-x'-}{f-g}}$};
\end{tikzpicture}}
\\
&=
 -
\sum_{f=0}^a
 \sum_{x'=0}^{\alpha-f}
(-1)^{f}
\delta_{\alpha-f,x'}
\hackcenter{
\begin{tikzpicture}[scale=.4]
	\draw [thick,red, double, ] (2,-2) to (2,2);
\draw [thick,red, double, ] (-2,-2) to (-2,2);
\filldraw[thick, red]  (-2,1) circle (5pt);
    \node at (-3.2,1) {\tiny ${\sf h}_{x'}$};
\node at (-2,-2.55) {\tiny $a$};	
\node at (2,-2.55) {\tiny $b$};
	\node at (-2,2.5) {\tiny $a$};
    \node at (0,2.5) {\tiny $k$};
    \node at (0,-2.5) {\tiny $k$};
	\node at (2,2.5) {\tiny $b$};
    \draw [very thick, double, ] (0,-2) to (0,2);
\filldraw[thick, red]  (-2,-.5) circle (5pt);
    \node at (-2.9,-.5) {\tiny ${\sf e}_{f}$};
\end{tikzpicture}}
=
 -
\sum_{f=0}^a
(-1)^{f}
\hackcenter{
\begin{tikzpicture}[scale=.4]
	\draw [thick,red, double, ] (2,-2) to (2,2);
\draw [thick,red, double, ] (-2,-2) to (-2,2);
\filldraw[thick, red]  (-2,1) circle (5pt);
    \node at (-3.2,1) {\tiny ${\sf h}_{\alpha-f}$};
\node at (-2,-2.55) {\tiny $a$};	
\node at (2,-2.55) {\tiny $b$};
	\node at (-2,2.5) {\tiny $a$};
    \node at (0,2.5) {\tiny $k$};
    \node at (0,-2.5) {\tiny $k$};
	\node at (2,2.5) {\tiny $b$};
    \draw [very thick, double, ] (0,-2) to (0,2);
\filldraw[thick, red]  (-2,-.5) circle (5pt);
    \node at (-2.9,-.5) {\tiny ${\sf e}_{f}$};
\end{tikzpicture}}
\end{align}
Again, if $\alpha >a$ then we can take the $f$ summation all the way to $\alpha$ because of the ${\sf e}_f$ on the thickness $a$ strand.   Otherwise, $\alpha \leq a$ and we have to stop the $f$ summation at $\alpha$ because of the ${\sf h}_{\alpha-f}$.  Either way we get
\begin{align}
  =
 -
\sum_{f=0}^{\alpha}
(-1)^{f}
\hackcenter{
\begin{tikzpicture}[scale=.4]
	\draw [thick,red, double, ] (2,-2) to (2,2);
\draw [thick,red, double, ] (-2,-2) to (-2,2);
\filldraw[thick, red]  (-2,1) circle (5pt);
    \node at (-3.2,1) {\tiny ${\sf h}_{\alpha-f}$};
\node at (-2,-2.55) {\tiny $a$};	
\node at (2,-2.55) {\tiny $b$};
	\node at (-2,2.5) {\tiny $a$};
    \node at (0,2.5) {\tiny $k$};
    \node at (0,-2.5) {\tiny $k$};
	\node at (2,2.5) {\tiny $b$};
    \draw [very thick, double, ] (0,-2) to (0,2);
\filldraw[thick, red]  (-2,-.5) circle (5pt);
    \node at (-2.9,-.5) {\tiny ${\sf e}_{f}$};
\end{tikzpicture}}
= -
\delta_{\alpha,0}
\hackcenter{
\begin{tikzpicture}[scale=.4]
	\draw [thick,red, double, ] (2,-2) to (2,2);
\draw [thick,red, double, ] (-2,-2) to (-2,2);
\node at (-2,-2.55) {\tiny $a$};	
\node at (2,-2.55) {\tiny $b$};
	\node at (-2,2.5) {\tiny $a$};
    \node at (0,2.5) {\tiny $k$};
    \node at (0,-2.5) {\tiny $k$};
	\node at (2,2.5) {\tiny $b$};
    \draw [very thick, double, ] (0,-2) to (0,2);
\end{tikzpicture}}
\end{align}
so that $\Psi$ preserves the infinite Grassmannian relation.

\subsection{Mixed relation}

\begin{proposition}
\label{mixedprop}
For $i \neq j$ we have the following equalities.
\begin{equation*}
\Psi \left(   \hackcenter{\begin{tikzpicture}[scale=0.8]
    \draw[thick] (0,0) .. controls ++(0,.5) and ++(0,-.5) .. (.75,1);
    \draw[thick, <-] (.75,0) .. controls ++(0,.5) and ++(0,-.5) .. (0,1);
    \draw[thick] (0,1 ) .. controls ++(0,.5) and ++(0,-.5) .. (.75,2);
    \draw[thick, ->] (.75,1) .. controls ++(0,.5) and ++(0,-.5) .. (0,2);
        \node at (-.2,.15) {\tiny $i$};
    \node at (.95,.15) {\tiny $j$};
\node at (1.1,.85) {  ${\bf s}$};
\end{tikzpicture}}
 \right)
 \;\; = \;\;
\Psi \left( \hackcenter{\begin{tikzpicture}[scale=0.8]
    \draw[thick, ->] (0,0) -- (0,2);
    \draw[thick, <-] (.75,0) -- (.75,2);
     \node at (-.2,.2) {\tiny $i$};
    \node at (.95,.2) {\tiny $j$};
\node at (1.1,.85) {  ${\bf s}$};
\end{tikzpicture}}
 \right)
\qquad \quad
 \Psi \left(   \hackcenter{\begin{tikzpicture}[scale=0.8]
    \draw[thick,<-] (0,0) .. controls ++(0,.5) and ++(0,-.5) .. (.75,1);
    \draw[thick] (.75,0) .. controls ++(0,.5) and ++(0,-.5) .. (0,1);
    \draw[thick, ->] (0,1 ) .. controls ++(0,.5) and ++(0,-.5) .. (.75,2);
    \draw[thick] (.75,1) .. controls ++(0,.5) and ++(0,-.5) .. (0,2);
        \node at (-.2,.15) {\tiny $j$};
    \node at (.95,.15) {\tiny $i$};
\node at (1.1,.85) {  ${\bf s}$};
\end{tikzpicture}}
 \right)
 \;\;=\;\;
\Psi \left( \hackcenter{\begin{tikzpicture}[scale=0.8]
    \draw[thick, <-] (0,0) -- (0,2);
    \draw[thick, ->] (.75,0) -- (.75,2);
     \node at (-.2,.2) {\tiny $j$};
    \node at (.95,.2) {\tiny $i$};
\node at (1.1,.85) {  ${\bf s}$};
\end{tikzpicture}}
 \right)
\end{equation*}
\end{proposition}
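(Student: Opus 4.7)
The plan is to verify both equalities by evaluating each composition on a spanning set of the bimodule $\mathsf{E}_i \mathsf{F}_j \mathsf{1}_{\mathbf{s}}$ (respectively $\mathsf{F}_j \mathsf{E}_i \mathsf{1}_{\mathbf{s}}$) and checking that the sequence of two sideways crossings acts as the identity. The key inputs are the explicit formulas for sideways crossings with distinct labels worked out in Lemmas \ref{Sidecrossingform1} and \ref{Sidecrossingform2}. Since the two equalities are formally dual (related by reversing orientations and swapping $i \leftrightarrow j$), I would prove the first equality in detail and deduce the second from the cyclic structure established in Propositions \ref{isotopicprop} and \ref{crossingcyclicity}.

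Case 1: $|i-j|>1$. Here the step strands coming from $\mathsf{E}_i$ and $\mathsf{F}_j$ live in disjoint portions of the ladder separated by at least one inert vertical red strand labelled by some $s_\ell$. Applying Lemma \ref{Sidecrossingform1} to a generator, the first sideways crossing simply transports the $\mathsf{F}_j$-step from one side of the configuration to the other, leaving the $\mathsf{E}_i$-step and all dots, black strands, and other decorations in place (since the corresponding pair of cups/caps involves only the $j$-labelled rungs). The second sideways crossing then transports it back. Composing, one sees on the nose that the two moves cancel and the composite is the identity on each generator.

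Case 2: $|i-j|=1$, say $j=i+1$ (the case $j=i-1$ is analogous). Now the $\mathsf{E}_i$-step sits on rungs $(i,i+1)$ and the $\mathsf{F}_{i+1}$-step sits on rungs $(i+1,i+2)$, sharing the common middle vertical red strand. Take a generator of $\mathsf{E}_i \mathsf{F}_{i+1} \mathsf{1}_{\mathbf{s}}$ as in Proposition \ref{EiEi+1Eigenerators} (with any dots and background black thick strands). Applying the first sideways crossing via Lemma \ref{Sidecrossingform2} produces a term in which the vertical order of the two steps on the middle red strand has been swapped, plus possible correction terms generated by sliding dots across the swap using Proposition \ref{folk-rel} and the rung-swap relation \eqref{r3-2}. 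Applying the second sideways crossing then swaps the order back. I would track each dot configuration through both crossings and show, using the complete-elementary symmetric function identities used repeatedly in Section 7 (cf.\ the simplifications in Section \ref{sectioncounterclockwise}), that all intermediate terms telescope and only the original generator survives.

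The main obstacle is the bookkeeping in Case 2: when the two steps slide past each other on the shared middle strand, one must invoke the thick red/red rung-swap identity \eqref{r3-2}, which produces a sum of corrections involving dots on the top, middle, and bottom of that strand. The danger is that the corrections produced by the first crossing do not literally vanish after the second, but rather cancel among themselves after collecting terms by degree of the red dots. Handling this cleanly is essentially the same bookkeeping carried out in the proof of Proposition \ref{isotopicprop}, so I would follow that template, splitting the sum according to which of the two generators from Proposition \ref{EiEi+1Eigenerators} the intermediate diagram lies in and matching the cancellations explicitly.
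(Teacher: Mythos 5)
Your overall strategy — reduce to the explicit sideways-crossing formulas of Lemmas \ref{Sidecrossingform1} and \ref{Sidecrossingform2} and check the composite on generators — is exactly what the paper does, and Case 1 is handled correctly. However, you have misjudged Case 2. Lemma \ref{Sidecrossingform2} already states that every $|i-j|=1$ sideways crossing acts on a ladder generator as a clean swap of the two red rungs past each other, with \emph{no} correction terms at all; the combinatorics involving \eqref{r3-2} and symmetric-function identities that you anticipate are precisely what was absorbed into the (admittedly lengthy) proof of that lemma, not something you should re-encounter when quoting its conclusion. Once the lemma is in hand, the composite of two opposite sideways crossings is a swap followed by the inverse swap, hence the identity on the nose, exactly as in Case 1 — there is no telescoping sum to collect. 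The concern about dots sliding is also moot at this stage: the sideways crossings are bimodule homomorphisms, so checking them on the dotless generators of Lemma \ref{Sidecrossingform2} determines their values on all dotted elements via the bimodule action, and the same is true of the composite. Your plan would ultimately produce a correct proof, but invoking \eqref{r3-2} and Proposition \ref{folk-rel} at the stage of Proposition \ref{mixedprop} suggests you would end up re-deriving Lemma \ref{Sidecrossingform2} from scratch rather than using it as the black box the paper intends.
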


\begin{proof}
This follows from Lemmas ~\ref{Sidecrossingform1} and ~\ref{Sidecrossingform2}.
\end{proof}

\subsection{$\mathcal{E} \mathcal{F}$ and $\mathcal{F} \mathcal{E}$ decompositions} \label{sec:EF}
In this subsection we will prove the $\mathcal{E} \mathcal{F}$ and $\mathcal{F} \mathcal{E}$ decompositions using a key argument provided to us by Sabin Cautis.  It requires proving the decompositions in some easy cases along with other relations already proved.

For notational convenience in this section we assume that the sequence ${\bf s}$ has $s_i=a$ and $s_{i+1}=b$, so that $\bar{s}_i=a-b$.

\begin{proposition}
\begin{align}
\Psi\left(
\sum_{\overset{f_1+f_2+f_3}{=a-b-1}}\hackcenter{
 \begin{tikzpicture}[scale=0.8]
 \draw[thick,->] (0,-1.0) .. controls ++(0,.5) and ++ (0,.5) .. (.8,-1.0) node[pos=.75, shape=coordinate](DOT1){};
  \draw[thick,<-] (0,1.0) .. controls ++(0,-.5) and ++ (0,-.5) .. (.8,1.0) node[pos=.75, shape=coordinate](DOT3){};
 \draw[thick,->] (0,0) .. controls ++(0,-.45) and ++ (0,-.45) .. (.8,0)node[pos=.25, shape=coordinate](DOT2){};
 \draw[thick] (0,0) .. controls ++(0,.45) and ++ (0,.45) .. (.8,0);
 \draw (-.15,.7) node { $\scs i$};
\draw (1.05,0) node { $\scs i$};
\draw (-.15,-.7) node { $\scs i$};
 \node at (.95,.65) {\tiny $f_3$};
 \node at (-.55,-.05) {\tiny $\overset{-\bar{s}_i-1}{+f_2}$};
  \node at (.95,-.65) {\tiny $f_1$};
 \node at (1.6,.3) { ${\bf s}$};
 \filldraw[thick]  (DOT3) circle (2.5pt);
  \filldraw[thick]  (DOT2) circle (2.5pt);
  \filldraw[thick]  (DOT1) circle (2.5pt);
\end{tikzpicture} }
\right)
\;\; \maps \;\;
\hackcenter{
\begin{tikzpicture}[scale=.4]
	\draw [thick,red, double, ] (2,-2) to (2,2);
	\draw [thick,red, double, ] (-2,-2) to (-2,2);
	\draw [very thick, red ] (-2,-1.5) to (2,-.5);
	\draw [very thick, red ] (-2,1.5) to (2,.5);
	\filldraw[thick, red]  (-1,-1.25) circle (4pt);
		\node at (-1,-1.8) {\tiny $\delta$};	
		\node at (-2,-2.55) {\tiny $a$};	
\node at (2,-2.55) {\tiny $b$};
	\node at (-2,2.5) {\tiny $a$};
    \node at (0,2.5) {\tiny $k$};
    \node at (0,-2.5) {\tiny $k$};
	\node at (2,2.5) {\tiny $b$};
    \draw [very thick, double, ] (0,-2) to (0,2);
\end{tikzpicture}} \mapsto
\sum_{\overset{f_1+f_2+f_3}{=a-b-1}}
\sum_{j=0}^k
 \sum_{\ell=0}^a
 (-1)^{j+\ell+1}
\hackcenter{
\begin{tikzpicture}[scale=.5]
	\draw [thick,red, double, ] (-2,-3) to (-2,3);
	\draw [thick,red, double, ] (2,-3) to (2,3);
	\draw [very thick, red ] (-2,-1.25) to (2,-.25);
	\draw [very thick, red ] (-2,1.75) to (2,.75);
    \filldraw[thick, red]  (-2,-2.45) circle (4pt);
            \draw [very thick, double, ] (0,-3) to (0,-1.95) .. controls ++(-1.4,.5) and ++(-1.4,-.5) .. (0,2.25) to (0,3);;
            \draw [very thick,   ]  (0,-1.95) .. controls ++(3.5,.5) and ++(3.5,-.5) .. (0,2.25) to (0,3) node[pos=.85, shape=coordinate](DOT3){};
    	\filldraw[thick, red]  (-2,-1.75) circle (4pt);
		\filldraw[thick, red]  (1,1) circle (4pt);
        \filldraw[thick, red]  (2,-1.75) circle (4pt);
            \filldraw[thick]  (0,-2.55) circle (4pt);
        \node at (-3.35,-2.6) {\tiny ${\sf h}_{\overset{\delta+f_1+j}{-a+1}}$};
        \node at (-2.6,-1.7) {\tiny ${\sf e}_{\ell}$};
        \node at (1,-2.5) {\tiny$ {\sf e}_{k-j}$};
        \node at (1,1.4) {\tiny$ f_3$};
        \node at (3,-1.7) {\tiny ${\sf h}_{f_2-\ell}$};
\node at (-2,-3.55) {\tiny $a$};	
\node at (2,-3.55) {\tiny $b$};
	\node at (-2,3.5) {\tiny $a$};
    \node at (0,3.5) {\tiny $k$};
    \node at (0,-3.5) {\tiny $k$};
	\node at (2,3.5) {\tiny $b$};
\end{tikzpicture}} \nn \\
\qquad \qquad +
\sum_{\overset{f_1+f_2+f_3}{=a-b-1}}
\sum_{j=0}^k
 \sum_{\ell=0}^a
 (-1)^{j+\ell+1}
 \sum_{\overset{d_1+d_2+d_3}{=b-k}}(-1)^{d_3+k}
 \hackcenter{
\begin{tikzpicture}[scale=.5]
	\draw [thick,red, double, ] (-2,-3) to (-2,3);
	\draw [thick,red, double, ] (2,-3) to (2,3);
	\draw [very thick, red ] (-2,-1.25) to (2,-.25);
	\draw [very thick, red ] (-2,1.75) to (2,.75);
            \draw [very thick, double, ] (-.5,-3) to (-.5,3);;
        \filldraw[thick, red]  (-2,-2.45) circle (4pt);
    	\filldraw[thick, red]  (-2,-1.75) circle (4pt);
		\filldraw[thick, red]  (1,1) circle (4pt);
        \filldraw[thick, red]  (2,-1.75) circle (4pt);
        \filldraw[thick, red]  (2,-2.45) circle (4pt);
            \filldraw[thick]  (-.5,-2.55) circle (4pt);
            \filldraw[thick]  (-.5,-1.75) circle (4pt);
        \node at (-3.35,-2.6) {\tiny ${\sf h}_{\overset{\delta+f_1+j}{-a+1}}$};
        \node at (2.7,-2.6) {\tiny ${\sf e}_{d_3}$};
        \node at (-2.6,-1.7) {\tiny ${\sf e}_{\ell}$};
        \node at (.5,-2.5) {\tiny$ {\sf e}_{k-j}$};
        \node at (.35,-1.7) {\tiny$ {\sf h}_{d_1}$};
        \node at (1,1.4) {\tiny$ f_3+d_2$};
        \node at (3,-1.7) {\tiny ${\sf h}_{f_2-\ell}$};
\node at (-2,-3.55) {\tiny $a$};	
\node at (2,-3.55) {\tiny $b$};
	\node at (-2,3.5) {\tiny $a$};
    \node at (0,3.5) {\tiny $k$};
    \node at (0,-3.5) {\tiny $k$};
	\node at (2,3.5) {\tiny $b$};
\end{tikzpicture}}
\end{align}
\end{proposition}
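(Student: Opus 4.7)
The plan is to compute $\Psi$ of the given 2-morphism by decomposing it into three elementary pieces in $\cal{U}$: a cap $\cal{F}_i\cal{E}_i\onel \to \onel$ carrying $f_1$ dots, a counter-clockwise dotted bubble on $\onel$ of label $-\bar{s}_i-1+f_2$, and a cup $\onel \to \cal{F}_i\cal{E}_i\onel$ carrying $f_3$ dots. Under $\Psi$ these map to the explicit bimodule homomorphisms $\epsilon^i_{\mathbf{s},l}$ and $\zeta^i_{\mathbf{s},r}$ from Section~\ref{sec:reddotted}, together with the counter-clockwise bubble homomorphism whose closed form was derived as equation~\eqref{counterformula}. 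The proposition will then follow by composing these three morphisms on the input generator and matching with the displayed right-hand side.

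First I would apply $\Psi(\epsilon^i_{\mathbf{s},l})$ to the input generator (carrying $\delta$ red dots on its lower-left step), producing a sum indexed by $d=0,\ldots,k$ with sign $(-1)^{b-1+k-d}$ and factors $e_d$ on the vertical black strand together with $H_{k-d-b+1+\delta}$ on the right-hand red strand. The $f_1$ cap-dot contribution is absorbed through the splitter using identity~\eqref{eq:elem_slide}, which merges the complete-symmetric factors. Multiplying next by the counter-clockwise bubble via formula~\eqref{counterformula} contributes a sum over $\beta=0,\ldots,a$ of $(-1)^{a+b+\beta+1}E_{a-\beta}H_{-\bar{s}_i-1+f_2+\beta-b+1}$ on the two outer red strands. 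Reindexing $j := k-d$ and $\ell := a-\beta$ then turns these combined contributions into the $e_{k-j}$, $E_\ell$, $H_{f_2-\ell}$, and $H_{\delta+f_1+j-a+1}$ factors displayed in the proposition, after using the standard identities $\sum_{x+y=n} E_x H_{n-x-y} = \delta_{n,0}$ (plus its shifted variants from Section~\ref{sec:reddotted}) to collapse the unwanted cross terms.

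Finally I would apply $\Psi(\zeta^i_{\mathbf{s},r})$ to the resulting identity-strand element. By the defining formula in Section~\ref{sec:reddotted} this splits into two summands: the ``topological'' leading term — a cup closed by a thin black strand on the right — produces the first sum in the RHS of the proposition, and the ``algebraic'' correction term, a sum over $d_1+d_2+d_3=b-k$ with decorations $h_{d_1}$, $d_2$, $E_{d_3}$ and sign $(-1)^{d_3+k}$, produces the second sum. The $f_3$ cup-dots are absorbed into the $d_2$ dot on the right step via the splitter-slide identity~\eqref{eq:splitters}, yielding the $f_3+d_2$ label in the second term. The main obstacle will be the combinatorial bookkeeping of signs and symmetric functions: the naive composition produces four nested sum indices (over $d$, $\beta$, the distribution of $f_1,f_3$, and the $d_1,d_2,d_3$ triple), which must be reorganized into the triple sum $f_1+f_2+f_3 = \bar{s}_i-1$ together with the inner sums over $j$ and $\ell$. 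This reorganization parallels the bubble computation of Section~\ref{sectioncounterclockwise}, where the split between the regimes $k \leq a$ and $k > a$ is absorbed uniformly through the fake-bubble conventions; verifying that the same uniformity holds here is what makes the statement a single clean formula rather than a case analysis.
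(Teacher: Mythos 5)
Your strategy---decomposing the bubble sandwich into a cap, a counter-clockwise bubble, and a cup, applying $\Psi$ to each piece, and composing the resulting bimodule homomorphisms---is exactly what the paper's one-line proof means, so the general plan is sound. But you have misread the orientations. In this diagram the cap at the bottom is $\cal{E}_i\cal{F}_i\onel \to \onel$, not $\cal{F}_i\cal{E}_i\onel \to \onel$, and hence maps under $\Psi$ to $\epsilon_{\mathbf{s},r}^i$, not $\epsilon_{\mathbf{s},l}^i$. Your stated cap formula, with sign $(-1)^{b-1+k-d}$ and $H_{k-d-b+1+\delta}$ on the right-hand red strand, is therefore the wrong one; the correct $\epsilon_{\mathbf{s},r}^i$ gives $(-1)^{k-d}$ with $e_d$ and $H_{k-d-a+1+\delta}$ on the \emph{left} red strand. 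Only the latter, after absorbing the $f_1$ cap-dots into the $\delta$ argument, produces the $H_{\delta+f_1+j-a+1}$ on the left strand visible in the stated right-hand side, and only it gives the claimed sign: the cap sign $(-1)^{k-d}$, the bubble sign $(-1)^{a+b+\beta+1}$, and the cup's leading factor $(-1)^{b}$ combine, under $j=k-d$ and $\ell=a-\beta$, to $(-1)^{j+\ell+1}$, whereas $(-1)^{b-1+k-d}$ in place of $(-1)^{k-d}$ leaves behind a spurious $(-1)^{b-1}$.

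The cup at the top has the opposite problem: it is $\onel \to \cal{E}_i\cal{F}_i\onel$, that is $\zeta_{\mathbf{s},l}^i$, not $\zeta_{\mathbf{s},r}^i$ as you name it; the formula you actually describe (a leading term with the thin black strand bulging to the right, and a correction indexed by $d_1+d_2+d_3=b-k$ with factors $h_{d_1}$, $d_2$, $E_{d_3}$) is the $\zeta_{\mathbf{s},l}^i$ formula, so there you invoked the right map under the wrong name. With the cap fixed, the rest of your outline---the bubble computation via \eqref{counterformula}, the reindexing $j=k-d$, $\ell=a-\beta$, and the final application of the cup---does reproduce the stated right-hand side, consistent with the paper's terse proof.
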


\begin{proof}
This follows from the definitions of the generating $2$-morphisms.
\end{proof}

\begin{proposition}
\begin{align}
\Psi\left(
\sum_{\overset{f_1+f_2+f_3}{=-\bar{s}_i-1}}\hackcenter{
 \begin{tikzpicture}[scale=0.8]
 \draw[thick,<-] (0,-1.0) .. controls ++(0,.5) and ++ (0,.5) .. (.8,-1.0) node[pos=.75, shape=coordinate](DOT1){};
  \draw[thick,->] (0,1.0) .. controls ++(0,-.5) and ++ (0,-.5) .. (.8,1.0) node[pos=.75, shape=coordinate](DOT3){};
 \draw[thick ] (0,0) .. controls ++(0,-.45) and ++ (0,-.45) .. (.8,0)node[pos=.25, shape=coordinate](DOT2){};
 \draw[thick, ->] (0,0) .. controls ++(0,.45) and ++ (0,.45) .. (.8,0);
 \draw (-.15,.7) node { $\scs i$};
\draw (1.05,0) node { $\scs i$};
\draw (-.15,-.7) node { $\scs i$};
 \node at (.95,.65) {\tiny $f_3$};
 \node at (-.55,-.05) {\tiny $\overset{\bar{s}_i-1}{+f_2}$};
  \node at (.95,-.65) {\tiny $f_1$};
 \node at (1.6,.3) { ${\bf s}$};
 \filldraw[thick]  (DOT3) circle (2.5pt);
  \filldraw[thick]  (DOT2) circle (2.5pt);
  \filldraw[thick]  (DOT1) circle (2.5pt);
\end{tikzpicture} }
\right)
\;\; \maps \;\;
\hackcenter{
\begin{tikzpicture}[scale=.4]
	\draw [thick,red, double, ] (2,-2) to (2,2);
	\draw [thick,red, double, ] (-2,-2) to (-2,2);
	\draw [very thick, red ] (-2,-.5) to (2,-1.5);
	\draw [very thick, red ] (-2,.5) to (2,1.5);
	\filldraw[thick, red]  (-1,.75) circle (4pt);
		\node at (-1,1.5) {\tiny $\delta$};	
		\node at (-2,-2.55) {\tiny $a$};	
\node at (2,-2.55) {\tiny $b$};
	\node at (-2,2.5) {\tiny $a$};
    \node at (0,2.5) {\tiny $k$};
    \node at (0,-2.5) {\tiny $k$};
	\node at (2,2.5) {\tiny $b$};
    \draw [very thick, double, ] (0,-2) to (0,2);
\end{tikzpicture}} \mapsto
\sum_{\overset{f_1+f_2+f_3}{=b-a-1}}
\sum_{j=0}^k
 \sum_{\ell=0}^b
 (-1)^{j+\ell+k}
\hackcenter{
\begin{tikzpicture}[scale=.5]
	\draw [thick,red, double, ] (-2,-3) to (-2,3);
	\draw [thick,red, double, ] (2,-3) to (2,3);
	\draw [very thick, red ] (-2,-.25) to (2,-1.25);
	\draw [very thick, red ] (-2,.75) to (2,1.75);
    \filldraw[thick, red]  (-2,-2.45) circle (4pt);
        \draw [very thick, double, ] (0,-3) to (0,-2.25) .. controls ++(1.4,.5) and ++(1.4,-.5) .. (0,2.25) to (0,3) node[pos=.5, shape=coordinate](DOT3){};;
    \draw [very thick, ]  (0,-2.25) .. controls ++(-3.5,.5) and ++(-3.5,-.5) .. (0,2.25) to (0,3);
    	\filldraw[thick, red]  (2,-2.75) circle (4pt);
		\filldraw[thick, red]  (-1,1) circle (4pt);
        \filldraw[thick, red]  (2,-1.75) circle (4pt);
            \filldraw[thick]  (0,-2.55) circle (4pt);
        \node at (-3,-2.6) {\tiny ${\sf h}_{f_2-\ell}$};
        \node at (2.5,-2.75) {\tiny ${\sf e}_{\ell}$};
        \node at (1,-2.5) {\tiny$ {\sf e}_{k-j}$};
        \node at (-1,1.4) {\tiny$ f_3$};
        \node at (3.1,-1.7) {\tiny  ${\sf h}_{\overset{\delta+f_1+j}{-b+1}}$};
\node at (-2,-3.55) {\tiny $a$};	
\node at (2,-3.55) {\tiny $b$};
	\node at (-2,3.5) {\tiny $a$};
    \node at (0,3.5) {\tiny $k$};
    \node at (0,-3.5) {\tiny $k$};
	\node at (2,3.5) {\tiny $b$};
\end{tikzpicture}} \nn \\
\qquad \qquad +
\sum_{\overset{f_1+f_2+f_3}{=b-a-1}}
\sum_{j=0}^k
 \sum_{\ell=0}^b
 (-1)^{j+\ell+1}
 \sum_{\overset{d_1+d_2+d_3}{=a-k}}(-1)^{a+d_1+d_2}
 \hackcenter{
\begin{tikzpicture}[scale=.5]
	\draw [thick,red, double, ] (-2,-3) to (-2,3);
	\draw [thick,red, double, ] (2,-3) to (2,3);
	\draw [very thick, red ] (-2,-.25) to (2,-1.25);
	\draw [very thick, red ] (-2,.75) to (2,1.75);
            \draw [very thick, double, ] (-.5,-3) to (-.5,3);;
        \filldraw[thick, red]  (-2,-2.45) circle (4pt);
    	\filldraw[thick, red]  (-2,-1.75) circle (4pt);
		\filldraw[thick, red]  (1,1.5) circle (4pt);
        \filldraw[thick, red]  (2,-1.75) circle (4pt);
        \filldraw[thick, red]  (2,-2.45) circle (4pt);
            \filldraw[thick]  (-.5,-2.55) circle (4pt);
            \filldraw[thick]  (-.5,-1.75) circle (4pt);
        \node at (-3.2,-2.6) {\tiny ${\sf h}_{f_2-\ell}$};
        \node at (2.7,-2.6) {\tiny ${\sf e}_{\ell}$};
        \node at (-2.6,-1.7) {\tiny ${\sf e}_{d_3}$};
        \node at (.5,-2.5) {\tiny$ {\sf e}_{k-j}$};
        \node at (.35,-1.7) {\tiny$ {\sf h}_{d_1}$};
        \node at (1,1.9) {\tiny$ f_3+d_2$};
        \node at (3.1,-1.7) {\tiny ${\sf h}_{\overset{\delta+f_1+j}{-b+1}}$};
\node at (-2,-3.55) {\tiny $a$};	
\node at (2,-3.55) {\tiny $b$};
	\node at (-2,3.5) {\tiny $a$};
    \node at (0,3.5) {\tiny $k$};
    \node at (0,-3.5) {\tiny $k$};
	\node at (2,3.5) {\tiny $b$};
\end{tikzpicture}}
\end{align}
\end{proposition}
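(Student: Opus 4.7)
The plan is to verify this formula by decomposing the 2-morphism on the left-hand side into its elementary constituents under $\Psi$, and then computing the image step by step on the given generator of $\mathcal{E}_i\mathcal{F}_i\mathsf{1}_{\mathbf{s}}$. The structure of the argument is directly parallel to the preceding proposition (with $\mathcal{E}$ and $\mathcal{F}$ interchanged, and $\bar{s}_i\rightsquigarrow -\bar{s}_i$, i.e.\ $a\leftrightarrow b$), so we reuse the definitions compiled in Section~\ref{sectionconj2rep}.

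First, factor the summand on the left as a vertical composition: a cup $\mathsf{1}_{\mathbf{s}}\to \mathcal{F}_i\mathcal{E}_i\mathsf{1}_{\mathbf{s}}$ stacked below the identity on $\mathcal{E}_i\mathcal{F}_i\mathsf{1}_{\mathbf{s}}$, followed by a sideways crossing of type $\mathcal{F}_i\mathcal{E}_i\mathcal{E}_i\mathcal{F}_i \to \mathcal{F}_i\mathcal{E}_i$ (with various dots and bubbles inserted as indicated by $f_1$, $f_2$, $f_3$), followed by a cap $\mathcal{F}_i\mathcal{E}_i\mathsf{1}_{\mathbf{s}}\to \mathsf{1}_{\mathbf{s}}$. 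Apply $\Psi$ using the formulas for $\Psi$ on the cup (with the two sign cases $(-1)^{s_{i+1}}$ and $(-1)^{s_{i+1}+d_3+k}$), on the sideways crossing of same label (Section~\ref{subsectionsidewayssame}), and on the cap (with its $(-1)^{s_{i+1}-1+k-d}$ prefactor). Summing over $f_1,f_2,f_3$ and organizing the dots produces a formal expression that collapses using the elementary-vs-complete symmetric function identity $\sum_{f_1+f_2+f_3=N}(-1)^{f_2}\varepsilon_{f_1}h_{f_2}\varepsilon_{f_3}=\delta_{N,0}$ and the standard splitter/dot slide rules \eqref{eq:splitters}, \eqref{eq:elem_slide}.

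Second, I would organize the computation so that the two sums in the target expression arise from the two pieces of the cup formula. The first sum (ranging over $j,\ell$ with prefactor $(-1)^{j+\ell+k}$) comes from pairing the ``ladder-nested'' term of the cup with the thin-strand splitter of the cap, while the second sum (ranging additionally over $d_1,d_2,d_3$ with $(-1)^{a+d_1+d_2}$) comes from the corrective sum in the cup formula; the signs line up because the two contributions differ precisely by the symmetry exchanging $a$ and $b$ from the previous proposition. The thin red crossings arising along the way can be resolved using relation \eqref{RBr2-rel}, and the counterclockwise-bubble formula from \eqref{counterformula} (or rather its clockwise cousin \eqref{clockwiseformula}) controls the degree-counting.

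The main obstacle is purely bookkeeping: tracking the many sums and signs through the cup--crossing--cap composition and verifying that the various collapses of elementary/complete symmetric function products yield exactly the two sums displayed. As in the proof of the preceding proposition, the key simplification is that after expanding one gets a telescoping-type sum in $j$ and a Cauchy-identity-type collapse in $\ell$, leaving a single term multiplied by the symmetric function $H_{\delta+f_1+j-b+1}$ on the left red strand. Modulo this combinatorial simplification (which is completely formal once one has the analogous computation from the previous proposition in hand), the result follows directly from the definitions.
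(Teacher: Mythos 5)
Your overall spirit — unwind $\Psi$ through the vertical composition of elementary generating 2-morphisms and then compose their images on a generator of $\mathsf{F}_i\mathsf{E}_i\mathsf{1}_{\mathbf{s}}$ — matches what the paper means by ``follows from the definitions.'' But your proposed factorization is wrong: the summand on the left is literally a \emph{cap} ($\mathcal{F}_i\mathcal{E}_i\mathsf{1}_{\mathbf{s}}\to\mathsf{1}_{\mathbf{s}}$, carrying $f_1$ dots), followed by a dotted \emph{bubble} (endomorphism of $\mathsf{1}_{\mathbf{s}}$ with $\bar s_i-1+f_2$ dots), followed by a \emph{cup} ($\mathsf{1}_{\mathbf{s}}\to\mathcal{F}_i\mathcal{E}_i\mathsf{1}_{\mathbf{s}}$, carrying $f_3$ dots). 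There is no sideways crossing anywhere in the diagram, and your ``cup $\mathsf{1}\to\mathcal{F}_i\mathcal{E}_i$ stacked below the identity on $\mathcal{E}_i\mathcal{F}_i$, then a crossing $\mathcal{F}_i\mathcal{E}_i\mathcal{E}_i\mathcal{F}_i\to\mathcal{F}_i\mathcal{E}_i$, then a cap'' does not even type-check against the domain $\mathcal{F}_i\mathcal{E}_i\mathsf{1}_{\mathbf{s}}$ of the given 2-morphism. The correct ingredients are precisely the cap formula (giving the sum over $j$, which is the $d$-index with range $0\le j\le k$), the clockwise-bubble formula \eqref{clockwiseformula} (giving the sum over $\ell$, which is the $\beta$-index with range $0\le\ell\le b$), and the cup formula (whose two pieces produce the two displayed blocks, the second of which carries the $d_1+d_2+d_3=a-k$ sum).

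Your second difficulty is the claim that the $j$-sum telescopes and the $\ell$-sum collapses by a Cauchy-type identity to a single term. That is not what happens here: the target expression retains \emph{all} of the sums over $f_1,f_2,f_3$, $j$, $\ell$, and (in the second block) $d_1,d_2,d_3$. These are simply the indices carried through unchanged from the cap, bubble, and cup formulas; no telescoping or collapse occurs. Relatedly, the factor $H_{\delta+f_1+j-b+1}$ lands on the \emph{right} red strand labelled $b=s_{i+1}$ (as it must, since it arises from the cap on $\mathsf{F}_i\mathsf{E}_i$, whose $H$ lives on the $s_{i+1}$-strand), not the left. If you correct the factorization to cap--bubble--cup and simply compose the three formulas, the computation is mechanical and matches the statement; no symmetric-function collapse is needed or available.
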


\begin{proof}
This too follows from the definitions of the generating $2$-morphisms.
\end{proof}

We begin with the decompositions which imply that
\begin{equation*}
\mathsf{E}_i \mathsf{F}_i 1_{\mathbf{s}} \cong [a] 1_{\mathbf{s}}; \quad \quad  s_i=a, s_{i+1}=0
\end{equation*}
\begin{equation*}
\mathsf{F}_i \mathsf{E}_i 1_{\mathbf{s}} \cong [b] 1_{\mathbf{s}}; \quad \quad  s_i=0, s_{i+1}=b
\end{equation*}

\begin{proposition}
\label{EFdecomspecialcasea00b} \hfill
\begin{enumerate}
\item Suppose ${\bf s}=(s_1,\ldots,s_m)$ with $s_i=a,s_{i+1}=0$.  Then we have the following equality of bimodule homomorphisms.
\begin{equation*}
\Psi\left(
 \hackcenter{\begin{tikzpicture}[scale=0.8]
    \draw[thick] (0,0) .. controls ++(0,.5) and ++(0,-.5) .. (.75,1);
    \draw[thick,<-] (.75,0) .. controls ++(0,.5) and ++(0,-.5) .. (0,1);
    \draw[thick] (0,1 ) .. controls ++(0,.5) and ++(0,-.5) .. (.75,2);
    \draw[thick, ->] (.75,1) .. controls ++(0,.5) and ++(0,-.5) .. (0,2);
        \node at (-.2,.15) {\tiny $i$};
    \node at (.95,.15) {\tiny $i$};
    \node at (1.1,1.1) {${\bf s}$};
\end{tikzpicture}}
\right)
\;\; = \;\;
\Psi\left(
\hackcenter{\begin{tikzpicture}[scale=0.8]
    \draw[thick, ->] (0,0) -- (0,2);
    \draw[thick, <-] (.75,0) -- (.75,2);
     \node at (-.2,.2) {\tiny $i$};
    \node at (.95,.2) {\tiny $i$};
\node at (1.1,1.1) {${\bf s}$};
\end{tikzpicture}}
\right)
\;\; +\;\;
\Psi\left(
\sum_{\overset{f_1+f_2+f_3}{=a-1}}\hackcenter{
 \begin{tikzpicture}[scale=0.8]
 \draw[thick,->] (0,-1.0) .. controls ++(0,.5) and ++ (0,.5) .. (.8,-1.0) node[pos=.75, shape=coordinate](DOT1){};
  \draw[thick,<-] (0,1.0) .. controls ++(0,-.5) and ++ (0,-.5) .. (.8,1.0) node[pos=.75, shape=coordinate](DOT3){};
 \draw[thick,->] (0,0) .. controls ++(0,-.45) and ++ (0,-.45) .. (.8,0)node[pos=.25, shape=coordinate](DOT2){};
 \draw[thick] (0,0) .. controls ++(0,.45) and ++ (0,.45) .. (.8,0);
 \draw (-.15,.7) node { $\scs i$};
\draw (1.05,0) node { $\scs i$};
\draw (-.15,-.7) node { $\scs i$};
 \node at (.95,.65) {\tiny $f_3$};
 \node at (-1,-.05) {\tiny $ -a-1+f_2$};
  \node at (.95,-.65) {\tiny $f_1$};
 \node at (1.6,.3) { ${\bf s}$};
 \filldraw[thick]  (DOT3) circle (2.5pt);
  \filldraw[thick]  (DOT2) circle (2.5pt);
  \filldraw[thick]  (DOT1) circle (2.5pt);
\end{tikzpicture} }
\right)
\end{equation*}
\item Suppose ${\bf s}=(s_1,\ldots,s_m)$ with $s_i=0,s_{i+1}=b$.  Then we have the following equality of bimodule homomorphisms.
\begin{equation*}
 \Psi\left(
 \hackcenter{\begin{tikzpicture}[scale=0.8]
    \draw[thick,<-] (0,0) .. controls ++(0,.5) and ++(0,-.5) .. (.75,1);
    \draw[thick] (.75,0) .. controls ++(0,.5) and ++(0,-.5) .. (0,1);
    \draw[thick, ->] (0,1 ) .. controls ++(0,.5) and ++(0,-.5) .. (.75,2);
    \draw[thick] (.75,1) .. controls ++(0,.5) and ++(0,-.5) .. (0,2);
        \node at (-.2,.15) {\tiny $i$};
    \node at (.95,.15) {\tiny $i$};
\node at (1.1,1.1) {${\bf s}$};
\end{tikzpicture}}
\right)
\;\; = \;\;
\Psi\left(
\hackcenter{\begin{tikzpicture}[scale=0.8]
    \draw[thick, <-] (0,0) -- (0,2);
    \draw[thick, ->] (.75,0) -- (.75,2);
     \node at (-.2,.2) {\tiny $i$};
    \node at (.95,.2) {\tiny $i$};
\node at (1.1,1.1) {${\bf s}$};
\end{tikzpicture}}
\right)
\;\; + \;\;
\Psi\left(
\sum_{\overset{f_1+f_2+f_3}{=b-1}}\hackcenter{
 \begin{tikzpicture}[scale=0.8]
 \draw[thick,<-] (0,-1.0) .. controls ++(0,.5) and ++ (0,.5) .. (.8,-1.0) node[pos=.75, shape=coordinate](DOT1){};
  \draw[thick,->] (0,1.0) .. controls ++(0,-.5) and ++ (0,-.5) .. (.8,1.0) node[pos=.75, shape=coordinate](DOT3){};
 \draw[thick ] (0,0) .. controls ++(0,-.45) and ++ (0,-.45) .. (.8,0)node[pos=.25, shape=coordinate](DOT2){};
 \draw[thick, ->] (0,0) .. controls ++(0,.45) and ++ (0,.45) .. (.8,0);
 \draw (-.15,.7) node { $\scs i$};
\draw (1.05,0) node { $\scs i$};
\draw (-.15,-.7) node { $\scs i$};
 \node at (.95,.65) {\tiny $f_3$};
 \node at (-1,-.05) {\tiny $-b-1+f_2 $};
  \node at (.95,-.65) {\tiny $f_1$};
 \node at (1.6,.3) { ${\bf s}$};
 \filldraw[thick]  (DOT3) circle (2.5pt);
  \filldraw[thick]  (DOT2) circle (2.5pt);
  \filldraw[thick]  (DOT1) circle (2.5pt);
\end{tikzpicture} }
\right)
\end{equation*}
\end{enumerate}
\end{proposition}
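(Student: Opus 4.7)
The plan is to verify each equation by direct computation on bimodule generators, exploiting that one of the three terms in each equation vanishes in the stated weight. For part~(1), with $s_{i+1}=0$, the bimodule $\mathsf{E}_i\mathbbm{1}_{\mathbf{s}}$ is zero (the ladder definition of $\mathsf{E}_i$ requires $s_{i+1}\geq 1$), so $\mathsf{F}_i\mathsf{E}_i\mathbbm{1}_{\mathbf{s}}=0$ and the first summand on the right-hand side---the morphism factoring through the identity on $\mathsf{F}_i\mathsf{E}_i$---contributes nothing. The content of the proposition is therefore to show that the sideways-crossing endomorphism of $\mathsf{E}_i\mathsf{F}_i\mathbbm{1}_{\mathbf{s}}$ on the left agrees with the bubble sum on the right.

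First I would enumerate generators of $\mathsf{E}_i\mathsf{F}_i\mathbbm{1}_{\mathbf{s}}$: these are the ladder diagrams obtained by stacking an $\mathsf{E}_i$-ladder on top of an $\mathsf{F}_i$-ladder, so the left red strand has thickness $a$ on the outside and $a-1$ between the two steps, the right red strand has thickness $0$ on the outside and $1$ in between, with black strands passing through and possibly carrying dots on the intermediate thin red step. Then I would compute both sides on such a generator: the sideways crossing was written out in Section~\ref{subsectionsidewayssame}, and the bubble sum expands using the cup and cap formulas from the definition of $\Psi$ together with the explicit counterclockwise and clockwise bubble homomorphisms \eqref{counterformula} and \eqref{clockwiseformula} derived in Sections~\ref{sectioncounterclockwise} and~\ref{sectionclockwise}.

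The final step is to match the two explicit formulas. Both reduce to sums of diagrams with dots on the two external red strands and on the central thick black strand, indexed by triples $(f_1,f_2,f_3)$ with $f_1+f_2+f_3=a-1$ on the bubble side and by elementary/complete symmetric function degrees on the crossing side. Crucially, $s_{i+1}=0$ truncates the bubble formulas: in the clockwise bubble formula the sum over $\beta$ runs from $0$ to $s_{i+1}=0$, collapsing to a single term, while the counterclockwise formula only produces $E$-dots on the left strand. A re-indexing together with the symmetric function identity $\sum_{x+y=j}(-1)^y h_x e_y=\delta_{j,0}$ (and the infinite Grassmannian relation, already checked for $\Psi$) identifies the two expressions.

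Part~(2) is the mirror argument: with $s_i=0$ one has $\mathsf{F}_i\mathbbm{1}_{\mathbf{s}}=0$, hence $\mathsf{E}_i\mathsf{F}_i\mathbbm{1}_{\mathbf{s}}=0$, and the same type of computation is carried out on generators of $\mathsf{F}_i\mathsf{E}_i\mathbbm{1}_{\mathbf{s}}$. Alternatively, one can invoke the algebra symmetry $\tau$ to derive part~(2) from part~(1). The main obstacle is the combinatorial bookkeeping: several nested summations over dot multiplicities must be re-indexed and collapsed, and the equality holds only after applying symmetric-function identities rather than term-by-term. The $s_{i+1}=0$ (respectively $s_i=0$) hypothesis truncates the sums sharply enough that this reorganization can be performed explicitly.
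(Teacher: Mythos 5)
You have the structure of the argument backwards, and this is a genuine gap. Trace the orientations carefully: in part (1), the \emph{left-hand side} is the double sideways crossing $\mathsf{E}_i\mathsf{F}_i\mathbbm{1}_{\bf s}\to\mathsf{F}_i\mathsf{E}_i\mathbbm{1}_{\bf s}\to\mathsf{E}_i\mathsf{F}_i\mathbbm{1}_{\bf s}$, and \emph{that} is the term which factors through $\mathsf{F}_i\mathsf{E}_i\mathbbm{1}_{\bf s}$ and hence vanishes when $s_{i+1}=0$. The first summand on the right-hand side is the identity morphism on $\mathsf{E}_i\mathsf{F}_i\mathbbm{1}_{\bf s}$, which is a nonzero bimodule (its generators are exactly the ladders you describe), so it certainly does not ``contribute nothing.'' You have conflated the two.

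Consequently your stated ``content of the proposition''---that the sideways-crossing endomorphism agrees with the bubble sum---is false: the left side is the zero map while the bubble sum, as you would discover by carrying out your own calculation, evaluates to $-\mathrm{Id}$. The actual content, after the left side vanishes for structural reasons, is the identity $\mathrm{Id}+\text{(bubble sum)}=0$, i.e.\ that the bubble sum is $-\mathrm{Id}$; this is the ``routine calculation'' the paper refers to, and in it one finds that only the term with $f_1=f_2=0$ and $f_3=a-1$ survives (rather than a delicate re-indexing over all triples). Your plan to compute both ends on generators and your intended use of the clockwise/counterclockwise bubble formulas and the symmetric function identity would eventually surface this discrepancy, but as written the argument does not close; you need to swap which side of the equation collapses and then show the remaining bubble contribution is minus the identity. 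The remark that part (2) follows from part (1) by the symmetry $\tau$ is fine, though the same sign-flip in your setup would need to be fixed there as well.
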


\begin{proof}
Both items are proved in the same way.  In fact, the second part follows from the first part by symmetry so we only sketch the proof of the first equation.

By definition the term on the left side of the equation vanishes.  It is a routine calculation to check that the second term on the right side is negative of the identity morphism.  Note that in the course of the computation one sees that $f_1=f_2=0$ and $f_3=a-1$.
\end{proof}

The next crucial result is due to Cautis.
Let ${\bf s}=(s_1, \ldots, s_m)$ and then set
${\bf s'}=(s_1, \ldots, s_i,0,s_{i+1},\ldots,s_m)$ to be a sequence of $m+1$ integers.
\begin{proposition}
\label{sabinprop}
For any sequence ${\bf s}$ there are isomorphisms
\begin{enumerate}
\item If $s_i \geq s_{i+1}$ then
$ \mathsf{E}_i \mathsf{F}_i 1_{\mathbf{s}} \cong
\mathsf{F}_i \mathsf{E}_i 1_{\mathbf{s}} \oplus
[s_i-s_{i+1}] 1_{\mathbf{s}}$,
\item If $s_i \leq s_{i+1}$ then
$ \mathsf{F}_i \mathsf{E}_i 1_{\mathbf{s}} \cong
\mathsf{E}_i \mathsf{F}_i 1_{\mathbf{s}} \oplus
[s_{i+1}-s_{i}] 1_{\mathbf{s}}$.
\end{enumerate}
\end{proposition}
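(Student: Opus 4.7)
The plan is to apply Cautis's key observation, which reduces the general $\mathcal{EF}/\mathcal{FE}$ decomposition to the special case of Proposition \ref{EFdecomspecialcasea00b} by passing to an auxiliary sequence with a zero weight inserted. Specifically, I would introduce $\mathbf{s}' = (s_1, \ldots, s_i, 0, s_{i+1}, \ldots, s_m)$ of length $m+1$, so that $(\mathbf{s}')^i = \mathbf{s}$, and use the splitter bimodules of Section \ref{sec:reddotted}, together with the unzip maps $\epsilon_{(s_i,1)}^i$, $\epsilon_{(1,s_{i+1})}^{i+1}$ and the zup maps $\zeta$, to explicitly connect the $2$-representations for $\mathbf{s}$ and $\mathbf{s}'$.

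In the $\mathbf{s}'$ picture, both position $i$ (where $s'_{i+1} = 0$) and position $i+1$ (where $s'_{i+1}=0,\ s'_{i+2} = s_{i+1}$) fall under the hypothesis of Proposition \ref{EFdecomspecialcasea00b}, and so the $\mathcal{EF}/\mathcal{FE}$ decomposition already holds at both of these positions. The key point is to identify the 1-morphisms $\mathsf{E}_i 1_{\mathbf{s}}$ and $\mathsf{F}_i 1_{\mathbf{s}}$, as well as the compositions $\mathsf{E}_i \mathsf{F}_i 1_{\mathbf{s}}$ and $\mathsf{F}_i \mathsf{E}_i 1_{\mathbf{s}}$, with compositions of splitter bimodules and categorified quantum group operators from the $\mathbf{s}'$ picture. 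Once this identification is in place, the isotopy, KLR, mixed (Proposition \ref{mixedprop}), and infinite Grassmannian relations already verified, combined with the two special-case $\mathfrak{sl}_2$ decompositions in $\mathbf{s}'$, allow one to formally transfer the decomposition back down to $\mathbf{s}$.

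The main obstacle will be carefully setting up the correspondence between 1-morphisms for $\mathbf{s}$ and compositions involving the splitter bimodules for $\mathbf{s}'$, and in particular tracking the grading shifts so that precisely the $[s_i - s_{i+1}]$ (respectively $[s_{i+1} - s_i]$) summand emerges. The adjunction structure supplied by the unzip and zup maps is what makes this possible: it ensures that the decomposition at the ``extra'' position $i+1$ in $\mathbf{s}'$, where one of the bimodules vanishes for weight reasons, is exactly the extra summand needed to match the predicted decomposition in $\mathbf{s}$. The remainder of the argument is then formal manipulation using relations already established in earlier sections.
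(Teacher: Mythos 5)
Your proposal correctly identifies the strategic setup that the paper credits to Cautis: insert a zero weight to pass from $\mathbf{s}$ to $\mathbf{s}'=(\ldots,s_i,0,s_{i+1},\ldots)$, identify the functors accordingly, and exploit the already-verified special case of Proposition~\ref{EFdecomspecialcasea00b}. But the actual mechanism that makes the ``transfer back down to $\mathbf{s}$'' work is left completely unspecified, and that mechanism is where the content of the proof lies. The paper does not merely ``formally manipulate using relations already established.'' It fixes a single auxiliary bimodule, $\mathsf{E}_i\mathsf{F}_{i+1}\mathsf{E}_{i+1}\mathsf{F}_i 1_{\mathbf{s}'}$, decomposes it in two different orders (first swapping $\mathsf{E}_i$ past $\mathsf{F}_{i+1}$ by the mixed relation and then applying the $\mathcal{EF}/\mathcal{FE}$ decomposition at position $i$; and alternatively decomposing at position $i+1$ first), and then equates the two resulting direct-sum expressions. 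The decompositions used in the interior of this computation, at local weights $(s_i,1)$ and $(1,s_{i+1})$, are \emph{not} instances of the special case but of the general statement one is proving; the argument therefore runs by \emph{induction on $s_i+s_{i+1}$}, which your sketch never invokes. Finally, after equating the two expansions one cancels the common summand, and the mismatch $[s_{i+1}][s_i-1]$ versus $[s_i][s_{i+1}-1]$ is resolved by the $q$-integer identity $[s_{i+1}][s_i-1]=[s_i][s_{i+1}-1]+[s_i-s_{i+1}]$ together with the \emph{Krull--Schmidt theorem} (the Hom-spaces are finite dimensional in each degree). Krull--Schmidt is a genuinely new ingredient relative to the 2-categorical relations, and it is the step that forces the desired isomorphism into existence; it cannot be absorbed into ``formal manipulation.''

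Two further issues. First, your explanation of where the extra $[s_i-s_{i+1}]$ comes from --- that ``one of the bimodules vanishes for weight reasons'' at the extra position --- is not correct. Nothing vanishes; the special case gives nontrivial isomorphisms $\mathsf{E}_i\mathsf{F}_i 1_{\mathbf{s}'}\cong[s_i]1_{\mathbf{s}'}$ and $\mathsf{F}_{i+1}\mathsf{E}_{i+1}1_{\mathbf{s}'}\cong[s_{i+1}]1_{\mathbf{s}'}$, and it is the difference of the two resulting graded multiplicities, extracted via Krull--Schmidt, that yields $[s_i-s_{i+1}]$. Second, the detour through splitter bimodules and the unzip/zup maps $\epsilon,\zeta$ is not needed (and is not what the paper does); the identification of $W(\mathbf{s},n)$ with $W(\mathbf{s}',n)$ is the straightforward canonical one, and the functor $\mathsf{E}_i1_{\mathbf{s}}$ is simply identified with $\mathsf{E}_i\mathsf{E}_{i+1}1_{\mathbf{s}'}$, $\mathsf{F}_i1_{\mathbf{s}}$ with $\mathsf{F}_{i+1}\mathsf{F}_i1_{\mathbf{s}'}$, which follows directly from the ladder definitions. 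In short: right opening move, but the crucial double-expansion, induction, and Krull--Schmidt cancellation are missing, and the stated reason for the extra summand is wrong.
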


\begin{proof}
We will only establish the first isomorphism as the second one is proved in a similar way.  We proceed by induction on $s_i+s_{i+1}$.
The isomorphism exists for the special case that $s_{i+1}=0$ by Proposition ~\ref{EFdecomspecialcasea00b}.

Decomposing $ \mathsf{E}_i \mathsf{F}_i 1_{\mathbf{s}}$ is the same as decomposing
$ \mathsf{E}_i \mathsf{E}_{i+1} \mathsf{F}_{i+1}  \mathsf{F}_i 1_{\mathbf{s'}}$ under the canonical identification of $W({\bf s},n)$ with $W({\bf s}',n)$.
\[
\hackcenter{
\begin{tikzpicture}[scale=.4]
	\draw [thick,red, double, ] (2,-2) to (2,2);
	\draw [thick,red, double, ] (-2,-2) to (-2,2);
	\draw [very thick, red ] (-2,-1.5) to (2,-.5);
	\draw [very thick, red ] (-2,1.5) to (2,.5);
		\node at (-2,-2.55) {\tiny $s_i$};	
    \node at (2,-2.55) {\tiny $s_{i+1}$};
	\node at (-2,2.5) {\tiny $s_i$};
	\node at (2,2.5) {\tiny $s_{i+1}$};
\end{tikzpicture}}
\qquad \rightsquigarrow\qquad
\hackcenter{
\begin{tikzpicture}[scale=.4]
\draw [thick,red, double, ] (6,-2) to (6,2.5);
\draw [thick,red, dotted ] (2,-2) to (2,2.5);
	\draw [thick,red, double, ] (-2,-2) to (-2,2.5);
	\draw [very thick, red ] (2,-1) to (-2,-1.5);
    \draw [very thick, red ] (2,-1) to (2,-.5) to (6,0);
    \draw [very thick, red ]  (6,.5) to (2,1) to (2,1.5) to (-2,2);
\node at (-2,-2.55) {\tiny $s_i$};	
\node at (2,-2.55) {\tiny $0$};
\node at (6,-2.55) {\tiny $s_{i+1}$};
	\node at (-2,3.05) {\tiny $s_i$};
	\node at (2,3.05) {\tiny $0$};
		\node at (6,3.05) {\tiny $s_{i+1}$};
\end{tikzpicture}}
\]

We begin by considering
$\mathsf{E}_{i} \mathsf{F}_{i+1} \mathsf{E}_{i+1} \mathsf{F}_{i} 1_{\mathbf{s'}}$.
Note that
\begin{align*}
\mathsf{E}_{i} \mathsf{F}_{i+1} \mathsf{E}_{i+1} \mathsf{F}_{i} 1_{\mathbf{s'}}
&\cong
\mathsf{F}_{i+1} \mathsf{E}_{i} \mathsf{F}_{i} \mathsf{E}_{i+1} 1_{\mathbf{s'}} \\
&\cong
\mathsf{F}_{i+1} \mathsf{F}_{i} \mathsf{E}_{i} \mathsf{E}_{i+1} 1_{\mathbf{s'}}
\oplus [s_i-1] \mathsf{F}_{i+1} \mathsf{E}_{i+1} 1_{\mathbf{s'}} \\
&\cong
\mathsf{F}_{i+1} \mathsf{F}_{i} \mathsf{E}_{i} \mathsf{E}_{i+1} 1_{\mathbf{s'}}
\oplus [s_{i+1}][s_i-1] 1_{\mathbf{s'}}
\end{align*}
where the first isomorphism is a consequence of Proposition ~\ref{mixedprop}, the second isomorphism follows from induction and the third isomorphism follows from ~\ref{EFdecomspecialcasea00b}.
On the other hand
\begin{align*}
\mathsf{E}_{i} \mathsf{F}_{i+1} \mathsf{E}_{i+1} \mathsf{F}_{i} 1_{\mathbf{s'}}
&\cong
\mathsf{E}_{i} \mathsf{E}_{i+1} \mathsf{F}_{i+1} \mathsf{F}_{i} 1_{\mathbf{s'}}
\oplus [s_{i+1}-1] \mathsf{E}_{i} \mathsf{F}_{i} 1_{\mathbf{s'}} \\
&\cong
\mathsf{E}_{i} \mathsf{E}_{i+1} \mathsf{F}_{i+1} \mathsf{F}_{i} 1_{\mathbf{s'}}
\oplus [s_i][s_{i+1}-1]  1_{\mathbf{s'}}
\end{align*}
where the first isomorphism follows from induction and the second isomorphism is a consequence of Proposition ~\ref{EFdecomspecialcasea00b}.  Thus
\begin{equation}
\label{preisoEFdecomp}
\mathsf{E}_{i} \mathsf{E}_{i+1} \mathsf{F}_{i+1} \mathsf{F}_{i} 1_{\mathbf{s'}}
\oplus [s_i][s_{i+1}-1]  1_{\mathbf{s'}}
\cong
\mathsf{F}_{i+1} \mathsf{F}_{i} \mathsf{E}_{i} \mathsf{E}_{i+1} 1_{\mathbf{s'}}
\oplus [s_{i+1}][s_i-1] 1_{\mathbf{s'}}.
\end{equation}
Since the graded homomorphism spaces between these bimodules are finite dimensional, we may apply the Krull-Schmidt theorem to ~\eqref{preisoEFdecomp} and obtain
\begin{equation*}
\mathsf{E}_{i} \mathsf{E}_{i+1} \mathsf{F}_{i+1} \mathsf{F}_{i} 1_{\mathbf{s'}}
\cong
\mathsf{F}_{i+1} \mathsf{F}_{i} \mathsf{E}_{i} \mathsf{E}_{i+1} 1_{\mathbf{s'}}
\oplus [s_i-s_{i+1}] 1_{\mathbf{s'}}
\end{equation*}
which proves the proposition.
\end{proof}

\begin{proposition}
\label{EFdecompprop}
Let ${\bf s}=(s_1,\ldots,s_m)$.  Then we have the following equalities of bimodule homomorphisms
\begin{enumerate}
\item
\begin{equation*}
\Psi\left(
 \hackcenter{\begin{tikzpicture}[scale=0.8]
    \draw[thick] (0,0) .. controls ++(0,.5) and ++(0,-.5) .. (.75,1);
    \draw[thick,<-] (.75,0) .. controls ++(0,.5) and ++(0,-.5) .. (0,1);
    \draw[thick] (0,1 ) .. controls ++(0,.5) and ++(0,-.5) .. (.75,2);
    \draw[thick, ->] (.75,1) .. controls ++(0,.5) and ++(0,-.5) .. (0,2);
        \node at (-.2,.15) {\tiny $i$};
    \node at (.95,.15) {\tiny $i$};
    \node at (1.1,1.1) {${\bf s}$};
\end{tikzpicture}}
\right)
\;\; = \;\;
\Psi\left(
\hackcenter{\begin{tikzpicture}[scale=0.8]
    \draw[thick, ->] (0,0) -- (0,2);
    \draw[thick, <-] (.75,0) -- (.75,2);
     \node at (-.2,.2) {\tiny $i$};
    \node at (.95,.2) {\tiny $i$};
\node at (1.1,1.1) {${\bf s}$};
\end{tikzpicture}}
\right)
\;\; +\;\;
\Psi\left(
\sum_{\overset{f_1+f_2+f_3}{=a-b-1}}\hackcenter{
 \begin{tikzpicture}[scale=0.8]
 \draw[thick,->] (0,-1.0) .. controls ++(0,.5) and ++ (0,.5) .. (.8,-1.0) node[pos=.75, shape=coordinate](DOT1){};
  \draw[thick,<-] (0,1.0) .. controls ++(0,-.5) and ++ (0,-.5) .. (.8,1.0) node[pos=.75, shape=coordinate](DOT3){};
 \draw[thick,->] (0,0) .. controls ++(0,-.45) and ++ (0,-.45) .. (.8,0)node[pos=.25, shape=coordinate](DOT2){};
 \draw[thick] (0,0) .. controls ++(0,.45) and ++ (0,.45) .. (.8,0);
 \draw (-.15,.7) node { $\scs i$};
\draw (1.05,0) node { $\scs i$};
\draw (-.15,-.7) node { $\scs i$};
 \node at (.95,.65) {\tiny $f_3$};
 \node at (-1.2,-.05) {\tiny $ -a+b-1+f_2$};
  \node at (.95,-.65) {\tiny $f_1$};
 \node at (1.6,.3) { ${\bf s}$};
 \filldraw[thick]  (DOT3) circle (2.5pt);
  \filldraw[thick]  (DOT2) circle (2.5pt);
  \filldraw[thick]  (DOT1) circle (2.5pt);
\end{tikzpicture} }
\right)
\end{equation*}
\item
\begin{equation*}
 \Psi\left(
 \hackcenter{\begin{tikzpicture}[scale=0.8]
    \draw[thick,<-] (0,0) .. controls ++(0,.5) and ++(0,-.5) .. (.75,1);
    \draw[thick] (.75,0) .. controls ++(0,.5) and ++(0,-.5) .. (0,1);
    \draw[thick, ->] (0,1 ) .. controls ++(0,.5) and ++(0,-.5) .. (.75,2);
    \draw[thick] (.75,1) .. controls ++(0,.5) and ++(0,-.5) .. (0,2);
        \node at (-.2,.15) {\tiny $i$};
    \node at (.95,.15) {\tiny $i$};
\node at (1.1,1.1) {${\bf s}$};
\end{tikzpicture}}
\right)
\;\; = \;\;
\Psi\left(
\hackcenter{\begin{tikzpicture}[scale=0.8]
    \draw[thick, <-] (0,0) -- (0,2);
    \draw[thick, ->] (.75,0) -- (.75,2);
     \node at (-.2,.2) {\tiny $i$};
    \node at (.95,.2) {\tiny $i$};
\node at (1.1,1.1) {${\bf s}$};
\end{tikzpicture}}
\right)
\;\; + \;\;
\Psi\left(
\sum_{\overset{f_1+f_2+f_3}{=b-a-1}}\hackcenter{
 \begin{tikzpicture}[scale=0.8]
 \draw[thick,<-] (0,-1.0) .. controls ++(0,.5) and ++ (0,.5) .. (.8,-1.0) node[pos=.75, shape=coordinate](DOT1){};
  \draw[thick,->] (0,1.0) .. controls ++(0,-.5) and ++ (0,-.5) .. (.8,1.0) node[pos=.75, shape=coordinate](DOT3){};
 \draw[thick ] (0,0) .. controls ++(0,-.45) and ++ (0,-.45) .. (.8,0)node[pos=.25, shape=coordinate](DOT2){};
 \draw[thick, ->] (0,0) .. controls ++(0,.45) and ++ (0,.45) .. (.8,0);
 \draw (-.15,.7) node { $\scs i$};
\draw (1.05,0) node { $\scs i$};
\draw (-.15,-.7) node { $\scs i$};
 \node at (.95,.65) {\tiny $f_3$};
 \node at (-1.2,-.05) {\tiny $-b+a-1+f_2 $};
  \node at (.95,-.65) {\tiny $f_1$};
 \node at (1.6,.3) { ${\bf s}$};
 \filldraw[thick]  (DOT3) circle (2.5pt);
  \filldraw[thick]  (DOT2) circle (2.5pt);
  \filldraw[thick]  (DOT1) circle (2.5pt);
\end{tikzpicture} }
\right)
\end{equation*}
\end{enumerate}
\end{proposition}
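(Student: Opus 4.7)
The plan is to deduce both equalities from the special case established in Proposition~\ref{EFdecomspecialcasea00b} together with the abstract decomposition of Proposition~\ref{sabinprop} (the argument supplied by Cautis). Since both statements are symmetric in the roles of $i$ and $i+1$, I will focus on part (1) and obtain (2) by the symmetry $\tau$ described in the section on symmetries of $W(\mathbf{s},n)$. I proceed by induction on $s_{i+1}$, the base case $s_{i+1}=0$ being exactly Proposition~\ref{EFdecomspecialcasea00b}(1).

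For the inductive step, I would pass to the sequence $\mathbf{s}' = (s_1,\dots,s_i,0,s_{i+1},\dots,s_m)$ via the non-unital inclusion $\Phi_{i+1,s_{i+1}}\maps W(\mathbf{s}^{i+1},n) \to W(\mathbf{s}',n)$ from \eqref{eq:inclusion}. Under this identification, $\mathsf{E}_i\mathsf{F}_i 1_{\mathbf{s}}$ corresponds to the summand $\mathsf{E}_i\mathsf{E}_{i+1}\mathsf{F}_{i+1}\mathsf{F}_i 1_{\mathbf{s}'}$, and similarly for $\mathsf{F}_i\mathsf{E}_i 1_{\mathbf{s}}$. Using the mixed relations of Proposition~\ref{mixedprop} repeatedly, I can rewrite
\[
\mathsf{E}_i \mathsf{E}_{i+1} \mathsf{F}_{i+1} \mathsf{F}_i 1_{\mathbf{s}'}
\;\cong\;
\mathsf{E}_i \mathsf{F}_{i+1} \mathsf{E}_{i+1} \mathsf{F}_i 1_{\mathbf{s}'}
\;\cong\;
\mathsf{F}_{i+1} \mathsf{E}_i \mathsf{F}_i \mathsf{E}_{i+1} 1_{\mathbf{s}'},
\]
so that the bimodule map we want to verify can be converted into a composition of two $\mathsf{E}\mathsf{F}$-decomposition maps for the smaller sequences obtained by decrementing $s_{i+1}$ (with a $0$ inserted). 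The inductive hypothesis then applies to each of these, and the claimed identity becomes a comparison of two explicit sums of diagrams, each built from the formulas for cups, caps, and sideways crossings given in Sections~\ref{subsectionsidewayssame}--\ref{sectionclockwise}. Since $\Phi_{i+1,s_{i+1}}$ is injective, verifying the identity after this reduction implies it before.

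The main obstacle will be the bookkeeping of the triple-sum on the right-hand side: after rewriting and applying the induction twice, one obtains a large sum of planar diagrams, and matching it term by term against the claimed formula
\[
\sum_{f_1+f_2+f_3 = a-b-1}\!\!\!\!\cdots
\]
requires a Pieri-type combinatorial identity among elementary and complete symmetric polynomials, essentially the recursion
\[
H_n \;=\; \sum_{k\geq 0} (-1)^{k+1}\,E_k\,H_{n-k} + \delta_{n,0},
\]
applied twice (once for each reduction). The appearance of the bubble coefficients from Sections~\ref{sectioncounterclockwise}--\ref{sectionclockwise} is governed precisely by this identity, which accounts for the $-\bar{s}_i-1+f_2$ and $\bar{s}_i-1+f_2$ shifts in the central dot of the formula. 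Once this combinatorial reconciliation is checked, the Krull--Schmidt argument from Proposition~\ref{sabinprop} guarantees that the difference of the two sides, viewed as an endomorphism of $\mathsf{E}_i\mathsf{F}_i 1_\mathbf{s}$ (respectively $\mathsf{F}_i\mathsf{E}_i 1_\mathbf{s}$), is zero on each indecomposable summand, completing the proof.
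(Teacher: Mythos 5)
Your plan diverges from the paper's actual proof, and as written it has a genuine gap. The paper does not attempt the explicit diagrammatic verification at all: it cites the ``implicit structure'' theorem of Cautis and Lauda~\cite[Theorem 1.1]{CLau} (see also~\cite{Brundan2}), which asserts that once the remaining $2$-categorical relations of $\cal{U}$ have been verified and the abstract direct-sum decompositions of Proposition~\ref{sabinprop} are in hand, the explicit $\sE\sF$- and $\sF\sE$-decomposition maps are \emph{automatically} isomorphisms of the stated form, with no further computation needed. This is precisely the Cautis trick acknowledged in the introduction (``A key observation provided to us by Cautis allowed us to avoid these calculations''), and it is what Section~\ref{sec:EF} is structured around: Propositions~\ref{EFdecomspecialcasea00b} and~\ref{sabinprop} are proved so that the rigidity theorem of~\cite{CLau} can then be invoked in one line. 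Your proposal instead tries to redo the computation the paper is specifically built to avoid.

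The more serious issue is the final step. You write that after the inductive reduction ``the Krull--Schmidt argument from Proposition~\ref{sabinprop} guarantees that the difference of the two sides \ldots is zero on each indecomposable summand.'' Krull--Schmidt establishes an \emph{isomorphism of objects}: from $A \oplus C \cong B \oplus C$ one cancels $C$ to get $A \cong B$. It does not identify a particular pair of $2$-morphisms, nor does it show that a given endomorphism vanishes. Indecomposable summands of $\mathsf{E}_i\mathsf{F}_i 1_{\mathbf{s}}$ generally admit nonzero endomorphisms (e.g.\ multiplication by non-unit central elements), so knowing that both sides of the claimed identity are bimodule maps between isomorphic objects gives you nothing about whether their difference vanishes. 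That is exactly the gap that~\cite[Theorem 1.1]{CLau} closes: it is a rigidity statement forcing the explicitly written maps to realize the abstract isomorphism, given that the other relations hold. Without citing such a result, your argument would instead have to actually carry out the ``Pieri-type combinatorial identity \ldots applied twice'' at the level of bimodule elements (across all thicknesses $k$ of black strands and all values of $\delta$), and further justify that the identity for $\mathbf{s}$ follows from the one for $\mathbf{s}'$ under $\Phi_{i+1,s_{i+1}}$ — the inclusion is injective on the algebra, but you would also need the generating $2$-morphisms of $\Psi$ to be compatible with $\Phi$, which is not established in the paper. The inductive skeleton (insert a $0$, use the mixed relation, cancel) is the right one for proving the \emph{abstract} isomorphism, and indeed it is the proof of Proposition~\ref{sabinprop}; it is not, by itself, a proof of the equality of the specific maps asserted here.
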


\begin{proof}
In \cite[Theorem 1.1]{CLau} (see also \cite{Brundan2}), it is proved that if one has all $2$-categorical relations
along with abstract isomorphisms (from Proposition ~\ref{sabinprop})
\begin{align*}
\mathsf{E}_i \mathsf{F}_i 1_{\mathbf{s}} &\cong
\mathsf{F}_i \mathsf{E}_i 1_{\mathbf{s}} \oplus
[s_i-s_{i+1}] 1_{\mathbf{s}} \\
\mathsf{F}_i \mathsf{E}_i 1_{\mathbf{s}} &\cong
\mathsf{E}_i \mathsf{F}_i 1_{\mathbf{s}} \oplus
[s_{i+1}-s_{i}] 1_{\mathbf{s}}
\end{align*}
for $s_i \geq s_{i+1}$ and $s_{i+1} \geq s_i$ respectively that these isomorphisms
could be written in terms of generators for the $2$-category and they satisfy the relations of the proposition.
\end{proof}

\section{Main Theorems}
In this section we state the  main results of this paper.

\begin{theorem}
\label{theorem2rep}
The assignment $\Psi$ extends to a $2$-functor
$\Psi \maps \cal{U} \to \Bim(n)$.
\end{theorem}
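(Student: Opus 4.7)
The plan is straightforward in structure but computationally substantial: the theorem reduces to verifying that the assignment $\Psi$ defined in Section~\ref{sectionconj2rep} respects each of the defining relations of $\cal{U}$ given in Definition~\ref{defU_cat}. Since $\Psi$ is already specified on objects, on generating 1-morphisms (via the ladder bimodules $\mathsf{E}_i^{(a)}\mathsf{1}_{\mathbf{s}}$ and $\mathsf{F}_i^{(a)}\mathsf{1}_{\mathbf{s}}$), and on generating 2-morphisms (dots, crossings, cups, caps), extending it to a 2-functor is automatic once we know the relations hold. The bulk of the proof is therefore a case-by-case verification, which is exactly what the subsections of Section~\ref{sectionrelationsproved} are designed to accomplish.

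Concretely, I would organize the proof as follows. First I would invoke the biadjointness and dot/crossing cyclicity relations from Propositions~\ref{isotopicprop}--\ref{crossingcyclicity}, which establish that cups, caps, and sideways crossings in $\Bim(n)$ behave as dictated by~(\ref{eq_biadjoint1-cyc})--(\ref{eq_crossr-gen-cyc}) and the cyclic relations~(\ref{eq_cyclic_dot-cyc})--(\ref{eq_cyclic}). Next I would collect the KLR relations: the quadratic relations~(\ref{eq_r2_ij-gen-cyc}), the nilHecke dot sliding~(\ref{eq:nil-dot}), the far-away dot sliding, and the triple-crossing relation~(\ref{heis:up-triple}), all of which are proved in the KLR subsection of Section~\ref{sectionrelationsproved} (using Proposition~\ref{nilheckeactionE^a} for the $i=j$ nilHecke action). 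The mixed relation~(\ref{mixed_rel-cyc}) for $i\neq j$ follows from Proposition~\ref{mixedprop}, which itself depends on the sideways crossing formulas in Lemmas~\ref{Sidecrossingform1} and~\ref{Sidecrossingform2}. The infinite Grassmannian relations are verified by combining the bubble formulas~(\ref{counterformula}) and~(\ref{clockwiseformula}) from Sections~\ref{sectioncounterclockwise}--\ref{sectionclockwise} and checking the resulting sum of diagrams collapses via elementary-complete symmetric function identities, as shown in the corresponding subsection.

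The hard part is the $\mathfrak{sl}_2$ relations, namely the $\cal{E}\cal{F}$ and $\cal{F}\cal{E}$ decompositions, and this is where I would spend most of the proof. Here the strategy is the indirect one already laid out in Section~\ref{sec:EF}: rather than attempting a direct verification of the explicit bimodule formula for the composite crossing-cap-cup diagrams, one first establishes the degenerate cases in Proposition~\ref{EFdecomspecialcasea00b} (where $s_{i+1}=0$ or $s_i=0$, which reduce essentially to the Soergel case), then uses Cautis's inductive argument in Proposition~\ref{sabinprop} to bootstrap from there: given the identification $W(\mathbf{s},n)\cong W(\mathbf{s}',n)$ inserting a zero entry, the bimodule $\cal{E}_i\cal{F}_i\mathbf{1}_{\mathbf{s}}$ is identified with $\cal{E}_i\cal{E}_{i+1}\cal{F}_{i+1}\cal{F}_i\mathbf{1}_{\mathbf{s}'}$, and two different orders of applying the already-verified relations, together with Krull–Schmidt, yield the abstract isomorphism by induction on $s_i+s_{i+1}$. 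Finally, invoking the uniqueness result \cite[Theorem 1.1]{CLau} (cf. \cite{Brundan2}), which says that given all the other 2-categorical relations, any such abstract $\cal{E}\cal{F}$/$\cal{F}\cal{E}$ isomorphism is automatically realised by the standard generators in the correct form, promotes this abstract isomorphism to the explicit relation required in Proposition~\ref{EFdecompprop}.

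With all these pieces in place, the proof of Theorem~\ref{theorem2rep} is then just the sentence: the defining relations of $\cal{U}$ are preserved under $\Psi$ by the results collected in Section~\ref{sectionrelationsproved} (Propositions \ref{isotopicprop}, \ref{crossingcyclicity}, \ref{nilheckeactionE^a}, \ref{mixedprop}, \ref{sabinprop}, \ref{EFdecompprop} together with the KLR and infinite Grassmannian verifications), and since these are all the relations of $\cal{U}$, the assignment extends uniquely to a graded additive $\Bbbk$-linear 2-functor $\Psi\maps\cal{U}\to\Bim(n)$. The principal obstacle, as expected from the acknowledgements, is the $\cal{E}\cal{F}$ decomposition; without Cautis's trick of exchanging $\cal{E}_i\cal{F}_i$ for $\cal{E}_i\cal{E}_{i+1}\cal{F}_{i+1}\cal{F}_i$ through a zero-weight insertion, one would face a direct computation of compositions of the bimodule maps defined in Section~\ref{sectionconj2rep}, which the sideways-crossing formulas in Section~\ref{subsectionsidewayssame} suggest would be intractable.
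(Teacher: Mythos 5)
Your proposal is correct and follows essentially the same route as the paper: the paper's own proof of Theorem~\ref{theorem2rep} is a one-line reference to Section~\ref{sectionrelationsproved}, and your write-up is simply an accurate and well-organized expansion of what that reference entails, including the correct identification of Cautis's insertion trick as the key step for the $\cE\cF$ decomposition.
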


\begin{proof}
This follows from the check of the relations in Section \ref{sectionrelationsproved}.
\end{proof}

\begin{definition}
Let $\Br_m$ be the braid group of type $A_m$.  That is, it is generated by elements
$T_i, T_i'$ for $i=1,\ldots,m-1$ subject to relations
\begin{itemize}
\item $T_i T_i'=1=T_i'T_i$,
\item $T_i T_j = T_j T_i$ if $|i-j|>1$,
\item $T_i T_j T_i = T_j T_i T_j$ if $|i-j|=1$.
\end{itemize}
\end{definition}

It was shown in ~\cite{CautisKam} that a categorical $\cal{U}$ action gives rise to a categorical $\Br_m$ action.   Cautis and Kamnitzer's work extended the foundational work of Chuang and Rouquier \cite{CR} where $\mathfrak{sl}_2$ categorification was developed.

We will now recall the main result of \cite{CautisKam} in the context of a particular weight space of a representation of $\mathfrak{gl}_m$.

Let ${\bf s}=(s_1,\ldots,s_m)$.  There are complexes

\begin{align*}
\mathsf{T}_i 1_{\bf s}&=
\mathsf{E}_i^{(s_{i+1}-s_i)} 1_{\bf s}
\rightarrow
\mathsf{E}_i^{(s_{i+1}-s_i+1)} \mathsf{F}_i^{(1)} 1_{\bf s} \langle 1 \rangle
\rightarrow
\cdots
\rightarrow
\mathsf{E}_i^{(s_{i+1}-s_i+j)} \mathsf{F}_i^{(j)} 1_{\bf s} \langle j \rangle
\rightarrow
\cdots
\quad \quad \text{ for } s_{i+1} \geq s_i \\
&=
\mathsf{F}_i^{(s_{i}-s_{i+1})} 1_{\bf s}
\rightarrow
\mathsf{F}_i^{(s_{i}-s_{i+1}+1)} \mathsf{E}_i^{(1)} 1_{\bf s} \langle 1 \rangle
\rightarrow
\cdots
\rightarrow
\mathsf{F}_i^{(s_{i}-s_{i+1}+j)} \mathsf{E}_i^{(j)} 1_{\bf s} \langle j \rangle
\rightarrow
\cdots
\quad \quad \text{ for } s_{i} \geq s_{i+1}
\end{align*}

\begin{align*}
1_{\bf s} \mathsf{T}'_i &=
\cdots
\rightarrow
1_{\bf s} \mathsf{E}_i^{(j)}  \mathsf{F}_i^{(s_{i+1}-s_i+j)}  \langle -j \rangle
\rightarrow
\cdots
\rightarrow
1_{\bf s} \mathsf{E}_i^{(1)} \mathsf{F}_i^{(s_{i+1}-s_i+1)}   \langle -1 \rangle
\rightarrow
1_{\bf s} \mathsf{F}_i^{(s_{i+1}-s_i)}
\quad \quad \text{ for } s_{i+1} \geq s_i \\
&=
\cdots
\rightarrow
1_{\bf s} \mathsf{F}_i^{(j)}  \mathsf{E}_i^{(s_{i}-s_{i+1}+j)}  \langle -j \rangle
\rightarrow
\cdots
\rightarrow
1_{\bf s} \mathsf{F}_i^{(1)} \mathsf{E}_i^{(s_{i}-s_{i+1}+1)}   \langle -1 \rangle
\rightarrow
1_{\bf s} \mathsf{E}_i^{(s_{i}-s_{i+1})}
\quad \quad \text{ for } s_{i} \geq s_{i+1}
\end{align*}
where the differentials are given by explicit bimodule homomorphisms.
See for example~\cite[Section 2.2]{LQR}.

Note that when ${\bf s}=(1,\ldots,1)$, these complexes simplify to
\begin{equation}
\label{complexes1}
\mathsf{T}_i' =
\xymatrix{
\mathsf{E}_i \mathsf{F}_i 1_{\bf s} \langle -1 \rangle \ar[r]^-{d} & \mathsf{\Id}
}
\quad \quad \quad \quad
\mathsf{T}_i =
\xymatrix{
\mathsf{\Id}  \ar[r]^-{d'} & \mathsf{E}_i \mathsf{F}_i 1_{\bf s} \langle 1 \rangle
}
\end{equation}
with the differentials given by
\begin{equation}
\label{complexes2}
d\; =\;\;
\hackcenter{\begin{tikzpicture}[scale=0.8]
   \draw[thick, ->] (0,0) .. controls (0,1.0) and (.8,1.0) .. (.8,0);
    \node   at (0.8,-.25) {$\scs i$};
        \node at (1,0.75) {$\scs s$};
\end{tikzpicture}}
\quad \quad \quad \quad
d' \; =\;\;
\hackcenter{\begin{tikzpicture}[scale=0.8]
    \draw[thick, <-] (0,1.5) .. controls (0,.5) and (.8,.5) .. (.8,1.5);
   \node   at (0.8,1.75) {$\scs i$};
        \node at (1,0.75) {$\scs s$};
\end{tikzpicture}}
\end{equation}

\begin{theorem}
\label{mainbraidthm}
As functors on the homotopy category of $W({\bf s},n)$-modules, the complexes
$\mathsf{T}_i, \mathsf{T}_i'$ satisfy braid group relations.  That is, there are isomorphisms
\begin{itemize}
\item $ \mathsf{T}_i'\mathsf{T}_i \cong \Id$,
\item $\mathsf{T}_i \mathsf{T}_i' \cong \Id $,
\item $\mathsf{T}_i \mathsf{T}_j  \cong \mathsf{T}_j \mathsf{T}_i $ if $|i-j|>1$,
\item $\mathsf{T}_i \mathsf{T}_j \mathsf{T}_i \cong \mathsf{T}_j \mathsf{T}_i \mathsf{T}_j $ if $|i-j|=1$.
\end{itemize}
Furthermore, as endofunctors on $K^b(W(1^m,n) \dmod)$, the complexes $\mathsf{T}_i$ and $\mathsf{T}_i'$ satisfy strong braid group relations.
\end{theorem}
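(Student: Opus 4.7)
The plan splits into two parts matching the two assertions of the theorem. For the braid relations themselves (the weak version), I would invoke the general result of Cautis--Kamnitzer \cite{CautisKam} (and, in the rank-one case, Chuang--Rouquier \cite{CR}): any 2-representation of $\cal{U}(\mf{gl}_m)$ induces an action of the braid group $\Br_m$ on its homotopy category via Rickard complexes. Theorem \ref{theorem2rep} supplies precisely such a 2-representation $\Psi : \cal{U}(\mf{gl}_m) \to \Bim(n)$, and by construction the complexes $\mathsf{T}_i, \mathsf{T}_i'$ of the theorem are the images under $\Psi$ of the abstract Rickard complexes living in $K^b(\cal{U}(\mf{gl}_m))$. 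Applying $\Psi$ to the homotopy equivalences $\mathsf{T}_i'\mathsf{T}_i \simeq \Id$, $\mathsf{T}_i\mathsf{T}_i' \simeq \Id$, $\mathsf{T}_i\mathsf{T}_j \simeq \mathsf{T}_j\mathsf{T}_i$ (for $|i-j|>1$), and $\mathsf{T}_i\mathsf{T}_j\mathsf{T}_i \simeq \mathsf{T}_j\mathsf{T}_i\mathsf{T}_j$ (for $|i-j|=1$) proved by Cautis--Kamnitzer in the universal 2-category yields the four required isomorphisms. The $\mathcal{E}\mathcal{F}$ and $\mathcal{F}\mathcal{E}$ decompositions verified in Section \ref{sec:EF} are doing the critical work under the hood, since these are exactly what guarantee the Rickard complexes are invertible up to homotopy.

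For the strong braid group action in the specialization $\mathbf{s} = (1^m)$, the complexes degenerate to the two-term form \eqref{complexes1}--\eqref{complexes2}, which matches the shape of Rouquier's Soergel-bimodule complexes. My plan is to first verify that the 2-functor $\Psi$ factors through the Schur quotient $\cal{U}(1^m,1^m) / \langle \lambda \notin \Lambda(m,m) \rangle$. This is essentially a transparent check on the ladder bimodules: when every red strand carries label $1$, the generating 1-morphisms $\mathsf{E}_i^{(a)}\mathsf{1}_{\mathbf{s}}$ and $\mathsf{F}_i^{(a)}\mathsf{1}_{\mathbf{s}}$ vanish whenever the source or target weight has a component outside $\{0,1\}$, since one cannot combinatorially form such a ladder diagram. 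Next, one invokes the 2-functor $\Sigma_{m,m} : \cal{S}\cal{C}_1(m) \to \cal{U}(1^m,1^m) / \langle \lambda \notin \Lambda(m,m) \rangle$ of \cite{MSV}, which sends the Rouquier complexes in diagrammatic Soergel calculus to the two-term Rickard complexes of \eqref{complexes1}. Elias--Krasner \cite{EKras} proved that Rouquier's complexes define a strong (cobordism-functorial) braid group action on the homotopy category of Soergel bimodules. Composing their strong action with $\Psi \circ \Sigma_{m,m}$ then transports the functoriality under braid cobordisms to $K^b(W(1^m,n)\dmod)$.

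The main obstacle I expect is the careful verification that $\Psi$ truly factors through the Schur quotient in the $\mathbf{s}=(1^m)$ weight: beyond showing that the identity 1-morphisms at illegal weights become zero, one must confirm that all 2-morphisms supported on such weights are killed as well. This should reduce to the vanishing of the corresponding ladder bimodules together with the graphical description of cups, caps, and crossings from Section \ref{sectionconj2rep}, but the bookkeeping needs to be handled systematically across all generating 2-morphisms of $\cal{U}$. Once that factorization is established, the strong braid action is essentially an invocation of \cite{EKras} pulled across a composition of 2-functors. Note that for general $\mathbf{s}$ with some $s_i > 1$, the Rickard complexes $\mathsf{T}_i$ are no longer two-term, so the Elias--Krasner framework does not directly apply; this is consistent with the theorem only asserting the strong version in the specific case $\mathbf{s}=(1^m)$, and extending strongness beyond this case is explicitly left as a separate problem.
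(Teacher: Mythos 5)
Your proposal is correct and follows essentially the same route as the paper: the weak braid relations come from Theorem \ref{theorem2rep} together with Cautis--Kamnitzer \cite[Theorem 6.3]{CautisKam}, and the strong action for $\mathbf{s}=(1^m)$ comes from the factorization through the Schur quotient, the functor $\Sigma_{m,m}$ of \cite{MSV}, and the Elias--Krasner result \cite{EKras}. Your extra discussion of what must be checked for the factorization is a reasonable expansion of a step the paper leaves implicit (it simply cites \cite[Lemma 6.5]{MSV}), but it is not a deviation in method.
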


\begin{proof}
These complexes satisfy braid group relations by Theorem \ref{theorem2rep} and
\cite[Theorem 6.3]{CautisKam}.

Restricting to the weight space $(1^m)$ of $\Bim(n)$, the complexes simplify to ~\eqref{complexes1} with the maps given in ~\eqref{complexes2}.
The work of Elias and Krasner~\cite{EKras} combined with the connection between Soergel calculus and the 2-category $\cal{U}$ from \cite[Lemma 6.5]{MSV} shows that this is a strong braid group action.
\end{proof}

%


\begin{thebibliography}{BKLW16}

\bibitem[BKLW16]{BHLW2}
A.~Beliakova, H.~Kazuo, A.D. Lauda, and B.~Webster, \emph{Cyclicity for
  categorified quantum groups}, J. Algebra \textbf{452} (2016), 118--132.

\bibitem[Bru16]{Brundan2}
J.~Brundan, \emph{On the definition of {K}ac-{M}oody 2-category}, Math. Ann.
  \textbf{364} (2016), no.~1-2, 353--372, arxiv:1501.00350.

\bibitem[BZ08]{BZ}
A.~Berenstein and S.~Zwicknagl, \emph{Braided symmetric and exterior algebras},
  Trans. Amer. Math. Soc. \textbf{360} (2008), no.~7, 3429--3472. \MR{2386232}

\bibitem[Cau15]{Cautis}
S.~Cautis, \emph{Clasp technology to knot homology via the affine
  {G}rassmannian}, Math. Ann. \textbf{363} (2015), no.~3-4, 1053--1115,
  \href{http://arxiv.org/abs/1207.2074}{arXiv:1207.2074}.

\bibitem[Cau17]{Cautis-Rem}
\bysame, \emph{Remarks on coloured triply graded link invariants}, Algebr.
  Geom. Topol. \textbf{17} (2017), no.~6, 3811--3836. \MR{3709661}

\bibitem[CGR17]{CGR}
S.~Chun, S.~Gukov, and D.~Roggenkamp, \emph{Junctions of surface operators and
  categorification of quantum groups}, Categorification in geometry, topology,
  and physics, Contemp. Math., vol. 684, Amer. Math. Soc., Providence, RI,
  2017, pp.~87--146. \MR{3611724}

\bibitem[CK08]{CaKa-slm}
S.~Cautis and J.~Kamnitzer, \emph{Knot homology via derived categories of
  coherent sheaves, {II}: The $\mathfrak{sl}(m)$-case}, Inventiones Math.
  \textbf{174} (2008), no.~1, 165--232,
  \href{http://arxiv.org/abs/arXiv:0710.3216 }{arXiv:0710.3216 }.

\bibitem[CK12a]{CautisKam}
\bysame, \emph{Braiding via geometric {L}ie algebra actions}, Compos. Math.
  \textbf{148} (2012), no.~2, 464--506.

\bibitem[CK12b]{CoopK0}
Benjamin Cooper and Vyacheslav Krushkal, \emph{Categorification of the
  {J}ones-{W}enzl projectors}, Quantum Topol. \textbf{3} (2012), no.~2,
  139--180.

\bibitem[CK18]{CK-symm}
Sabin Cautis and Joel Kamnitzer, \emph{Categorical geometric symmetric {H}owe
  duality}, Selecta Math. (N.S.) \textbf{24} (2018), no.~2, 1593–--1631.

\bibitem[CKL10]{CKL-skew}
S.~Cautis, J.~Kamnitzer, and A.~Licata, \emph{Categorical geometric skew {H}owe
  duality}, Inventiones Math. \textbf{180} (2010), no.~1, 111--159,
  \href{https://arxiv.org/abs/0902.1795}{arXiv:0902.1795}.

\bibitem[CKM14]{CKM}
S.~Cautis, J.~Kamnitzerl, and S.~Morrison, \emph{Webs and quantum skew {H}owe
  duality}, Math. Ann. \textbf{360} (2014), no.~1-2, 351--390.

\bibitem[CL14]{CLau}
S.~Cautis and A.~D. Lauda, \emph{Implicit structure in 2-representations of
  quantum groups}, Selecta Mathematica (2014), 1--44,
  \href{http://arxiv.org/abs/arXiv:1111.1431}{arXiv:1111.1431}.

\bibitem[CR08]{CR}
J.~Chuang and R.~Rouquier, \emph{Derived equivalences for symmetric groups and
  {$\mf{sl}_2$}-categorification}, Ann. of Math. (2) \textbf{167} (2008),
  no.~1, 245--298.

\bibitem[CW12]{CW}
S.J Cheng and W.~Wang, \emph{Dualities and representations of {L}ie
  superalgebras}, Graduate Studies in Mathematics, vol. 144, American
  Mathematical Society, Providence, RI, 2012.

\bibitem[EK10a]{EK}
B.~Elias and M.~Khovanov, \emph{Diagrammatics for {S}oergel categories}, Int.
  J. Math. Math. Sci. (2010), Art. ID 978635, 58. \MR{3095655}

\bibitem[EK10b]{EKras}
B.~Elias and D.~Krasner, \emph{Rouquier complexes are functorial over braid
  cobordisms}, Homology Homotopy Appl. \textbf{12} (2010), no.~2, 109--146.
  \MR{2721032}

\bibitem[EQ16a]{EQ}
B.~Elias and Y.~Qi, \emph{An approach to categorification of some small quantum
  groups {II}}, Adv. Math. \textbf{288} (2016), 81--151,
  \href{https://arxiv.org/abs/1302.5478}{arXiv:1302.5478}.

\bibitem[EQ16b]{EQ2}
\bysame, \emph{A categorification of quantum {$\mf{sl}(2)$} at prime roots of
  unity}, Adv. Math. \textbf{299} (2016), 863--930.

\bibitem[ES18]{ES}
M.~Ehrig and C.~Stroppel, \emph{Nazarov-{W}enzl algebras, coideal subalgebras
  and categorified skew {H}owe duality}, Adv. Math. \textbf{331} (2018),
  58--142. \MR{3804673}

\bibitem[FSS12]{FSS}
I.~Frenkel, C.~Stroppel, and J.~Sussan, \emph{Categorifying fractional {E}uler
  characteristics, {J}ones-{W}enzl projectors and 3j-symbols}, Quantum Topol.
  \textbf{3} (2012), no.~2, 181--253.

\bibitem[GLL18]{Lau-q}
S.~Garoufalidis, A.D. Lauda, and T.Q. Le, \emph{The colored {HOMFLYPT} function
  is {$q$}-holonomic}, Duke Math. J. \textbf{167} (2018), no.~3, 397--447.
  \MR{3761103}

\bibitem[Gra16]{Grant}
J.~Grant, \emph{A generators and relations description of a representation
  category of {$U_q(\mf{gl}(1|1))$}}, Algebr. Geom. Topol. \textbf{16} (2016),
  no.~1, 509--539. \MR{3470707}

\bibitem[Hog19]{Hog}
M.~Hogancamp, \emph{A polynomial action on colored {$\mf{sl}_2$} link
  homology}, Quantum Topol. \textbf{10} (2019), no.~1, 1--75. \MR{3900776}

\bibitem[HQS18]{QRS}
D.E. V.~Rose H.~Queffelec and A.~Sartori, \emph{Annular evaluation and link
  homology}, arXiv:1802.04131.

\bibitem[Kho00]{Kh1}
M.~Khovanov, \emph{A categorification of the {J}ones polynomial}, Duke Math. J.
  \textbf{101} (2000), no.~3, 359--426,
  \href{https://arxiv.org/abs/math/9908171}{arXiv:math.9908171}.

\bibitem[Kho02]{Kh2}
\bysame, \emph{A functor-valued invariant of tangles}, Algebr. Geom. Topol.
  \textbf{2} (2002), 665--741 (electronic),
  \href{http://arxiv.org/abs/0103190}{arXiv:0103190}.

\bibitem[Kho05]{Kh05}
\bysame, \emph{Categorifications of the colored {J}ones polynomial}, J. Knot
  Theory Ramifications \textbf{14} (2005), no.~1, 111--130.

\bibitem[Kho07]{Kho-triple}
\bysame, \emph{Triply-graded link homology and {H}ochschild homology of
  {S}oergel bimodules}, Int. J. Math. \textbf{18} (2007), no.~18, 869--885.

\bibitem[Kho16]{Kh4}
\bysame, \emph{Hopfological algebra and categorification at a root of unity:
  the first steps}, J. Knot Theory Ramifications \textbf{25} (2016), no.~3,
  1640006, 26, \href{https://arxiv.org/abs/math/0509083}{arXiv:math/0509083}.

\bibitem[KK12]{KK}
S.J. Kang and M.~Kashiwarai, \emph{Categorification of highest weight modules
  via {K}hovanov-{L}auda-{R}ouquier algebras}, Invent. Math. \textbf{190}
  (2012), no.~3, 699--742,
  \href{http://arxiv.org/abs/1102.4677}{arXiv:1102.4677}.

\bibitem[KL09]{KL1}
M.~Khovanov and A.D. Lauda, \emph{A diagrammatic approach to categorification
  of quantum groups {I}}, Represent. Theory \textbf{13} (2009), 309--347,
  \href{http://arxiv.org/abs/0803.4121}{arXiv:0803.4121}.

\bibitem[KL10]{KL3}
\bysame, \emph{A diagrammatic approach to categorification of quantum groups
  {III}}, Quantum Topology \textbf{1} (2010), 1--92,
  \href{http://arxiv.org/abs/0807.3250}{arXiv:0807.3250}.

\bibitem[KL11]{KL2}
\bysame, \emph{A diagrammatic approach to categorification of quantum groups
  {II}}, Trans. Amer. Math. Soc. \textbf{363} (2011), 2685--2700,
  \href{http://arxiv.org/abs/0804.2080}{arXiv:0804.2080}.

\bibitem[KLMS12]{KLMS}
M.~Khovanov, A.D. Lauda, M.~Mackaay, and M.~{S}to{\v{s}i\'c}, \emph{Extended
  graphical calculus for categorified quantum sl(2)}, Memoirs of the AMS
  \textbf{219} (2012), \href{http://arxiv.org/abs/1006.2866}{arXiv:1006.2866}.

\bibitem[KQ15]{KQ}
M.~Khovanov and Y.~Qi, \emph{An approach to categorification of some small
  quantum groups}, Quantum Topol. \textbf{6} (2015), no.~2, 185--311,
  \href{https://arxiv.org/abs/1208.0616}{arXiv:math/1208.0616}.

\bibitem[KR08]{KhR2}
M.~Khovanov and L.~Rozansky, \emph{Matrix factorizations and link homology
  {II}}, Geom. Topol. \textbf{12} (2008), 1387--1425.

\bibitem[KS18]{KS}
M.~Khovanov and J.~Sussan, \emph{The {S}oergel category and the redotted
  {W}ebster algebra}, J. Pure Appl. Algebra \textbf{222} (2018), 1957--2000,
  \href{http://arxiv.org/abs/1605.02678}{arXiv:1605.02678}.

\bibitem[Lau08]{Lau1}
Aaron~D. Lauda, \emph{A categorification of quantum sl(2)}, Adv. Math.
  \textbf{225} (2008), 3327--3424.

\bibitem[Lau18]{Lau-param}
A.D. Lauda, \emph{Parameters in categorified quantum groups}, 2018,
  \href{http://arxiv.org/abs/1812.07654}{arXiv:1812.07654}.

\bibitem[LQR15]{LQR}
Aaron~D. Lauda, Hoel Queffelec, and David E.~V. Rose, \emph{Khovanov homology
  is a skew {H}owe 2-representation of categorified quantum {$\mf{sl}_m$}},
  Algebr. Geom. Topol. \textbf{15} (2015), no.~5, 2517--2608,
  \href{http://arxiv.org/abs/arXiv:1212.6076}{arXiv:1212.6076}.

\bibitem[Man01]{Man}
L.~Manivel, \emph{Symmetric functions, {S}chubert polynomials and degeneracy
  loci}, SMF/AMS Texts and Monographs, vol.~6, American Mathematical Society,
  Providence, RI, 2001, Translated from the 1998 French original by John R.
  Swallow, Cours Sp\'ecialis\'es [Specialized Courses], 3.

\bibitem[MPT14]{MPT}
M.~Mackaay, W.~Pan, and D.~Tubbenhauer, \emph{The {$\mf{sl}_3$}-web algebra},
  Math. Z. \textbf{277} (2014), no.~1-2, 401--479.

\bibitem[MSV13]{MSV}
Marco Mackaay, Marko Sto{\v{s}i\'c}, and Pedro Vaz, \emph{A diagrammatic
  categorification of the {$q$}-{S}chur algebra}, Quantum Topol. \textbf{4}
  (2013), no.~1, 1--75. \MR{2998837}

\bibitem[MW18]{MW}
M.~Mackaay and B.~Webster, \emph{Categorified skew {H}owe duality and
  comparison of knot homologies}, Adv. Math. \textbf{330} (2018), 876--945.
  \MR{3787560}

\bibitem[MY19]{MY}
Marco Mackaay and Yasuyoshi Yonezawa, \emph{$\mathfrak{sl}_n$-web categories
  and categorified skew howe duality}, J. Pure Appl. Algebra \textbf{223}
  (2019), 2173--2229, \href{https://arxiv.org/abs/1306.6242}{arXiv:1306.6242}.

\bibitem[Qi14]{Q1}
You Qi, \emph{Hopfological algebra}, Compos. Math. \textbf{150} (2014), no.~1,
  1--45, \href{https://arxiv.org/abs/1205.1814}{arXiv:math/1205.1814}.

\bibitem[QR16]{QR}
Hoel Queffelec and David E.~V. Rose, \emph{The {$\mf{sl}_n$} foam 2-category: a
  combinatorial formulation of {K}hovanov-{R}ozansky homology via categorical
  skew {H}owe duality}, Adv. Math. \textbf{302} (2016), 1251--1339,
  \href{http://arxiv.org/abs/arXiv:1405.5920}{arXiv:1405.5920}.

\bibitem[QS16]{QSus}
You Qi and Joshua Sussan, \emph{A categorification of the {B}urau
  representation at prime roots of unity}, Selecta Math. (N.S.) \textbf{22}
  (2016), no.~3, 1157--1193,
  \href{https://arxiv.org/abs/1312.7692}{arXiv:1312.7692}.

\bibitem[Que15]{QS}
Antonio Queffelec, Hoel anf~Sartori, \emph{Mixed quantum skew {H}owe duality
  and link invariants of type {A}},
  \href{http://arxiv.org/abs/arXiv:1504.01225}{arXiv:1504.01225}.

\bibitem[Rou04]{Rou-Soergel}
R.~Rouquier, \emph{Categorification of the braid groups},
  \href{http://arxiv.org/abs/arXiv:0409593}{arXiv:0409593}.

\bibitem[Rou08]{Rou2}
\bysame, \emph{2-{K}ac-{M}oody algebras}, 2008,
  \href{http://arxiv.org/abs/0812.5023}{arXiv:0812.5023}.

\bibitem[Roz14]{Roz}
L.~Rozansky, \emph{An infinite torus braid yields a categorified
  {J}ones-{W}enzl projector}, Fund. Math. \textbf{225} (2014), 305--326.

\bibitem[RT16]{RT}
D.~Rose and D.~Tubbenhauer, \emph{Symmetric webs, {J}ones-{W}enzl recursions,
  and {$q$}-{H}owe duality}, Int. Math. Res. Not. IMRN (2016), no.~17,
  5249--5290.

\bibitem[RW16]{RoseW}
D.E.V. Rose and P.~Wedrich, \emph{Deformations of colored {$\mf{sl}_N$} link
  homologies via foams}, Geom. Topol. \textbf{20} (2016), no.~6, 3431--3517,
  arXiv:1501.02567.

\bibitem[RW18]{RWag}
L.H. Robert and E.~Wagner, \emph{Symmetric {K}hovanov--{R}ozansky link
  homologies}, \href{http://arxiv.org/abs/1801.02244 }{arXiv:1801.02244 }.

\bibitem[{Tol}02]{LT}
V.~{Toledano Laredo}, \emph{A {K}ohno-{D}rinfeld theorem for quantum {W}eyl
  groups}, Duke Math. J. \textbf{112} (2002), no.~3, 421--451. \MR{1896470}

\bibitem[TVW17]{TVW}
D.~Tubbenhauer, P.~Vaz, and P.~Wedrich, \emph{Super {$q$}-{H}owe duality and
  web categories}, Algebr. Geom. Topol. \textbf{17} (2017), no.~6, 3703--3749.
  \MR{3709658}

\bibitem[Web13]{WebgradedHecke}
B.~Webster, \emph{On graded presentations of {H}ecke algebras and their
  generalizations}, 2013,
  \href{http://arxiv.org/abs/1305.0599}{arXiv:1305.0599}.

\bibitem[Web17]{Web}
\bysame, \emph{Knot invariants and higher representation theory}, Mem. Amer.
  Math. Soc. \textbf{250} (2017), no.~1191, v+141,
  \href{http://arxiv.org/abs/1001.2020}{arXiv:1001.2020}.

\bibitem[Wed16]{Wed}
P.~Wedrich, \emph{Exponential growth of colored {HOMFLY-PT} homology},
  \href{http://arxiv.org/abs/1602.02769}{arXiv:1602.02769}.

\end{thebibliography}

\providecommand{\bysame}{\leavevmode\hbox to3em{\hrulefill}\thinspace}
\providecommand{\MR}{\relax\ifhmode\unskip\space\fi MR }
\providecommand{\MRhref}[2]{%
  \href{http://www.ams.org/mathscinet-getitem?mr=#1}{#2}
}
\providecommand{\href}[2]{#2}

%
%

\end{document}